\documentclass{amsart}
\usepackage{geometry}   
\usepackage[dvipdfmx]{graphicx}

\usepackage{amsmath,amssymb,eucal,mathtools}
\usepackage[all]{xy}
\usepackage{ulem}
\usepackage{ytableau}

\usepackage{amssymb, amscd, epsfig, mathrsfs, amsmath,amsthm, color}

\usepackage{tikz}

\newcommand{\ep}{\varepsilon}

\newcommand{\sh}{\theta_k}

\newcommand{\pol}{\mathrm{pol}}

\newcommand{\SL}{\mathrm{SL}}

\newcommand{\C}{\mathbb{C}}
\newcommand{\Z}{\mathbb{Z}}

\newcommand{\res}{\mathrm{res}}

\newcommand{\loc}{\mathrm{loc}}
\newcommand{\Fl}{G/B}

\renewcommand{\O}{\mathcal{O}}

\newcommand{\G}{\mathfrak{G}}

\newcommand{\F}{\sigma}
\newcommand{\inv}{\omega}
\newcommand{\T}{\mathcal{T}}
\renewcommand{\r}{\mathfrak{r}}
\newcommand{\Kato}{\kappa}
\newcommand{\K}[3]{K(#1;#2;#3)}

\newcommand{\up}{\mathrm{up}}

\renewcommand{\top}{\mathrm{top}}
\newcommand{\mtop}{\mathrm{mtop}}
\newcommand{\Gr}{\mathrm{Gr}}

\newcommand{\Par}{\mathcal{P}}
\newcommand{\gt}[1]{\tilde{g}^{(#1)}}

\newcommand{\Waf}{\tilde{S}_{k+1}}
\newcommand{\Wafo}{\tilde{S}_{k+1}^0}
\newcommand{\Wafh}{\hat{S}_{k+1}}
\newcommand{\tfg}[1]{\tilde{\mathfrak{g}}^{(#1)}}
\newcommand{\fg}[1]{{\mathfrak{g}}^{(#1)}}

\newcommand{\Des}{\mathrm{Des}}
\newcommand{\gtf}[1]{\tilde{\mathfrak{g}}^{(#1)}}
\newcommand{\gcirc}[1]{\overset{\circ}{\mathfrak{g}}^{(#1)}}

\newcommand{\leqk}{\underset{k}{\leq}}
\newcommand{\A}{\mathcal{S}}

\usepackage{scalerel,stackengine}
\stackMath
\newcommand\reallywidecheck[1]{%
\savestack{\tmpbox}{\stretchto{%
  \scaleto{%
    \scalerel*[\widthof{\ensuremath{#1}}]{\kern-.6pt\bigwedge\kern-.6pt}%
    {\rule[-\textheight/2]{1ex}{\textheight}}
  }{\textheight}%
}{0.5ex}}%
\stackon[1pt]{#1}{\scalebox{-1}{\tmpbox}}%
}
\parskip 1ex

\numberwithin{equation}{section}
\theoremstyle{definition}
\newtheorem{thm}{Theorem}[section]

\newtheorem{lem}[thm]{Lemma}
\newtheorem{prop}[thm]{Proposition}
\newtheorem{cor}[thm]{Corollary}
\newtheorem{defn}[thm]{Definition}

\newtheorem{example}[thm]{Example}
\newtheorem{rem}[thm]{Remark}

%
%
\author{Takeshi Ikeda}
\address{Faculty of Science and Engineering, Waseda University, 3-4-1 Okubo, Shinjuku-ku, Tokyo 169-8555 Japan}
\email{gakuikeda@waseda.jp}
%
%
\author{Shinsuke Iwao}
\address{Faculty of Business and Commerce, Keio University, 4-1-1 Hiyoshi, Kohoku-ku, Yokohama-shi, Kanagawa 223-8521 Japan}
\email{iwao-s@keio.jp}
%
%
\author{Satoshi Naito}
\address{Department of Mathematics, Tokyo Institute of Technology, 2-12-1 Oh-Okayama, Meguro-ku, Tokyo 152-8551 Japan}
\email{naito@math.titech.ac.jp}


\title[Closed $k$-Schur Katalan functions]{Closed $k$-Schur Katalan functions as $K$-homology Schubert representatives
of the affine Grassmannian
}

\dedicatory{Dedicated to the memory of Bumsig Kim}

\date{\today}

\subjclass[2000]{05E05,14N15}

\begin{document}
\begin{abstract}
Recently, Blasiak-Morse-Seelinger
introduced symmetric functions called Katalan functions,
and proved that the $K$-theoretic $k$-Schur functions due to Lam-Schilling-Shimozono form  
a subfamily of the Katalan functions.  
They conjectured that another subfamily of Katalan functions called 
closed $k$-Schur Katalan functions are 
identified with the Schubert structure sheaves in the $K$-homology of the affine Grassmannian.
Our main result is a proof of this conjecture.

We also study a $K$-theoretic Peterson isomorphism that 
Ikeda, Iwao, and Maeno constructed, in a non-geometric manner, based on the unipotent solution of 
the relativistic Toda lattice of Ruijsenaars. 
We prove that the map sends a Schubert class of the quantum $K$-theory ring of the flag variety
to a closed $K$-$k$-Schur Katalan function up to an explicit factor related to 
a translation element with respect to an anti-dominant coroot. In fact, 
we prove the above map coincides with a map whose existence was conjectured by Lam, Li, Mihalcea, Shimozono, and proved by 
Kato, and more recently by Chow and Leung.
\end{abstract}

\maketitle

\section{Introduction}
The study of $K$-theoretic Schubert calculus attracts much attention in the last few decades. 
In this paper, we focus on the $K$-theory version of the ``quantum equals affine'' phenomenon, which originally comes from an unpublished result by Peterson in 1997 for the case of (co)homology, and has been developed by many authors. 
See the textbook \cite{book} by 
 Lam, Lapointe, Morse, Schilling, Shimozono, and Zabrocki 
 on these topics.
 
Lam, Schilling, and Shimozono \cite{LSS} identifies 
the $K$-homology $K_*(\Gr)$ 
of the affine Grassmannian $\Gr=G(\C((t)))/G(\C[[t]])$ of $G=SL_{k+1},$
with a subring $\Lambda_{(k)}=\C[h_1,\ldots,h_k]$ of the ring of symmetric functions, where $h_i$ is the $i$-th complete symmetric function. 
In particular, the $K$-{\it theoretic $k$-Schur functions\/} $g_\lambda^{(k)}$, 
the $K$-$k$-{\it Schur functions\/} for short,
 were introduced in \cite{LSS}, which form 
a family of inhomogeneous symmetric functions in $\Lambda_{(k)}$. It was proved that the $K$-theoretic $k$-Schur functions, indexed by the partitions $\lambda$ with $\lambda_1\leq k$, are identified with a distinguished basis of 
$K_*(\Gr).$
These functions form a basis of $\Lambda_{(k)}$, and are indexed by the partitions $\lambda$ with $\lambda_1\leq k.$

Although the $K$-$k$-Schur functions can be characterized by 
a Pieri type formula (see Definition \ref{def:KkSchur}), there was no explicit combinatorial formula until recently. 
Blasiak, Morse, and Seelinger \cite{BMS}
proved a raising operator formula for the $K$-$k$-Schur functions.
In fact, they introduced a family of
inhomogeneous symmetric functions
called 
$K$-{\it theoretic Catalan functions}, {\it Katalan functions\/} for short, 
and proved that the $K$-theoretic $k$-Schur functions form a subfamily of the 
Katalan functions. 

The Katalan functions $K(\Psi;M;\gamma)$
are indexed by triples $(\Psi,M,\gamma)$, where 
$\Psi$ is an upper order ideal in the set $\Delta_\ell^+
:=\{\ep_i-\ep_j\;|\; 1\leq i<j\leq \ell\}$ of positive roots
of type $A_{\ell-1}$, $M$ is a multiset supported on $\{1,\ldots,\ell\}$, and $\gamma
\in \Z^\ell.$
For any root ideal $\mathcal{L}\subset \Delta_\ell^+,$ 
set $L(\mathcal{L}):=\bigsqcup_{(i,j)\in \mathcal{L}}\{j\},$
where $(i,j)$ is a short hand notation for $\ep_i-\ep_j.$ 
Let $\Par^k$ denote the set of all partitions $\lambda$ such that $\lambda_1\leq k$.
For $\lambda\in \Par^k$, let  $g^{(k)}_\lambda$  denote 
the corresponding $K$-$k$-Schur function (see \S\ref{sec:Kk} for the definition).
In \cite{BMS}, it was proved that
\begin{equation}
g_\lambda^{(k)}=
K(\Delta^k(\lambda);L(\Delta^{k+1}(\lambda));\lambda),\label{eq:BMS}
\end{equation}
where 
$$
\Delta^k(\lambda):=\{(i,j)\in \Delta_\ell^+\;|\;\lambda_i+j-i>k\},
$$ 
and $\ell\geq \ell(\lambda)$, the length of $\lambda$. 
As one of the consequences of \eqref{eq:BMS}, a long-standing conjecture by Morse \cite{M12} was verified
in \cite{BMS}:
for $\lambda\in \Par^k$,
\begin{equation}
\lambda_1+\ell(\lambda)\leq k+1
\Longrightarrow
g^{(k)}_\lambda=g_\lambda,\label{eq:M12}
\end{equation}
where $g_\lambda$ is the {\it dual stable Grothendieck polynomial\/}. 

In \cite{BMS}, they introduced another subfamily of Katalan functions, called {\it closed $k$-Schur Katalan functions}, defined for $\lambda\in \Par^k$ by 
$$\tilde{\mathfrak{g}}^{(k)}_\lambda:=K(\Delta^k(\lambda);L(\Delta^k(\lambda));\lambda).$$
It is conjectured that the closed $k$-Schur Katalan function is 
related to the function
\begin{equation}
\tilde{g}^{(k)}_\lambda
:=\sum_{\mu\leqk\lambda}{g}^{(k)}_\mu,\label{eq:defTak}
\end{equation}
where $\lambda,\mu\in \Par^k,$ and $\leqk $ denotes the order on $\Par^k$ induced by 
the Bruhat order on the affine symmetric group $\tilde{S}_{k+1}$ (see \S\ref{sec:aff}  for details) .
We call $\gt{k}_\lambda$ a {\it closed} $K$-$k$-{\it Schur function\/}.
These functions are essential in the $K$-homology Schubert calculus 
 because it is identified with the class of the Schubert structure sheaf $\O_\lambda^\Gr$ for the affine Grassmannian, {whereas $g^{(k)}_\lambda$ is identified with the class of ideal sheaf of the boundary of the Schubert variety; see
 \cite[Theorem 1]{LLMS} and \cite[Theorems 5.4 and 7.17(1)]{LSS}.} It should be noted that 
Takigiku \cite{Tak} proved a Pieri type formula for 
$\tilde{g}^{(k)}_\lambda$.
Another important 
result in \cite{Tak} is called the $k$-{\it rectangle factorization formula\/}. 
For $1\leq i\leq k$, define
\begin{equation*}
R_i= (\overbrace{i,\ldots,i}^{k+1-i}).
\end{equation*}
Takigiku showed
\begin{equation}
\tilde{g}_{R_i}\cdot \gt{k}_\lambda=\gt{k}_{\lambda\cup R_i},\label{eq:kfact}
\end{equation}
where $\lambda\cup R_i$ is the partition 
made by  combining the parts of $\lambda$ and those of $R_i$ and then sorting them.
This formula is natural from a geometric point of view (see \cite{LLMS}),
and plays an important role in our construction.

Let $\F$ be a ring automorphism 
of $\Lambda$ given by $\F(h_i)=\sum_{j=0}^{i}h_j\;(i\geq 1)$
with $h_0=1$.
We can now state the main result of this paper 
confirming a conjecture \cite[Conjecture 2.12 (a)]{BMS}
by Blasiak, Morse, and Seelinger, which enable us to have a better explicit knowledge of 
the structure sheaf $\O_\lambda^\Gr$.  
\begin{thm}\label{thm:main}
For $\lambda\in \Par^k$, 
we have
\begin{equation*}
\tilde{g}^{(k)}_\lambda=\F(\tilde{\mathfrak{g}}^{(k)}_\lambda).
\end{equation*}
\end{thm}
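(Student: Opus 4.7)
The plan is to establish the identity by reducing it, through a rectangle factorization on both sides, to the case of $k$-irreducible partitions, and then to handle that base case using the Morse-type identity \eqref{eq:M12}. As a preliminary step, I would investigate how the ring automorphism $\F$ interacts with the Katalan raising-operator formalism. Since $\F(h_i) = \sum_{j=0}^{i} h_j$, at the level of generating series $\F$ corresponds to multiplication by $(1-z)^{-1}$; the goal is to obtain a concrete Katalan-type description of $\F(\gtf{k}_\lambda)$ (for instance, by showing that $\F$ corresponds to enlarging the multiset $M$ or shifting the root ideal $\Psi$ in a controlled way), yielding a workable form for the right-hand side.

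Next I would establish a rectangle factorization on the Katalan side,
\begin{equation*}
\gtf{k}_{R_i} \cdot \gtf{k}_\lambda = \gtf{k}_{\lambda \cup R_i} \qquad (1 \leq i \leq k),
\end{equation*}
by direct raising-operator manipulation, exploiting the triangular shape of $\Delta^k(R_i)$ and the way $\Delta^k(\lambda \cup R_i)$ decomposes into pieces coming from $\Delta^k(R_i)$ and $\Delta^k(\lambda)$. Since $\F$ is a ring homomorphism, this yields a matching factorization for $\F(\gtf{k}_\lambda)$ which, combined with Takigiku's identity \eqref{eq:kfact} for $\gt{k}_\lambda$, reduces the theorem to the case where $\lambda$ is $k$-irreducible, i.e., contains no $R_i$ as a sub-multiset of parts.

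For $k$-irreducible $\lambda$ one has $\lambda_1 + \ell(\lambda) \leq k+1$, so \eqref{eq:M12} gives $g^{(k)}_\mu = g_\mu$ for every $\mu \leqk \lambda$. The left-hand side of the theorem then becomes the Bruhat-interval sum $\sum_{\mu \leqk \lambda} g_\mu$ of dual stable Grothendieck polynomials, while the right-hand side is the explicit expression from the preliminary step, determined entirely by the simple root ideal $\Delta^k(\lambda)$ in this regime. I would complete the match either by checking that both sides satisfy a common Pieri-type recursion (Takigiku's rule being the natural target) or by a direct comparison of expansions in the $h$-basis of $\Lambda_{(k)}$.

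The main obstacle is the rectangle factorization. Although its image \eqref{eq:kfact} reflects a clean geometric operation on the affine Grassmannian, proving it directly on the Katalan side requires delicate raising-operator identities, and tracking how the pairs $(\Delta^k(\lambda), L(\Delta^k(\lambda)))$ and $(\Delta^k(R_i), L(\Delta^k(R_i)))$ merge under partition union is subtle. A secondary difficulty is ensuring that the modification induced by $\F$ in the preliminary step interacts cleanly with the factorization, so that the two steps combine without spurious correction terms.
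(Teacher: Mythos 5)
Your reduction strategy contains a fatal error at the base case: the claim that ``for $k$-irreducible $\lambda$ one has $\lambda_1+\ell(\lambda)\leq k+1$'' is false. Recall that $\lambda\in\Par^k$ is $k$-irreducible when it contains no $R_i$ as a sub-multiset of parts, equivalently $m_i(\lambda)\leq k-i$ for $1\leq i\leq k$. Take $k=4$ and $\lambda=(3,2,2,1,1,1)$: this has one part equal to $3$ (allowed $\leq 1$), two parts equal to $2$ (allowed $\leq 2$), three parts equal to $1$ (allowed $\leq 3$), so it is $4$-irreducible, yet $\lambda_1+\ell(\lambda)=3+6=9>5=k+1$. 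The partitions with $\lambda_1+\ell(\lambda)\leq k+1$ are exactly those fitting inside some rectangle $R_i$, which is a much smaller set than the $k!$ $k$-irreducible partitions. Consequently, after stripping all $k$-rectangles via the factorizations you cannot invoke \eqref{eq:M12} or Proposition \ref{prop:ksmallgtilde} for the remaining irreducible core, and your proof leaves the bulk of cases unproven.

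Even setting this aside, the Katalan-side rectangle factorization you need is not an easy warm-up: Seelinger's independent proof of it (cited after Corollary 1.2) required introducing generalized Katalan functions over arbitrary subsets of $\Delta_\ell^+$, and in this paper that corollary is derived \emph{from} Theorem \ref{thm:main} together with Takigiku's formula \eqref{eq:kfact}, not the other way around, so you would be reproving a substantial external result as a lemma. The paper's actual route is quite different: it establishes a vertical Pieri-type characterization of $\{\tilde{g}^{(k)}_\lambda\}$ (Lemma \ref{lem:11+}, via $k$-conjugation of Takigiku's horizontal Pieri rule), then shows that $g_{(1^r)}\cdot\tfg{k}_\lambda=\sum_{|A|\leq r}T_{u_A}\cdot\tfg{k}_\lambda$ by a delicate straightening of Katalan triples (Lemma \ref{lem:main}, the technical heart), converts the right-hand side with a $0$-Hecke computation in the parabolic quotient (Lemma \ref{lem:Naito2++}), and concludes by uniqueness of the functions satisfying the Pieri recursion after applying $\F$. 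If you want to salvage a reduction-to-base-cases approach, you would need a base case covering all $k$-irreducible partitions, which cannot come from \eqref{eq:M12} alone.
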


Knowing Takigiku's result \eqref{eq:kfact},
an immediate consequence is the following $k$-rectangle
factorization formula for $\tfg{k}_\lambda$, which 
was first proved by Seelinger \cite{See} by a more direct 
method involving generalized Katalan functions defined for 
arbitrary subsets of $\Delta_\ell^+$.

\begin{cor}(cf. \cite[Conjecture 2.12 (f)]{BMS})
For $1\leq i\leq k,$ we have
$$\tfg{k}_{R_i}\cdot \gtf{k}_\lambda=\gtf{k}_{\lambda\cup R_i}.$$
\end{cor}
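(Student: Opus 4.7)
The plan is to deduce the corollary directly from Theorem \ref{thm:main} and Takigiku's $k$-rectangle factorization formula \eqref{eq:kfact} by transferring the identity across the ring automorphism $\F$. Because $\F$ is, by construction, a ring automorphism of $\Lambda$, its inverse $\F^{-1}$ is again a ring homomorphism and in particular preserves products.

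By Theorem \ref{thm:main}, for every $\mu\in\Par^k$ one has $\F^{-1}(\gt{k}_\mu)=\tfg{k}_\mu$. Observe also that $\lambda\cup R_i$ still lies in $\Par^k$, since all parts of $\lambda$ and of $R_i$ are at most $k$, so the right-hand side of \eqref{eq:kfact} falls under the scope of Theorem \ref{thm:main}. Applying $\F^{-1}$ to both sides of \eqref{eq:kfact}, the left-hand side becomes
\[
\F^{-1}(\gt{k}_{R_i})\cdot\F^{-1}(\gt{k}_\lambda)=\tfg{k}_{R_i}\cdot\tfg{k}_\lambda,
\]
while the right-hand side becomes $\F^{-1}(\gt{k}_{\lambda\cup R_i})=\tfg{k}_{\lambda\cup R_i}$. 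Comparing gives the desired identity.

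No genuine obstacle arises in this deduction; the whole content of the corollary has already been packaged into the proof of Theorem \ref{thm:main}. The key design choice that makes this free is that the main theorem is phrased as an intertwining of the two families by a \emph{ring} automorphism, so any multiplicative identity valid for one family transfers immediately to the other. This is precisely why the surrounding text advertises the corollary as an ``immediate consequence'' of Theorem \ref{thm:main} together with \eqref{eq:kfact}, and why the more delicate direct proof of Seelinger \cite{See}, via generalized Katalan functions indexed by arbitrary subsets of $\Delta_\ell^+$, can be bypassed.
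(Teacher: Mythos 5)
Your deduction is correct and is exactly the ``immediate consequence'' argument the paper has in mind: Theorem \ref{thm:main} realizes the correspondence $\gt{k}_\mu \leftrightarrow \tfg{k}_\mu$ as conjugation by the \emph{ring} automorphism $\F$, so Takigiku's multiplicative identity \eqref{eq:kfact} transfers automatically. One small step you pass over silently: the left factor in \eqref{eq:kfact} is $\tilde{g}_{R_i}$ (the sum $\sum_{\mu\subset R_i} g_\mu$), not $\gt{k}_{R_i}$ a priori, so before applying $\F^{-1}$ you should invoke Proposition \ref{prop:ksmallgtilde} (valid since $R_i$ satisfies $\lambda_1+\ell(\lambda)=k+1$) to identify $\tilde{g}_{R_i}=\gt{k}_{R_i}$, or equivalently use Proposition \ref{prop:Taki-g} together with the corollary $\tfg{k}_{R_i}=g_{R_i}$ to get $\F^{-1}(\tilde{g}_{R_i})=g_{R_i}=\tfg{k}_{R_i}$ directly.
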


\bigskip

Another motivation of our study is to clarify the connection between
a map called the $K$-{\it theoretic Peterson isomorphism\/} 
constructed by Ikeda, Iwao, and Meano \cite{IIM}, and 
a map whose existence was conjectured by 
Lam, Li, Mihalcea, and Shimozono \cite{LLMS}; this conjecture was proved by 
Kato \cite{Kato}, and more recently by 
Chow and Leung \cite[a]{CL} using different methods. 

Let $QK(\Fl)$ be the (small) quantum $K$-theory ring
of the flag variety $\Fl=\SL_{k+1}(\C)/B$, a deformation of 
the Grothendieck 
ring of coherent sheaves on $G/B$ studied by 
Givental and Lee \cite{GL} (see also the finiteness result \cite{ACTI} by 
Anderson, Chen, Tseng, and Iritani). 
This is 
a commutative associative algebra over 
the formal power series ring $\C[[Q]]:=\C[[Q_1,\ldots,Q_k]]$ in the variables $Q_i$,  
called the Novikov variables. For $w\in S_{k+1}$, the symmetric group of degree $k+1$,
the Schubert variety $\Omega^w$ in $G/B$
is defined to be $\overline{B_{-}wB/B}$, where $B_{-}$ is the opposite 
Borel subgroup. Let $\O^w_{\Fl}$ denote 
(the class of) the structure sheaf of $\Omega^w.$
As a $\C[[Q]]$-module,  $QK(\Fl)$
has a basis consisting of $\O^w_{\Fl}\;(w\in S_{k+1})$. 
Let $K_*(\Gr)$ be the $K$-homology group  
of the affine Grassmannian $\Gr,$ which has a ring structure 
by the Pontryagin product. 
The ring has a basis consisting of (the class of ) the Schubert structure sheaves
$\O_\lambda^\Gr$ indexed by the $k$-bounded partitions (see \cite{LSS} for more detailed description).
 
{We represent $QK(\Fl)$ as 
a quotient ring $A_{k+1}$ of the the polynomial
ring $\C[[Q]][z_1,\ldots,z_{k+1}]$ (see \S\ref{sec:QK} for details). 
According to Lam, Schilling, and Shimozono \cite{LSS}, we can identify
$K_*(\Gr)$ with $\Lambda_{(k)}$. More precisely, 
$\O_\lambda^\Gr$ is identified with $\F^{-1}(\tilde{g}_\lambda^{(k)})$
in our convention (see \S\ref{sec:Khom} for details). 
For any partition $\lambda$, let $g_\lambda\in \Lambda$
be the {dual stable Grothendieck polynomial\/} (see \S\ref{sec:dualstable}).  
We also set $\tilde{g}_\lambda:=\sum_{\mu\subset \lambda}g_\mu$, following Takigiku \cite{Tak1}.   
We let $A_{k+1}^\pol$ be a $\C[Q]$-algebra which is a polynomial version of 
$A_{k+1}$ (see \S\ref{sec:QK}).
There is a ring isomorphism~\cite{IIM}
given by : \begin{equation*}
\Phi_{k+1}: A_{k+1}^\pol[Q_i^{-1}(1\le i\le k)]\rightarrow 
\Lambda_{(k)}[\tau_i^{-1},(\tau_i^{+})^{-1}(1\le i\le k)],
\end{equation*}
\begin{equation*}
\Phi_{k+1}
(z_i)=\frac{\tau_i\tau_{i-1}^+}{\tau_i^+\tau_{i-1}},\quad
\Phi_{k+1}(Q_i)=\frac{\tau_{i-1}\tau_{i+1}}{\tau_i^2},
\end{equation*}
where $$\tau_i=g_{R_i},\quad \tau_i^+=\tilde{g}_{R_i}$$ for $1\leq i\leq k$
and $\tau_0=\tau_{k+1}=\tau_0^+=\tau^+_{k+1}=1.$
It should be emphasized that the nature of the construction 
of $\Phi_{k+1}$ is combinatorial rather than geometric, in the sense that 
it heavily depends on the explicit presentations of the rings involved. 
In fact, they used a non-linear differential equation
called the {\it relativistic Toda lattice\/} introduced by Ruijsenaars. The map $\Phi_{k+1}$ arises as 
a solution of the relativistic Toda lattice equation
with its Lax matrix unipotent. For the affine side, the functions $h_i\;(1\leq i\leq k)$
can be thought of as the coordinates of a certain abelian centralizer subgroup in $PSL_{k+1}(\C).$}

There is a map $\sh: S_{k+1}\rightarrow \Par^k$ explicitly 
described by Lam and Shimozono \cite{LS:MRL}.
For $w\in S_{k+1}$, define 
$\Des(w)=\{1\leq i\leq k\;|\;w(i)>w(i+1)\}$.
The next result is a generalization of Theorem 1 in \cite{LS:MRL} by Lam and Shimozono 
for the (co)homology case to the $K$-theoretic setting. 
\begin{thm}\label{thm:KPet}
For $w\in S_{k+1}$, we have
\begin{equation*}
\Phi_{k+1}(\O^w_{\Fl})=\frac{\F^{-1}(\tilde{g}_{\sh(w)})
}{\prod_{i\in \Des(w)}\tau_i}.
\end{equation*}
\end{thm}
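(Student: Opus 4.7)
The plan is to derive Theorem \ref{thm:KPet} by identifying $\Phi_{k+1}$ with the $K$-theoretic Peterson map $\Psi$ whose existence was conjectured by Lam, Li, Mihalcea, and Shimozono and proved by Kato (and more recently by Chow-Leung). Once $\Phi_{k+1}=\Psi$ is established, the formula for $\Phi_{k+1}(\O^w_\Fl)$ reduces to the Schubert-to-Schubert formula for $\Psi$, combined with the Lam-Schilling-Shimozono identification $\O^\Gr_\lambda = \F^{-1}(\tilde{g}^{(k)}_\lambda)$ recalled in the introduction.

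The first step is to recast the Kato/Chow-Leung theorem in the present normalization: it provides a $\C[[Q]]$-algebra homomorphism $\Psi\colon QK(\Fl)[Q^{-1}]\to K_*(\Gr)_\loc$ sending $\O^w_\Fl$ to $\O^\Gr_{\sh(w)}$ divided by an explicit product indexed by $\Des(w)$. Using $\tau_i=g_{R_i}$ together with \eqref{eq:M12}, this denominator is exactly $\prod_{i\in\Des(w)}\tau_i$, so that the right-hand side of Theorem \ref{thm:KPet} coincides with $\Psi(\O^w_\Fl)$.

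It remains to prove $\Phi_{k+1}=\Psi$. Both are ring isomorphisms between the same localized rings, so it suffices to verify agreement on a $\C$-algebra generating set, which I would take to be $\{Q_i,\,z_i\}$. The values $\Phi_{k+1}(Q_i)=\tau_{i-1}\tau_{i+1}/\tau_i^2$ are matched on the $\Psi$-side by applying the Schubert formula of Kato/Chow-Leung to the explicit expression of the quantum parameter $Q_i$ in terms of Schubert classes (it corresponds to a translation element with respect to an anti-dominant coroot), combined with Takigiku's $k$-rectangle factorization \eqref{eq:kfact}. In the same spirit, $\Phi_{k+1}(z_i)=\tau_i\tau_{i-1}^+/(\tau_i^+\tau_{i-1})$ is matched by expanding $z_i$ in Schubert classes of the Grassmannian permutations appearing in the presentation of $A_{k+1}^\pol$, applying $\Psi$ term by term, and reorganizing via \eqref{eq:kfact} together with Theorem \ref{thm:main}, the latter giving the key identification $\F^{-1}(\tilde{g}^{(k)}_\lambda)=\tfg{k}_\lambda$ needed to rewrite Schubert ratios in the form dictated by $\Phi_{k+1}$.

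The main obstacle is precisely this matching on generators, since the two maps have very different origins: $\Phi_{k+1}$ arises from the unipotent solution of Ruijsenaars' relativistic Toda lattice, while $\Psi$ is defined via geometric/algebraic Schubert calculus on the affine Grassmannian (or the semi-infinite flag variety, in Kato's approach). The bridge between these two pictures is the $k$-rectangle factorization formula, which controls both the $\tau_i,\,\tau_i^+$ on the Toda side and the multiplicative structure of the Schubert ratios on the geometric side. Once the agreement on generators is settled, Theorem \ref{thm:KPet} follows by applying $\Phi_{k+1}=\Psi$ to $\O^w_\Fl$ for each $w\in S_{k+1}$.
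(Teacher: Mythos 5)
Your high-level strategy matches the paper's: show that $\Phi_{k+1}$ coincides with the $K$-theoretic Peterson map $\kappa$ of Kato (equivalently, the Chow--Leung map), and then Theorem~\ref{thm:KPet} follows from applying $\kappa$ to each $\O^w_{\Fl}$, together with $\alpha(\O^\Gr_{R_i})=\tau_i$ and Lam--Shimozono's Proposition~\ref{prop:LS11}. The divergence, and the gap, is in \emph{how} you verify that the two maps agree.

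You propose checking on the generating set $\{Q_i, z_i\}$ of $A_{k+1}^\pol$. For $z_i$ this runs into a real problem: $z_i$ is not a Schubert class, nor a manageable ratio of Schubert classes, so the step ``expanding $z_i$ in Schubert classes of Grassmannian permutations, applying $\Psi$ term by term, and reorganizing via~\eqref{eq:kfact}'' is not an argument but a placeholder — it is not at all clear how to carry it out without already knowing that $\Phi_{k+1}=\kappa$. The only Schubert-class expression for $z_i$ that the paper obtains is the identity $(1-Q_i)z_1\cdots z_i = 1-\O^{s_i}_{\Fl}$ in Section~4.5, and this is itself derived \emph{from} $\Phi_{k+1}$, so it cannot be used here without circularity. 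Your invocation of Theorem~\ref{thm:main} in this step is also a red flag: the paper's proof of Theorem~\ref{thm:KPet} does not use Theorem~\ref{thm:main} at all, and needing it suggests the chosen generating set is the wrong one.

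The paper instead verifies agreement on the other side, on the ring generators $\O^\Gr_{(i)} = \O^\Gr_{s_{i-1}\cdots s_1 s_0}$ of $K_*(\Gr)$, $1\le i\le k$. This is the strategically better choice for three reasons: $\alpha(\O^\Gr_{(i)}) = g_{(i)}$ is as simple as possible; the image of a Grassmannian Schubert class under $\Phi_{k+1}$ was already computed in \cite{IIM} (Proposition~\ref{lem:Grass}: $\Phi_{k+1}(\G_{\lambda,i}) = g_{(\lambda^\vee)'}/\tau_i$); and the Kato map is given directly on Schubert classes, so no expansion is needed. The only new input is the elementary Lemma~\ref{lem:lambdaGr}, computing $\sh(u_i)=(i)$ for the relevant $k$-Grassmannian permutation $u_i$. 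Finally, you gloss over the fact that $\Phi_{k+1}$ and $\kappa$ do not literally have the same domain and codomain: one needs the intermediary homomorphism $\phi\colon A_{k+1}^\pol[Q_i^{-1}]\hookrightarrow QK(\Fl)[Q_i^{-1}]$ (Corollary~\ref{cor:phi}, built via the localization $S=1+(Q)$ and Proposition~\ref{prop:KMmodified}), and to invert only $\tau_i$ on the affine side (the $\tau_i^+$ are extra). These are not cosmetic issues — they are needed to make sense of the equality $\Kato'=\Kato$ as maps between the same rings.
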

As noted above, $\F^{-1}(\tilde{g}_{\sh(w)})=\tfg{k}_{\sh(w)}$ is identified 
with the structure sheaf $\O_{\sh(w)}^\Gr$, while 
 $\tau_i$ is identified with the Schubert structure sheaf associated to an anti-dominant 
translation element in $\Waf$ (see \S\ref{sec:Khom} for a more precise statement).
Note also that for $w\in S_{k+1}$, we can take
the {\it quantum Grothendieck polynomials\/} $\mathfrak{G}_w^Q
\in \C[Q][z_1,\ldots,z_k]$ of Lenart and Maeno \cite{LM} (with the change of variables $x_i=1-z_i$)
as a representative for $\O^w_{\Fl}$ (see \S\ref{sec:QK}). 
The statement of 
Theorem \ref{thm:KPet} is a refinement of Conjecture 1.8 in \cite{IIM}.
See also Conjecture 2.12 (a), (b) in \cite{BMS}.

\subsection*{Future works and related results}
Now we are able to say that the unipotent solution of the relativistic Toda lattice  
actually gives the canonical $K$-theoretic Peterson isomorphism, in type $A$. 
A natural question is how to generalize 
this fact to any semisimple algebraic group $G$.
Furthermore, 
we would like to describe the $K$-theoretic 
Peterson isomorphisms in various types at the level of concrete polynomial representatives of Schubert classes. 
This leads us to a lot of interesting combinatorial problems related to the geometry
of $K$-theoretic Gromov-Witten theory. 
We know from Kim \cite{Kim} that the quantum cohomology 
ring of $G/B$ is identified with the quotient ring of 
a polynomial ring by the ideal generated by the conserved quantities 
of the Toda lattice for the Langlands dual group $G^\vee$ with the nilpotent initial condition.  
For our purpose, one of the central tasks is to obtain an analogue of Kim's result 
in the context of quantum $K$-theory ring for $G/B$.
Another possible clue in this direction is the work of Bezrukavnikov, Finkelberg, and Mirkovi\'c \cite{BFM}
that studied a connection between the $K$-homology of the 
affine Grassmannian and certain generalized Toda lattice equations. However, 
we still do not understand how this work fits into the framework 
of the $K$-Peterson isomorphism. 

For the affine Grassmannian side,  Lam, Schilling, and Shimozono \cite{LSS(C)} defined the $k$-Schur functions for the symplectic groups. It is remarkable that Seelinger \cite{See} made a conjecture
that the symplectic $k$-Schur functions of \cite{LSS(C)} can be 
expressed by a raising operator formula.              
Pon \cite{Pon} studied {the case of the orthogonal group. In particular, he gave a definition of ``$k$-Schur functions'' as 
the Schubert representatives for the homology of the affine Grassmannian in type $B$, 
and established the Pieri rule for types $B$ and $D$.}
Further combinatorial researches on these functions are needed to explore the issue
of giving explicitly the $K$-theoretic Peterson isomorphism.

Even in type $A$, there are a lot of things to do. 
We have a Chevalley type formula for $QK(\Fl)$ (\cite{LNS}, \cite{LP}, \cite{LM}, \cite{NOS}), 
which describes 
the multiplication by $\O^{s_i}_{\Fl}\;(1\leq i\leq k)$,
where $s_i$ is the transposition $(i,i+1)$.  
The image of  $\O^{s_i}_{\Fl}\;(1\leq i\leq k)$ under $\Phi_{k+1}$
is 
$\F^{-1}(\gt{k}_{R_i^*})/\tau_i$, where 
$R_i^*$ is the partition obtained from $R_i$ by removing the unique corner  
box.  We expect a good combinatorial formula for 
$\gt{k}_{R_i^*}\cdot \gt{k}_\lambda$. Although we know a 
dictionary between quantum and affine Schubert classes, the problem of 
translating the Chevalley formula into its affine counterpart 
seems not to be so simple. 
For the homology case, the analogous issue was pursued by Dalal and Morse 
 (\cite[Conjecture 39]{DM}). 
 Another basic question is what formula on  
 the quantum side is corresponding to $\tilde{g}^{(k)}_{(i)}\cdot \gt{k}_\lambda$ and $\tilde{g}^{(k)}_{(1^i)}\cdot \gt{k}_\lambda
\;(1<i<k-1).$
 Furthermore, there is a conjecture\footnote{After the first version of this paper was submitted the conjecture is proved in \cite{NS}} by Lenart and Maeno of a Pieri type formula
 for $\mathfrak{G}_w^Q$ (\cite[Conjecture 6.7]{LM}).
 We hope that the ``quantum equals affine'' phenomenon in $K$-theory would shed light on these questions. 
Also, it is natural to extend Theorem 1.3 torus equivariantly. In fact, Lam and Shimozono \cite{LM:equivToda} proved the equivariant (co)homology version of the explicit Peterson isomorphism in \cite{LS:MRL}. It was shown that the double $k$-Schur functions of Lam-Shimozono (\cite{LM:k-double}), and the quantum double Schubert polynomials by Kirillov and Maeno \cite{KM1} and by Ciocan-Fontanine and Fulton \cite{CF}, can be obtained from each other by the map. There are arguments in \cite[\S4]{LM:equivToda} on the centralizer family for $\SL_{k+1}$ in connection with Peterson's $j$-map, which would be useful in future studies.

\subsection*{Organization}
In Sections \ref{sec:defn}, we present some basic definitions and preliminary facts.
 In Section \ref{sec:proof}, we prove Theorem \ref{thm:main}.
 In Section \ref{sec:QK}, we discuss 
 the $K$-theoretic Peterson isomorphism. 
 In Appendix \ref{app:A}, we provide some results related to 
 the parabolic quotient of a Coxeter group, which are used in the proof of 
 Theorem \ref{thm:main}. 
 In Appendix \ref{sec:Grass}, we give a proof of a result (Lemma \ref{lem:lambdaGr}) on Grassmannian 
 permutations. In Appendix \ref{app:C}, we record the vertical Pieri rule for 
 the closed $K$-$k$-Schur functions. 
\subsection*{Acknowledgements} We thank Hiroshi Naruse, Daisuke Sagaki, and Mark Shimozono for 
valuable discussions; in particular, S.~N. thanks Daisuke Sagaki for helpful discussions about the material in Appendix A.
We thank Cristian Lenart for valuable comments.  
We also thank the anonymous referee for
valuable suggestions for improving 
the manuscript. 
T.~I. was partly supported  by JSPS Grant-in-Aid for Scientific Research 20H00119, 20K03571, 18K03261, 17H02838.
S.~N. was partly supported by JSPS Grant-in-Aid for Scientific Research (C) 21K03198. 
S.~I. was partly supported by JSPS Grant-in-Aid for Scientific Research (C) 19K03605.

\section{Basic definitions}\label{sec:defn}
Let $k$ be a positive integer. In this section, we fix the notation and explain the definitions and some properties 
of basic notions needed to understand Theorem \ref{thm:main}.
\subsection{Affine symmetric groups}\label{sec:aff} 
The {\it affine symmetric group\/} $\Waf$ is the group with generators 
$\{s_i\;|\; i\in I\}$ for $I=\{0,1,\ldots,k\}$ subject to the relations:
$$s_i^2=id, \quad s_is_{i+1}s_i=s_{i+1}s_i s_{i+1},\quad
s_is_j=s_js_i\quad
\mbox{for} \quad
i-j\neq 0,\pm1,$$ with indices 
considered modulo $k+1$.
The {\it length} $\ell(w)$ of $w\in \Waf$ is the minimum number $m$ such that 
$w=s_{i_1}\cdots s_{i_m}$ for some $i_j\in I$; any such 
expression for $w$ with $\ell(w)$ generators is said to be {\it reduced}.
The set of {\it affine Grassmannian elements} $\Waf^0$ is the minimal length coset representatives 
for $\Waf/S_{k+1}$, where $S_{k+1}=\langle s_1,\ldots,s_k\rangle.$ 

The {\it Bruhat order\/} (or {\it strong order\/}) on $\Waf$ is denoted by $\leq$. 
It can be described by the subword property (see \cite[\S2]{BB}). 

\subsection{$k$-bounded partitions and affine Grassmannian elements}
Let $\Par^k_\ell:=\{(\lambda_1,\ldots,\lambda_\ell)\in \Z^\ell\;|\;
k\geq \lambda_1\geq \cdots \geq \lambda_\ell\geq 0
\}$ denote the set of partitions contained in the $\ell\times k$ rectangle and let $\Par^k$
be the set of partitions $\lambda$ with $\lambda_1\leq k$.
For a partition $\lambda$, the {\it length\/} $\ell(\lambda)$ is the
number of nonzero parts of $\lambda.$
There is a bijection $\Par^k\rightarrow \Wafo\;(\lambda \mapsto x_\lambda)$ due to Lapointe and Morse \cite[Definition 45, Corollary 48]{LM05}
given by 
\begin{equation}
x_\lambda:=
(s_{\lambda_{\ell}-\ell}\cdots s_{-\ell+1})\cdots
(s_{\lambda_2-2}\cdots s_{-1})
(s_{\lambda_1-1}\cdots s_0),\label{eq:redexp}
\end{equation}
where $\ell=\ell(\lambda).$
To make the notation simple, we are omitting the dependency of $x_\lambda$ on $k$.
\begin{example}
Let $k=4, \lambda=(4,3,2)$. The corresponding 
affine Grassmannian element in $\tilde{S}_{5}^0$ is
$x_\lambda=s_4s_3\cdot s_1s_0s_4\cdot s_3s_2s_1s_0$.
 This is obtained by reading the $(k+1)$-residues in each row of 
 $\lambda$, from right to left, proceeding with bottom row to top. 
$$
\ytableausetup{smalltableaux}
\ytableausetup{centertableaux}
\begin{ytableau}
0 & 1 & 2 &3  \\
4 & 0 &1 \\
3 & 4 
\end{ytableau}.
$$
\end{example}

For $\lambda,\mu\in \Par^k$, 
we denote $\lambda\leqk \mu$ if
$x_\lambda\leq x_\mu$ holds, where 
$\leq $ is the Bruhat order on $\Waf$. 
The following fundamental fact is included in the proof of 
\cite[Proposition 2.16]{BMS}.
\begin{lem}\label{lem:funny}
Suppose $\lambda\in \Par^k$
satisfies $\lambda_1+\ell(\lambda) \leq k+1.$
For $\mu\in \Par^k$,
$\mu\leqk \lambda\Longleftrightarrow \mu\subset \lambda.$
\end{lem}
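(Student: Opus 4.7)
The claim splits into two implications; the easier one ($\mu \subset \lambda \Rightarrow \mu \leqk \lambda$) in fact holds with no hypothesis on $\lambda$. My plan is to verify it directly from the explicit formula \eqref{eq:redexp}: if $\mu \subset \lambda$, then $\ell(\mu) \le \ell(\lambda)$ and $\mu_i \le \lambda_i$ for every $i$, so for each $i \le \ell(\mu)$ the row-$i$ factor $s_{\mu_i - i} \cdots s_{-i+1}$ of $x_\mu$ is a suffix of the corresponding row-$i$ factor $s_{\lambda_i - i} \cdots s_{-i+1}$ of $x_\lambda$ (and the rows $i > \ell(\mu)$ contribute nothing to $x_\mu$). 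Concatenating over $i$, the canonical reduced expression \eqref{eq:redexp} for $x_\mu$ appears as a subword of that for $x_\lambda$, and the subword property of the Bruhat order on $\Waf$ gives $x_\mu \le x_\lambda$.

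For the converse, the hypothesis $\lambda_1 + \ell(\lambda) \le k+1$ is used crucially to localize the problem inside a finite symmetric group. Set $J := \{-\ell(\lambda)+1, -\ell(\lambda)+2, \ldots, \lambda_1 - 1\}$, viewed in $\Z/(k+1)\Z$. The hypothesis gives $|J| = \lambda_1 + \ell(\lambda) - 1 \le k$, so the residues in $J$ are pairwise distinct and $J$ is a proper arc of the affine Dynkin diagram of $\Waf$ (a cycle of length $k+1$). Since a proper arc of this cycle is a type-$A$ chain, the standard parabolic subgroup $W_J := \langle s_j : j \in J\rangle$ is a finite Coxeter group, isomorphic to $S_{\lambda_1 + \ell(\lambda)}$. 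The expression \eqref{eq:redexp} shows $x_\lambda \in W_J$, and so the subword property forces $x_\mu \in W_J$; the analogous inspection of the canonical reduced expression of $x_\mu$ then yields $\ell(\mu) \le \ell(\lambda)$ and $\mu_1 \le \lambda_1$, and in particular $\mu_1 + \ell(\mu) \le k+1$.

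Next, I would identify $W_J$ with $S_{\lambda_1 + \ell(\lambda)}$ via $s_j \mapsto t_{j + \ell(\lambda)}$, where $t_a = (a, a+1)$ is the simple transposition; under this map, $s_0 \mapsto t_{\ell(\lambda)}$. Since $x_\lambda, x_\mu \in \Wafo$, their only possible right descent is $s_0$, so each becomes a Grassmannian permutation in $S_{\lambda_1+\ell(\lambda)}$ with descent at position $\ell(\lambda)$. Reading the expression \eqref{eq:redexp} through this identification, one verifies that the Grassmannian permutation associated to $x_\lambda$ (resp.\ $x_\mu$) is precisely the one indexed by $\lambda$ (resp.\ $\mu$), viewed as a partition inside the $\ell(\lambda) \times \lambda_1$ rectangle. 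The classical equivalence between the Bruhat order on Grassmannian permutations of $S_n$ and containment of the associated partitions then completes the argument.

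The principal obstacle is the last identification: checking that the affine reduced word \eqref{eq:redexp}, reinterpreted inside $W_J \cong S_{\lambda_1+\ell(\lambda)}$, really is the standard reduced word of the Grassmannian permutation indexed by $\lambda$. Concretely, one must match the row-by-row, right-to-left reading of the box-residues $s_{j-i}$ in $\lambda$ with the standard reduced word of the Grassmannian permutation sending $i \mapsto i + \lambda_{\ell(\lambda)+1-i}$ for $1 \le i \le \ell(\lambda)$. This is a careful but routine bookkeeping step once the correspondence $s_j \leftrightarrow t_{j+\ell(\lambda)}$ is fixed.
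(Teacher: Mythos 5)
Your proof is correct, and it is worth noting that the paper gives no proof of its own for this lemma --- it simply cites that the statement is contained in the proof of \cite[Proposition~2.16]{BMS}. Your argument is therefore a genuine addition. Both directions go through: the forward implication by reading the canonical reduced word \eqref{eq:redexp} of $x_\mu$ as a literal subword of that of $x_\lambda$; the converse by using the hypothesis to cut down to the finite parabolic $W_J$ (a type-$A$ chain) and then invoking the classical identification of Bruhat order on Grassmannian permutations with containment of Young diagrams. The latter technique is in fact exactly what the paper itself deploys in Appendix~B (Proposition~\ref{prop:afGr} and Lemma~\ref{lem:lambdaGr}), so your proof fits naturally into the paper's toolbox even though it is logically independent of that appendix.

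Two places deserve to be spelled out slightly more. First, in going from ``$x_\mu\in W_J$'' to ``$\ell(\mu)\leq\ell(\lambda)$ and $\mu_1\leq\lambda_1$'', the substantive point is that the support of $x_\mu$ contains the hook residues $\{-\ell(\mu)+1,\dots,0,\dots,\mu_1-1\}\pmod{k+1}$, a contiguous arc; since this arc sits inside the proper arc $J$, it cannot wrap, which forces $\mu_1+\ell(\mu)\leq k+1$ and then the two inequalities by comparing the endpoints of the arcs about $0$. This is where the hypothesis is actually consumed, and it would strengthen the write-up to make the wrap-around exclusion explicit. Second, the final ``bookkeeping'' step --- that under $s_j\mapsto t_{j+\ell(\lambda)}$ the element $x_\nu$ becomes the $\ell(\lambda)$-Grassmannian permutation of $S_{\lambda_1+\ell(\lambda)}$ indexed by $\nu$ in the $\ell(\lambda)\times\lambda_1$ rectangle --- checks out; the row factors of \eqref{eq:redexp} map to the standard row-by-row reduced word of $w_{\nu,\ell(\lambda)}$, exactly as in the proof of Proposition~\ref{prop:afGr}.
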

\begin{rem}
For $\lambda,\mu\in \Par^k$, $\lambda\subset \mu$ implies
$\lambda\leqk \mu$. The reverse implication is not true in general.
For example, in $\tilde{S}_{3}$ we have
$(2,2)\underset{2}{\leq} (2,1,1,1)$ because the reduced expression 
$x_{(2,1,1,1)}=s_0s_1s_2s_1s_0$ has 
a subexpression $s_0s_2s_1s_0=x_{(2,2)}.$
\end{rem}

\subsection{Cyclically increasing elements in $\Wafo$}

  Let $A$ be a proper subset of $I=\{0,1,\ldots,k\}.$
  Set $|A|=r$.
  There is a sequence $(i_1,\ldots,{i_r})$
  consisting of the elements of $A$ such that
 an index $i+1$ never occurs anywhere to the left 
of an index $i\;(\mathrm{mod}\; k+1).$
For any such sequence, the element $s_{i_1}\cdots s_{i_r}\in\Waf$ depends only on the set of 
indices $A=\{i_1,\ldots,i_r\}$ (see also Remark \ref{rem:fulcom}).
Let us denote the element by $u_A.$ Such an  element in $\Waf$ is called 
a {\it cyclically increasing element.}
Similarly, for any $A\subsetneq I$, we can define
the corresponding {\it cyclically decreasing element\/}
denoted by $d_A.$
We choose a sequence $(i_1,\ldots,i_r)$
such that an index $i$ never occurs anywhere to the right
of an index ${i+1}\;(\mathrm{mod}\; k+1)$, and
set $d_A:=s_{i_1}\cdots s_{i_r}.$

 \begin{example}
  Let $k=4$.
  Then  for $A=\{0,1,3,4\}$, $u_{A}=s_3 s_4s_0s_1,\;
  d_{A}=s_1 s_0s_4s_3$, 
  and for $A=\{0,2,4\}$, $u_{A}=s_4s_0s_2
  =s_4s_2s_0=s_2s_4s_0,\;
  d_{A}=s_2s_0s_4
  =s_0s_2s_4=s_0s_4s_2.$ 
  \end{example}

\subsection{Dual stable Grothendieck polynomials}\label{sec:dualstable}

We work in the ring 
$\Lambda=\Z[e_1,e_2,\ldots]
=\Z[h_1,h_2,\ldots]$ of symmetric functions in infinitely many variables $(x_1,x_2,\ldots)$,
where $e_r=\sum_{i_1<\cdots<i_r}x_{i_1}\cdots x_{i_r}$ and
$h_r=\sum_{i_1\leq \cdots\leq i_r}x_{i_1}\cdots x_{i_r}$.
Set $h_0=e_0=1$ and $h_r=0$ for $r<0$ by convention.

Let $\F$ be the ring automorphism of $\Lambda$ defined by $\F(h_i)=\tilde{h}_i\;(i\geq 1)$,
where we set $\tilde{h}_i=\sum_{j=0}^ih_j$.
Note that $\F^{-1}$ sends $h_i$ to $h_i-h_{i-1}$ for $i\geq 1.$
For $i,m\in \Z$, define
\begin{equation*}
h_i^{(m)}:=\F^{m}(h_i)=
\sum_{j=0}^i\binom{m+j-1}{j}h_{i-j},
\end{equation*}
where $\binom{n}{i}=n(n-1)\cdots(n-i+1)/i!$ for $n\in \Z,\;i\geq 1$ and
$\binom{n}{0}=1$ for $n\in \Z$. 
For $\gamma\in \Z^\ell$,
define
\begin{equation}
g_{\gamma}:=\det(h_{\gamma_i+j-i}^{(i-j)})_{1\leq i,j\leq \ell}.\label{eq:g}
\end{equation}
When $\lambda$ is a partition,
$g_\lambda$ is the {\it dual stable Grothendieck polynomials\/}.
Although the defining formula for $g_\gamma$ in \cite{BMS} is $\det(h_{\gamma_i+j-i}^{(i-1)})_{1\leq i,j\leq \ell}$, by some column operations, we see that 
their definition agrees with \eqref{eq:g}. 
Note that $h_i^{(m)}$ is denoted by $k_i^{(m)}$ in \cite{BMS}. 

The following result is fundamental and used throughout the paper.  
\begin{prop}[Takiguku \cite{Tak1}]\label{prop:Taki-g} For any partition $\lambda$,
set $\tilde{g}_\lambda:=\sum_{\mu\subset \lambda}g_\mu$. Then we have
\begin{equation}
\F(g_\lambda)=\tilde{g}_\lambda.\label{eq:Taksum}
\end{equation}
\end{prop}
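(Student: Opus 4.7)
The plan is to verify the identity $\F(g_\lambda) = \sum_{\mu \subset \lambda} g_\mu$ by manipulating the Jacobi--Trudi determinant \eqref{eq:g}. Since $\F$ is a ring homomorphism and $\F(h_i^{(m)}) = h_i^{(m+1)}$, applying $\F$ to \eqref{eq:g} immediately yields
\[
\F(g_\lambda) = \det\bigl(h_{\lambda_i + j - i}^{(i - j + 1)}\bigr)_{1 \le i, j \le \ell}.
\]

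The crucial algebraic input is the telescoping identity
\[
h_a^{(m+1)} = \sum_{b = 0}^{a} h_b^{(m)} \qquad (a, m \in \Z),
\]
with the convention that an empty sum vanishes. This follows at once from the one-step recursion $h_a^{(m+1)} - h_{a-1}^{(m+1)} = h_a^{(m)}$, itself a consequence of Pascal's rule $\binom{m+j}{j} = \binom{m+j-1}{j} + \binom{m+j-1}{j-1}$ applied to the closed form of $h_i^{(m)}$, by telescoping from $b = 0$ up to $a$.

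With the telescoping identity in hand, the remaining task is to verify the determinantal identity
\[
\det\bigl(h_{\lambda_i + j - i}^{(i - j + 1)}\bigr)_{i, j} = \sum_{\mu \subset \lambda} \det\bigl(h_{\mu_i + j - i}^{(i - j)}\bigr)_{i, j}.
\]
I would try to approach this by expanding the left-hand side row by row, using the telescoping identity together with the multilinearity of the determinant in its rows: for each row $i$, write $(h_{\lambda_i + j - i}^{(i - j + 1)})_j$ as a sum $\sum_{\mu_i}(h_{\mu_i + j - i}^{(i - j)})_j$ over a range of $\mu_i$'s chosen so that the telescoping identity reproduces the original row. Matching the result against the Jacobi--Trudi representation of each $g_\mu$ should give the desired equality.

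The main obstacle is that this naive expansion produces contributions indexed by arbitrary tuples $(\mu_1, \ldots, \mu_\ell) \in \Z_{\ge 0}^\ell$ rather than just partitions, so one must isolate the partition contributions and show that the remaining terms either vanish individually or cancel in aggregate. The cleanest route is combinatorial: interpret $g_\mu$ as the weighted generating function of reverse plane partitions of shape $\mu$ and construct a weight-preserving bijection between a suitably ``decorated'' collection of reverse plane partitions of shape $\lambda$---encoding $\F(g_\lambda)$ via the action $h_i \mapsto \sum_{j \le i} h_j$---and the disjoint union $\bigsqcup_{\mu \subset \lambda}\mathrm{RPP}(\mu)$. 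An alternative algebraic route is by induction on $|\lambda|$: peeling off one removable corner box and checking the compatibility of both sides with the resulting recursion implied by the telescoping identity.
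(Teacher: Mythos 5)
The paper itself does not prove this proposition; it is cited directly from Takigiku \cite{Tak1}, so there is no in-paper argument to compare yours against. Your first two steps are sound: since $\F$ is a ring homomorphism, applying it entrywise to the determinant \eqref{eq:g} gives $\F(g_\lambda)=\det(h_{\lambda_i+j-i}^{(i-j+1)})$, and the telescoping identity $h_a^{(m+1)}=\sum_{b\le a}h_b^{(m)}$ follows cleanly from $\F(h_a)-\F(h_{a-1})=h_a$ upon applying $\F^m$.

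The difficulty is concentrated in the straightening step, and it is a little worse than you describe. To obtain a row expansion whose index range is independent of the column $j$, one must write $h_{\lambda_i+j-i}^{(i-j+1)}=\sum_{\mu_i\le\lambda_i}h_{\mu_i+j-i}^{(i-j)}$ with $\mu_i$ ranging over \emph{all} integers $\le\lambda_i$; the terms with $\mu_i<0$ do not all vanish, because $h_{\mu_i+j-i}^{(i-j)}$ can be nonzero when $j>i$. So the multilinear expansion yields $\sum_{\mu\in\Z^\ell,\,\mu_i\le\lambda_i}g_\mu$, not a sum over $\Z_{\ge 0}^\ell$ as you state. For instance, with $\lambda=(1,1)$ one already picks up $g_{(-1,1)}=-1$ and $g_{(0,1)}=1$, which happen to cancel. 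The assertion that all contributions from non-partition $\gamma$ sum to zero is precisely the content of the proposition, and you have not established it; neither proposed route closes the gap. In particular, the determinants $g_\gamma$ do not obey the classical Jacobi--Trudi straightening (a row swap does not preserve the superscript pattern $i-j$), so a $g$-specific straightening or vanishing criterion would have to be proved, and the RPP/induction suggestions are not carried out even in outline. The framework is reasonable, but the crux of the proof is left open.
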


\subsection{$K$-$k$-Schur functions}\label{sec:Kk}
For $i\in I$, 
set for  $w\in \Waf$, 
\begin{equation}
s_i*w=\begin{cases} s_iw &(s_iw>w)\\
w & (s_iw<w)
\end{cases}.\label{eq:*}
\end{equation}If we write $\phi_{i}: \Waf\rightarrow \Waf$
$(w\mapsto s_i*w)$, then
$\phi_{i}^2=\phi_i, \;\phi_{i}\phi_{i+1}\phi_{i}=
\phi_{i+1}\phi_i \phi_{i+1}$ (see the proof of \cite[Proposition 2.1]{Ste07}). 
So for $v\in \Waf$,
we can define  
\begin{equation*}
v*w=s_{i_1}*(s_{i_2}*\cdots (s_{i_r}*w)\cdots)\quad (w\in\Waf),
\end{equation*}
where $v=s_{i_1}\cdots s_{s_r}$ is an arbitrary 
 reduced expression. 
 
For $x\in \Wafo$, we write $g_x^{(k)}$ for $g^{(k)}_\lambda$ with $x=x_\lambda\,,\lambda\in \Par^k.$
 
\begin{defn}\label{def:KkSchur}
The $K$-$k$-Schur functions $\{{g}^{(k)}_\lambda\}_{\lambda\in \Par^k}$ are the family of elements of $\Lambda_{(k)}$ such that $g^{(k)}_{\varnothing}=1$ and 
\begin{equation}
{h}_{r}\cdot {g}^{(k)}_\lambda
=
\sum_{\substack{A\subset I,\;|A|= r\\
d_A*x_\lambda\in \Wafo}}
(-1)^{|A|-\ell(d_A*x_\lambda)+\ell(x_\lambda)}
{g}_{d_A*x_\lambda}^{(k)}\label{eq:11}
\end{equation}
for $\lambda\in \Par^k$ and $1\leq r\leq k$.
\end{defn}

\begin{example} Let $k=2,\; \lambda=(1,1,1),$ and $r=2$.
Then $x_{\lambda}=s_1s_2s_0$. 
There are three $A$'s: $\{1,0\},\{0,2\},\{1,2\}.$
We compute 
\begin{eqnarray*}
d_{\{1,0\}}*x_{\lambda}
&=&(s_1s_0)*(s_1s_2s_0)=s_1s_0s_1s_2s_0=s_0s_1s_0s_2s_0
=s_0s_1s_2s_0s_2\notin \tilde{S}^{0}_{3},\\
d_{\{0,2\}}*x_{\lambda}
&=&(s_0s_2)*(s_1s_2s_0)
=s_0s_2s_1s_2s_0=x_{(2,1,1,1)}\in\tilde{S}_3^{0},\\
d_{\{1,2\}}*x_{\lambda}
&=&(s_2s_1)*(s_1s_2s_0)
=s_2s_1s_2s_0=x_{(2,1,1)}\in\tilde{S}_3^{0},
\end{eqnarray*}
and hence
\begin{equation*}
h_{2}\cdot g_{(1,1,1)}^{(2)}=
g_{(2,1,1,1)}^{(2)}-g_{(2,1,1)}^{(2)}.
\end{equation*}
 \end{example}
 It is known that $\{g^{(k)}_\lambda\}_{\lambda\in \Par^k}$ is a basis of $\Lambda_{(k)}$ (\cite{LSS}). 
\begin{prop}[\cite{LSS}]
For $1\leq r\leq k$,
$g_{(r)}^{(k)}=h_r.$
\end{prop}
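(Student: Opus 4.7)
My plan is to specialize the Pieri formula in Definition~\ref{def:KkSchur} to $\lambda = \varnothing$, for which $x_\lambda = \mathrm{id}$ and $g^{(k)}_\varnothing = 1$. Since every cyclically decreasing word is reduced, we have $d_A * \mathrm{id} = d_A$ with $\ell(d_A) = |A| = r$, so all signs in the Pieri formula are $+1$ and the identity becomes
\[
h_r = \sum_{\substack{A \subset I,\; |A| = r \\ d_A \in \Wafo}} g^{(k)}_{d_A}.
\]
The task reduces to classifying those $A$ for which $d_A$ is affine Grassmannian and identifying the resulting elements.

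The key combinatorial input is that the right descent set of a cyclically decreasing element $d_A$ equals the set of cyclic bottoms $\{m \in A : m-1 \notin A \pmod{k+1}\}$. I would establish this by sliding each $s_m$ rightwards: $s_m$ commutes past any $s_\ell$ with $\ell \not\equiv m \pm 1 \pmod{k+1}$, and in the cyclically decreasing arrangement $s_{m+1}$ always sits strictly to the left of $s_m$, so $s_m$ can be moved to the rightmost slot precisely when $m-1 \notin A$. Requiring $d_A \in \Wafo$ forces the right descent set to lie inside $\{0\}$, so $A$ must consist of a single cyclic run whose bottom is $0$. Because $r \le k$, the only such $A$ is the interval $\{0,1,\ldots,r-1\}$, whose top is $r-1$; reading off the cyclically decreasing word then gives $d_A = s_{r-1} s_{r-2} \cdots s_1 s_0$.

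Comparing this with the Lapointe--Morse expression \eqref{eq:redexp} for the single-row partition $\lambda = (r)$ yields $d_A = x_{(r)}$, and the Pieri identity collapses to $h_r = g^{(k)}_{(r)}$. The main obstacle is the descent-set characterization above, since it is the only step that genuinely uses affine Weyl-group combinatorics; once it is in hand, the rest of the argument is a direct specialization of the defining Pieri rule.
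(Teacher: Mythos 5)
Your argument is correct. Since the paper does not supply a proof of this proposition (it is cited from \cite{LSS}), there is nothing to compare directly; but your derivation is a clean, self-contained argument from the defining Pieri rule \eqref{eq:11}, which is exactly the characterization of $g^{(k)}_\lambda$ adopted in this paper.

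Two small checks that are worth spelling out but that you have essentially right: (i) $d_A*\mathrm{id}=d_A$ because any prefix-truncation $s_{i_j}\cdots s_{i_r}$ of a reduced word is reduced, so each step of the Demazure product strictly increases length, giving $\ell(d_A*\mathrm{id})=|A|$ and sign $+1$; (ii) a cyclically decreasing element factors (uniquely, up to order) into commuting descending runs $s_b s_{b-1}\cdots s_a$ over the maximal cyclic intervals of $A$, and such a run has a unique reduced word and hence unique right descent $a$, so the right descent set of $d_A$ is the set of cyclic bottoms of $A$ as you assert. Then $d_A\in\Wafo$ (no right descents in $\{1,\dots,k\}$) forces a single interval with bottom $0$, and since $r\le k$ the only possibility is $A=\{0,1,\dots,r-1\}$, giving $d_A=s_{r-1}\cdots s_1s_0=x_{(r)}$ by \eqref{eq:redexp}. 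The conclusion $h_r = g^{(k)}_{(r)}$ follows.
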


Recall that we set $\tilde{g}_\lambda:=\sum_{\mu\subset \lambda}g_\mu$ for any partition $\lambda$.
\begin{prop}\label{prop:ksmallgtilde}
For $\lambda\in \Par^k$ such that $\lambda_1+\ell(\lambda)\leq k+1$,
\begin{equation*}
\gt{k}_\lambda=
\tilde{g}_\lambda.
\end{equation*}
\end{prop}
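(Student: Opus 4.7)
The plan is to chain together three facts already at hand. By definition, $\gt{k}_\lambda = \sum_{\mu \leqk \lambda} g^{(k)}_\mu$, while $\tilde{g}_\lambda = \sum_{\mu \subset \lambda} g_\mu$, so I need to match the index sets and match the summands term by term.

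First, I would invoke Lemma \ref{lem:funny}: since $\lambda_1 + \ell(\lambda) \leq k+1$, for any $\mu \in \Par^k$ the relation $\mu \leqk \lambda$ is equivalent to the containment $\mu \subset \lambda$. Moreover, any partition $\mu \subset \lambda$ automatically lies in $\Par^k$ (because $\mu_1 \leq \lambda_1 \leq k$), so the index set $\{\mu \in \Par^k : \mu \leqk \lambda\}$ coincides exactly with $\{\mu : \mu \subset \lambda\}$.

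Next, I would use the Morse conjecture \eqref{eq:M12}, proved in \cite{BMS}, which says that $g^{(k)}_\mu = g_\mu$ whenever $\mu_1 + \ell(\mu) \leq k+1$. The key point is that containment $\mu \subset \lambda$ gives both $\mu_1 \leq \lambda_1$ and $\ell(\mu) \leq \ell(\lambda)$, so the hypothesis on $\lambda$ propagates to every $\mu$ in the sum. Hence term by term $g^{(k)}_\mu = g_\mu$ for each $\mu$ appearing in $\gt{k}_\lambda$.

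Putting the two matches together gives
\begin{equation*}
\gt{k}_\lambda \;=\; \sum_{\mu \leqk \lambda} g^{(k)}_\mu \;=\; \sum_{\mu \subset \lambda} g^{(k)}_\mu \;=\; \sum_{\mu \subset \lambda} g_\mu \;=\; \tilde{g}_\lambda.
\end{equation*}
There is no real obstacle here: all the heavy lifting has been done in Lemma \ref{lem:funny} (the Bruhat-order description of the index set) and in the resolution of Morse's conjecture \eqref{eq:M12} (the equality of the two kinds of $k$-Schur functions in the ``small'' range). The proposition is essentially a bookkeeping corollary that combines them.
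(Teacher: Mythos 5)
Your proof is correct and follows the same route as the paper: invoke Lemma \ref{lem:funny} to identify the index set $\{\mu \leqk \lambda\}$ with $\{\mu \subset \lambda\}$, and invoke \eqref{eq:M12} to replace each $g^{(k)}_\mu$ by $g_\mu$. The paper states this tersely; you have merely spelled out the (correct and worth noting) detail that the hypothesis $\mu_1+\ell(\mu)\leq k+1$ is inherited from $\lambda$ by every $\mu\subset\lambda$, which the paper leaves implicit.
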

\begin{proof}
From 
Lemma \ref{lem:funny}, and  \eqref{eq:M12}, we see that
\begin{equation*}
\gt{k}_\lambda
=\sum_{\mu\leqk\lambda}
g^{(k)}_\mu
=\sum_{\mu\subset\lambda}
g^{(k)}_\mu
=\sum_{\mu\subset\lambda}
g_\mu
=\tilde{g}_\lambda.
\end{equation*}
\end{proof}

\subsection{Katalan functions}
Fix a positive integer $\ell$. 
Let $\{\ep_1,\ldots,\ep_\ell\}$ be the standard basis of $\Z^\ell.$  
By a {\it positive root\/} $\beta$, we mean an element
of the form $\ep_i-\ep_j\in \Z^\ell$ with $1\leq i<j\leq \ell$, which is also
denoted by $(i,j)$. The set of all positive roots is denoted by 
$\Delta_{\ell}^+$. Although this is considered as the set of positive roots of type $A_{\ell-1}$, we use this notation $\Delta_\ell^+$
following \cite{BMS} 
rather than $\Delta_{\ell-1}^+.$

A natural partial order $\leq $ on $\Delta_\ell^+$
is defined by $\alpha\leq \beta$ if 
$\beta-\alpha$ is a linear combination of 
positive roots with coefficients in $\Z_{\geq 0}.$ 
An upper order ideal $\Psi$ of $\Delta_{\ell}^+$
is called a {\it root ideal\/}.

Given a root ideal $\Psi\subset \Delta^+_\ell$, a multiset $M$ supported on $\{1,\ldots,\ell\}$,
and $\gamma\in \Z^\ell$,
we call $(\Psi,M,\gamma)$ a {\it Katalan triple\/}.
Let $m_M: \{1,\ldots,\ell\}\rightarrow \Z_{\geq 0}$ 
denote the multiplicity function of $M.$ 
Each Katalan triple $(\Psi,M,\gamma)$ can be depicted by an
$\ell\times \ell$ grid of square boxes (labeled by matrix-style coordinates) with the boxes of $\Psi$ shaded,
$m_M(a)$ $\bullet$'s in column $a$ (assuming $m_M(a)<a$), and the entries of $\gamma$ written along the diagonal boxes. 
\begin{example}
Let $\Psi=\{(1,2),(1,3),(1,4),(1,5),(2,4),(2,5),(3,5)\}\subset \Delta_5^+$, 
$M=\{3,4,5,5\}$, and $\gamma=(3,2,0,1,0)$.
The Katalan triple $(\Psi,M,\gamma)$ is depicted by
$$\begin{ytableau}
{3} &*(lightgray)&*(lightgray) \bullet&*(lightgray)\bullet&*(lightgray)\bullet\\
{} &2&&*(lightgray)&*(lightgray)\bullet\\
{} &&0&&*(lightgray)\\
{} &&&1&\\
{} &&&&0\\
\end{ytableau}.$$
\end{example}  

We define the {\it Katalan function\/} 
associated to the triple $(\Psi,M,\gamma)$ by
\begin{eqnarray*}
K(\Psi;M;\gamma)&:=&
\prod_{j\in M}(1-L_j)
\prod_{(i,j)\in \Delta_\ell^+\setminus \Psi}
(1-R_{ij})k_\gamma,\\
k_\gamma&:=&h_{\gamma_1}^{(0)}h_{\gamma_2}^{(1)}\cdots h_{\gamma_\ell}^{(\ell-1)}.
\end{eqnarray*}
Note that raising operators are not well-defined as linear transformations on $\Lambda$.
They act on the subscript $\gamma$ of $k_\gamma$ rather than the function $k_\gamma$.   
A rigorous formulation can be found in \cite[\S3]{BMS}.
For any root ideal $\mathcal{L}\subset \Delta^+_\ell$,
let 
\begin{equation*}
L(\mathcal{L})=\bigsqcup_{(i,j)\in \mathcal{L}}\{j\}.
\end{equation*}
We also write $K(\Psi;L(\mathcal{L});\gamma)$ simply 
 as $K(\Psi;\mathcal{L};\gamma).$

In \cite{BMS}, Blasiak, Morse, and Seelinger introduced, for $\lambda\in \Par^k_\ell$,
\begin{eqnarray*}
\fg{k}_\lambda&:=&K(\Delta^k(\lambda);\Delta^{k+1}(\lambda);\lambda),\\
\tfg{k}_\lambda&:=&K(\Delta^k(\lambda);\Delta^k(\lambda);\lambda).
\end{eqnarray*}
$\fg{k}_\lambda$ is called a $K$-{\it Schur Katalan function\/},
and 
$\tfg{k}_\lambda$ a {\it closed\/} $K$-{\it Schur Katalan function.}
If we choose $\ell$ such that $\ell\geq \ell(\lambda),$ then $\fg{k}_\lambda$ and $\tfg{k}_\lambda$ do not depend on $\ell$ (\cite[Lemma 3.4, Remark 3.5]{BMS}).

The following simplified formula is available.  
\begin{prop}
Let $\lambda\in \Par^k_\ell,$ then 
\begin{equation}
\tfg{k}_\lambda=\prod_{(i,j)\in \Delta_\ell^+\setminus \Delta^k(\lambda)}
(1-L_j)^{-1}
(1-R_{ij})h_\lambda,\label{eq:LRKH}
\end{equation}
where $h_\lambda=h_{\lambda_1}\cdots h_{\lambda_\ell}$, and 
$L_j,R_{ij}$ act on the subscript $\lambda$.
\end{prop}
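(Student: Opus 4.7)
The plan is a direct manipulation of the definition of the Katalan function together with the elementary observation that the operator $(1-L_j)$ converts the ``shifted'' $h$-functions $h^{(m)}_i$ into $h^{(m-1)}_i$. All shift operators $L_j$, $R_{ij}$ act commutatively on the subscripts of $h$, so we have a lot of algebraic freedom to rearrange.

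First I would unpack the definitions. By construction,
\[
\tfg{k}_\lambda = K(\Delta^k(\lambda);L(\Delta^k(\lambda));\lambda)
= \prod_{(i,j)\in \Delta^k(\lambda)}(1-L_j)\prod_{(i,j)\in \Delta_\ell^+\setminus \Delta^k(\lambda)}(1-R_{ij})\,k_\lambda ,
\]
using that the multiset $L(\Delta^k(\lambda))$ takes each $(i,j) \in \Delta^k(\lambda)$ to the column index $j$. Since the $R_{ij}$'s and $L_j$'s are all commuting shift operators on subscripts, I can pull the $L$-product through the $R$-product, so the task reduces to comparing
$\prod_{(i,j)\in\Delta^k(\lambda)}(1-L_j)\,k_\lambda$ with $\prod_{(i,j)\in\Delta_\ell^+\setminus\Delta^k(\lambda)}(1-L_j)^{-1}\,h_\lambda$.

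The key identity I would prove is
\[
\prod_{(i,j)\in\Delta_\ell^+}(1-L_j)\,k_\lambda \;=\; h_\lambda .
\]
This reduces column by column: the column $j$ of $\Delta_\ell^+$ contains exactly $j-1$ roots, so the product on the left equals $\prod_{j=1}^\ell (1-L_j)^{j-1}$. On $k_\lambda = h_{\lambda_1}^{(0)}h_{\lambda_2}^{(1)}\cdots h_{\lambda_\ell}^{(\ell-1)}$, the operator $(1-L_j)$ shifts the $j$th factor by $\F^{-1}$ because $\F^{-1}(h_i)=h_i-h_{i-1}$; hence $(1-L_j)\,h^{(m)}_{\lambda_j} = h^{(m-1)}_{\lambda_j}$. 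Applying $(1-L_j)^{j-1}$ converts $h^{(j-1)}_{\lambda_j}$ into $h^{(0)}_{\lambda_j}=h_{\lambda_j}$, which yields the displayed identity.

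Splitting $\Delta_\ell^+ = \Delta^k(\lambda)\sqcup (\Delta_\ell^+\setminus\Delta^k(\lambda))$ in this identity and moving the complementary $L$-factors across (legitimately, since $(1-L_j)^{-1}=1+L_j+L_j^2+\cdots$ terminates when applied to $h_\lambda$ by the convention $h_r=0$ for $r<0$), I obtain
\[
\prod_{(i,j)\in\Delta^k(\lambda)}(1-L_j)\,k_\lambda \;=\; \prod_{(i,j)\in\Delta_\ell^+\setminus\Delta^k(\lambda)}(1-L_j)^{-1}\,h_\lambda .
\]
Substituting back and re-commuting gives the desired formula~\eqref{eq:LRKH}. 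The only place that needs care—and which I would treat as the main obstacle—is the rigorous justification in the formalism of \cite[\S3]{BMS}: one must check that the inversion $(1-L_j)^{-1}$ and the commutations of $L_j$ with $R_{ij}$ are valid as operators on the appropriate space of formal subscripts, rather than merely as shift operations on the final symmetric function. Once this is in place, everything else is bookkeeping.
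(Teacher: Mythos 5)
Your proof is correct and follows essentially the same route as the paper: both reduce the proposition to the identity $\prod_{(i,j)\in\Delta_\ell^+}(1-L_j)\,k_\gamma = h_\gamma$ and then rearrange the commuting shift operators. The paper simply cites this identity from \cite[Proof of Proposition~2.3]{BMS}, whereas you prove it directly via the column-by-column observation $(1-L_j)\,h_{\gamma_j}^{(m)} = h_{\gamma_j}^{(m-1)}$, which is a worthwhile filling-in of the paper's terse argument.
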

\begin{proof}
In the proof of \cite[Proposition 2.3]{BMS}, it was shown that 
$\prod_{(i,j)\in \Delta_\ell^+}(1-L_j)k_\gamma=h_\gamma.$
\eqref{eq:LRKH} follows from this.
\end{proof}

\begin{example}
For $k=3$, $\lambda=(2,1,1)$, $\tfg{3}_\lambda$ is depicted by
\begin{ytableau}
2 & & *(lightgray) \bullet\\
& 1 & \\
&& 1
\end{ytableau}. We have
\begin{eqnarray*}
\tfg{3}_{(2,1,1)}&=&
(1-L_2)^{-1}(1-L_3)^{-1}
(1-R_{12})(1-R_{23}) h_{211}\\
&=&
{h_{211}+h_{201}-h_{220}-h_{301}+h_{310}}\\
&=&
{h_1^2h_2+h_1h_2-h_2^2}\\
&=&{h_2(h_1^2+h_1-h_2).}
\end{eqnarray*}
\end{example}
One of the main results in \cite{BMS} is
\begin{equation*}
\fg{k}_\lambda=g^{(k)}_\lambda.
\end{equation*}
The main result of the present paper (Theorem \ref{thm:main}) 
is
\begin{equation}
\tfg{k}_\lambda=\F^{-1}(\gt{k}_\lambda).\label{eq:mainmain}
\end{equation}

A simple consequence of \eqref{eq:mainmain} is the following. 
\begin{cor}[{\cite[Proposition 2.16 (d)]{BMS}}]

If $\lambda_1+\ell(\lambda)\leq k+1$, then 
$\tfg{k}_\lambda=g_\lambda.$
\end{cor}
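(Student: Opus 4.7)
The plan is to chain together three results that have already been set up in the excerpt. The hypothesis $\lambda_1 + \ell(\lambda) \leq k+1$ is precisely the condition under which the ``small partition'' identifications kick in on the affine side, and the main theorem bridges the closed Katalan side to the closed $K$-$k$-Schur side, so the corollary should fall out by a short computation with no combinatorics left to do.

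Concretely, I would argue as follows. First, apply Theorem \ref{thm:main} (in the form \eqref{eq:mainmain}) to rewrite
\[
\tfg{k}_\lambda = \F^{-1}(\gt{k}_\lambda).
\]
Next, invoke Proposition \ref{prop:ksmallgtilde}, whose hypothesis is exactly the assumption $\lambda_1 + \ell(\lambda) \leq k+1$: this gives $\gt{k}_\lambda = \tilde{g}_\lambda$. Substituting yields
\[
\tfg{k}_\lambda = \F^{-1}(\tilde{g}_\lambda).
\]
Finally, apply Takigiku's identity $\F(g_\lambda) = \tilde{g}_\lambda$ from Proposition \ref{prop:Taki-g}, which is equivalent to $\F^{-1}(\tilde{g}_\lambda) = g_\lambda$. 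This gives $\tfg{k}_\lambda = g_\lambda$, as required.

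There is essentially no obstacle here, since all the real content has been absorbed into Theorem \ref{thm:main}, Proposition \ref{prop:ksmallgtilde} (which in turn rests on Lemma \ref{lem:funny} and the Morse conjecture \eqref{eq:M12}), and Takigiku's identity. The only thing one must be careful about is that the hypothesis $\lambda_1 + \ell(\lambda) \leq k+1$ is used strictly through Proposition \ref{prop:ksmallgtilde}; without it, $\gt{k}_\lambda$ can differ from $\tilde{g}_\lambda$ because the order $\leqk$ is strictly coarser than inclusion of partitions (as illustrated in the remark following Lemma \ref{lem:funny}), and the identification of $g^{(k)}_\mu$ with $g_\mu$ can fail as well. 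So the proof is a two-line deduction, and the entire role of this corollary is to record the compatibility of Theorem \ref{thm:main} with the known ``small partition'' regime.
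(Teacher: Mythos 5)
Your proof is correct and matches the paper's own argument exactly: the paper also chains Theorem \ref{thm:main} (to get $\tfg{k}_\lambda=\F^{-1}(\gt{k}_\lambda)$), Proposition \ref{prop:ksmallgtilde} (to replace $\gt{k}_\lambda$ by $\tilde{g}_\lambda$ under the hypothesis $\lambda_1+\ell(\lambda)\leq k+1$), and Proposition \ref{prop:Taki-g} (to conclude $\F^{-1}(\tilde{g}_\lambda)=g_\lambda$). Your remarks about where the hypothesis is used and why the corollary would fail without it are accurate and consistent with the paper's surrounding discussion.
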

\begin{proof} By Lemma \ref{prop:ksmallgtilde} and Proposition \ref{prop:Taki-g},
$$
\tfg{k}_\lambda=\F^{-1}(\gt{k}_\lambda)=
\F^{-1}(\tilde{g}_\lambda)=g_\lambda.
$$
\end{proof}
\section{Proof of Theorem \ref{thm:main}}\label{sec:proof}
Before we start the proof of 
Theorem \ref{thm:main}, we gather some results on 
$K$-$k$-Schur functions and Katalan functions in the first three subsections. In \S\ref{sec:kconj}, we explain
some results on the $k$-conjugation. In \S\ref{sec:basicK}, we collect some basic 
properties of Katalan functions used in \S\ref{sec:PfKey}. In \S\ref{sec:zeroH}, we introduce an action of 
the $0$-Hecke algebras on $\Lambda_{(k)}$.
The outline of the proof of
Theorem \ref{thm:main} is given in \S\ref{sec:outline}.
The last subsection is devoted to the proof of 
a key lemma (Lemma \ref{lem:main}).
With the help of a general fact (Lemma \ref{lem:Naito2++}) on 
the parabolic coset space of a Coxeter group,
we complete the proof of Theorem \ref{thm:main}.
\subsection{$k$-conjugation}\label{sec:kconj}
There is an automorphism $\omega_k$ of $\Waf$
given by 
$\inv_k(s_i)=s_{-i}=s_{k+1-i}$ for $i\in I$.
Note that $\inv_k$ fixes $s_0$.
In fact, $\omega_k$ is an automorphism of a Coxeter group. 
So it is easy to see 
\begin{eqnarray}
w\leq v &\Longleftrightarrow&
\omega_k(w)\leq \omega_k(v).\label{eq:omega_strong}
\end{eqnarray}

The left weak order $\leq _L$ on $\Waf$ is defined by the covering relation
$$
w\lessdot_L v
\Longleftrightarrow
v=s_iw \;\mbox{and}\;
\ell(v)=\ell(w)+1\;\mbox{for some}\;
i\in I.
$$
It is easy to see that
$w\lessdot_L v\Longleftrightarrow \omega_k(w)\lessdot_L\omega_k(v)$, 
and hence we have 
\begin{eqnarray}
w\leq _Lv &\Longleftrightarrow&
\omega_k(w)\leq _L \omega_k(v).\label{eq:omega_weak}
\end{eqnarray}
For $A=\{i_1,\ldots,i_m\}\subsetneq I$, we have 
\begin{equation*}
\inv_k(u_A)=d_{\overline{A}},\quad
\inv_k(d_A)=u_{\overline{A}},
\end{equation*}
where 
$\overline{A}:=\{-i_1,\ldots,-i_m\}.$

\begin{defn} Let $\Omega$ be the ring morphism of $\Lambda$ defined by 
\begin{equation}
\Omega(h_i)=g_{(1^i)}\quad (i\geq 1).
\end{equation}
\end{defn}

\begin{prop}
$\Omega$ is an involution on $\Lambda$ and  
$\Omega$ commutes with $\F$.
\end{prop}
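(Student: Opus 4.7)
Both claims concern ring endomorphisms of the polynomial ring $\Lambda = \mathbb{Z}[h_1, h_2, \ldots]$, so it suffices to verify each on the free generators $h_i$.

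The commutation $\Omega \circ \F = \F \circ \Omega$ falls out immediately from Takigiku's formula (Proposition \ref{prop:Taki-g}). A direct calculation on $h_i$ yields
\begin{align*}
\Omega(\F(h_i)) &= \Omega(\tilde h_i) \;=\; \sum_{j=0}^{i} g_{(1^j)},\\
\F(\Omega(h_i)) &= \F(g_{(1^i)}) \;=\; \tilde g_{(1^i)} \;=\; \sum_{\mu \subset (1^i)} g_\mu \;=\; \sum_{j=0}^{i} g_{(1^j)},
\end{align*}
where the last step uses that the only subpartitions of the single column $(1^i)$ are $(1^j)$ for $0 \le j \le i$. Both ring maps agree on the generators, hence on $\Lambda$.

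For the involution property, the plan is to prove the stronger identity $\Omega(g_\lambda) = g_{\lambda'}$ for every partition $\lambda$. Applied to $\lambda = (1^i)$ this yields $\Omega(g_{(1^i)}) = g_{(i)} = h_i$ (the last equality is the one-row case of \eqref{eq:g}), and hence $\Omega^2(h_i) = h_i$ on generators. To prove $\Omega(g_\lambda) = g_{\lambda'}$, apply $\Omega$ entry-wise to \eqref{eq:g}; the commutation with $\F$ just established gives $\Omega(h_n^{(m)}) = \F^m(\Omega(h_n)) = \F^m(g_{(1^n)})$, so
\[
\Omega(g_\lambda) \;=\; \det\bigl(\F^{i-j}(g_{(1^{\lambda_i + j - i})})\bigr)_{1 \le i, j \le \ell}.
\]
One then recognizes the right-hand side as the value at $\mu=\lambda'$ of the column (dual) Jacobi--Trudi formula $g_\mu = \det(\F^{i-j}(g_{(1^{\mu'_i + j - i})}))$, the $K$-theoretic analog of the classical identity $s_\lambda = \det(h_{\lambda_i + j - i}) = \det(e_{\lambda'_i + j - i})$.

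The main obstacle is establishing this column Jacobi--Trudi identity in the precise normalization consistent with \eqref{eq:g}. I would obtain it either by a Lindström--Gessel--Viennot argument on the set-valued tableau model that combinatorially realizes $g_\lambda$ (mirroring the classical lattice-path proof of Jacobi--Trudi), or by exploiting the Hopf-algebraic duality between the dual stable Grothendieck polynomials $g_\lambda$ and the stable Grothendieck polynomials $G_\lambda$ together with the known conjugation symmetry on the $G_\lambda$ basis. Once this dual formula is in hand, the entire proof of both assertions amounts to the two calculations above.
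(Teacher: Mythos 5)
Your argument for the commutativity $\Omega\circ\F=\F\circ\Omega$ is exactly the paper's argument: check it on the generators $h_i$ using Takigiku's identity $\F(g_\lambda)=\tilde g_\lambda$, noting that the only subpartitions of $(1^i)$ are the columns $(1^j)$.

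For the involution, you take a genuinely different route from the paper. The paper simply cites \cite[\S8]{M12} for $\Omega^2=\mathrm{Id}$. You instead aim to derive it from the stronger statement $\Omega(g_\lambda)=g_{\lambda'}$, which specializes to $\Omega(g_{(1^i)})=g_{(i)}=h_i$. The chain of reductions you give is correct as far as it goes: since $\Omega$ is a ring map it passes through the determinant in \eqref{eq:g}, and using the commutation already established, $\Omega(h_n^{(m)})=\F^m(g_{(1^n)})$, so $\Omega(g_\lambda)=\det\bigl(\F^{i-j}(g_{(1^{\lambda_i+j-i})})\bigr)$. But the remaining step — identifying this determinant with $g_{\lambda'}$, i.e. a ``dual'' or ``column'' Jacobi–Trudi formula for the dual stable Grothendieck polynomials in the normalization of \eqref{eq:g} — is not proved; you only gesture at an LGV or Hopf-duality argument. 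That identity is not a routine bookkeeping step: it is essentially equivalent to the conjugation symmetry $\Omega(g_\lambda)=g_{\lambda'}$ that Morse establishes, so as written your argument for the involution has a real gap and would circle back to the result being cited. If the goal is to leave it as a citation, one might as well cite the conjugation symmetry directly (as the paper does for the involution); if the goal is a self-contained proof, the column Jacobi–Trudi determinant needs an actual proof, not a sketch.
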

\begin{proof}
A proof of the fact that $\Omega$ is an involution 
can be found in \cite[\S8]{M12}.
The commutativity follows from \eqref{eq:Taksum}:
\[
\Omega (\F(h_i))=\Omega(\sum_{j=0}^ih_j)
=\sum_{j=0}^ig_{(1^j)}
=\F(g_{(1^i)})=\F(\Omega(h_i)).
\]
\end{proof}

It is easy to see that $\omega_k$ preserves $\Wafo$. Hence for $\lambda\in \Par^k$, 
$\omega_k(x_\lambda)=x_{\mu}$ for some $\mu\in \Par^k.$ 
Then we define $\omega_k(\lambda)=\mu$.
An explicit description of $\omega_k(\lambda)$, also denoted by $\lambda^{\omega_k}$ in \cite{book}, \cite{M12}, is available (see \cite[\S1.3]{book}).

\begin{thm}[\cite{M12}]\label{thm:omegag}
For $x\in\Wafo$, 
$\Omega(g_x^{(k)})=g_{\inv_k(x)}^{(k)}.$
Equivalently,
for $\lambda\in \Par^k,$ 
$\Omega(g_{\lambda}^{(k)})=g_{\omega_k(\lambda)}^{(k)}$.
\end{thm}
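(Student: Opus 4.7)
The plan is to use the recursive characterization of the $K$-$k$-Schur functions afforded by the Pieri-type rule in Definition~\ref{def:KkSchur}: together with $g^{(k)}_\varnothing=1$ it determines $\{g^{(k)}_\lambda\}_{\lambda\in\Par^k}$ uniquely in the polynomial ring $\Lambda_{(k)}=\C[h_1,\ldots,h_k]$, since one may extract each $g^{(k)}_\lambda$ as the top-length term on the right-hand side of a suitable Pieri expansion. Hence it suffices to prove that the candidate family $\tilde{G}_\mu:=\Omega\bigl(g^{(k)}_{\omega_k(\mu)}\bigr)$ satisfies the same initial condition and Pieri recursion in place of $g^{(k)}_\mu$.

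The crucial ingredient is a dual Pieri rule for multiplication by $g^{(k)}_{(1^r)}=\Omega(h_r)$:
\[
g^{(k)}_{(1^r)} \cdot g^{(k)}_\mu \;=\; \sum_{\substack{B \subset I,\; |B| = r \\ u_B * x_\mu \in \Wafo}} (-1)^{|B|-\ell(u_B * x_\mu)+\ell(x_\mu)}\, g^{(k)}_{u_B * x_\mu},
\]
obtained from Definition~\ref{def:KkSchur} by replacing cyclically decreasing elements $d_A$ with cyclically increasing elements $u_B$. I would then use the compatibility $\omega_k(d_A * w)=u_{\overline{A}}*\omega_k(w)$: as $\omega_k$ is a Coxeter automorphism, by \eqref{eq:omega_strong} it preserves the Bruhat order, so from \eqref{eq:*} it intertwines the $*$-action with $\omega_k(s_i * w)=s_{-i}*\omega_k(w)$; iterating along a reduced expression for $d_A$ and using $\omega_k(s_{i_1}\cdots s_{i_r})=s_{-i_1}\cdots s_{-i_r}$ then yields the claim. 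Moreover, $\omega_k$ permutes $\{s_1,\ldots,s_k\}$, hence preserves $\Wafo$, so $d_A * x_{\omega_k(\mu)}\in\Wafo$ if and only if $u_{\overline{A}} * x_\mu\in\Wafo$.

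Applying $\Omega$ to Definition~\ref{def:KkSchur} with $\lambda:=\omega_k(\mu)$, the left-hand side becomes $g^{(k)}_{(1^r)}\cdot\tilde{G}_\mu$; the right-hand side, after reindexing $B=\overline{A}$ and invoking the inductive hypothesis for the strictly-lower terms appearing in the recursion, reproduces the right-hand side of the dual Pieri rule for $g^{(k)}_{(1^r)}\cdot g^{(k)}_\mu$. Comparing the two identities forces $\tilde{G}_\mu=g^{(k)}_\mu$, which is the theorem. The main obstacle is establishing the dual Pieri rule, since the recursion in Definition~\ref{def:KkSchur} is phrased only in terms of $h_r$ and $d_A$. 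One natural route is a direct computation in the Lam--Schilling--Shimozono affine $0$-Hecke model for $K_*(\Gr)$, exploiting an anti-involution that exchanges the $*$-actions associated with $d_A$ and $u_A$; an alternative is to deduce it from the raising-operator identity \eqref{eq:BMS} of Blasiak--Morse--Seelinger combined with a transpose-type symmetry of the Katalan functions.
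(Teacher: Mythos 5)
Your argument is structurally sound but contains one genuine gap that you yourself flag as ``the main obstacle'': the vertical (dual) Pieri rule
\[
g^{(k)}_{(1^r)}\cdot g^{(k)}_\mu
=\sum_{\substack{B\subset I,\ |B|=r\\ u_B*x_\mu\in\Wafo}}
(-1)^{|B|-\ell(u_B*x_\mu)+\ell(x_\mu)}\, g^{(k)}_{u_B*x_\mu}.
\]
Everything else in the proposal is correct: $\Omega$ restricts to a ring automorphism of $\Lambda_{(k)}$ (since $\Omega(h_i)=g_{(1^i)}=g^{(k)}_{(1^i)}\in\Lambda_{(k)}$ for $i\le k$), so a recursion in the $\Omega(h_r)$ determines the family as uniquely as the recursion in the $h_r$ does; the compatibility $\omega_k(d_A*w)=u_{\overline{A}}*\omega_k(w)$ does follow from \eqref{eq:omega_strong} together with $\omega_k(s_i)=s_{-i}$; and $\omega_k$ does preserve $\Wafo$ because it permutes $\{s_1,\dots,s_k\}$. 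Applying $\Omega$ to \eqref{eq:11} with $\lambda=\omega_k(\mu)$ therefore yields precisely that recursion for the candidate family $\tilde{G}_\mu=\Omega(g^{(k)}_{\omega_k(\mu)})$. But the recursion for the genuine $g^{(k)}_\mu$ is never established, only asserted.

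The trouble is that this missing ingredient is not a technical detail but is logically equivalent to the theorem itself. If the theorem holds, the vertical Pieri rule follows immediately by applying $\Omega$ to \eqref{eq:11}, which is exactly the computation you perform for $\tilde{G}_\mu$. Conversely, as you correctly argue, the vertical Pieri rule plus the uniqueness argument gives the theorem. So postulating it is postulating the conclusion. (Note also that in this paper the vertical rule appears only as Lemma~\ref{lem:11+} for the \emph{closed} functions $\tilde{g}^{(k)}_\lambda$, and that lemma's proof explicitly invokes Corollary~\ref{cor:omegatilde}, hence Theorem~\ref{thm:omegag}, so it cannot be used as independent input here.) The two escape routes you sketch --- an anti-involution argument inside the Lam--Schilling--Shimozono $0$-Hecke model, or a transpose symmetry for Katalan functions --- are indeed the plausible places to look, and the first is essentially what one would expect Morse's original argument to do; but neither is carried out, and the second needs a check for circularity against the proof in \cite{BMS}. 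For comparison, the paper itself offers no proof: Theorem~\ref{thm:omegag} is quoted from \cite{M12} and used as a black box, so you are attempting to supply something the authors deliberately outsource.
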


\begin{cor}\label{cor:omegatilde}
 For $\lambda\in \Par^k$, 
$\Omega(\gt{k}_\lambda)=\gt{k}_{\omega_k(\lambda)}$.
\end{cor}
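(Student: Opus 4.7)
The plan is to unwind the definition of $\gt{k}_\lambda$ and then transport the summation index through $\omega_k$, using two already-established facts: Theorem \ref{thm:omegag} on the image of individual $K$-$k$-Schur functions under $\Omega$, and the compatibility \eqref{eq:omega_strong} of $\omega_k$ with the Bruhat order on $\Waf$.

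Concretely, I would start from \eqref{eq:defTak} and apply $\Omega$ termwise,
\begin{equation*}
\Omega(\gt{k}_\lambda) \;=\; \Omega\!\left(\sum_{\mu\leqk \lambda} g^{(k)}_\mu\right) \;=\; \sum_{\mu\leqk \lambda} \Omega(g^{(k)}_\mu) \;=\; \sum_{\mu\leqk \lambda} g^{(k)}_{\omega_k(\mu)},
\end{equation*}
where the last equality is Theorem \ref{thm:omegag}. The remaining task is to recognize the right-hand side as $\gt{k}_{\omega_k(\lambda)}$, i.e.\ to re-index the sum by $\nu = \omega_k(\mu)$.

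For the re-indexing I need two things. First, the map $\Par^k \to \Par^k$, $\mu\mapsto \omega_k(\mu)$, is a bijection; this follows because $\omega_k$ is an involutive automorphism of $\Waf$ that preserves $\Wafo$ (as noted just before Theorem \ref{thm:omegag}) and because $\lambda\mapsto x_\lambda$ is a bijection $\Par^k\to \Wafo$. Second, I need $\mu\leqk \lambda \iff \omega_k(\mu)\leqk \omega_k(\lambda)$. By the definition $\mu\leqk\lambda \iff x_\mu\leq x_\lambda$, and \eqref{eq:omega_strong} converts this into $\omega_k(x_\mu)\leq \omega_k(x_\lambda)$, which is $x_{\omega_k(\mu)}\leq x_{\omega_k(\lambda)}$, i.e.\ $\omega_k(\mu)\leqk \omega_k(\lambda)$.

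Substituting $\nu = \omega_k(\mu)$ in the sum then yields
\begin{equation*}
\sum_{\mu\leqk \lambda} g^{(k)}_{\omega_k(\mu)} \;=\; \sum_{\nu\leqk \omega_k(\lambda)} g^{(k)}_\nu \;=\; \gt{k}_{\omega_k(\lambda)},
\end{equation*}
completing the argument. There is no real obstacle here; everything is a formal consequence of Theorem \ref{thm:omegag} together with the $\omega_k$-invariance of the Bruhat order, and the only thing to double-check is that $\omega_k$ restricted to $\Par^k$ (via the bijection $\lambda\leftrightarrow x_\lambda$) is well-defined and a bijection, which is exactly what is recorded immediately before Theorem \ref{thm:omegag}.
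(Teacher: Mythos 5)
Your argument is correct and is essentially identical to the paper's proof: both apply $\Omega$ termwise to the defining sum \eqref{eq:defTak}, invoke Theorem \ref{thm:omegag} on each summand, and re-index using the $\omega_k$-invariance of the Bruhat order \eqref{eq:omega_strong}. The only cosmetic difference is that the paper carries out the computation in the variable $x\in\Wafo$ and sets $x=x_\lambda$ at the end, while you work directly with $\lambda\in\Par^k$ and spell out the bijectivity of $\omega_k$ on $\Par^k$ a bit more explicitly.
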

\begin{proof}
For $x\in \Wafo$, $\gt{k}_x=\sum_{y\leq x}g_{y}^{(k)}$, where
$y$ are the elements in $\Wafo$ such that $y\leq x$ in the Bruhat order. From 
Theorem \ref{thm:omegag}, and the fact 
\eqref{eq:omega_strong}, we see that
\begin{equation*}
\Omega(\gt{k}_x)=
\Omega(\sum_{y\leq x}g_{y}^{(k)})
=\sum_{y\leq x}\Omega(g_{y}^{(k)})
=\sum_{y\leq x}g_{\inv_k(y)}^{(k)}
=\sum_{\inv_k(y)\leq \inv_k(x)}g_{\inv_k(y)}^{(k)}
=\tilde{g}^{(k)}_{\inv_k(x)},
\end{equation*}
where the last equality holds since $\omega_k$ is an involution.
Set $x=x_\lambda$. Then  $\Omega(\gt{k}_\lambda)=\gt{k}_{\omega_k(\lambda)}$
\end{proof}

\subsection{Basic properties of Katalan functions}\label{sec:basicK}
Let $\Psi\subset \Delta_\ell^+$ be a root ideal.
A root $\alpha\in \Psi$ is a {\it removable root\/} of $\Psi$ if $\Psi\setminus \alpha$ is a root ideal. A root $\beta\in \Delta_\ell^+$ is 
an {\it addable root\/} of $\Psi$ if $\Psi\cup \alpha$ is a 
root ideal. 

We define an oriented graph with 
$\{1,\ldots,\ell\}$ as the vertex set and  
the oriented edges $j\rightarrow i$ if $(i,j)$ 
is a removable root in $\Psi$ (in \cite{BMS} the bounce graph is not considered as an oriented graph, but here it is).
An edge of the bounce graph of $\Psi$ is called simply  
a {\it bounce edge\/} of $\Psi$. 
Let $p\in \{1,\ldots,\ell\}$. If there is a bounce edge $p\rightarrow i$ of  $\Psi$, then such $i$ is unique by the construction, and we write $i=\up_\Psi(p).$

\begin{figure}[htbp]
\centering
{
\begin{tikzpicture}[scale = .4]
\filldraw [lightgray] (2,9)--(9,9)--(9,3)--(8,3)--(8,4)--(6,4)--(6,5)--(5,5)--(5,7)--(4,7)--(4,8)--(2,8)--cycle;
\foreach \i in {0,1,...,8}{
   \coordinate (\i) at ({\i + .5},{8.5 - \i});
   \draw[fill] (\i) circle (.15);
}
\foreach \i in {0,1,...,9}{
   \draw (0,\i)--(9,\i);
   \draw (\i,0)--(\i,9);
}
\draw [-stealth,thick] (8) -- ++(0,3) -- ++(-2.7,0);
\draw [-stealth,thick] (6) -- ++(0,2) -- ++(-1.7,0);
\draw [-stealth,thick] (5) -- ++(0,2) -- ++(-1.7,0);
\draw [-stealth,thick] (4) -- ++(0,3) -- ++(-2.7,0);
\draw [-stealth,thick] (2) -- ++(0,2) -- ++(-1.7,0);
\end{tikzpicture}
}
\caption{Bounce graph of $\Psi$}
\label{fig:path}
\end{figure}
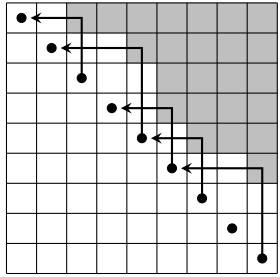
Each connected component of the bounce graph of $\Psi$ is called a {\it bounce path\/} of $\Psi$. 
For $p\in \{1,\dots,\ell\}$, $\top_\Psi(p)$ denotes the smallest element in the bounce path containing $p$ (see \cite{BMS}). 
\begin{example}
For the root ideal $\Psi$ in Figure 1, $\{1,3\},\;\{2,5,7\},\;\{4,6,9\},\{8\}$
are the bounce paths. We have, for example,
$
\up_\Psi(3)=1
$, 
$\up_\Psi(7)=5$, 
$
\top_\Psi(7)=\top_\Psi(5)=\top_\Psi(2)=2.$
\end{example}

\begin{defn}[Walls and ceilings]
Let $d$ be a positive integer. 
A root ideal $\Psi$ is said to have a {\it wall\/} in rows $r,r+1,\ldots,r+d$ if 
the rows $r,r+1,\ldots,r+d$ of $\Psi$
have the same length, and 
a {\it ceiling\/} in 
columns $c,c+1,\ldots,c+d$ if the columns $c,c+1,\ldots,c+d$ of $\Psi$ have the
same length.
\end{defn}
\begin{example}
The root ideal $\Psi$ in Figure 1 
has a ceiling in columns $1,2$, in columns $3,4$, in columns $7,8$, and has a wall 
in rows $3,4$, in rows $7,8,9$. 
\end{example}
Let us begin with an obvious remark.
\begin{lem}
Let $\Psi$ be a root ideal of $\Delta_\ell^+$, and $p\in \{2,\ldots,\ell\}$.
If $\top_\Psi(p)=p$, i.e., if there is no bounce edge starting from $p$, then 
$\Psi$ has has a ceiling in columns $p-1,p$. 
\end{lem}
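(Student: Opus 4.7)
My approach is to translate the hypothesis $\top_\Psi(p)=p$ into a concrete statement about individual roots, and then exploit the upper order ideal axiom twice. Since every bounce edge $j\to i$ satisfies $i<j$, the condition $\top_\Psi(p)=p$ is equivalent to saying that $p$ has no outgoing edge in the bounce graph, or equivalently, that no root of the form $(i,p)\in\Psi$ is removable (i.e., minimal in $\Psi$).

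Let $m$ denote the length of column $p$ of $\Psi$. If $m=0$, the upper order ideal property immediately gives that column $p-1$ is also empty: $(1,p-1)\in\Psi$ would force $(1,p)\geq(1,p-1)$ to lie in $\Psi$, contradicting $m=0$. For $m\geq 1$, consider the topmost root $(m,p)\in\Psi$. By hypothesis it is not removable, hence not minimal in $\Psi$, so one of its two strictly smaller covers $(m+1,p)$ and $(m,p-1)$ lies in $\Psi$. The alternative $(m+1,p)\in\Psi$ is ruled out by maximality of $m$, so $(m,p-1)\in\Psi$, which shows the length of column $p-1$ is at least $m$. For the reverse inequality, if column $p-1$ had length $\geq m+1$, then $(m+1,p-1)\in\Psi$ would force $(m+1,p)\in\Psi$ by the upper order ideal property, again a contradiction. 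Hence columns $p-1$ and $p$ both have length $m$, which is the desired ceiling.

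I expect the only real bookkeeping subtlety to arise in the boundary case $m=p-1$: here neither $(m+1,p)$ nor $(m,p-1)$ is a positive root, so $(p-1,p)$ has no strictly smaller neighbors in $\Delta_\ell^+$ and is automatically minimal, yielding a bounce edge $p\to p-1$ and contradicting $\top_\Psi(p)=p$. This case therefore never arises, and the argument above applies with $m\leq p-2$ throughout. This is the only place where one has to attend to the boundary of the root system, and it explains why the authors flag the statement as an \emph{obvious remark}.
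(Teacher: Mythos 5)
Your proof is correct, and in fact the paper offers no proof at all: it introduces the lemma with the phrase ``Let us begin with an obvious remark'' and leaves the verification to the reader. Your argument supplies exactly the right justification: removable roots of $\Psi$ are precisely its minimal elements, the only covers below $(m,p)$ in $\Delta_\ell^+$ are $(m+1,p)$ and $(m,p-1)$, so non-removability of $(m,p)$ forces one of these into $\Psi$, and maximality of $m$ rules out $(m+1,p)$; the reverse inequality and the degenerate boundary cases $m=0$ and $m=p-1$ are handled correctly. One terminological slip: the root $(m,p)$, where $m$ is the length of column $p$, is the \emph{bottommost} box of that column in the grid (equivalently, the \emph{minimal} root of $\Psi$ lying in column $p$ in the root-poset order), not the ``topmost''; this does not affect the logic.
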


The following lemmas are borrowed from \cite[Lemma 4.4]{BMS}. 
\begin{lem}[Adding or removing a root]\label{lem:addroot}
Let $(\Psi,M,\gamma)$ be a Katalan triple.

\begin{itemize}
\item[(i)] For any addable root $\alpha$ of $\Psi$,
\begin{equation*}
\K{\Psi}{M}{\gamma}
=\K{\Psi\cup \alpha}{M}{\gamma}
-\K{\Psi\cup \alpha}{M}{\gamma+\alpha}.
\end{equation*}
\item[(ii)] For any removable root $\alpha$ of $\Psi$,
\begin{equation*}
\K{\Psi}{M}{\gamma}
=\K{\Psi\setminus \alpha}{M}{\gamma}
+\K{\Psi}{M}{\gamma+\alpha}.
\end{equation*}
\end{itemize}
\end{lem}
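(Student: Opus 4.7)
The plan is to unfold the definition
\[
\K{\Psi}{M}{\gamma}=\prod_{j\in M}(1-L_j)\prod_{(i,j)\in \Delta_\ell^+\setminus \Psi}(1-R_{ij})\,k_\gamma
\]
and exploit the fact that all of the operators $L_j$ and $R_{ij}$ act on the same subscript $\gamma\in\Z^\ell$ of $k_\gamma$, hence commute as formal operations; in particular, the ordering of factors in the product above is immaterial. The key observation is that inserting or removing a single factor $(1-R_\alpha)$ at a root $\alpha$ on the boundary of $\Psi$ is exactly what produces the right-hand sides of (i) and (ii).

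For part (i), addability of $\alpha$ to $\Psi$ implies $\alpha\notin\Psi$, so $\alpha\in\Delta_\ell^+\setminus\Psi$ and the factor $(1-R_\alpha)$ already appears in the product defining $\K{\Psi}{M}{\gamma}$. Pulling it out by commutativity and expanding using $R_\alpha\cdot k_\gamma=k_{\gamma+\alpha}$ yields
\[
\K{\Psi}{M}{\gamma}=\K{\Psi\cup\alpha}{M}{\gamma}-\K{\Psi\cup\alpha}{M}{\gamma+\alpha}.
\]
Part (ii) is symmetric: removability of $\alpha$ gives $\alpha\in\Psi$, hence $\alpha\in\Delta_\ell^+\setminus(\Psi\setminus\alpha)$, so applying the same factoring argument to $\K{\Psi\setminus\alpha}{M}{\gamma}$ produces
\[
\K{\Psi\setminus\alpha}{M}{\gamma}=\K{\Psi}{M}{\gamma}-\K{\Psi}{M}{\gamma+\alpha},
\]
which after transposition is the desired identity.

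The only step requiring care, rather than genuine ingenuity, is making rigorous sense of these formal identities: since raising operators are not well-defined as linear maps on $\Lambda$ itself, one must interpret $R_{ij}$ and $L_j$ as operators on a free module indexed by $\gamma\in\Z^\ell$, with the map $\gamma \mapsto k_\gamma$ extended linearly, as set up in \cite[\S3]{BMS}. Once that framework is in place, the argument collapses to the one-line manipulation above, and I anticipate no substantive obstacle; in effect, (i) and (ii) are mutually inverse reformulations of the single identity obtained by factoring $(1-R_\alpha)$ across the product, so it suffices to carry out either one and then rearrange.
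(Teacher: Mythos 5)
Your argument is correct: since $\alpha$ is addable (resp.\ removable) it lies in $\Delta_\ell^+\setminus\Psi$ (resp.\ $\Delta_\ell^+\setminus(\Psi\setminus\alpha)$), and factoring the commuting operator $(1-R_\alpha)$ out of the defining product, using $R_\alpha k_\gamma=k_{\gamma+\alpha}$, immediately yields (i) and, after transposition, (ii). The paper itself does not reprove this lemma but simply cites \cite[Lemma~4.4]{BMS}, and your proof coincides with the standard argument there; your caveat that the raising/lowering operators must be interpreted on a free $\Z^\ell$-indexed module (as in \cite[\S3]{BMS}) before specializing to $\Lambda$ is exactly the right point to flag.
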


\begin{example} We apply 
Lemma \ref{lem:addroot} (i) to the following Katalan triple,
with $\alpha=\ep_2-\ep_3$:  
$$
\begin{ytableau}
1 &  &  *(lightgray) &*(lightgray) \\
   & 0                && *(lightgray) \\
   & & 1 & *(lightgray) \\
   & &  & 1
\end{ytableau}=\begin{ytableau}
1 &  &  *(lightgray) &*(lightgray) \\
   & 0                &*(lightgray)& *(lightgray) \\
   & & 1 & *(lightgray) \\
   & &  & 1
\end{ytableau}
-\begin{ytableau}
1 & &  *(lightgray) &*(lightgray) \\
   & 1                &*(lightgray)& *(lightgray) \\
   & & 0 & *(lightgray) \\
   & &  & 1
\end{ytableau}.
$$We apply 
Lemma \ref{lem:addroot} (ii) to the following Katalan triple,
with $\alpha=\ep_3-\ep_4$:  
$$
\begin{ytableau}
1 &  &  *(lightgray) &*(lightgray) \\
   & 0                && *(lightgray) \\
   & & 1 & *(lightgray) \\
   & &  & 1
\end{ytableau}=\begin{ytableau}
1 &  &  *(lightgray) &*(lightgray) \\
   & 0                && *(lightgray) \\
   & & 1 &  \\
   & &  & 1
\end{ytableau}
+\begin{ytableau}
1 & &  *(lightgray) &*(lightgray) \\
   & 1                && *(lightgray) \\
   & & 2 &*(lightgray) \\
   & &  & 0
\end{ytableau}.
$$
\end{example}

\begin{lem}[Adding or removing a dot]\label{lem:dot}
Let $(\Psi,M,\gamma)$ be a Katalan triple.

\begin{itemize}
\item[(i)] For any $j\in M$,
$
\K{\Psi}{M}{\gamma}
=\K{\Psi}{M\setminus \{j\}}{\gamma}
-\K{\Psi}{M\setminus \{j\}}{\gamma-\ep_j},
$
\item[(ii)] For any $1\leq j\leq \ell$,
$\K{\Psi}{M}{\gamma}
=\K{\Psi}{M\sqcup \{j\}}{\gamma}
+\K{\Psi}{M}{\gamma-\ep_j}.$
\end{itemize}
\end{lem}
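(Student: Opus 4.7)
The plan is to derive both identities directly from the definition
\[
K(\Psi;M;\gamma)=\prod_{j\in M}(1-L_j)\prod_{(i,j)\in\Delta_\ell^+\setminus\Psi}(1-R_{ij})\,k_\gamma,
\]
using the basic fact that the lowering operator acts on the subscript by $L_j\cdot k_\gamma=k_{\gamma-\ep_j}$, together with the trivial decomposition $1=(1-L_j)+L_j$. Everything reduces to a bookkeeping manipulation of the operator product, performed in the formal framework of \cite[\S3]{BMS} in which the raising and lowering operators act on subscripts rather than as honest linear maps on $\Lambda$.

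For (i), since $j\in M$, I first isolate the factor $(1-L_j)$ from the product over $M$, writing
\[
K(\Psi;M;\gamma)=\prod_{j'\in M\setminus\{j\}}(1-L_{j'})\prod_{(i,j')\in\Delta_\ell^+\setminus\Psi}(1-R_{ij'})\,(1-L_j)k_\gamma.
\]
Expanding $(1-L_j)k_\gamma=k_\gamma-k_{\gamma-\ep_j}$ and distributing the remaining operator over the two resulting terms gives precisely $K(\Psi;M\setminus\{j\};\gamma)-K(\Psi;M\setminus\{j\};\gamma-\ep_j)$, which is the claimed identity.

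For (ii), I go the other direction. Starting from $K(\Psi;M;\gamma)$, I replace $k_\gamma$ by $(1-L_j)k_\gamma+L_j k_\gamma=(1-L_j)k_\gamma+k_{\gamma-\ep_j}$. After distributing the outer operator, the first term absorbs $(1-L_j)$ into the product over $M$ (thickened to $M\sqcup\{j\}$), producing $K(\Psi;M\sqcup\{j\};\gamma)$, and the second term simply becomes $K(\Psi;M;\gamma-\ep_j)$, giving the stated formula.

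There is no substantive obstacle here: the only point that requires any care is that the $L_j$ and $R_{ij}$ are not literal endomorphisms of $\Lambda$, so the ``factorization'' and ``distributivity'' steps above must be interpreted inside the formal polynomial-ring setup of \cite[\S3]{BMS}. Once that framework is in place, the commutation relations needed to isolate or insert a single $(1-L_j)$ are built into the definition, and both parts of the lemma become essentially tautological identities of Katalan triples.
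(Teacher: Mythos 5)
Your proof is correct. The paper does not supply its own argument for this lemma---it is explicitly cited as borrowed from \cite[Lemma 4.4]{BMS}---but what you wrote is exactly the standard direct computation one expects: in the formal raising/lowering-operator framework of \cite[\S3]{BMS} the $L_j$'s and $R_{ij}$'s commute (they act as multiplication by Laurent monomials on the $z$-variables before linearization), so peeling off a factor $(1-L_j)$ in part (i), respectively inserting $1=(1-L_j)+L_j$ in part (ii), combined with $L_j k_\gamma=k_{\gamma-\varepsilon_j}$ and linearity of the remaining operator product, gives the two identities immediately. Your remark that the manipulations must be read inside that formal setup rather than as operator identities on $\Lambda$ is the only subtlety, and you flagged it correctly.
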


The next lemma is \cite[Lemma 3.3]{BMS}.

\begin{lem}[Alternating property]
Let $\Psi\subset \Delta^+_\ell$ be a root ideal, 
and $M$ a multiset on $\{1,\ldots,\ell\}$.
Suppose there is an index $1\leq i\leq\ell-1$ such that 
\begin{itemize}
\item[(a)] $\Psi$ has a ceiling in columns $i,i+1$,
\item[(b)] $\Psi$ has a wall in rows $i,i+1$, 
\item[(c)] $m_{M}(i+1)=m_M(i)+1$.
\end{itemize}
Then for any $\gamma\in \Z^\ell$,
\begin{equation}
\K{\Psi}{M}{\gamma}=-\K{\Psi}{M}{s_i\gamma-\ep_i+\ep_{i+1}}.
\label{eq:alt}
\end{equation}
In particular, if $\gamma_{i+1}=\gamma_{i}+1$ holds, then
 \begin{equation}
 \K{\Psi}{M}{\gamma}=0.\label{eq:vanish}
 \end{equation}
\end{lem}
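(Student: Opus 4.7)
My plan is to reduce the alternating identity to a single operator-level cancellation on positions $i$ and $i+1$, exploiting the identity
\[
h^{(m)}_n - h^{(m)}_{n-1} = h^{(m-1)}_n \qquad (m,n\in\Z),
\]
which follows at once from the generating-function interpretation $\F^m\bigl(\sum_n h_n T^n\bigr) = \bigl(\sum_n h_n T^n\bigr)/(1-T)^m$. Translated into an operator statement on $k_\gamma$, this says that $(1-L_j)$ decreases the superscript of the position-$j$ factor $h^{(j-1)}_{\gamma_j}$ by one while leaving its subscript fixed: $(1-L_j)\,h^{(j-1)}_{\gamma_j} = h^{(j-2)}_{\gamma_j}$.

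Under (a) and (b) one has $(i,i+1) \notin \Psi$, so $(1-R_{i,i+1})$ is one of the raising factors in $K(\Psi;M;\gamma)$; by (c), writing $a := m_M(i)$, both $(1-L_i)^a$ and $(1-L_{i+1})^{a+1}$ appear in the lowering product. Since all the $(1-L_j)$'s and $(1-R_{pq})$'s are formal translations on $\Z^\ell$, they commute pairwise, so we may factor the defining operator of $K$ as $\mathcal{O} = \mathcal{O}_1 \cdot \mathcal{O}_0$, where $\mathcal{O}_0 := (1-L_i)^a (1-L_{i+1})^{a+1}(1-R_{i,i+1})$ and $\mathcal{O}_1$ is the product of the remaining factors. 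Setting $\gamma' := s_i\gamma - \ep_i + \ep_{i+1}$, the identity \eqref{eq:alt} is equivalent to $\mathcal{O}(k_\gamma + k_{\gamma'}) = 0$, and since $\mathcal{O}_1$ is linear it suffices to prove $\mathcal{O}_0(k_\gamma + k_{\gamma'}) = 0$.

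For this core cancellation, set $u := \gamma_i$, $v := \gamma_{i+1}$, and $k^{\mathrm{rest}} := \prod_{j \ne i,i+1} h^{(j-1)}_{\gamma_j}$, so that $k_\gamma = k^{\mathrm{rest}}\, h^{(i-1)}_u h^{(i)}_v$ and $k_{\gamma'} = k^{\mathrm{rest}}\, h^{(i-1)}_{v-1} h^{(i)}_{u+1}$. Iterating the key identity, the lowering factors equalize the superscripts at positions $i,i+1$ to the common value $m := i-1-a$ (this is precisely where (c) enters), producing $k^{\mathrm{rest}}\bigl[h^{(m)}_u h^{(m)}_v + h^{(m)}_{v-1} h^{(m)}_{u+1}\bigr]$. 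Applying $(1-R_{i,i+1})$ then yields
\[
k^{\mathrm{rest}}\bigl[(h^{(m)}_u h^{(m)}_v - h^{(m)}_v h^{(m)}_u) + (h^{(m)}_{v-1} h^{(m)}_{u+1} - h^{(m)}_{u+1} h^{(m)}_{v-1})\bigr] = 0
\]
by commutativity in $\Lambda$. The vanishing statement \eqref{eq:vanish} is then immediate: when $\gamma_{i+1} = \gamma_i + 1$ one checks $\gamma' = \gamma$, so \eqref{eq:alt} forces $K(\Psi;M;\gamma) = 0$.

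The main obstacle I anticipate is bookkeeping: one must rigorously justify the interplay of the two natural pictures of $(1-L_j)$ — as a formal translation of the $\Z^\ell$-subscript of $k_\gamma$, and as a superscript-lowering operator on the position-$j$ $h$-factor — even after raising operators have shifted subscripts. The key identity $h^{(m)}_n - h^{(m)}_{n-1} = h^{(m-1)}_n$, whose validity is independent of $n$, is precisely what makes this interplay unambiguous and reduces the whole assertion to commutativity of the $h_n$'s in $\Lambda$.
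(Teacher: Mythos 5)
Your core cancellation $\mathcal{O}_0(k_\gamma+k_{\gamma'})=0$ for $\mathcal{O}_0=(1-L_i)^a(1-L_{i+1})^{a+1}(1-R_{i,i+1})$ is correct, as is the interpretation of $(1-L_j)$ as lowering the superscript via $h^{(m)}_n-h^{(m)}_{n-1}=h^{(m-1)}_n$. The gap is in the sentence ``since $\mathcal{O}_1$ is linear it suffices to prove $\mathcal{O}_0(k_\gamma+k_{\gamma'})=0$.'' As the paper emphasizes, raising and lowering operators are \emph{not} linear transformations on $\Lambda$; they act on the symbol $k_\gamma$ through the index $\gamma\in\Z^\ell$. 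Expanding $\mathcal{O}_1=\sum_\eta c_\eta T_\eta$ as a formal sum of translations, one has
\[
\mathcal{O}(k_\gamma+k_{\gamma'})=\sum_\eta c_\eta\,\mathcal{O}_0\bigl(k_{\gamma+\eta}+k_{\gamma'+\eta}\bigr),
\]
and your core cancellation applies to the $\eta$-th summand only when $\gamma'+\eta=s_i(\gamma+\eta)-\ep_i+\ep_{i+1}$, i.e.\ only when $s_i\eta=\eta$. Since $\mathcal{O}_1$ contains factors such as $R_{iq}$ (for $i+1<q\le c$, $c$ the common row length) and $R_{pi}$, whose shifts are not $s_i$-invariant, the summands do \emph{not} individually vanish. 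To rescue the argument one must instead pair $\eta$ with $s_i\eta$, using $\gamma'+s_i\eta=s_i(\gamma+\eta)-\ep_i+\ep_{i+1}$, and this requires the coefficient symmetry $c_\eta=c_{s_i\eta}$ of $\mathcal{O}_1$. That symmetry is \emph{exactly} what hypotheses (a) and (b) provide: the ceiling in columns $i,i+1$ and the wall in rows $i,i+1$ together make $(\Delta_\ell^+\setminus\Psi)\setminus\{(i,i+1)\}$ stable under $s_i$, so the remaining raising factors come in $s_i$-conjugate pairs.

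The tell-tale sign is that you use (a) and (b) only to conclude $(i,i+1)\notin\Psi$, which either hypothesis alone already implies. Your argument would therefore ``prove'' the lemma under genuinely weaker hypotheses, and that is false. For example take $\ell=3$, $i=1$, $\Psi=\{(1,3)\}$, $M=\{2\}$: then (a) and (c) hold, $(1,2)\notin\Psi$, and $\mathcal{O}_0=(1-L_2)(1-R_{12})$ annihilates $k_\gamma+k_{\gamma'}$ exactly as you say. Yet (b) fails, and with $\gamma=(1,2,3)$ (so $\gamma'=\gamma$) a direct computation gives
\[
K(\Psi;M;\gamma)=h^{(2)}_2\,(h_2^2-h_1h_3)\neq 0,
\]
so the vanishing \eqref{eq:vanish} does not hold. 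Here $\mathcal{O}_1=1-R_{23}$, whose translation $\ep_2-\ep_3$ lacks its $s_1$-partner $\ep_1-\ep_3$, and the pairing breaks exactly as described. In short: the decomposition $\mathcal{O}=\mathcal{O}_1\mathcal{O}_0$ and the identity $\mathcal{O}_0(k_\gamma+k_{\gamma'})=0$ are both correct, but you need the additional observation that $\mathcal{O}_1$ has $s_i$-symmetric translation coefficients, and it is there---not in establishing $(i,i+1)\notin\Psi$---that the full strength of (a) and (b) is used.
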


\subsection{The $0$-Hecke algebra }\label{sec:zeroH}
The $0$-{\it Hecke algebra} $H_{k+1}$ is the associative $\C$-algebra generated by 
$\{T_i\;|\;i\in I\}$ subject to the relations : 
$$T_i^2=-T_i, \quad T_iT_{i+1}T_i=T_{i+1}T_i T_{i+1},\quad
T_iT_j=T_jT_i\quad
\mbox{for} \quad
i-j\neq 0,\pm1,$$
with indices 
considered modulo $k+1$.
 For $w\in \Waf$, define $T_w=T_{i_1}\cdots T_{i_m}$ for any reduced expression $w=s_{i_1}\cdots s_{i_m}.$
The elements $T_w\;(w\in \Waf)$ form a basis of $H_{k+1}.$

We introduce a family of symmetric functions
$\gcirc{k}_{\lambda}\;(\lambda
\in \Par^k)$ such that
$\tfg{k}_\lambda=\sum_{\mu\leqk \lambda}
\gcirc{k}_{\mu}
$
for all $\lambda\in \Par^k.$
Such functions uniquely exist since the transition matrix from
$\{\gcirc{k}_{\lambda}\}$ to 
$\{{\mathfrak{g}}_{\lambda}^{(k)}\}$ is upper uni triangular.

\begin{prop}\label{prop:Tgcirc}
There is a left $H_{k+1}$-module structure on $\Lambda_{(k)}$ such that 
\begin{equation*}
T_i\cdot \gcirc{k}_\lambda=\begin{cases}
\gcirc{k}_{s_i \lambda}& (s_ix_\lambda>x_\lambda\;\mbox{and}\;
s_i x_\lambda\in \Wafo)\\
-\gcirc{k}_\lambda & (s_ix_\lambda<x_\lambda)\\
0 & \mbox{(otherwise)}
\end{cases},
\end{equation*}
for $\lambda\in \Par^k$ and $i\in I$.
Moreover, we have
\begin{equation*}
D_i\cdot \tfg{k}_\lambda=\begin{cases}
\tfg{k}_{s_i \lambda}& (s_ix_\lambda>x_\lambda\;\mbox{and}\;
s_i x_\lambda\in \Wafo)\\
\tfg{k}_\lambda & \mbox{(otherwise)}
\end{cases},
\end{equation*}
for $i\in I$ and $\lambda\in \Par^k$, 
and $D_i:=T_i+1$.
\end{prop}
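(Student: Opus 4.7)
The plan is to verify the $T_i$-formula by realizing the action on $\{\gcirc{k}_\lambda\}$ as a quotient of the standard $0$-Hecke action on $\C\Waf$, and then to derive the $D_i = T_i+1$ formula on $\tfg{k}_\lambda = \sum_{\mu\leqk\lambda}\gcirc{k}_\mu$ by Bruhat-order lifting arguments.

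First, since the transition matrix from $\{\gcirc{k}_\lambda\}_{\lambda\in\Par^k}$ to $\{\tfg{k}_\lambda\}_{\lambda\in\Par^k}$ is upper unitriangular with respect to $\leqk$ and the latter is a basis of $\Lambda_{(k)}$, the former is also a basis, and the stated formula unambiguously defines linear operators on $\Lambda_{(k)}$. To check the $0$-Hecke relations I would transport the problem to $\C\Wafo$ via $\gcirc{k}_\lambda\leftrightarrow [x_\lambda]$ and exhibit the action as the one induced on the quotient $\C\Waf/\C(\Waf\setminus\Wafo)\cong\C\Wafo$ from the standard $H_{k+1}$-module structure on $\C\Waf$ defined by $T_i\cdot w = s_iw$ if $s_iw>w$ and $T_i\cdot w = -w$ otherwise. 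The key point is that $\C(\Waf\setminus\Wafo)$ is a submodule: if $w = uv$ is the parabolic decomposition with $u\in\Wafo$ and $e\neq v\in S_{k+1}$, then either $s_iu>u$, in which case $s_iw = (s_iu)v\notin\Wafo$, or $s_iu<u$, forcing $s_iw<w$ so that $T_i\cdot w=-w$ stays in $\C(\Waf\setminus\Wafo)$. The parabolic-coset machinery collected in Lemma~\ref{lem:Naito2++} of Appendix~A covers these steps; under the quotient, the ``$s_ix_\lambda\notin\Wafo$'' scenario correctly reproduces the $0$-case of the stated formula.

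Once the $H_{k+1}$-module structure is established, expanding $D_i\cdot\tfg{k}_\lambda$ termwise yields
\begin{equation*}
D_i\cdot\tfg{k}_\lambda \;=\; \sum_{\substack{\mu\leqk\lambda\\ s_ix_\mu>x_\mu}}\!\gcirc{k}_\mu \;+\; \sum_{\substack{\mu\leqk\lambda\\ s_ix_\mu>x_\mu,\,s_ix_\mu\in\Wafo}}\!\gcirc{k}_{s_i\mu},
\end{equation*}
since the $s_ix_\mu<x_\mu$ contributions from $T_i$ and the identity cancel. In the case $s_ix_\lambda>x_\lambda$ with $s_ix_\lambda\in\Wafo$, the lifting property and subword characterization of Bruhat order together produce a bijection showing that the right-hand side equals $\sum_{\nu\in\Wafo,\,\nu\leq s_ix_\lambda}\gcirc{k}_\nu = \tfg{k}_{s_i\lambda}$. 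Otherwise, I need $T_i\tfg{k}_\lambda = 0$: when $s_ix_\lambda<x_\lambda$, the involution $\nu\mapsto s_i\nu$ on $\Wafo\cap[e,x_\lambda]$ (well-defined since $\Wafo$ is closed under length-decreasing left simple reflections) cancels the two sums; when $s_ix_\lambda>x_\lambda$ but $s_ix_\lambda\notin\Wafo$, the coset identity $(s_ix_\lambda)S_{k+1}=x_\lambda S_{k+1}$ shows that $x_\lambda$ is still the $\Wafo$-minimum of this coset, so Deodhar's lemma forces $\Wafo\cap[e,s_ix_\lambda]=\Wafo\cap[e,x_\lambda]$ and the same bijection applies.

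The hard part will be the subcase $s_ix_\lambda>x_\lambda$ with $s_ix_\lambda\notin\Wafo$, because there the naive assignment $\nu\mapsto s_i\nu$ a priori lands in $[e,s_ix_\lambda]$ rather than $[e,x_\lambda]$, and one has to argue that intersecting with $\Wafo$ does not enlarge the interval—this is precisely what the Appendix~A results on parabolic coset representatives are designed to furnish. Away from this subcase, the matching of sums reduces to routine Bruhat-order combinatorics.
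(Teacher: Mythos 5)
Your realization of the module in the first part is valid but genuinely different from the paper's. The paper (Proposition~\ref{prop:VJ}) builds the cyclic left ideal $V^J=H_We_J$ with $e_J=\sum_{w\in W_J}T_w$, verifies that $\{T_xe_J\}_{x\in W^J}$ is a basis, and reads off the stated $T_i$-action from $T_ie_J=0$ for $i\in J$; you instead realize it as the quotient $\C\Waf/\C(\Waf\setminus\Wafo)$ of the left regular $0$-Hecke module. These are isomorphic $H_{k+1}$-modules, so either route works, but your verification that $\C(\Waf\setminus\Wafo)$ is a submodule has a gap. The dichotomy ``$s_iu>u\Rightarrow s_iw=(s_iu)v\notin\Wafo$; $s_iu<u\Rightarrow s_iw<w$'' fails in the first branch: when $s_iu>u$ but $s_iu\notin\Wafo$, one has $s_iu=us_j$ for some $j\in J$ (\cite[Corollary~2.5.2]{BB}), and if moreover $s_jv<v$ (e.g.\ $v=s_j$) then $s_iw=u(s_jv)$ has $\ell(s_iw)=\ell(w)-1$, so $s_iw<w$ even though $s_iu>u$, and $s_iw$ can lie in $\Wafo$---take $W=S_3$, $J=\{1\}$, $u=s_1s_2$, $v=s_1$, $i=2$: then $s_iu=s_2s_1s_2>u$ with $s_iu\notin W^J$, yet $s_iw=s_1s_2=u\in W^J$. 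The conclusion you need, $T_iw\in\C(\Waf\setminus\Wafo)$, still holds (here $T_iw=-w$), but the case split should be on $\ell(s_iw)$ versus $\ell(w)$, not on $\ell(s_iu)$ versus $\ell(u)$: when $s_iw<w$ one has $T_iw=-w$, and when $s_iw>w$ one checks via the parabolic decomposition of $s_iw$ that its $W_J$-component is nontrivial.

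For the $D_i$-formula your outline is essentially the paper's Proposition~\ref{prop:Db}: expand $D_i\cdot\tfg{k}_\lambda$ in the $\gcirc{k}$-basis, partition $\Wafo\cap[e,x_\lambda]$ into the three sets $X^{+}_{\leq x_\lambda}$, $X^{-}_{\leq x_\lambda}$, $X^{0}_{\leq x_\lambda}$ of Lemma~\ref{lem:Z1}, and match terms via the $Z$-lemma. Two imprecisions: the map $\nu\mapsto s_i\nu$ is not an involution of $\Wafo\cap[e,x_\lambda]$ (it does not preserve $X^0_{\leq x_\lambda}$) but only a bijection between $X^{+}_{\leq x_\lambda}$ and $X^{-}_{\leq x_\lambda}$, which is exactly the content of Lemma~\ref{lem:Z1}(2) and (3). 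You correctly isolate the delicate subcase $s_ix_\lambda>x_\lambda$ with $s_ix_\lambda\notin\Wafo$, and the identity $\Wafo\cap[e,s_ix_\lambda]=\Wafo\cap[e,x_\lambda]$ you propose is true and does settle that subcase; but the paper's Lemma~\ref{lem:Z1}(3) obtains it not from Deodhar's lemma but directly from $s_ix_\lambda=x_\lambda s_j$ together with the order-preservation of the parabolic projection, $w\leq v\Rightarrow w^J\leq v^J$ (\cite[Proposition~2.5.1]{BB}).
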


A proof of this proposition is given in \S\ref{sec:0Hmod}.
\begin{rem}
Note that we will eventually show $\F(\gcirc{k}_{\lambda})=g_\lambda^{(k)}.$
There is an action of $H_{k+1}$ on $\Lambda_{(k)}$ used in \cite{BMS}. If we denote the action of
$T_i$ in \cite[\S5.4]{BMS} by $T_i'$, then we have $T_i'=\F\circ T_i \circ \F^{-1}$.
\end{rem}

\subsection{Outline of Proof of Theorem \ref{thm:main}}\label{sec:outline}
The following characterization property
for the closed $K$-$k$-Schur functions is available. 
Note that for $1\leq r\leq k$, we have $\tilde{g}^{(k)}_{(1^r)}=\tilde{g}_{(1^r)}$ and 
$\tilde{g}^{(k)}_{(r)}=\tilde{g}_{(r)}=\tilde{h}_r$ (Proposition \ref{prop:ksmallgtilde}).    
\begin{lem}\label{lem:11+}
For $\lambda\in \Par^k$ and $1\leq r\leq k$, we have
\begin{equation}
\tilde{g}_{(1^r)}\cdot \tilde{g}^{(k)}_\lambda
=\sum_{\mu\leqk \lambda}
\sum_{\substack{A\subset I,\;|A|\leq r\\
u_A*x_\mu\in \Wafo}}
(-1)^{|A|-\ell(u_A*\mu)+\ell(\mu)}
{g}_{u_A*x_\mu}^{(k)}.\label{eq:11+}
\end{equation}
Moreover, the $\{\tilde{g}^{(k)}_\lambda\}_{\lambda\in \Par^k}$ are the unique elements of $\Lambda_{(k)}$ satisfying \eqref{eq:11+} for $1\leq r\leq k$.
\end{lem}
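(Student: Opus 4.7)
The plan is to deduce \eqref{eq:11+} by applying the $k$-conjugation involution $\Omega$ to the defining Pieri rule \eqref{eq:11}, turning it into a ``column'' Pieri rule for multiplication by $g^{(k)}_{(1^j)}$, and then summing the result over the Bruhat interval $\{\mu : \mu \leqk \lambda\}$ and over subset sizes $0 \leq j \leq r$. First I expand both factors on the left-hand side: since $1+j \leq k+1$ for $0 \leq j \leq k$, Proposition \ref{prop:ksmallgtilde} gives $g_{(1^j)} = g^{(k)}_{(1^j)}$, so $\tilde{g}_{(1^r)} = \sum_{j=0}^{r} g_{(1^j)} = \sum_{j=0}^{r} g^{(k)}_{(1^j)}$, while $\tilde{g}^{(k)}_\lambda = \sum_{\mu \leqk \lambda} g^{(k)}_\mu$ by \eqref{eq:defTak}. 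The task therefore reduces to establishing, for each $\mu \in \Par^k$ and $0 \leq j \leq k$, the column Pieri rule
\[
g^{(k)}_{(1^j)} \cdot g^{(k)}_\mu
 = \sum_{\substack{B \subset I,\ |B|=j \\ u_B * x_\mu \in \Wafo}} (-1)^{|B|-\ell(u_B * x_\mu)+\ell(x_\mu)}\, g^{(k)}_{u_B * x_\mu}.
\]

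To derive this, I apply $\Omega$ to \eqref{eq:11} with $r$ replaced by $j$ and $\lambda$ by $\omega_k(\mu)$. The left-hand side becomes $g^{(k)}_{(1^j)} \cdot g^{(k)}_\mu$ via $\Omega(h_j) = g^{(k)}_{(1^j)}$ and Theorem \ref{thm:omegag} (together with $\omega_k^2 = \mathrm{id}$). Each summand $g^{(k)}_{d_A * x_{\omega_k(\mu)}}$ on the right transforms under $\Omega$ into $g^{(k)}_{\omega_k(d_A * x_{\omega_k(\mu)})}$. The key identities are that $\omega_k$, being a Coxeter-group automorphism, commutes with the $*$-operation (since $*$ is defined via the Bruhat order, preserved by $\omega_k$ thanks to \eqref{eq:omega_strong}), sends $d_A$ to $u_{\overline{A}}$ with $\overline{A} = \{-i : i \in A\}$, and preserves both $\Wafo$ and word length. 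Re-indexing by $B = \overline{A}$ yields the column Pieri rule. Substituting back into $\tilde{g}_{(1^r)} \cdot \tilde{g}^{(k)}_\lambda = \sum_{j=0}^{r} \sum_{\mu \leqk \lambda} g^{(k)}_{(1^j)} \cdot g^{(k)}_\mu$ and collapsing $\sum_{j=0}^r \sum_{|B|=j}$ into $\sum_{|B| \leq r}$ gives \eqref{eq:11+}.

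For the uniqueness assertion, take $r = 1$ in \eqref{eq:11+}: the right-hand side is then a fixed element of $\Lambda_{(k)} = \C[h_1, \ldots, h_k]$ depending only on $\lambda$ (not on the candidate family), and $\tilde{g}_{(1)} = 1 + h_1$ is a nonzero element of this integral domain. Hence any family $\{f_\lambda\}$ satisfying \eqref{eq:11+} must satisfy $\tilde{g}_{(1)} \cdot f_\lambda = \tilde{g}_{(1)} \cdot \tilde{g}^{(k)}_\lambda$, forcing $f_\lambda = \tilde{g}^{(k)}_\lambda$.

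The main technical point I anticipate needing care is the compatibility of $\omega_k$ with the $*$-operation and with the correspondences $A \mapsto d_A$ and $A \mapsto u_A$; but once one observes that negation of indices modulo $k+1$ interchanges the cyclic-decreasing and cyclic-increasing conditions, and that $\omega_k$ fixes $s_0$ and preserves the parabolic $S_{k+1} = \langle s_1, \ldots, s_k \rangle$, the rest of the argument is essentially bookkeeping.
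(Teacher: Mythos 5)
Your proof is correct and takes essentially the same approach as the paper: both start from the horizontal Pieri rule \eqref{eq:11}, apply the involution $\Omega$ (using $\omega_k(d_A*\cdot)=u_{\overline{A}}*\cdot$ and the length/Bruhat-preservation of $\omega_k$), and sum over the Bruhat interval $\{\mu\leqk\lambda\}$ and subset sizes $\leq r$; the only cosmetic difference is that the paper sums first and then conjugates, whereas you conjugate each summand first and then sum. Your uniqueness argument (integral domain plus $\tilde{g}_{(1)}\neq 0$) differs in flavor from the paper's brief appeal to unitriangularity but is a valid proof of the statement as written.
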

\begin{proof}
By summing up \eqref{eq:11} over $\mu\in \Par^k$ such that 
$\mu\leqk \lambda$ and integers $i$ with  $0\leq i\leq r$, we obtain 
\begin{equation}
\tilde{g}_{(r)}\cdot \tilde{g}^{(k)}_\lambda
=\sum_{\mu\leqk \lambda}
\sum_{\substack{A\subset I,\;|A|\leq r\\
d_A*x_\mu\in \Wafo}}
(-1)^{|A|-\ell(d_A*\mu)+\ell(\mu)}
{g}_{d_A*x_\mu}^{(k)}.\label{eq:Tak11+}
\end{equation}
In fact, this identity appears in \cite[p. 470]{Tak}.
We apply the $k$-conjugation to both sides of \eqref{eq:Tak11+}, and use Corollary \ref{cor:omegatilde}, to have
\begin{eqnarray*}
\tilde{g}_{(1^r)}\cdot \tilde{g}^{(k)}_{\omega_k(\lambda)}
&=&\sum_{\mu\leqk \lambda}
\sum_{\substack{A\subset I,\;|A|\leq r\\
d_A*x_\mu\in \Wafo}}
(-1)^{|A|-\ell(d_A*\mu)+\ell(\mu)}
{g}_{u_{\overline{A}}*x_{\omega_k(\mu)}}^{(k)}.
\end{eqnarray*}
This is equivalent to 
\begin{eqnarray}
\tilde{g}_{(1^r)}\cdot \tilde{g}^{(k)}_{\lambda}
&=&\sum_{\omega_k(\mu)\leqk \omega_k(\lambda)}
\sum_{\substack{A\subset I,\;|A|\leq r\\
d_A*x_{\omega_k(\mu)}\in \Wafo}}
(-1)^{|A|-\ell(d_A*\omega_k(\mu))+\ell(\omega_k(\mu))}
{g}_{u_{\overline{A}}*x_{\mu}}^{(k)}.\label{eq:12?}
\end{eqnarray}
Noting that $\omega_k(d_A*x_{\omega_k(\mu)})=
u_{\overline{A}}*x_{\mu}$, and $|\overline{A}|=|A|$, the right-hand side of \eqref{eq:12?} is identical to that of \eqref{eq:11+}
because the involution $\omega_k$ preserves the Bruhat order and the length.
{The uniqueness follows from the unitriangularity of the transition matrix and the fact that $\tilde{g}^{(k)}_{(1^r)}=\tilde{g}_{(1^r)}$ for $1\leq r\leq k$.}
\end{proof}

Theorem \ref{thm:main} will follow easily once we have the proposition below. 
\begin{prop}\label{prop:main}
For $\lambda\in \Par^k$ and $1\leq r\leq k$, we have
\begin{equation}
g_{(1^r)}\cdot \tfg{k}_\lambda
=\sum_{\mu\leqk \lambda}\sum_{\substack{A\subset I,\;|A|\leq r\\
u_A*x_\mu\in \Wafo}}
(-1)^{|A|-\ell(u_A*\mu)+\ell(\mu)}
\gcirc{k}_{u_A*x_\mu}.\label{eq:vPieri}
\end{equation}
\end{prop}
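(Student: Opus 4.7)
The plan is to prove Proposition~\ref{prop:main} by first reformulating its right-hand side via the $0$-Hecke action of Proposition~\ref{prop:Tgcirc}, and then verifying the resulting operator identity through Katalan-function calculus.

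For a reduced cyclically-increasing expression $u_A=s_{i_1}\cdots s_{i_m}$, I would first show, by induction on $m=|A|$ using Proposition~\ref{prop:Tgcirc}, that
\[
T_{u_A}\,\gcirc{k}_\mu=\begin{cases}(-1)^{|A|-\ell(u_A*x_\mu)+\ell(x_\mu)}\,\gcirc{k}_{u_A*x_\mu}& \text{if $u_A*x_\mu\in\Wafo$,}\\0&\text{otherwise.}\end{cases}
\]
Indeed the sign simply counts the steps at which $T_{i_j}$ acts as $-\mathrm{id}$ (the ``$s_i x<x$'' case of Proposition~\ref{prop:Tgcirc}), and the equivalence of the vanishing case with ``$u_A*x_\mu\notin\Wafo$'' follows from a parabolic-quotient argument (Appendix~\ref{app:A}). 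Summing these formulas over $\mu\leqk\lambda$ and using $\tfg{k}_\lambda=\sum_{\mu\leqk\lambda}\gcirc{k}_\mu$, the right-hand side of~\eqref{eq:vPieri} rewrites as $\bigl(\sum_{|A|\leq r}T_{u_A}\bigr)\cdot\tfg{k}_\lambda$, so Proposition~\ref{prop:main} reduces to the operator identity on $\Lambda_{(k)}$:
\[
g_{(1^r)}\cdot f=\Bigl(\sum_{\substack{A\subsetneq I\\|A|\leq r}}T_{u_A}\Bigr)\cdot f.
\]

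To establish this identity I would expand the left-hand side using the Katalan-function machinery. Writing $g_{(1^r)}$ via its Jacobi--Trudi-type determinant~\eqref{eq:g} and $\tfg{k}_\lambda=K(\Delta^k(\lambda);\Delta^k(\lambda);\lambda)$, the product $g_{(1^r)}\cdot\tfg{k}_\lambda$ becomes a signed sum of Katalan functions. Repeated application of Lemma~\ref{lem:addroot} (adding/removing a root), Lemma~\ref{lem:dot} (adding/removing a dot), and the vanishing~\eqref{eq:vanish} collects the expansion into groups indexed by proper subsets $A\subsetneq I$ with $|A|\leq r$. The surviving terms are exactly those $(\mu,A)$ with $\mu\leqk\lambda$ and $u_A*x_\mu\in\Wafo$; the resulting indices $u_A*x_\mu$ and the signs $(-1)^{|A|-\ell(u_A*x_\mu)+\ell(x_\mu)}$ are read off via a direct identification of ceiling/wall toggles in the bounce graph of $\Delta^k(\lambda)$ with the Coxeter generators appearing in a reduced cyclically-increasing expression of $u_A$. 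This identification is the content of the key Lemma~\ref{lem:main}.

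The hard part will be precisely this matching: bridging the Katalan-function combinatorics (root ideals in $\Delta^k(\lambda)$ and multisets on $\{1,\dots,\ell\}$) with the Coxeter-theoretic combinatorics of cyclically-increasing elements in $\Waf$ and the $*$-action on the parabolic quotient $\Wafo\subset\Waf$. In particular, the condition ``$u_A*x_\mu\in\Wafo$'' is invisible on the Katalan side and must be recovered from the vanishing patterns dictated by~\eqref{eq:vanish}, while the sign must be reconstructed from the parity of ``dot-cancellations'' performed during the expansion. This is exactly the bridge supplied by the parabolic-quotient result Lemma~\ref{lem:Naito2++} of Appendix~\ref{app:A}; once in place, the remaining sign and index bookkeeping is delicate but mechanical, and Theorem~\ref{thm:main} then follows by applying $\F$ to Proposition~\ref{prop:main} and invoking the uniqueness assertion of Lemma~\ref{lem:11+}.
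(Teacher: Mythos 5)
Your proposal follows essentially the same route as the paper: reduce Proposition~\ref{prop:main} to the operator identity of Lemma~\ref{lem:main} via the $0$-Hecke action formula of Lemma~\ref{lem:Naito2++} (proved in the paper by the parabolic-quotient argument of Appendix~\ref{app:A}, via Proposition~\ref{prop:generalNaito2}), and then establish that operator identity by Katalan-function manipulations. You have correctly identified both the decomposition and which lemma carries the weight.

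The one caveat is that your sketch of the Katalan-function computation is where essentially all the difficulty lies and is left at the level of a plan. The paper does not start by expanding $g_{(1^r)}$ via its Jacobi--Trudi determinant \eqref{eq:g}; instead it invokes the product rule of \cite[Lemma~3.8]{BMS} to package $g_{(1^r)}\cdot\tfg{k}_\lambda$ as a single Katalan function $K(\Psi\uplus\varnothing_r;\Psi\uplus\varnothing_r;\lambda+\ep_{\{\ell+1,\ldots,\ell+r\}})$, then applies the Diagonal Removable Lemma and \cite[Lemma~5.3]{BMS}, the Basic Straightening Rule (Lemma~\ref{lem:mirror_path}), and the Bounce-up, Absorption, and Cleaning Lemmas, and finally a nontrivial bijective identity $\sum_{a=0}^r\sum D_{\mathfrak{r}(\cdot)}\cdots T_{\mathfrak{r}(\cdot)}=\sum_{|A|\le r}T_{u_A}$. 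The ``identification of ceiling/wall toggles with the Coxeter generators'' that you describe is the content of Lemma~\ref{lem:mirror_path} and Corollary~\ref{cor:mirror_path}, and the ``delicate but mechanical bookkeeping'' is in fact the technical heart of the paper; a completed proof would need to supply all of this.
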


We apply the ring automorphism $\sigma$ 
of $\Lambda_{(k)}$ on both hand sides of \eqref{eq:vPieri} to obtain
\begin{equation}
\tilde{g}_{(1^r)}\cdot \sigma(\tfg{k}_\lambda)
=\sum_{\mu\leqk \lambda}\sum_{\substack{A\subset I,\;|A|\leq r\\
u_A*x_\mu\in \Wafo}}
(-1)^{|A|-\ell(u_A*\mu)+\ell(\mu)}
\sigma(\gcirc{k}_{u_A*x_\mu}).\label{eq:vP}
\end{equation}
Note that when we express $\sigma(\gcirc{k}_{u_A*x_\mu})$
as a linear combination of $\{\sigma(\tfg{k}_{\lambda})\}$,
$g_{u_A*x_\mu}^{(k)}$ is also 
a linear combination of 
$\{\tilde{g}_\lambda^{(k)}\}$ with the same coefficients.
Therefore we see that \eqref{eq:11+} and \eqref{eq:vP} are
exactly the same equation via the correspondence
$\sigma(\tfg{k}_{\lambda})\mapsto \tilde{g}^{(k)}_\lambda,$
and hence we have $  \sigma(\tfg{k}_{\lambda})=\tilde{g}^{(k)}_\lambda$
for all $\lambda\in \Par^k.$

\bigskip

Here is the outline of the proof Proposition \ref{prop:main}.
We compute ${g}_{(1^r)}\cdot \tfg{k}_\lambda$
by using the combinatorial theory of Katalan functions 
developed in \cite{BMS} to show the following
result, which is the technical heart of this paper (see \S\ref{sec:PfKey} below for the proof).  
\begin{lem}[Key lemma for Proposition \ref{prop:main}]\label{lem:main}
\begin{equation}
g_{(1^r)}\cdot \tfg{k}_\lambda=\sum_{A\subset I,\;|A|\leq r}T_{u_A}\cdot \tfg{k}_{\lambda}\label{eq:key}
\end{equation}
\end{lem}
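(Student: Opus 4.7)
The plan is to expand $g_{(1^r)}\cdot \tfg{k}_\lambda$ using the raising-operator Katalan function machinery, then match the resulting expression term-by-term with the $0$-Hecke expansion of the right-hand side. A preliminary observation is that since $r \leq k$, Corollary 2.17 yields $g_{(1^r)} = \tfg{k}_{(1^r)}$, so the multiplier itself carries a Katalan-function description; this is what allows combining both factors inside a single $K(\Psi; M; \gamma)$ expression.

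First, I would pass to an ambient root system $\Delta^+_{\ell+r}$ for $\ell \geq \ell(\lambda)$, placing the $(1^r)$ pattern on top of (or below) the $\lambda$ pattern, and repeatedly apply Lemma 3.7 and Lemma 3.8 to insert the missing raising operators and dots linking the two blocks. This realizes the product as a single Katalan function over $\Delta^+_{\ell+r}$, up to error terms that already vanish by the alternating property (Lemma 3.9) whenever $\gamma_{i+1} = \gamma_i + 1$ sits in a wall/ceiling region. The outcome should be a sum of $\tfg{k}_\nu$ terms, where each $\nu$ is obtained from $\lambda$ by moving one box up through a chain of adjacent rows dictated by the bounce paths of $\Delta^k(\lambda)$.

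Next, I would expand the right-hand side using Proposition 3.5: writing $T_{u_A} = \prod_{i \in A}(D_i - 1)$ (interpreting the product in the cyclically increasing order of $A$), one gets
\begin{equation*}
\sum_{A \subset I,\, |A| \leq r} T_{u_A}\cdot \tfg{k}_\lambda \;=\; \sum_{A \subset I,\, |A| \leq r}\,\sum_{B \subseteq A} (-1)^{|A|-|B|}\, D_{u_B}\cdot \tfg{k}_\lambda,
\end{equation*}
and each $D_{u_B}\cdot \tfg{k}_\lambda$ is either $\tfg{k}_{u_B\ast \lambda}$ (when $u_B \ast x_\lambda \in \Wafo$ with length increase) or $\tfg{k}_\lambda$ otherwise. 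The residues occurring as admissible ``upward moves'' are precisely those encoded by the bounce graph, which matches the combinatorics produced by the Katalan manipulation above. Matching coefficients of each basis element $\tfg{k}_\nu$ on both sides then reduces the lemma to a combinatorial identity over residues in $I$.

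The main obstacle, as I see it, is the bookkeeping of the ``otherwise'' case in Proposition 3.5: terms where either $s_i x_\lambda < x_\lambda$ or $s_i x_\lambda \notin \Wafo$ contribute $\tfg{k}_\lambda$ rather than a new basis element, and these must exactly cancel against the pieces of the Katalan-function expansion that are forced to equal $\tfg{k}_\lambda$ (or to vanish) via the alternating property in Lemma 3.9. I expect the cleanest route is an induction on $r$: the base case $r = 1$ already exercises the full bounce-path analysis — one verifies directly that $h_1\cdot \tfg{k}_\lambda = \sum_{i=0}^{k} D_i\cdot \tfg{k}_\lambda - k\, \tfg{k}_\lambda$ by a single pass through Lemmas 3.7 and 3.8 — and the inductive step follows by factoring $g_{(1^r)}$ through $g_{(1^{r-1})}\cdot g_1$ with a determinantal correction, together with the identity $(1+T_i)(1+T_j)\cdots = \sum T_{u_A}$ that underlies the defining relations of the $0$-Hecke algebra.
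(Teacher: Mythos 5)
Your high-level strategy — express $g_{(1^r)}\cdot \tfg{k}_\lambda$ as a single Katalan function via the product rule, manipulate it with the adding/removing lemmas and the alternating property, and then compare against a $0$-Hecke expansion of the right-hand side — is the right framework, and the opening observation that $g_{(1^r)}=\tfg{k}_{(1^r)}$ is correct. The actual argument in the paper follows roughly this outline: it starts from the product formula $g_{(1^r)}\cdot\tfg{k}_\lambda=\K{\Psi\uplus\varnothing_r}{\Psi\uplus\varnothing_r}{\lambda+\ep_{\{\ell+1,\ldots,\ell+r\}}}$, applies the Diagonal Removable Lemma and a decomposition lemma from BMS, and then rewrites the resulting summands using $D$'s and $T$'s.

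However there is a genuine gap, and it is exactly where you flag ``the main obstacle.'' The technical heart of the argument is the Basic Straightening Rule (Lemma \ref{lem:mirror_path}): for appropriate $p$ one has
$\K{\Delta^k(\lambda)}{\Delta^k(\lambda)}{\lambda+\ep_p}=D_{\mathfrak{r}(p)}\cdot\tfg{k}_\lambda$, proved by tracking a ``mirror path'' up the bounce graph and then invoking the Cleaning and Absorption Lemmas. Your proposal gestures at this via ``moving one box up through a chain of adjacent rows dictated by the bounce paths,'' but never states or proves the precise reduction; without it the Katalan-side computation does not terminate in a $D$/$T$ expression that can be matched to the right-hand side. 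Equally importantly, the proposed induction on $r$ cannot work as described: the operators $T_{u_A}$ and $D_i$ are linear on $\Lambda_{(k)}$ but are not ring maps, so from the inductive hypothesis $g_{(1^{r-1})}\cdot\tfg{k}_\lambda=\sum_{|A|\leq r-1}T_{u_A}\cdot\tfg{k}_\lambda$ you cannot get the case $r$ by multiplying by $g_{(1)}$; you would need a commutation rule between multiplication by $g_{(1)}$ and $T_{u_A}$, which you do not have, and the ``determinantal correction'' is never specified. Finally, the closing identity equating the Katalan-derived $D$/$T$ sum to $\sum_{|A|\leq r}T_{u_A}$ is not the naive $\prod_i(1+T_i)=\sum T_{u_A}$: the $T_i$ do not commute and $u_A$ is only defined for proper subsets, so the paper has to prove a specific cyclically-increasing rearrangement identity by an explicit bijection between index sets.
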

By the definition of $\gcirc{k}_\lambda$ the right-hand side of \eqref{eq:key}
can be written as : 
\begin{equation}
\sum_{\mu\leqk \lambda}
\sum_{A\subset I,\;|A|\leq r}T_{u_A}\cdot \gcirc{k}_{\mu}.
\label{eq:key0}
\end{equation}
Thus, in order to complete the proof of Proposition \ref{prop:main}, 
it suffices to show that 
\begin{equation*}\sum_{A\subset I,\;|A|\leq r}T_{u_A}\cdot \gcirc{k}_{\mu}=
\sum_{\substack{A\subset I,\;|A|\leq r\\
u_A*x_\mu\in \Wafo}}
(-1)^{|A|-\ell(u_A*\mu)+\ell(\mu)}
\gcirc{k}_{u_A*x_\mu}.
\end{equation*}
The final step of the proof of Proposition \ref{prop:main} is the following.

\begin{lem}\label{lem:Naito2++}
Let $A\subsetneq I$. Then
\begin{equation*}
T_{u_A}\cdot\gcirc{k}_\mu
=\begin{cases}
(-1)^{|A|-\ell(u_A*x_\mu)+\ell(\mu)}
\gcirc{k}_{u_A*x_\mu}& 
(u_A*x_\mu\in \Wafo),\\
0 & (\mbox{otherwise}).
\end{cases}
\end{equation*}
\end{lem}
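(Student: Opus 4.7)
The plan is to prove the lemma by induction on $|A|$ using the $H_{k+1}$-module structure on $\Lambda_{(k)}$ from Proposition \ref{prop:Tgcirc}. The argument is actually insensitive to the cyclically increasing nature of $u_A$; I would instead establish the same formula for an arbitrary $w \in \Waf$ (with $|A|$ replaced by $\ell(w)$) by induction on $\ell(w)$, which makes the induction cleaner since we may peel off any letter of a reduced expression without worrying about preserving the cyclic order on $A$.

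Fix any reduced expression $u_A = s_{i_1}\cdots s_{i_r}$, set $w = s_{i_2}\cdots s_{i_r}$, and split on whether $v := w * x_\mu$ belongs to $\Wafo$. Suppose first $v = x_\nu \in \Wafo$, so by induction $T_w \cdot \gcirc{k}_\mu = (-1)^{(r-1)-\ell(v)+\ell(\mu)}\gcirc{k}_\nu$. Apply $T_{i_1}$ and consider the three cases of Proposition \ref{prop:Tgcirc}: if $s_{i_1}x_\nu > x_\nu$ and $s_{i_1}x_\nu \in \Wafo$, then both $\ell(u_A)$ and $\ell(u_A * x_\mu) = \ell(s_{i_1}x_\nu)$ rise by $1$ from the inductive step, preserving the sign; if $s_{i_1}x_\nu < x_\nu$, then $u_A * x_\mu = s_{i_1} * x_\nu = x_\nu$ so $\ell(u_A * x_\mu)$ is unchanged while $\ell(u_A) = \ell(w)+1$, and the extra minus sign from $T_{i_1}\cdot \gcirc{k}_\nu = -\gcirc{k}_\nu$ compensates exactly; if $s_{i_1}x_\nu > x_\nu$ but $s_{i_1}x_\nu \notin \Wafo$, both the $H_{k+1}$-action and the Demazure product put us in the "vanishing / not in $\Wafo$" case, so both sides are zero.

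The remaining case $v \notin \Wafo$ is the main technical point, where $T_{u_A}\cdot \gcirc{k}_\mu = T_{i_1}\cdot 0 = 0$ and we must check $u_A * x_\mu = s_{i_1} * v \notin \Wafo$. If $s_{i_1}v < v$, then $s_{i_1} * v = v \notin \Wafo$, and we are done; otherwise $s_{i_1} * v = s_{i_1}v$, and the claim reduces to: the property "outside $\Wafo$" is preserved by left multiplication by a simple reflection causing an ascent. To see this, pick a right descent $s_j$ of $v$ with $j \in \{1,\dots,k\}$ (such $j$ exists since $v \notin \Wafo$) and write $v = v's_j$ with $\ell(v') = \ell(v)-1$; then $(s_{i_1}v)s_j = s_{i_1}v'$ has length at most $\ell(v')+1 < \ell(v)+1 = \ell(s_{i_1}v)$, so $s_j$ remains a right descent of $s_{i_1}v$, forcing $s_{i_1}v \notin \Wafo$. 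This one-line length count is the genuinely nontrivial ingredient; everything else is bookkeeping of signs, and the case analysis carries the induction through.
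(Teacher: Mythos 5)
Your proof is correct and essentially reproduces the paper's argument. Proposition \ref{prop:generalNaito2} also drops the cyclically-increasing hypothesis (working in fact over an arbitrary Coxeter system with parabolic quotient) and inducts by peeling generators off the left of the word, invoking the $0$-Hecke module structure of Proposition \ref{prop:Tgcirc} at each step. Where you diverge slightly is the organization of the vanishing case: you split on whether the intermediate element $v = w'*x_\mu$ lies in $\Wafo$ and, when it does not, prove directly that left-multiplying by an ascending $s_{i_1}$ keeps it outside $\Wafo$ (your length count), whereas the paper splits on whether the final element lies in $W^J$, locates the critical suffix at which the iterated $*$-products first leave $W^J$, and kills the computation there using the third case of the $H_W$-action on the basis $\{a_x\}$. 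These are two sides of the same coin: your length argument is precisely the contrapositive of Lemma \ref{lem:WJ}, which the paper invokes as well known, so the combinatorial input is identical. One incidental difference is that Proposition \ref{prop:generalNaito2} is stated for arbitrary, possibly non-reduced, sequences of generators, which is slightly more general than your formulation, but only the reduced case is needed to deduce Lemma \ref{lem:Naito2++}.
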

The proof of Lemma \ref{lem:Naito2++} is given by a general
statement (Proposition \ref{prop:generalNaito2}) that holds in the context of 
an arbitrary 
Coxeter group $(W,S)$ and its parabolic quotient.    
A detailed discussion is given in \S\ref{sec:Naito2}.

\subsection{Proof of Lemma \ref{lem:main}}\label{sec:PfKey}

This section is devoted to the proof of 
Lemma \ref{lem:main}.
\subsubsection{Basic straightening rule}
\begin{defn}[map $\mathfrak{r}$]
Define a map $\mathfrak{r}: \{1,\ldots,\ell\}\rightarrow I=\Z/(k+1)\Z$
by $\mathfrak{r}(p)=-p+1 \;\mathrm{mod}\; k+1$. 
\end{defn}
The following is the crucial combinatorial result, whose 
proof will be given in \S\ref{sec:mirror_Pf}.
\begin{lem}[Basic straightening rule]\label{lem:mirror_path} 
For $\lambda\in \Par^k_m$ and an integer $\ell-k< p\leq \ell$ with $p\geq m+2$,
\begin{eqnarray*}
\K{\Delta^k(\lambda)}{\Delta^k(\lambda)}{\lambda+\ep_p}
&=&
D_{\mathfrak{r}(p)}\cdot \tfg{k}_\lambda.
\end{eqnarray*}
Furthermore, we have 
\begin{align}
D_{\mathfrak{r}(p)}\cdot \tfg{k}_\lambda=
\tfg{k}_{s_{\mathfrak{r}(p)}x_\lambda}
&\iff
s_{\mathfrak{r}(p)}x_\lambda>x_\lambda\;\mbox{and}
\; s_{\mathfrak{r}(p)}x_\lambda
\in \Wafo\nonumber\\
&\iff
\top_{\Psi}(p)<\top_{\Psi}(p-1),\label{eq:case}
\end{align}
with $\Psi=\Delta^k(\lambda).$
\end{lem}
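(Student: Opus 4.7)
The lemma has two assertions: the Katalan-function identity
\[
K(\Psi;\Psi;\lambda+\ep_p)=D_{\mathfrak{r}(p)}\cdot \tfg{k}_\lambda, \qquad \Psi:=\Delta^k(\lambda),
\]
and the chain of equivalences \eqref{eq:case}. I plan to treat them in sequence.

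For the identity, the hypotheses $p>m=\ell(\lambda)$ and $p>\ell-k$ immediately force rows $p-1$ and $p$ of $\Psi$ to be empty, so $\Psi$ has a wall in rows $p-1,p$. I would then analyse the local column structure of $\Psi$ at columns $p-1$ and $p$, comparing their heights and the dot-multiplicities $m_{L(\Psi)}(p-1)$ and $m_{L(\Psi)}(p)$. Applying Lemma \ref{lem:dot}(i) to strip the dot at column $p$, and Lemma \ref{lem:addroot} to remove or add roots on the boundary between columns $p-1$ and $p$, one can reduce $K(\Psi;\Psi;\lambda+\ep_p)$ to a small sum of Katalan functions of the form $(\Psi',\Psi',\gamma)$. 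Depending on the local configuration, one of these terms is $\tfg{k}_\lambda$, while the remaining term either recombines into $\tfg{k}_{s_{\mathfrak{r}(p)}x_\lambda}$ in the ``good'' case, or vanishes by the alternating property in the ``bad'' case. Either way the total matches $D_{\mathfrak{r}(p)}\cdot \tfg{k}_\lambda$ as computed via Proposition \ref{prop:Tgcirc}.

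For \eqref{eq:case}, the first ``$\iff$'' is immediate from Proposition \ref{prop:Tgcirc} applied to $\tfg{k}_\lambda$. For the second ``$\iff$'', I would use the Lapointe-Morse reduced expression \eqref{eq:redexp} for $x_\lambda$ to translate ``$s_{\mathfrak{r}(p)}x_\lambda>x_\lambda$ and $s_{\mathfrak{r}(p)}x_\lambda\in \Wafo$'' into a combinatorial condition on where the residue $\mathfrak{r}(p)=1-p\bmod(k+1)$ sits relative to the factors of the reduced word. Under the standard identification between the rows of $\lambda$ and the residues appearing in its reduced expression, and between these residues and the bounce graph of $\Delta^k(\lambda)$, the condition becomes ``the bounce path through $p$ tops out strictly above the bounce path through $p-1$'', i.e.\ $\top_\Psi(p)<\top_\Psi(p-1)$.

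The main obstacle will be the first part: the careful bookkeeping of columns, dots, and removable roots in $\Delta^k(\lambda)$ near position $p$, so that the three straightening lemmas of Section \ref{sec:basicK} conspire to yield exactly $D_{\mathfrak{r}(p)}\cdot \tfg{k}_\lambda$ in every local configuration. The second equivalence is more conceptual: it rests on setting up a clean dictionary between bounce paths of $\Delta^k(\lambda)$ and the block structure of the reduced expression \eqref{eq:redexp} for $x_\lambda$, after which the translation should be routine.
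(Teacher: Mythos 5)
Your high-level plan for the second equivalence in \eqref{eq:case} is close to the paper's: the first ``$\iff$'' is indeed an immediate consequence of Proposition~\ref{prop:Tgcirc}, and the residue identification $\r(p)=\r(\mtop_\Psi(p))$ (Lemma~\ref{lem:residue_dic}, quoted from \cite[Lemma 5.7]{BMS}) does the translation between bounce-path tops and the index of the reflection, though the paper reads this off directly from the output of the straightening rather than from the Lapointe-Morse word.

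The genuine gap is in your plan for the Katalan identity. You propose to work \emph{locally} at columns $p-1$ and $p$, stripping one dot and adding/removing a couple of roots on that boundary to get ``a small sum'' of Katalan triples of the form $(\Psi',\Psi',\gamma)$. That does not suffice: the perturbation $\lambda\mapsto\lambda+\ep_p$ at the bottom row must be propagated \emph{all the way up the bounce path} before one can recognise the result as $\tfg{k}_\mu$ for some $\mu$. Concretely, the paper introduces the \emph{mirror path} $p=p_0\to p_1\to\cdots\to p_L$ and iterates the packaged \emph{Bounce-up Lemma} (Lemma~\ref{lem:basic}) $L$ times, each step inserting a temporary root $\beta_i$ and shifting the bump from row $p_{i-1}$ to row $p_i$. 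After reaching $\mtop_\Psi(p)$, the argument splits on whether $\top_\Psi(p)<\top_\Psi(p-1)$ or not, applying one more Bounce-up (now via \eqref{eq:Bounce-up2}, which also adjusts the dot multiset) or the \emph{Absorption Lemma} (Lemma~\ref{lem:case2}), and finally the \emph{Cleaning Lemma} (Lemma~\ref{lem:clean}) is used $L$ times in reverse to remove the temporarily inserted roots $\beta_L,\ldots,\beta_1$. None of this is a ``local'' calculation at column $p$; when the mirror path has length $L\geq 1$ the manipulations touch arbitrarily high rows of the diagram. Moreover, the end result is a \emph{single} Katalan function identical to $\tfg{k}_{\lambda'}$ (good case) or $\tfg{k}_\lambda$ (bad case), not a two-term sum that you later need to identify with $T_{\r(p)}\cdot\tfg{k}_\lambda+\tfg{k}_\lambda$; the decomposition $D_i=T_i+1$ is matched against the output only at the very end, via Proposition~\ref{prop:Tgcirc}. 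To fix your sketch you would need to (i) formulate the mirror-path iteration, (ii) verify that at each step the hypotheses of Lemma~\ref{lem:basic} are propagated (this uses the special structure of $\Delta^k(\lambda)$, e.g.\ Lemma~\ref{lem:special}), and (iii) incorporate the Cleaning Lemma to return the root ideal to $\Delta^k(\lambda')$ or $\Delta^k(\lambda)$.
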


\begin{cor}\label{cor:mirror_path} For $\lambda\in \Par^k_m$ and an integer $\ell-k< p\leq \ell$ with $p\geq m+2$,
\begin{eqnarray*}
\K{\Delta^k(\lambda)}{L(\Delta^k(\lambda))\sqcup\{p\}}{\lambda+\ep_p}
&=&
T_{\mathfrak{r}(p)}\cdot\tfg{k}_\lambda.
\end{eqnarray*}
\end{cor}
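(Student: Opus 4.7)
The plan is to derive Corollary \ref{cor:mirror_path} as a direct consequence of Lemma \ref{lem:mirror_path} together with Lemma \ref{lem:dot}(i), the ``remove a dot'' identity for Katalan functions. No further combinatorial input should be required.

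First I would apply Lemma \ref{lem:dot}(i) to the triple $(\Delta^k(\lambda), L(\Delta^k(\lambda)) \sqcup \{p\}, \lambda + \ep_p)$, choosing to remove the distinguished element $j = p$ from the multiset. Using the convention $K(\Psi; \mathcal{L}; \gamma) = K(\Psi; L(\mathcal{L}); \gamma)$, this yields
\begin{equation*}
K(\Delta^k(\lambda); L(\Delta^k(\lambda)) \sqcup \{p\}; \lambda + \ep_p)
= K(\Delta^k(\lambda); \Delta^k(\lambda); \lambda + \ep_p) - K(\Delta^k(\lambda); \Delta^k(\lambda); \lambda).
\end{equation*}

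Next I would identify each of the two remaining Katalan functions with known quantities. By Lemma \ref{lem:mirror_path}, the first term equals $D_{\mathfrak{r}(p)} \cdot \tfg{k}_\lambda$. The second term is, by the very definition of the closed $K$-Schur Katalan function, equal to $\tfg{k}_\lambda$. Combining these and using the relation $D_{\mathfrak{r}(p)} = T_{\mathfrak{r}(p)} + 1$ in the $0$-Hecke algebra (as set up in \S\ref{sec:zeroH}), we obtain
\begin{equation*}
K(\Delta^k(\lambda); L(\Delta^k(\lambda)) \sqcup \{p\}; \lambda + \ep_p)
= D_{\mathfrak{r}(p)} \cdot \tfg{k}_\lambda - \tfg{k}_\lambda
= T_{\mathfrak{r}(p)} \cdot \tfg{k}_\lambda,
\end{equation*}
which is exactly the claim.

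There is essentially no obstacle here: the corollary is a one-line consequence of the preceding lemma once one recognizes that adjoining the dot in column $p$ converts the operator $D_{\mathfrak{r}(p)} = T_{\mathfrak{r}(p)} + 1$ into $T_{\mathfrak{r}(p)}$ via the $\gamma \mapsto \gamma - \ep_p$ term of Lemma \ref{lem:dot}(i). All the real work is hidden in Lemma \ref{lem:mirror_path}, whose proof is deferred to \S\ref{sec:mirror_Pf}.
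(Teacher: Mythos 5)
Your proof is correct and is essentially identical to the paper's own: both apply Lemma \ref{lem:dot}(i) with $j=p$ to split off a $-\tfg{k}_\lambda$ term, then invoke Lemma \ref{lem:mirror_path} for the first term and the relation $D_{\mathfrak{r}(p)} = T_{\mathfrak{r}(p)} + 1$ to conclude. No further comment is needed.
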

\begin{proof}
We apply Lemma \ref{lem:dot} (i) to deduce that
\begin{eqnarray*}
&&\K{\Delta^k(\lambda)}{L(\Delta^k(\lambda))\sqcup\{p\}}{\lambda+\ep_p}\nonumber\\
&=&
\K{\Delta^k(\lambda)}{\Delta^k(\lambda)}{\lambda+\ep_p}
-
\K{\Delta^k(\lambda)}{\Delta^k(\lambda)}{\lambda}\nonumber\\
&=&D_{\mathfrak{r}(i)}\cdot\tfg{k}_{\lambda}
-\tfg{k}_\lambda\nonumber\quad(\mbox{by Lemma}\;\ref{lem:mirror_path} )\\
&=&
T_{\mathfrak{r}(i)}\cdot \tfg{k}_\lambda.
\end{eqnarray*}
\end{proof}

\ytableausetup{smalltableaux}

For the reader's convenience, we give some examples showing 
how the proof of Lemma \ref{lem:mirror_path} goes. 
 
\begin{example}\label{ex:strai} Let $k=4$, 
$\lambda=(3,2,1,0,0,0),$ 
$\Psi:=\Delta^k(\lambda), M=L(\Psi).$
Note that $\beta=(2,4)$ is an addable root of $\Psi$.  
Consider $\gamma=\lambda+\ep_5=(3,2,1,0,1,0).$
By using Lemma \ref{lem:addroot} (i), we have 
\begin{equation}
\K{\Psi}{M}{\gamma}=
\K{\Psi\cup \beta}{M}{\gamma}
-\K{\Psi\cup \beta}{M}{\gamma+\ep_2-\ep_4}.\label{eq:sum}
\end{equation}
$$
\begin{ytableau}
3 & & *(gray) \bullet& *(gray) \bullet & *(gray) \bullet & *(gray) \bullet\\
 & *(magenta)2 & && *(gray) \bullet & *(gray) \bullet \\
 &  & 1&&  & \\
 &  & &*(yellow){0}&  & \\
 &  && &*(magenta){1}& \\
 &  & &&  & 0 \\
\end{ytableau}
=\begin{ytableau}
3 & & *(gray) \bullet& *(gray) \bullet & *(gray) \bullet & *(gray) \bullet\\
 &{2} & &*(gray) & *(gray) \bullet & *(gray) \bullet \\
 &  & 1&&  & \\
 &  & &{0}&  & \\
 &  & && {1}& \\
 &  & &&  & 0 \\
\end{ytableau}
-\begin{ytableau}
3 & & *(gray) \bullet& *(gray) \bullet & *(gray) \bullet & *(gray) \bullet\\
 &{3} & &*(gray) & *(gray) \bullet & *(gray) \bullet \\
 &  & 1&&  & \\
 &  & &{-\!1}&  & \\
 &  & && 1& \\
 &  & &&  & 0 \\
\end{ytableau}.
$$
The first term of \eqref{eq:sum} vanishes by \eqref{eq:vanish} with $i=4$.
The second term equals 
$$(-1)^2\K{\Psi\cup \beta}{M}
{s_4(\gamma+\ep_2-\ep_4)-\ep_4+\ep_5}
=\K{\Psi\cup \beta}{M}
{\mu}
$$
with $\mu:=(3,3,1,0,0,0)$, by \eqref{eq:alt} with $i=4$.
Hence we have
$$\K{\Psi}{M}{\lambda+\ep_5}=
\K{\Psi\cup \beta}{M}{\mu}=
\begin{ytableau}
3 & & *(gray) \bullet& *(gray) \bullet & *(gray) \bullet & *(gray) \bullet\\
 &{3} & &*(gray) & *(gray) \bullet & *(gray) \bullet \\
 &  & 1&&  & \\
 &  & &0&  & \\
 &  & && 0& \\
 &  & &&  & 0 \\
\end{ytableau}.
$$
Next, we apply Lemma \ref{lem:dot} (ii) to it with $j=4$ to get
\begin{equation}
\K{\Psi\cup \beta}{M\sqcup\{4\}}
{\mu}
+
\K{\Psi\cup \beta}{M}
{(3,3,1,-\!1,0,0)}.\label{eq:sum2}
\end{equation}
$$
\begin{ytableau}
3 & & *(gray) \bullet& *(gray) \bullet & *(gray) \bullet & *(gray) \bullet\\
 &{3} & &*(gray) \bullet& *(gray) \bullet & *(gray) \bullet \\
 &  & 1&&  & \\
 &  & &0&  & \\
 &  & && 0& \\
 &  & &&  & 0 \\
\end{ytableau}
+
\begin{ytableau}
3 & & *(gray) \bullet& *(gray) \bullet & *(gray) \bullet & *(gray) \bullet\\
 &{3} & &*(gray) & *(gray) \bullet & *(gray) \bullet \\
 &  & 1&&  & \\
 &  & &-\!1&  & \\
 &  & && 0& \\
 &  & &&  & 0 \\
\end{ytableau}.
$$
The second term of \eqref{eq:sum2} vanishes by \eqref{eq:vanish}
with $i=4$.
Noting $\Psi\cup \beta=\Delta^4(\mu)$ and $M\sqcup\{4\}=L(\Delta^4(\mu))$, we finally obtain
$\K{\Psi\cup \beta}{M\sqcup\{4\}}{\mu}=
\K{\Delta^4(\mu)}{L(\Delta^4(\mu))}{\mu}=\tfg{4}_{(3,3,1)}.
$
$$
\begin{ytableau}
3 & & *(gray) \bullet& *(gray) \bullet & *(gray) \bullet & *(gray) \bullet\\
 &{3} & &*(gray)\bullet & *(gray) \bullet & *(gray) \bullet \\
 &  & 1&&  & \\
 &  & &0&  & \\
 &  & && 0& \\
 &  & &&  & 0 \\
\end{ytableau}
$$
\end{example}

\begin{example} Let $k=4$, 
and $(\Psi,M,\lambda)$ the same as Example \ref{ex:strai}. 
Consider $\gamma=\lambda+\ep_6=(3,2,1,0,0,1).$
We can apply Lemma \ref{lem:dot} (i) with $j=5$ to get
\begin{equation}
\K{\Psi}{M}{\gamma}=
\K{\Psi}{M\setminus\{5\}}{\gamma}
-\K{\Psi}{M\setminus\{5\}}{\gamma-\ep_5},
\label{eq:sum3}
\end{equation}
depicted by 
$$
\begin{ytableau}
3 & & *(gray) \bullet& *(gray) \bullet & *(gray) \bullet & *(gray) \bullet\\
 & *(yellow)2 & && *(gray) \bullet & *(gray) \bullet \\
 &  & 1&&  & \\
 &  & &{0}&  & \\
 &  & && *(yellow){0}& \\
 &  & &&  & *(magenta)1 \\
\end{ytableau}
=\begin{ytableau}
3 & & *(gray) \bullet& *(gray) \bullet & *(gray) \bullet & *(gray) \bullet\\
 &{2} & & & *(gray)  & *(gray) \bullet \\
 &  & 1&&  & \\
 &  & &{0}&  & \\
 &  & && {0}& \\
 &  & &&  & 1 \\
\end{ytableau}
-\begin{ytableau}
3 & & *(gray) \bullet& *(gray) \bullet & *(gray) \bullet & *(gray) \bullet\\
 &{2} & & & *(gray) & *(gray) \bullet \\
 &  & 1&&  & \\
 &  & &0&  & \\
 &  & && -\!1& \\
 &  & &&  & 1 \\
\end{ytableau}.$$
The first term vanishes by \eqref{eq:vanish} with $i=5$.
Hence we have 
$$\K{\Psi}{M}{\gamma}=-\K{\Psi}{M\setminus\{5\}}{\gamma-\ep_5}
=-\begin{ytableau}
3 & & *(gray) \bullet& *(gray) \bullet & *(gray) \bullet & *(gray) \bullet\\
 &{2} & & & *(gray)  & *(gray) \bullet \\
 &  & 1&&  & \\
 &  & &0&  & \\
 &  & && -\!1& \\
 &  & &&  & 1 \\
\end{ytableau}
=\begin{ytableau}
3 & & *(gray) \bullet& *(gray) \bullet & *(gray) \bullet & *(gray) \bullet\\
 &{2} & & & *(gray)  & *(gray) \bullet \\
 &  & 1&&  & \\
 &  & &0&  & \\
 &  & && 0& \\
 &  & &&  & 0 \\
\end{ytableau},
$$
where we used \eqref{eq:alt} for the last equality with $i=5$.
Thus, we obtain
$$
\begin{ytableau}
3 & & *(gray) \bullet& *(gray) \bullet & *(gray) \bullet & *(gray) \bullet\\
 &{2} & & & *(gray)  & *(gray) \bullet \\
 &  & 1&&  & \\
 &  & &0&  & \\
 &  & && 0& \\
 &  & &&  & 0 \\
\end{ytableau}
=
\begin{ytableau}
3 & & *(gray) \bullet& *(gray) \bullet & *(gray) \bullet & *(gray) \bullet\\
 &{2} & & & *(gray)\bullet  & *(gray) \bullet \\
 &  & 1&&  & \\
 &  & &0&  & \\
 &  & && 0& \\
 &  & &&  & 0 \\
\end{ytableau}
-\begin{ytableau}
3 & & *(gray) \bullet& *(gray) \bullet & *(gray) \bullet & *(gray) \bullet\\
 &{2} & & & *(gray)  & *(gray) \bullet \\
 &  & 1&&  & \\
 &  & &0&  & \\
 &  & && -\!1& \\
 &  & &&  & 0 \\
\end{ytableau}=
\begin{ytableau}
3 & & *(gray) \bullet& *(gray) \bullet & *(gray) \bullet & *(gray) \bullet\\
 &{2} & & & *(gray)\bullet  & *(gray) \bullet \\
 &  & 1&&  & \\
 &  & &0&  & \\
 &  & && 0& \\
 &  & &&  & 0 \\
\end{ytableau}=\tfg{4}_{(3,2,1)},
$$
where we again used \eqref{eq:vanish} with $i=5$ for the last equality.
\end{example}

\subsubsection{Some lemmas}

\begin{lem}[Bounce-up Lemma]\label{lem:basic}
Let $(\Psi,M,\gamma)$ be a Katalan triple, and  
$p\rightarrow q$ a bounce edge of $\Psi$, such that 
\begin{itemize}
\item[(a)] $\beta:=(q,p-1)$ is an addable root of $\Psi$,
\item[(b)] $\gamma_{p}=\gamma_{p-1}+1$,
\item[(c)] $m_M(p)=m_M(p-1)+1$,
\item[(d)] $\Psi$ has a wall in rows $p-1,p$. 
\end{itemize}Then, we have
\begin{eqnarray}
\K{\Psi}{M}{\gamma}
&=&\K{\Psi\cup \beta}{M}{\gamma+\ep_q-\ep_{p}}\label{eq:Bounce-up1}\\
&=&\K{\Psi\cup \beta}{M\sqcup\{p-1\}}{\gamma+\ep_q-\ep_{p}}.\label{eq:Bounce-up2}
\end{eqnarray}
\end{lem}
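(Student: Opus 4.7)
My plan is to deduce both identities by applying Lemma \ref{lem:addroot}(i) once, Lemma \ref{lem:dot}(ii) once, and the alternating property twice; the entire argument is driven by the alternating package at the index $i = p-1$ for the enlarged root ideal $\Psi \cup \beta$.

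For \eqref{eq:Bounce-up1}, I would first apply Lemma \ref{lem:addroot}(i) with the addable root $\beta = (q, p-1)$ to split
\begin{equation*}
\K{\Psi}{M}{\gamma} \;=\; \K{\Psi\cup\beta}{M}{\gamma} \;-\; \K{\Psi\cup\beta}{M}{\gamma+\beta}.
\end{equation*}
To verify the ceiling/wall hypotheses of the alternating property for $\Psi\cup\beta$ at index $p-1$, I observe: the bounce edge $p \to q$ means $(q,p)$ is a removable (equivalently minimal) root of $\Psi$, so column $p$ of $\Psi$ has length exactly $q$; addability of $\beta$ forces column $p-1$ of $\Psi$ to have length $q-1$ and hence length $q$ in $\Psi\cup\beta$, giving a ceiling in columns $p-1,p$. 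Because $\beta$ sits in row $q$ with $q < p-1$, the augmentation leaves rows $p-1, p$ untouched, so hypothesis (d) gives the wall in rows $p-1, p$. Condition (c) supplies the multiplicity condition, and then (b) together with \eqref{eq:vanish} forces $\K{\Psi\cup\beta}{M}{\gamma} = 0$. Applying \eqref{eq:alt} to the remaining summand and computing $s_{p-1}(\gamma+\beta) - \ep_{p-1} + \ep_p = \gamma + \ep_q - \ep_p$ converts the minus sign into the desired equality.

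For \eqref{eq:Bounce-up2}, I would expand the right-hand side of \eqref{eq:Bounce-up1} by Lemma \ref{lem:dot}(ii) with $j = p-1$, producing the asserted term together with an error term $\K{\Psi\cup\beta}{M}{\gamma+\ep_q-\ep_p-\ep_{p-1}}$. In that error term the $(p-1)$th and $p$th entries of the shifted $\gamma$ become $\gamma_{p-1}-1$ and $\gamma_{p-1}$ respectively, so the hypothesis of \eqref{eq:vanish} is again met (the ceiling/wall/multiplicity data are independent of $\gamma$) and the error vanishes. I expect the only mildly technical point to be the column-length bookkeeping that yields the ceiling; once that is in place, the lemma reduces to a disciplined application of the three standard moves.
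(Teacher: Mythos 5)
Your proof is correct and follows exactly the same route as the paper's: split with Lemma \ref{lem:addroot}(i), kill the first term via \eqref{eq:vanish}, straighten the second via \eqref{eq:alt} to get \eqref{eq:Bounce-up1}, then split with Lemma \ref{lem:dot}(ii) and kill the residual term again via \eqref{eq:vanish}. The only difference is that you spell out the column-length bookkeeping behind the ceiling in columns $p-1,p$ of $\Psi\cup\beta$, which the paper asserts without detail.
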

\begin{proof}
Since $\beta$ is a removable root, we have, by Lemma \ref{lem:addroot} (i), 
\begin{equation}\label{eq:bounce_lemma_1}
\K{\Psi}{M}{\gamma}
=\K{\Psi\cup \beta}{M}{\gamma}
-\K{\Psi\cup \beta}{M}{\gamma+\beta}.
\end{equation}
Note that $\Psi\cup\beta$ has a ceiling in columns $p-1,p$.
Hence, by (c) and (d), we can apply \eqref{eq:vanish} to the first term of \eqref{eq:bounce_lemma_1}, which then vanishes.
Applying \eqref{eq:alt} to the second term, we obtain \eqref{eq:Bounce-up1}.

By Lemma \ref{lem:dot} (ii) with $j=p-1$, the right-hand side of \eqref{eq:Bounce-up1}
equals
$$
\K{\Psi\cup \beta}{M\sqcup\{p-1\}}{\gamma+\ep_q-\ep_p}
+
\K{\Psi\cup \beta}{M}{\gamma+\ep_q-\ep_{p-1}-\ep_p},
$$
in which the second term vanishes by \eqref{eq:vanish} with $i=p-1$.
Therefore, we obtain \eqref{eq:Bounce-up2}.
\end{proof}

\begin{example} 
The following equation is given by applying Lemma \ref{lem:basic} with $p=6$. 
Here, $q=3$ and $\beta=(3,5).$
$$
       \begin{ytableau}
3 & & *(lightgray) \bullet& *(lightgray) \bullet & *(lightgray) \bullet & *(lightgray) \bullet& *(lightgray) \bullet& *(lightgray) \bullet & *(lightgray) \bullet\\
 & *(yellow)2 & & & *(lightgray) \bullet  & *(lightgray) \bullet& *(lightgray) \bullet& *(lightgray) \bullet& *(lightgray) \bullet\\
 &  & *(magenta)2&& \beta &*(lightgray) \bullet&*(lightgray) \bullet&*(lightgray)\bullet&*(lightgray)\bullet
 \\
 &  &   &1&  &&&*(lightgray)\bullet& *(lightgray) \bullet
\\
   &  & & & *(yellow)1 &&&& *(lightgray) \bullet\\
    &  & &&    &*(magenta)2&&&  *(lightgray) \\
     &  & &&   & &0&&  \\
      &  & &&  &&&0& \\
       &  & &&  &&&&0 
       \end{ytableau}
       =       \begin{ytableau}
3 & & *(lightgray) \bullet& *(lightgray) \bullet & *(lightgray) \bullet & *(lightgray) \bullet& *(lightgray) \bullet& *(lightgray) \bullet & *(lightgray) \bullet\\
 & *(yellow)2 & & & *(lightgray) \bullet  & *(lightgray) \bullet& *(lightgray) \bullet& *(lightgray) \bullet& *(lightgray) \bullet\\
 &  & *(magenta)3&& *(lightgray) &*(lightgray) \bullet&*(lightgray) \bullet&*(lightgray)\bullet&*(lightgray)\bullet
 \\
 &  &   &1&  &&&*(lightgray)\bullet& *(lightgray) \bullet
\\
   &  & & & *(yellow)1 &&&& *(lightgray) \bullet\\
    &  & &&    &*(magenta)1&&&  *(lightgray) \\
     &  & &&   & &0&&  \\
      &  & &&  &&&0& \\
       &  & &&  &&&&0 
       \end{ytableau} =       \begin{ytableau}
3 & & *(lightgray) \bullet& *(lightgray) \bullet & *(lightgray) \bullet & *(lightgray) \bullet& *(lightgray) \bullet& *(lightgray) \bullet & *(lightgray) \bullet\\
 & *(yellow)2 & & & *(lightgray) \bullet  & *(lightgray) \bullet& *(lightgray) \bullet& *(lightgray) \bullet& *(lightgray) \bullet\\
 &  & *(magenta)3&& *(lightgray)\bullet &*(lightgray) \bullet&*(lightgray) \bullet&*(lightgray)\bullet&*(lightgray)\bullet
 \\
 &  &   &1&  &&&*(lightgray)\bullet& *(lightgray) \bullet
\\
   &  & & & *(yellow)1 &&&& *(lightgray) \bullet\\
    &  & &&    &*(magenta)1&&&  *(lightgray) \\
     &  & &&   & &0&&  \\
      &  & &&  &&&0& \\
       &  & &&  &&&&0 
       \end{ytableau}.
       $$
\end{example}

\begin{lem}[Absorption Lemma, {\cite[Lemma 4.4]{BMS}}]\label{lem:case2} Let $(\Psi,M,\gamma)$ be a Katalan triple, and 
$p\geq 2$ such that 
\begin{itemize} 
\item[(a)] $\top_{\Psi}(p)=p$,
\item[(b)] $\gamma_{p}=\gamma_{p-1}+1$,
\item[(c)] $m_M(p)=m_M(p-1)$,
 \item[(d)] $\Psi$ has a wall in rows $p-1,p$.
\end{itemize}
 Then, 
\begin{equation*}
\K{\Psi}{M}{\gamma}
=\K{\Psi}{M}{\gamma-\ep_{p}}.
\end{equation*}
\end{lem}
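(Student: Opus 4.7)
The plan is to reduce the claim to a single application of Lemma~\ref{lem:dot}~(ii) followed by a vanishing via the alternating property \eqref{eq:vanish}. Specifically, applying Lemma~\ref{lem:dot}~(ii) with $j=p$ gives
\begin{equation*}
K(\Psi;M;\gamma) \;=\; K(\Psi;M\sqcup\{p\};\gamma) \;+\; K(\Psi;M;\gamma-\ep_p),
\end{equation*}
so the lemma reduces to showing that $K(\Psi;M\sqcup\{p\};\gamma)=0$.

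To prove this vanishing, I would invoke \eqref{eq:vanish} at the index $i=p-1$. The three structural hypotheses required are: (a$'$) $\Psi$ has a ceiling in columns $p-1,p$; (b$'$) $\Psi$ has a wall in rows $p-1,p$; (c$'$) $m_{M\sqcup\{p\}}(p)=m_{M\sqcup\{p\}}(p-1)+1$. Hypothesis (b$'$) is exactly assumption (d). For (a$'$), the hypothesis $\top_\Psi(p)=p$ means that there is no bounce edge starting at $p$, and by the ``obvious remark'' recorded just before Lemma~\ref{lem:addroot} this forces a ceiling in columns $p-1,p$. For (c$'$), using assumption (c) one computes
\begin{equation*}
m_{M\sqcup\{p\}}(p) \;=\; m_M(p)+1 \;=\; m_M(p-1)+1 \;=\; m_{M\sqcup\{p\}}(p-1)+1.
\end{equation*}
Finally, assumption (b) asserts $\gamma_p=\gamma_{p-1}+1$, which is precisely the condition making \eqref{eq:vanish} applicable. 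Thus $K(\Psi;M\sqcup\{p\};\gamma)=0$, and the desired identity follows.

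There is essentially no obstacle here; the proof is a bookkeeping exercise matching hypotheses (a)--(d) to the inputs of \eqref{eq:vanish}. The only mildly subtle point is recognizing that (a), phrased in terms of the bounce graph (no edge out of $p$), is the same as the ceiling hypothesis needed by the alternating property — but this is exactly the content of the remark placed immediately before the ``Adding or removing a root'' lemma, so it is available for free.
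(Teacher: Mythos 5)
Your proof is correct and in fact takes a noticeably shorter route than the one in the paper. The paper proves this lemma by first removing a dot at column $p-1$ via Lemma~\ref{lem:dot}~(i), using \eqref{eq:vanish} to kill the first resulting term, rewriting the surviving term with \eqref{eq:alt}, and then re-adding a dot via Lemma~\ref{lem:dot}~(ii) and applying \eqref{eq:vanish} once more — five steps in all. You instead add a dot at column $p$ via Lemma~\ref{lem:dot}~(ii), which immediately produces the decomposition $K(\Psi;M;\gamma)=K(\Psi;M\sqcup\{p\};\gamma)+K(\Psi;M;\gamma-\ep_p)$, and dispose of the first summand with a single application of \eqref{eq:vanish} at $i=p-1$; the hypothesis verification you give (ceiling from (a) via the remark before Lemma~\ref{lem:addroot}, wall from (d), multiplicity bump from (c), $\gamma_p=\gamma_{p-1}+1$ from (b)) is exactly right. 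A further advantage of your version is that it uses only part (ii) of Lemma~\ref{lem:dot}, which is stated for arbitrary $1\le j\le\ell$, whereas the paper's first step invokes part (i) with $j=p-1$ and hence implicitly assumes $p-1\in M$ (a condition that holds in the applications inside the proof of Lemma~\ref{lem:mirror_path} but is not among the stated hypotheses of the Absorption Lemma). So your argument is both shorter and applicable under strictly weaker bookkeeping assumptions.
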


\begin{proof}
By (a), $\Psi$ has a ceiling in columns $p-1,p$.
From Lemma \ref{lem:dot} (i) with $j=p-1$, it follows that
\begin{equation}
\K{\Psi}{M}{\gamma}
=\K{\Psi}{M\setminus \{p-1\}}{\gamma}
-\K{\Psi}{M\setminus \{p-1\}}{\gamma-\ep_{p-1}}.
\label{eq:case2}
\end{equation}
Let $M'=M\setminus\{p-1\}$. 
Then, we have $m_{M'}(p)=m_{M'}(p-1)+1$, which implies that we can apply \eqref{eq:vanish} with $i=p-1$ to the first term of \eqref{eq:case2}.
Hence we obtain 
$$\K{\Psi}{M}{\gamma}=-\K{\Psi}{M'}{\gamma-\ep_{p-1}}
=\K{\Psi}{M'}{\gamma-\ep_p},$$
where we used \eqref{eq:alt} with $i=p-1$ for the last equality.
By Lemma \ref{lem:dot} (ii) with $j=p-1$, we deduce that 
$$
\K{\Psi}{M'}{\gamma-\ep_p}
=\K{\Psi}{M}{\gamma-\ep_p}
+\K{\Psi}{M'}{\gamma-\ep_p-\ep_{p-1}},
$$
in which the second term vanishes by \eqref{eq:vanish} with $i=p-1$.
\end{proof}

\begin{example}From Lemma \ref{lem:case2} with $p=6$, we have
$$
\begin{ytableau}
3 & & *(gray) \bullet& *(gray) \bullet & *(gray) \bullet & *(gray) \bullet\\
 & *(yellow)2 & && *(gray) \bullet & *(gray) \bullet \\
 &  & 1&&  & \\
 &  & &{0}&  & \\
 &  & && *(yellow){0}& \\
 &  & &&  & *(magenta)1 \\
\end{ytableau}=\begin{ytableau}
3 & & *(gray) \bullet& *(gray) \bullet & *(gray) \bullet & *(gray) \bullet\\
 & *(yellow)2 & && *(gray) \bullet & *(gray) \bullet \\
 &  & 1&&  & \\
 &  & &{0}&  & \\
 &  & && *(yellow){0}& \\
 &  & &&  & *(magenta)0 \\
\end{ytableau}.
$$
\end{example}
\subsubsection{Mirror edges and mirror paths}

\begin{defn}[Mirror edge, mirror path, mirror top] Let $\Psi$ be a root ideal, and
$e: =(p\rightarrow q)$ a bounce edge of $\Psi$.
We say that $e$ is a {\it mirror edge\/} if
$p-1\rightarrow q-1$ is also a bounce edge of $\Psi$. 
A bounce path $p=p_0\rightarrow p_1\rightarrow\cdots \rightarrow p_L$ is a {\it mirror path\/} of {\it length\/} $L$ of $\Psi$
if $p_{i}\rightarrow p_{i+1}$ is a mirror edge for each $0\leq i\leq L-1$.
If such $L$ is maximal, then we define $\mtop_{\Psi}(p)=p_{L}$ and call it the {\it mirror top\/} of $p.$ 
In particular, if $p$ is not contained in any mirror path, we have $\mtop_{\Psi}(p)=p$.
\end{defn}

\begin{example}
For the root ideal $\Psi$ illustrated in the following picture, $10\rightarrow 6\rightarrow 3$ is
       a mirror path having the maximal length $2$.
       Hence, we have $\mtop_{\Psi}(10)=3.$
$$
\begin{ytableau}
 {}& & *(lightgray) & *(lightgray)& *(lightgray) & *(lightgray) & *(lightgray) & *(lightgray) & *(lightgray) & *(lightgray) \\
 & *(yellow) & & & *(lightgray) & *(lightgray) & *(lightgray) & *(lightgray) & *(lightgray) & *(lightgray) \\
 &  & *(magenta)&&  &*(lightgray) &*(lightgray) &*(lightgray)&*(lightgray)
 & *(lightgray)  \\
 &  &   &&  &&&*(lightgray)& *(lightgray) 
 & *(lightgray) \\
   &  & & & *(yellow) &&&& *(lightgray) & *(lightgray) \\
    &  & &&    &*(magenta)&&&  & *(lightgray)\\
     &  & &&   & &&&&  \\
      &  & &&  &&&& &\\
       &  & &&  &&&&*(yellow)& \\ 
       &  & &&  &&&&&*(magenta)   \\ 
       \end{ytableau}.$$
\end{example}

It is easy to see  the following. 
\begin{lem}\label{lem:easy}
Let $\Psi$ be a root ideal, and $p\rightarrow q$ a mirror edge
of $\Psi$. Then $\beta:=(q,p-1)$ is an addable root of $\Psi.$
Furthermore, $\Psi\cup \{\beta\}$ 
has a wall in rows $q-1,q$, and a ceiling in columns $p-1,p$. 
\end{lem}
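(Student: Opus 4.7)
The plan is to verify the three assertions by translating the mirror-edge hypothesis into concrete constraints on the staircase shape of $\Psi$. The key dictionary is that a root $(i,j) \in \Psi$ is removable iff it is minimal in $\Psi$, equivalently iff $(i,j-1) \notin \Psi$ and $(i+1,j) \notin \Psi$ (when these make sense). The assumption that both $p \to q$ and $p-1 \to q-1$ are bounce edges of $\Psi$ then says that $(q,p)$ and $(q-1,p-1)$ both lie in $\Psi$ as minimal elements. Since $\beta := (q,p-1) < (q,p)$ and $(q,p)$ is minimal in $\Psi$, we immediately get $\beta \notin \Psi$.

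To show that $\Psi \cup \{\beta\}$ is again a root ideal, I would enumerate the roots $\gamma > \beta$: these are precisely the $(i,j)$ with $i \le q$, $j \ge p-1$, $(i,j) \neq \beta$. If $j \ge p$ then $\gamma \ge (q,p) \in \Psi$; if $j = p-1$ and $i < q$, then $\gamma \ge (q-1,p-1) \in \Psi$. In either case $\gamma \in \Psi$, so $\Psi \cup \{\beta\}$ is upper-closed and $\beta$ is addable.

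For the wall and ceiling in $\Psi \cup \{\beta\}$ I just compare row and column starts. Row $q-1$ of $\Psi$ begins at column $p-1$ by minimality of $(q-1,p-1)$, while row $q$ of $\Psi$ begins at column $p$ by minimality of $(q,p)$; adjoining $\beta = (q,p-1)$ moves the start of row $q$ down to $p-1$, so rows $q-1$ and $q$ of $\Psi \cup \{\beta\}$ have the same length, yielding the wall. Dually, column $p$ of $\Psi$ has bottom box in row $q$ and column $p-1$ of $\Psi$ has bottom box in row $q-1$; adjoining $\beta$ extends column $p-1$ down to row $q$, so columns $p-1$ and $p$ of $\Psi \cup \{\beta\}$ have the same length, yielding the ceiling.

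I do not expect any substantive obstacle — the lemma reduces to direct bookkeeping on the shape of $\Psi$ once removable roots are reinterpreted as minimal elements. The only mild point of care is the implicit requirement $p - q \ge 2$, needed to ensure $\beta = (q,p-1)$ is a genuine positive root; this will be automatic in the applications to the mirror paths of $\Delta^k(\lambda)$ that enter the proof of Lemma~\ref{lem:mirror_path}.
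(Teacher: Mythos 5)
The paper states this lemma without proof, prefacing it with ``It is easy to see the following,'' so there is no argument in the text to compare against. Your proof is correct and supplies exactly the intended direct verification: identifying removable roots with minimal elements, checking upper-closure of $\Psi\cup\{\beta\}$ via the two cases $j\ge p$ and $j=p-1$, and reading off the wall in rows $q-1,q$ and ceiling in columns $p-1,p$ by comparing where rows start and columns end. Your observation that the statement implicitly requires $p-q\ge 2$ (so that $\beta=(q,p-1)$ is an actual positive root) is also a fair point of care; in the setting where the lemma is used, namely mirror edges of $\Delta^k(\lambda)$ arising in the proof of Lemma~\ref{lem:mirror_path}, this is indeed automatic because the mirror-edge condition $\lambda_{q-1}=\lambda_q$ together with $\lambda_q = k+1-(p-q)$ and $\lambda\in\Par^k$ forces $\lambda_q\le k-1$.
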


\begin{lem}\label{lem:max} Let $\Psi$ be a root ideal, and $z$ the lowest nonempty row of $\Psi$.
Assume that $\Psi$ is wall-free in nonempty rows, \textit{i.e.,}
$\Psi$ has no wall between the $1$st and $z$-th rows.
Then, for $p\geq 2$, we have
\begin{equation*}
\mtop_\Psi(p)=\max\{\top_\Psi(p),\top_\Psi(p-1)+1
\}.
\end{equation*}
\end{lem}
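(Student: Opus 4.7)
The plan is to split the proof into a rigidity step, an iteration step, and a final tallying.

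For rigidity, I would first show that, under the wall-free hypothesis, whenever $p \to q$ is a bounce edge of $\Psi$ and $\up_\Psi(p-1)$ is defined, one automatically has $\up_\Psi(p-1) = q - 1$, so that $p \to q$ is in fact a mirror edge. The key observation is that removability of $(q, p)$ and of $(q', p-1)$, where $q' := \up_\Psi(p-1)$, pins down the starting columns of rows $q, q'$ together with those of $q+1, q'+1$. Combining these with the monotonicity of the starting-column sequence of a root ideal (a consequence of the upper order ideal property) forces $q > q'$ and forces every row $i$ with $q' < i \le q$ to start at column $p$. Those rows are all nonempty (since $i \le q < p$), so they form a block of nonempty rows of constant length, which the wall-free hypothesis forbids unless $q = q' + 1$.

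For iteration, I let $p = p_0, p_1, \ldots$ and $p-1 = q_0, q_1, \ldots$ be the bounce paths obtained by iterating $\up_\Psi$, terminating at $\top_\Psi(p)$ and $\top_\Psi(p-1)$ respectively. The rigidity step together with an induction on $i$ shows that so long as both $\up_\Psi(p_i)$ and $\up_\Psi(q_i)$ exist, the step $p_i \to p_{i+1}$ is a mirror edge and $q_{i+1} = p_{i+1} - 1$. Consequently the mirror path from $p$ coincides with the common prefix of these two bounce paths (with $q_i = p_i - 1$ throughout), and it terminates at the first index $L$ for which either $p_L = \top_\Psi(p)$ or $p_L - 1 = \top_\Psi(p-1)$. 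Since the values $p_0 > p_1 > \cdots$ are strictly decreasing, this first $L$ yields $p_L = \max\{\top_\Psi(p),\, \top_\Psi(p-1) + 1\}$. The degenerate cases in which $\up_\Psi(p)$ or $\up_\Psi(p-1)$ fails to exist produce $\mtop_\Psi(p) = p$, which is still consistent with the formula since $\top_\Psi(p) \le p$ and $\top_\Psi(p-1) \le p-1$.

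The hard part will be the rigidity step: one must carefully extract from the two removability conditions the geometric fact that the rows strictly between $q'$ and $q$ share the starting column $p$, verify these rows are nonempty, and then invoke the wall-free hypothesis on precisely this block. Once rigidity is in hand, both the synchronous iteration and the identification of the stopping value of the mirror path are essentially bookkeeping.
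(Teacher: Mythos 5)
Your proposal is correct and follows essentially the same approach as the paper's proof: the paper similarly states the rigidity observation (that a bounce edge $p\to q$ with $\up_\Psi(p-1)$ defined forces $\up_\Psi(p-1)=q-1$ under wall-freeness) as a brief remark, then synchronizes the two bounce paths termwise and concludes by comparing their lengths. Your version is merely more explicit about why the rigidity holds, via the non-decreasing sequence of starting columns of the root ideal, which is a reasonable level of detail to fill in.
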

\begin{proof}
Let $p\rightarrow q$ be a bounce edge of $\Psi$.
Note that, whenever $\up_\Psi(p-1)$ exists, it satisfies $\up_\Psi(p-1)=q-1$; otherwise, $\Psi$ has a wall in rows $q-1,q$.

Let $p=p_0\rightarrow p_1\rightarrow \cdots\rightarrow
p_L$ and $p-1=q_0\rightarrow q_1\rightarrow \cdots\rightarrow
q_{L'}$ be bounce paths of maximal lengths. 
By the above remark, we have 
$q_i=p_i-1$ for $0\leq i\leq \min(L,L').$
If $L>L'$, than we have $\mtop_\Psi(p)=p_{L'}=q_{L'}+1
=\top_\Psi(p-1)+1$, while $\mtop_\Psi(p)>\top_\Psi(p)$. 
If $L=L'$, then $\mtop_\Psi(p)=\top_\Psi(p)=\top_\Psi(p-1)+1.$
If $L<L'$, then $\mtop_\Psi(p)=\top_\Psi(p),$ while 
$\mtop_\Psi(p)=\top_\Psi(p)=q_{L}+1>\top_\Psi(p-1)+1.$
\end{proof}

Note that $\Psi=\Delta^k(\lambda)$ is wall-free in nonempty rows, then we can apply the above result to it. 

\begin{lem}\label{lem:special}
Let $\lambda\in \Par^k$. 
Set $\Psi:=\Delta^k(\lambda)$, $M=L(\Psi)$.
Let $e=(p\rightarrow q)$ be a bounce edge of  $\Psi$ such that $q\geq 2$.

(1) 
$e$ is a mirror edge of $\Psi$
if and only if $\lambda_{q-1}=\lambda_{q}.$

(2) If $e$ is a mirror edge of $\Psi$, then $m_M(p)=m_M(p-1)+1$.

(3) If $e$ is not a mirror edge, then  
$\lambda_{q-1}>\lambda_{q}.$
\end{lem}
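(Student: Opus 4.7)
The plan is to reduce everything to one structural observation: if $p\rightarrow q$ is a bounce edge of $\Psi=\Delta^k(\lambda)$, meaning $(q,p)$ is a removable root, then $\lambda_q=k-p+q+1$. This follows immediately from the defining inequalities of $\Delta^k(\lambda)$: membership $(q,p)\in\Psi$ forces $\lambda_q+p-q>k$, while the SW-corner condition $(q,p-1)\notin\Psi$ (vacuous when $p=q+1$) forces $\lambda_q+p-q\leq k+1$, so the two combine to equality. Equivalently, the leftmost cell of row $q$ lies at column $k-\lambda_q+q+1$, and this column index is strictly increasing in $q$ since $(k-\lambda_i+i+1)-(k-\lambda_{i-1}+i)=\lambda_{i-1}-\lambda_i+1\geq 1$.

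For parts (1) and (3), I would next analyze when $(q-1,p-1)$ is removable. Since $q\geq 2$ the cell $(q-1,p-1)$ lies in $\Delta_\ell^+$, and it is automatically in $\Psi$ because $\lambda_{q-1}\geq\lambda_q$. The two SW-corner conditions for $(q-1,p-1)$ become: $(q,p-1)\notin\Psi$, which is automatic from $\lambda_q=k-p+q+1$, and $(q-1,p-2)\notin\Psi$, which unwinds to $\lambda_{q-1}\leq k-p+q+1=\lambda_q$. Combined with the partition inequality $\lambda_{q-1}\geq\lambda_q$ this is equivalent to $\lambda_{q-1}=\lambda_q$, proving (1); statement (3) is then the contrapositive coupled with the weakly decreasing property of $\lambda$.

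For part (2), the key observation is that in $\Psi=\Delta^k(\lambda)$ every column is a downward-closed initial segment of row indices, because the sequence $\lambda_i-i$ is strictly decreasing in $i$. Consequently $m_M(j)=\#\{i<j : \lambda_i-i\geq k-j+1\}$ equals the largest $i<j$ at which the inequality holds, and the SW-corner characterization identifies this with the row index of the removable cell sitting at the bottom of column $j$. Thus from the bounce edge $p\rightarrow q$ we read off $m_M(p)=q$, and from the mirror-edge hypothesis together with part (1) we read off $m_M(p-1)=q-1$, giving $m_M(p)-m_M(p-1)=1$.

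The only obstacle I anticipate is a uniform handling of the boundary cases in which some of the cells invoked above (notably $(q,p-1)$ or $(q-1,p-2)$) fall outside $\Delta_\ell^+$ when $p=q+1$; these degeneracies should all be subsumed by the standard convention that a cell outside $\Delta_\ell^+$ is not in $\Psi$, making the relevant SW-corner condition trivially satisfied and not affecting the counting arguments.
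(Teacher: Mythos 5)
Your proof is correct and fleshes out the definitional unwinding that the paper condenses to ``(1), (2) are clear from the definitions of $\Psi,M$'' (the paper's derivation of (3) from (1) is the same as yours). The key structural observation---that a bounce edge $p\rightarrow q$ forces $\lambda_q=k-p+q+1$---and its use to reduce mirror-edge membership to $\lambda_{q-1}=\lambda_q$, and the removability-gives-column-height argument for (2), are all sound. The one place to patch is the boundary case $p=q+1$, which you flag at the end but dismiss too quickly: the SW-corner condition $(q,p-1)\notin\Psi$ is then vacuous, so it cannot ``force'' $\lambda_q+p-q\leq k+1$ as written; instead, since $p=q+1$ gives $k-p+q+1=k$, membership already gives $\lambda_q\geq k$ while $\lambda\in\Par^k$ supplies $\lambda_q\leq k$. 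Similarly, when $(q-1,p-2)$ is out of range, the needed bound $\lambda_{q-1}\leq\lambda_q=k$ comes from $\lambda\in\Par^k$, not from an SW-corner. With those two lines inserted, the argument is complete and matches the intent of the paper's (unspelled) proof.
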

\begin{proof} (1), (2) are clear from the definitions of 
$\Psi,M$. (3) If $e$ is not a mirror edge, then it follows from (1) that 
$\lambda_{q-1}\neq \lambda_q$. Since $\lambda$ is a partition, 
we have $ \lambda_{q-1}>\lambda_{q}.$
\end{proof}

\begin{lem}\label{lem:residue_dic}
Let $\lambda\in \Par^k$, and $\Psi:=\Delta^k(\lambda)\subset 
\Delta_\ell^+$. 
Then 
for $p\in\{1,\ldots,\ell\}$, 
\begin{equation*}
\mathfrak{r}\left(\mtop_{\Psi}(p)\right)=
\mathfrak{r}(p).
\end{equation*} 
\end{lem}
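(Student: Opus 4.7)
The plan is to combine the explicit description of $\mtop_\Psi(p)$ furnished by Lemma \ref{lem:max} with the concrete arithmetic of bounce edges in $\Psi = \Delta^k(\lambda)$. The first step is to verify the hypothesis of Lemma \ref{lem:max}: for a partition $\lambda$, the length of row $i$ of $\Psi$ inside the $\ell\times\ell$ grid is $\ell-k-i+\lambda_i$ (when positive), and since $\lambda_{i+1}\leq\lambda_i$ these lengths strictly decrease between consecutive nonempty rows. Hence $\Delta^k(\lambda)$ is wall-free in nonempty rows, and Lemma \ref{lem:max} gives the identity
\[
\mtop_\Psi(p)=\max\{\top_\Psi(p),\,\top_\Psi(p-1)+1\}\qquad(p\geq 2).
\]

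The second ingredient is the arithmetic of bounce edges: for $\Psi=\Delta^k(\lambda)$, the leftmost box in row $q$ of $\Psi$ sits in column $k+1+q-\lambda_q$, so a bounce edge $p\to q$ forces $p-q=k+1-\lambda_q$, which gives $\mathfrak{r}(p)-\mathfrak{r}(q)\equiv -\lambda_q\pmod{k+1}$. Using this, I plan to prove the claim by strong induction on $p$, splitting into the two cases coming from the maximum above. When $\mtop_\Psi(p)=\top_\Psi(p-1)+1$, the inductive hypothesis applied to $p-1$, together with the formula $\mathfrak{r}(r+1)=\mathfrak{r}(r)-1$, yields
\[
\mathfrak{r}(\mtop_\Psi(p))=\mathfrak{r}(\top_\Psi(p-1))-1\equiv \mathfrak{r}(p-1)-1=\mathfrak{r}(p)\pmod{k+1}.
\]
When $\mtop_\Psi(p)=\top_\Psi(p)$, I would instead track residues step by step along the bounce path from $p$, using the characterization of mirror edges from Lemma \ref{lem:special} ($\lambda_{q-1}=\lambda_q$) and the fact that the mirror path is taken to be maximal to control how the individual shifts $-\lambda_{p_i}$ can combine.

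The main obstacle is the second case: on a single bounce edge the residue typically changes, so there is no local invariant, and one must show that the accumulated shift along the entire maximal mirror path is divisible by $k+1$. I expect this to follow by comparing the bounce path of $p$ with that of $p-1$ and using the maximality clause to pin down exactly where the two paths must diverge (so that $\top_\Psi(p)$ and $\top_\Psi(p-1)$ have the prescribed relation). Equivalently, one can formulate this case as showing that the maximality of $L$ in the mirror path from $p$ already forces $\top_\Psi(p)\geq \top_\Psi(p-1)+1$ to fail in precisely the situations where the residue on the bounce-path top would be wrong, so that the relevant case reduces to the inductive one handled above. This careful bookkeeping, combined with the verification that the base case $p=1$ and the cases with no bounce edge at $p$ give $\mtop_\Psi(p)=p$ trivially, should complete the proof.
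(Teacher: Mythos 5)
The paper does not actually prove this lemma; it simply cites it as {\cite[Lemma 5.7]{BMS}}, so any from-scratch argument is by definition taking a different route from the paper. That said, the proposal has a genuine gap that I do not think is easily patched within the stated strategy.

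In Case A you write
$\mathfrak{r}(\mtop_\Psi(p))=\mathfrak{r}(\top_\Psi(p-1))-1\equiv \mathfrak{r}(p-1)-1$,
which implicitly uses $\mathfrak{r}(\top_\Psi(p-1))\equiv \mathfrak{r}(p-1)$. But the inductive hypothesis applied to $p-1$ only gives $\mathfrak{r}(\mtop_\Psi(p-1))\equiv\mathfrak{r}(p-1)$, and $\mtop_\Psi(p-1)$ is not $\top_\Psi(p-1)$ in general (it is $q_{\min(L',L'')}$ in the notation of the proof of Lemma~\ref{lem:max}, where $L''$ is the bounce-path length from $p-2$). Worse, the auxiliary statement $\mathfrak{r}(\top_\Psi(p'))\equiv\mathfrak{r}(p')$ that the argument tacitly needs is simply false for $\Psi=\Delta^k(\lambda)$: each bounce edge $p\to q$ satisfies $p-q=k+1-\lambda_q$, so the residue shifts by $\lambda_p$ at each step and is not conserved down a bounce path. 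Concretely, with $k=3$, $\lambda=(2,1,1)$, $\ell=8$, the bounce path from $6$ is $6\to 3\to 1$ and $\mathfrak{r}(6)=3\ne 0=\mathfrak{r}(1)=\mathfrak{r}(\top_\Psi(6))$. So the reduction from $\mtop$ to $\top$ cannot be made. The mirror structure is exactly what makes the statement work, and there is no local invariant along individual bounce edges; this is the same obstacle you correctly identify in Case B, but it already shows up in Case A. Case B is in any case left as a plan rather than a proof.

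One more caution worth flagging: the statement as printed in this paper is not obviously correct as stated for all $p$. For example, with $k=2$, $\lambda=(1,1)$, $\ell=4$, $p=4$ one has $\mtop_\Psi(4)=2$ but $\mathfrak{r}(4)=0\ne 2=\mathfrak{r}(2)\pmod 3$ (this is a Case~2 situation in the proof of Lemma~\ref{lem:mirror_path}, where $\up_\Psi(\mtop_\Psi(p))$ does not exist and the residue claim is not actually needed). Before investing further in a full proof, it would be worth pinning down the exact form and hypotheses of {\cite[Lemma 5.7]{BMS}} (in particular, whether the intended assertion is $\mtop_\Psi(p)\equiv\mathfrak{r}(p)$ rather than $\mathfrak{r}(\mtop_\Psi(p))=\mathfrak{r}(p)$, and whether it is stated only when $\up_\Psi(\mtop_\Psi(p))$ exists, i.e., when $\top_\Psi(p)<\top_\Psi(p-1)$), since that is the only role the lemma plays in this paper and it would tell you exactly what you must prove.
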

\begin{proof} This is \cite[Lemma 5.7]{BMS}.
\end{proof}

In the final step of the proof of Lemma \ref{lem:mirror_path}, we use the following.
\begin{lem}[Cleaning Lemma; {\cite[Lemma 4.7]{BMS}}]\label{lem:clean}
Let $(\Psi,M,\gamma)$ be a Katalan triple
such that
\begin{itemize}
\item[(a)] $\beta=(q,p-1)$ is a removable root of $\Psi$,
\item[(b)] $\Psi$ has a wall in rows $p-1,p$,
\item[(c)] $\Psi$ has a ceiling in columns $p-1,p$,
\item[(d)] $\gamma_{q}=\gamma_{q-1}$,
\item[(e)] $m_M(p)=m_{M}(p-1)+1$.
\end{itemize}
Then 
\begin{equation}
\K{\Psi}{M}{\gamma}
=\K{\Psi\setminus \beta}{M}{\gamma}.
\end{equation}
\end{lem}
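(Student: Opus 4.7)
The natural first move is to apply Lemma \ref{lem:addroot}(ii) with $\alpha=\beta$: since $\beta$ is a removable root of $\Psi$ by hypothesis (a), we obtain
\[
K(\Psi;M;\gamma) \;=\; K(\Psi\setminus\beta;M;\gamma) \;+\; K(\Psi;M;\gamma+\beta),
\]
so that the lemma reduces to the single vanishing $K(\Psi;M;\gamma+\beta)=0$, where $\gamma+\beta=\gamma+\ep_q-\ep_{p-1}$.

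The next move is to exploit hypotheses (b), (c), (e), which are precisely the three conditions of the alternating identity \eqref{eq:alt} at index $i=p-1$. Applying \eqref{eq:alt} to $K(\Psi;M;\gamma+\beta)$ produces
\[
K(\Psi;M;\gamma+\beta) \;=\; -\,K(\Psi;M;\delta),
\]
where the new weight $\delta$ satisfies $\delta_{p-1}=\gamma_p-1$, $\delta_p=\gamma_{p-1}$, $\delta_q=\gamma_q+1$, and $\delta_r=\gamma_r$ otherwise. Combining $\delta_q=\gamma_q+1$ with hypothesis (d) yields $\delta_q=\delta_{q-1}+1$, precisely the numerical condition required by the vanishing identity \eqref{eq:vanish} at $i=q-1$.

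It then remains to upgrade this numerical coincidence into honest vanishing. If the wall/ceiling/dot hypotheses of the alternating property at $i=q-1$ were automatically in place, we would be done. Since they are not in general, the plan is to establish them by auxiliary manipulations: iteratively applying Lemma \ref{lem:addroot} to adjust $\Psi$ and Lemma \ref{lem:dot} to adjust $M$ near rows and columns $q-1,q$, while arguing that each correction term generated in the process itself vanishes, either by a further application of \eqref{eq:vanish} at some intermediate index, or by some coordinate becoming negative, so that $k_{\gamma'}=0$.

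The hardest step will be organizing this auxiliary chain. The alternating/vanishing apparatus so far is localized near index $p-1$, whereas hypothesis (d) lives at the distant index $q$, and bridging the two requires careful bookkeeping of the shape of $\Psi$ between rows $q-1$ and $p$. My expectation is that an induction on the gap $p-1-q$, parallel to the proofs of the Bounce-up Lemma (Lemma \ref{lem:basic}) and the Absorption Lemma (Lemma \ref{lem:case2}), yields the correct reduction scheme, with each inductive step using Lemma \ref{lem:addroot} or Lemma \ref{lem:dot} to advance the configuration by one row or column toward the index $q-1$, where the vanishing \eqref{eq:vanish} finally applies.
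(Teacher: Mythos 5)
This lemma is imported from \cite[Lemma 4.7]{BMS}; the present paper states it without proof, so there is no in-paper argument to compare against, and your proposal must be judged on its own. Your first two moves are correct: applying Lemma \ref{lem:addroot}(ii) at $\alpha=\beta$ reduces the claim to showing $K(\Psi;M;\gamma+\beta)=0$, and hypotheses (b), (c), (e) are exactly what is needed to invoke the alternating identity \eqref{eq:alt} at $i=p-1$, which produces $-K(\Psi;M;\delta)$ with $\delta_q=\delta_{q-1}+1$ via hypothesis (d). Your bookkeeping of $\delta$ is also correct.

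The gap, which you yourself flag, is the last step. The vanishing identity \eqref{eq:vanish} at index $q-1$ requires \emph{all three} structural hypotheses of the alternating property there: a ceiling in columns $q-1,q$, a wall in rows $q-1,q$, and $m_M(q)=m_M(q-1)+1$. None of these is among the Cleaning Lemma's hypotheses, and none follows from them --- conditions (a)--(e) constrain $\Psi$ and $M$ only near rows $p-1,p$ and the corner at $(q,p-1)$, leaving rows $q-1,q$ and column $q-1$ completely unconstrained. Your plan to manufacture these conditions ``by auxiliary manipulations'' is not carried out, and as stated it is unconvincing: the proposed induction on $p-1-q$ has no working base case (when $q=p-2$ the vanishing condition at $q-1=p-3$ still faces the same missing wall/ceiling/dot hypotheses), and each intermediate application of Lemma \ref{lem:addroot} or Lemma \ref{lem:dot} generates a correction term whose vanishing requires yet another unverified structural condition. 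Moreover, the fallback argument that ``some coordinate becoming negative'' forces $k_{\gamma'}=0$ does not apply to Katalan functions: the raising operators $(1-R_{ij})$ for $(i,j)\notin\Psi$ act on the subscript before evaluation, so $K(\Psi;M;\gamma')$ is a sum of many $k_{\gamma''}$ with $\gamma''$ differing from $\gamma'$, and a negative entry of $\gamma'$ does not kill the sum. The reduction to a single vanishing is a correct start, but the heart of the lemma --- why that term actually vanishes --- is not established.
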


\subsubsection{Proof of Lemma \ref{lem:mirror_path}}\label{sec:mirror_Pf}

\begin{proof}[Proof of Lemma \ref{lem:mirror_path} ]
Let $\lambda\in \Par^k$, and $\Psi:=\Delta^k(\lambda)
\subset \Delta_\ell^+$. 
Set $\mathcal{T}:=(\Psi,M,\gamma)=
(\Delta^{k}(\lambda),L(\Delta^{k}(\lambda)),\lambda+\ep_p)$.
Let $p=p_0\rightarrow p_1\rightarrow \cdots\rightarrow p_{L}$ be a mirror path starting from $p$ of maximal length $L\geq 0$.
We will prove that we can successively apply Lemma \ref{lem:basic} to $\T$ and obtain the sequence of Katalan triples:
\begin{equation}\label{eq:sequence_of_triples}
\T=:\T_0\overset{p_0}{\longrightarrow}
\mathcal{T}_1
\overset{p_1}{\longrightarrow}
\cdots
\overset{p_{L-1}}{\longrightarrow}
\mathcal{T}_{L}=:\T'.
\end{equation}
Here $\T_i
\overset{p_{i}}{\longrightarrow}\T_{i+1}$ means that 
$\T_{i+1}$ is obtained from $\T_i$ by applying equation \eqref{eq:Bounce-up1} in Lemma \ref{lem:basic} with 
respect to the mirror edge $p_i\rightarrow p_{i+1}$ of $\T_i.$
If $L=0$, it suffices to put $\T'=\T$.
Assume $L\geq 1$.
We can check immediately that $\T=\T_0$ satisfies the assumptions (a)-(d) of Lemma \ref{lem:basic}: (a) holds by Lemma \ref{lem:easy};
(b) holds from $\lambda_{p-1}=\lambda_p=0$ (since $p\geq m+2$) and $\gamma=\lambda+\ep_p$;
(c) holds by Lemma \ref{lem:special} (2); (d) holds because there are no roots of $\Psi=\Delta^k(\lambda)$ 
in rows $p-1,p$ (since $p>\ell-k$). 
Then, we obtain a new Katalan triple $\T_1=(\Psi^{(1)},M^{(1)},\gamma^{(1)})$ by applying \eqref{eq:Bounce-up1} to $\T_0$.
When $L>1$, we have $\lambda_{p_1-1}=\lambda_{p_1}$ from Lemma \ref{lem:special} (1),
which implies $\gamma^{(1)}_{p_1}=\gamma^{(1)}_{p_1-1}+1$.
By Lemma \ref{lem:easy}, $\Psi^{(1)}$ has a wall in rows $p_1-1,p_1$. 
Hence, $\T_1$ satisfies (b), (d) of Lemma \ref{lem:basic}.
The remaining conditions (a) and (c) hold for $\T_1$ for the same reasons as in the case of $\T_0$: the multiset $M^{(1)}$ is the same as $M$; the root ideals $\Psi^{(1)}$ and $\Psi$ coincide with each other in columns $\leq p_1$.
Therefore, we can apply equation \eqref{eq:Bounce-up1} in Lemma \ref{lem:basic} to $\T_1$ with respect to the mirror edge 
$p_1\rightarrow p_2$, and obtain $\T_2.$
The above procedure can be repeated $L$ times to obtain the sequence in \eqref{eq:sequence_of_triples}.
Let $\T'=(\Psi',M',\gamma')$. Then, we have
\begin{equation}
\Psi'=\Psi\cup \{\beta_1,\ldots,\beta_{L}\},\quad
M'=M,\quad
\gamma'=\gamma+\ep_{\mtop_\Psi(p)}-\ep_p
(=\lambda+\ep_{\mtop_\Psi(p)}),\label{eq:T'}
\end{equation}
where $\beta_i:=(p_{i},p_{i-1}-1)\;(1\leq i\leq L).$
Note that $\Psi'$ has a wall in rows $p_i-1,p_i$ for each $i$.
The following procedure depends on whether (i) $\top_\Psi(p)<\top_\Psi(p-1)$ or (ii) $\top_\Psi(p)>\top_\Psi(p-1)$.

If $\top_\Psi(p)<\top_\Psi(p-1)$, then we have  $\mtop_{\Psi}(p)=\top_\Psi(p-1)+1>\top_\Psi(p)$ by Lemma \ref{lem:max}, which implies that there is a bounce edge $\mtop_\Psi(p)\rightarrow q$ of $\Psi$ 
for some $q$.
Note that $\mtop_\Psi(p)\rightarrow q$ is also a bounce edge of $\Psi'$.
Therefore, we can apply Lemma \ref{lem:basic} to $\T'$ with respect to this bounce edge.
From \eqref{eq:Bounce-up2}, it follows that
\begin{equation*}
\K{\Psi'}{M'}{\gamma'}=
\K{\Psi'\cup \beta}{M'\sqcup\{\mtop_\Psi(p)-1\}}{\gamma'+\ep_{q}-\ep_{\mtop_\Psi(p)}},
\end{equation*}
where
\begin{equation*}
\beta:=(q,\mtop_\Psi(p)-1).
\end{equation*}
Note that
\begin{equation*}
\lambda':=\gamma'+\ep_{q}-\ep_{\mtop_\Psi(p)}
=\lambda+\ep_{q}
\end{equation*}
and $M'\sqcup\{\mtop_\Psi(p)-1\}
=L(\Delta^k(\lambda')).$
Moreover, $\lambda'$ is a partition by Lemma \ref{lem:special} (3).
To prove that $\lambda'$ is $k$-bounded, it suffices to consider the case that $\lambda_1=k$ and $q=1$. 
However, one sees that this does not occur in view of  Lemma \ref{lem:max}.
We also note that
\begin{equation*}
\Psi\cup \beta=
(\Psi'\cup\beta)\setminus\{\beta_1,\ldots,\beta_{L}\}
=\Delta^k(\lambda').
\end{equation*}
By using Lemma \ref{lem:clean} recursively, we can remove 
$\beta_{L},\ldots,\beta_{2},\beta_1$ from $\Psi'$,
and obtain
$$
\K{\Delta^k(\lambda')}{L(\Delta^k(\lambda'))}{\lambda'}
=\tfg{k}_{\lambda'}.
$$
From Lemma \ref{lem:residue_dic}, we deduce that 
$$
q=\up_\Psi(\mtop_\T(p))\equiv
\mtop_\T(p)\equiv 
\mathfrak{r}(p) \;\mathrm{mod}\;k+1,
$$
and $s_{\mathfrak{r}(p)}x_\lambda
=x_{\lambda'}.$

Next assume that $\top_{\Psi}(p)>\top_{\Psi}(p-1)$. 
We check that $\T'$ satisfies conditions (a)-(d) of Lemma \ref{lem:case2} with $p=\mtop_{\Psi}(p)$.
From Lemma \ref{lem:max}, we have $\mtop_\Psi(p)=\top(p)$, which implies (a).
(b) follows from Lemma \ref{lem:special} (1).
(c) follows from the fact that there is a ceiling in the columns $p-1,p$.
(d) follows from the definition of $\Psi'$.

From Lemma \ref{lem:case2}, we have
\begin{equation*}
\K{\Psi'}{M'}{\gamma'}=
\K{\Psi'}{M'}{\gamma'-\ep_{\mtop_\Psi(p)}}.
\end{equation*}
In view of \eqref{eq:T'},
we can rewrite 
\begin{equation*}
\K{\Psi'}{M'}{\gamma'-\ep_{\mtop_\Psi(p)}}
=\K{\Delta^k(\lambda)\cup \{\beta_1,\ldots,\beta_{L}\}}{\Delta^k(\lambda)}{\lambda}.
\end{equation*}
By Lemma \ref{lem:clean}, we can remove 
$\beta_{L},\ldots,\beta_{2},\beta_1$ successively
to obtain
$$
\K{\Delta^k(\lambda)}{L(\Delta^k(\lambda))}{\lambda}
=\tfg{k}_{\lambda}.
$$
Again by Lemma \ref{lem:residue_dic}, we deduce that  
$$
q=\up_\Psi(\mtop_\T(p))\equiv
\mtop_\T(p)\equiv 
\mathfrak{r}(p) \;\mathrm{mod}\; k+1,
$$
and $s_{\mathfrak{r}(p)}x_\lambda
=x_{\lambda}.$
\end{proof}

Here are some examples.

\begin{example} Case 1 ($p=8$):  
$\top(p)=\mtop_\Psi(p)=4<\top(p-1)=7$ with $p=8$.
In this case, we have $L=0.$
\begin{enumerate}
\item Lemma \ref{lem:basic} \eqref{eq:Bounce-up2} with $p=8.$
\end{enumerate}
\ytableausetup{smalltableaux}
$$
\begin{ytableau}
3 & & *(lightgray) \bullet& *(lightgray) \bullet & *(lightgray) \bullet & *(lightgray) \bullet& *(lightgray) \bullet& *(lightgray) \bullet & *(lightgray) \bullet\\
 & 2 & & & *(lightgray) \bullet  & *(lightgray) \bullet& *(lightgray) \bullet& *(lightgray) \bullet& *(lightgray) \bullet\\
 &  & 2&&  &*(lightgray) \bullet&*(lightgray) \bullet&*(lightgray)\bullet&*(lightgray)\bullet
  \\
 &  &   &*(magenta)1&  &&&*(lightgray)\bullet& *(lightgray) \bullet
  \\
   &  & & & 1 &&&& *(lightgray) \bullet\\
    &  & &&    &1&&&  \\
     &  & &&   & &*(yellow)0&& \\
      &  & &&  &&&*(magenta)1&\\
       &  & &&  &&&&0 \\ 
       \end{ytableau}       =\begin{ytableau}
3 & & *(lightgray) \bullet& *(lightgray) \bullet & *(lightgray) \bullet & *(lightgray) \bullet& *(lightgray) \bullet& *(lightgray) \bullet & *(lightgray) \bullet\\
 & 2 & & & *(lightgray) \bullet  & *(lightgray) \bullet& *(lightgray) \bullet& *(lightgray) \bullet& *(lightgray) \bullet\\
 &  & 2&&  &*(lightgray) \bullet&*(lightgray) \bullet&*(lightgray)\bullet&*(lightgray)\bullet
  \\
 &  &   &*(magenta)2&  &&*(lightgray)\bullet&*(lightgray)\bullet& *(lightgray) \bullet
  \\
   &  & & & 1 &&&& *(lightgray) \bullet\\
    &  & &&    &1&&&  \\
     &  & &&   & &*(yellow)0&& \\
      &  & &&  &&&*(magenta)0&\\
       &  & &&  &&&&0 \\ 
       \end{ytableau}.       $$
\end{example}

\begin{example} Case 2 ($p=11$) : $\top(p)=11>\top(p-1)=6$. $L=0$.
\begin{enumerate}
\item Lemma \ref{lem:case2} with 
$p=11$.
\end{enumerate}
\ytableausetup{smalltableaux}
$$
\begin{ytableau}
3 & & *(gray) \bullet& *(gray) \bullet & *(gray) \bullet & *(gray) \bullet& *(gray) \bullet& *(gray) \bullet & *(gray) \bullet & *(gray) \bullet& *(gray) \bullet& *(gray) \bullet & *(gray) \bullet\\
 & 2 & & & *(gray) \bullet  & *(gray) \bullet& *(gray) \bullet& *(gray) \bullet& *(gray) \bullet& *(gray) \bullet& *(gray) \bullet& *(gray) \bullet&*(gray) \bullet \\
 &  & 1&&  &&*(gray) \bullet&*(gray)\bullet&*(gray)\bullet&*(gray)\bullet&*(gray)\bullet&*(gray) \bullet& *(gray) \bullet \\
 &  &   &1&  &&&*(gray)\bullet&*(gray)\bullet&*(gray)\bullet&*(gray)\bullet&*(gray) \bullet&*(gray) \bullet  \\
   &  & & &1 &&&&*(gray)\bullet&*(gray)\bullet&*(gray) \bullet&*(gray) \bullet& *(gray) \bullet \\
    &  & &&    & *(yellow)1&&&&*(gray) \bullet&*(gray) \bullet&*(gray) \bullet&*(gray) \bullet  \\
     &  & &&   & &0&&&&&*(gray) \bullet& *(gray) \bullet \\
      &  & &&  &&&0&&&&& *(gray) \bullet \\
       &  & &&  &&&&0&&&& \\ 
       &  & &&  &&&&&*(yellow)0&&&\\
       &  & &&  &&&&&&*(magenta)1&& \\
       &  & &&  &&&&&&&0& \\
       &  & &&  &&&&&&&&  0\\
\end{ytableau}
=\begin{ytableau}
3 & & *(gray) \bullet& *(gray) \bullet & *(gray) \bullet & *(gray) \bullet& *(gray) \bullet& *(gray) \bullet & *(gray) \bullet & *(gray) \bullet& *(gray) \bullet& *(gray) \bullet & *(gray) \bullet\\
 & 2 & & & *(gray) \bullet  & *(gray) \bullet& *(gray) \bullet& *(gray) \bullet& *(gray) \bullet& *(gray) \bullet& *(gray) \bullet& *(gray) \bullet&*(gray) \bullet \\
 &  & 1&&  &&*(gray) \bullet&*(gray)\bullet&*(gray)\bullet&*(gray)\bullet&*(gray)\bullet&*(gray) \bullet& *(gray) \bullet \\
 &  &   &1&  &&&*(gray)\bullet&*(gray)\bullet&*(gray)\bullet&*(gray)\bullet&*(gray) \bullet&*(gray) \bullet  \\
   &  & & &1 &&&&*(gray)\bullet&*(gray)\bullet&*(gray) \bullet&*(gray) \bullet& *(gray) \bullet \\
    &  & &&    & *(yellow)1&&&&*(gray) \bullet&*(gray) \bullet&*(gray) \bullet&*(gray) \bullet  \\
     &  & &&   & &0&&&&&*(gray) \bullet& *(gray) \bullet \\
      &  & &&  &&&0&&&&& *(gray) \bullet \\
       &  & &&  &&&&0&&&& \\ 
       &  & &&  &&&&&*(yellow)0&&&\\
       &  & &&  &&&&&&*(magenta)0&& \\
       &  & &&  &&&&&&&0& \\
       &  & &&  &&&&&&&&  0\\
\end{ytableau}.
$$
\end{example}

\begin{example} Case 1 $(p=9)$: $\top(p)=2<\top(p-1)=4.$
\begin{enumerate}
\item Lemma \ref{lem:basic} \eqref{eq:Bounce-up1} with 
$p=9$.
\item Lemma \ref{lem:basic} \eqref{eq:Bounce-up2} with $p=5.$
\item Cleaning Lemma with $q=5$.
\end{enumerate}
\ytableausetup{smalltableaux}
$$
\begin{ytableau}
3 & & *(lightgray) \bullet& *(lightgray) \bullet & *(lightgray) \bullet & *(lightgray) \bullet& *(lightgray) \bullet& *(lightgray) \bullet & *(lightgray) \bullet\\
 &*(magenta) 2 & & & *(lightgray) \bullet  & *(lightgray) \bullet& *(lightgray) \bullet& *(lightgray) \bullet& *(lightgray) \bullet\\
 &  & 2&&  &*(lightgray) \bullet&*(lightgray) \bullet&*(lightgray)\bullet&*(lightgray)\bullet
  \\
 &  &   &*(yellow)1&  &&&*(lightgray)\bullet& *(lightgray) \bullet
  \\
   &  & & & *(magenta)1 &&&& *(lightgray) \bullet\\
    &  & &&    &1&&&  \\
     &  & &&   & &0&& \\
      &  & &&  &&&*(yellow)0&\\
       &  & &&  &&&&*(magenta)1 \\ 
       \end{ytableau}
       =\begin{ytableau}
3 & & *(lightgray) \bullet& *(lightgray) \bullet & *(lightgray) \bullet & *(lightgray) \bullet& *(lightgray) \bullet& *(lightgray) \bullet & *(lightgray) \bullet\\
 & *(magenta)2 & & & *(lightgray) \bullet  & *(lightgray) \bullet& *(lightgray) \bullet& *(lightgray) \bullet& *(lightgray) \bullet\\
 &  & 2&&  &*(lightgray) \bullet&*(lightgray) \bullet&*(lightgray)\bullet&*(lightgray)\bullet
  \\
 &  &   &*(yellow)1&  &&&*(lightgray)\bullet& *(lightgray) \bullet
  \\
   &  & & & *(magenta)2 &&&*(lightgray)& *(lightgray)\bullet\\
    &  & &&    &1&&&  \\
     &  & &&   & &0&& \\
      &  & &&  &&&*(yellow)0&\\
       &  & &&  &&&&*(magenta)0 \\ 
       \end{ytableau}
       =\begin{ytableau}
3 & & *(lightgray) \bullet& *(lightgray) \bullet & *(lightgray) \bullet & *(lightgray) \bullet& *(lightgray) \bullet& *(lightgray) \bullet & *(lightgray) \bullet\\
 & *(magenta)3 & & *(lightgray)\bullet& *(lightgray) \bullet  & *(lightgray) \bullet& *(lightgray) \bullet& *(lightgray) \bullet& *(lightgray) \bullet\\
 &  & 2&&   &*(lightgray) \bullet&*(lightgray) \bullet&*(lightgray)\bullet&*(lightgray)\bullet
  \\
 &  &   &*(yellow)1&  &&&*(lightgray)\bullet& *(lightgray) \bullet
  \\
   &  & & & *(magenta)1 &&&*(lightgray)& *(lightgray)\bullet\\
    &  & &&    &1&&&  \\
     &  & &&   & &0&& \\
      &  & &&  &&&*(yellow)0&\\
       &  & &&  &&&&*(magenta)0 \\ 
       \end{ytableau}
              =\begin{ytableau}
3 & & *(lightgray) \bullet& *(lightgray) \bullet & *(lightgray) \bullet & *(lightgray) \bullet& *(lightgray) \bullet& *(lightgray) \bullet & *(lightgray) \bullet\\
 & *(magenta)3 & & & *(lightgray) \bullet  & *(lightgray) \bullet& *(lightgray) \bullet& *(lightgray) \bullet& *(lightgray) \bullet\\
 &  & 2&&  &*(lightgray) \bullet&*(lightgray) \bullet&*(lightgray)\bullet&*(lightgray)\bullet
  \\
 &  &   &*(yellow)1&  &&&*(lightgray)\bullet& *(lightgray) \bullet
  \\
   &  & & & *(magenta)1 &&&& *(lightgray)\bullet\\
    &  & &&    &1&&&  \\
     &  & &&   & &0&& \\
      &  & &&  &&&*(yellow)0&\\
       &  & &&  &&&&*(magenta)0 \\ 
       \end{ytableau}.
       $$
\end{example}

\begin{example} Case 1 ($p=10$): 
$\top(p)=1<\top(p-1)=2$.
$\mathfrak{r}(p)=1$ 
\begin{enumerate}
\item Lemma \ref{lem:basic} \eqref{eq:Bounce-up1} with 
$p=10$.
\item Lemma \ref{lem:basic} \eqref{eq:Bounce-up1} with 
$p=6$.
\item Lemma \ref{lem:basic} \eqref{eq:Bounce-up2} with $p=3.$
\item Cleaning Lemma twice with $q=3,5.$
\end{enumerate}
\ytableausetup{smalltableaux}
$$
\begin{aligned}
\begin{ytableau}
*(magenta)3 & & *(lightgray) \bullet& *(lightgray) \bullet & *(lightgray) \bullet & *(lightgray) \bullet& *(lightgray) \bullet& *(lightgray) \bullet & *(lightgray) \bullet& *(lightgray) \bullet\\
 & *(yellow)2 & & & *(lightgray) \bullet  & *(lightgray) \bullet& *(lightgray) \bullet& *(lightgray) \bullet& *(lightgray) \bullet& *(lightgray) \bullet\\
 &  & *(magenta)2&&  &*(lightgray) \bullet&*(lightgray) \bullet&*(lightgray)\bullet&*(lightgray)\bullet
 & *(lightgray) \bullet \\
 &  &   &1&  &&&*(lightgray)\bullet& *(lightgray) \bullet
 & *(lightgray) \bullet \\
   &  & & & *(yellow)1 &&&& *(lightgray) \bullet& *(lightgray) \bullet\\
    &  & &&    &*(magenta)1&&&  & *(lightgray) \bullet\\
     &  & &&   & &0&&&  \\
      &  & &&  &&&0& &\\
       &  & &&  &&&&*(yellow)0& \\ 
       &  & &&  &&&&&*(magenta)1   \\ 
       \end{ytableau}
       =
       \begin{ytableau}
*(magenta)3 & & *(lightgray) \bullet& *(lightgray) \bullet & *(lightgray) \bullet & *(lightgray) \bullet& *(lightgray) \bullet& *(lightgray) \bullet & *(lightgray) \bullet& *(lightgray) \bullet\\
 & *(yellow)2 & & & *(lightgray) \bullet  & *(lightgray) \bullet& *(lightgray) \bullet& *(lightgray) \bullet& *(lightgray) \bullet& *(lightgray) \bullet\\
 &  & *(magenta)2&&  &*(lightgray) \bullet&*(lightgray) \bullet&*(lightgray)\bullet&*(lightgray)\bullet
 & *(lightgray) \bullet \\
 &  &   &1&  &&&*(lightgray)\bullet& *(lightgray) \bullet
 & *(lightgray) \bullet \\
   &  & & & *(yellow)1 &&&& *(lightgray) \bullet& *(lightgray) \bullet\\
    &  & &&    &*(magenta)2&&&  *(lightgray)  & *(lightgray) \bullet\\
     &  & &&   & &0&&&  \\
      &  & &&  &&&0& &\\
       &  & &&  &&&&*(yellow)0& \\ 
       &  & &&  &&&&&*(magenta)0   \\ 
       \end{ytableau}
           =
       \begin{ytableau}
*(magenta)3 & & *(lightgray) \bullet& *(lightgray) \bullet & *(lightgray) \bullet & *(lightgray) \bullet& *(lightgray) \bullet& *(lightgray) \bullet & *(lightgray) \bullet& *(lightgray) \bullet\\
 & *(yellow)2 & & & *(lightgray) \bullet  & *(lightgray) \bullet& *(lightgray) \bullet& *(lightgray) \bullet& *(lightgray) \bullet& *(lightgray) \bullet\\
 &  & *(magenta)3&& *(lightgray) &*(lightgray) \bullet&*(lightgray) \bullet&*(lightgray)\bullet&*(lightgray)\bullet
 & *(lightgray) \bullet \\
 &  &   &1&  &&&*(lightgray)\bullet& *(lightgray) \bullet
 & *(lightgray) \bullet \\
   &  & & & *(yellow)1 &&&& *(lightgray) \bullet& *(lightgray) \bullet\\
    &  & &&    &*(magenta)1&&&  *(lightgray)  & *(lightgray) \bullet\\
     &  & &&   & &0&&&  \\
      &  & &&  &&&0& &\\
       &  & &&  &&&&*(yellow)0& \\ 
       &  & &&  &&&&&*(magenta)0   \\ 
       \end{ytableau}\\
       =       \begin{ytableau}
*(magenta)4 &*(lightgray) \bullet  & *(lightgray) \bullet& *(lightgray) \bullet & *(lightgray) \bullet & *(lightgray) \bullet& *(lightgray) \bullet& *(lightgray) \bullet & *(lightgray) \bullet& *(lightgray) \bullet\\
 & *(yellow)2 & & & *(lightgray) \bullet  & *(lightgray) \bullet& *(lightgray) \bullet& *(lightgray) \bullet& *(lightgray) \bullet& *(lightgray) \bullet\\
 &  & *(magenta)2&& *(lightgray) &*(lightgray) \bullet&*(lightgray) \bullet&*(lightgray)\bullet&*(lightgray)\bullet
 & *(lightgray) \bullet \\
 &  &   &1&  &&&*(lightgray)\bullet& *(lightgray) \bullet
 & *(lightgray) \bullet \\
   &  & & & *(yellow)1 &&&& *(lightgray) \bullet& *(lightgray) \bullet\\
    &  & &&    &*(magenta)1&&& *(lightgray)  & *(lightgray) \bullet\\
     &  & &&   & &0&&&  \\
      &  & &&  &&&0& &\\
       &  & &&  &&&&*(yellow)0& \\ 
       &  & &&  &&&&&*(magenta)0   \\ 
       \end{ytableau}
        =       \begin{ytableau}
*(magenta)4 &*(lightgray) \bullet  & *(lightgray) \bullet& *(lightgray) \bullet & *(lightgray) \bullet & *(lightgray) \bullet& *(lightgray) \bullet& *(lightgray) \bullet & *(lightgray) \bullet& *(lightgray) \bullet\\
 & *(yellow)2 & & & *(lightgray) \bullet  & *(lightgray) \bullet& *(lightgray) \bullet& *(lightgray) \bullet& *(lightgray) \bullet& *(lightgray) \bullet\\
 &  & *(magenta)2&&  &*(lightgray) \bullet&*(lightgray) \bullet&*(lightgray)\bullet&*(lightgray)\bullet
 & *(lightgray) \bullet \\
 &  &   &1&  &&&*(lightgray)\bullet& *(lightgray) \bullet
 & *(lightgray) \bullet \\
   &  & & & *(yellow)1 &&&& *(lightgray) \bullet& *(lightgray) \bullet\\
    &  & &&    &*(magenta)1&&&   & *(lightgray) \bullet\\
     &  & &&   & &0&&&  \\
      &  & &&  &&&0& &\\
       &  & &&  &&&&*(yellow)0& \\ 
       &  & &&  &&&&&*(magenta)0   \\ 
       \end{ytableau}.
\end{aligned}
           $$
\end{example}

\begin{example} Case 2 ($p=10$): $\top(p)=6>\top(p-1)=2$.
\begin{enumerate}
\item Lemma \ref{lem:basic} \eqref{eq:Bounce-up1} with 
$p=10$.
\item Lemma \ref{lem:case2} with 
$p=6$.
\item Cleaning Lemma with $q=6.$
\end{enumerate}
\ytableausetup{smalltableaux}
$$
\begin{aligned}
&
\begin{ytableau}
3 & & *(gray) \bullet& *(gray) \bullet & *(gray) \bullet & *(gray) \bullet& *(gray) \bullet& *(gray) \bullet & *(gray) \bullet & *(gray) \bullet& *(gray) \bullet& *(gray) \bullet & *(gray) \bullet\\
 &*(yellow) 2 & & & *(gray) \bullet  & *(gray) \bullet& *(gray) \bullet& *(gray) \bullet& *(gray) \bullet& *(gray) \bullet& *(gray) \bullet& *(gray) \bullet&*(gray) \bullet \\
 &  & 1&&  &&*(gray) \bullet&*(gray)\bullet&*(gray)\bullet&*(gray)\bullet&*(gray)\bullet&*(gray) \bullet& *(gray) \bullet \\
 &  &   &1&  &&&*(gray)\bullet&*(gray)\bullet&*(gray)\bullet&*(gray)\bullet&*(gray) \bullet&*(gray) \bullet  \\
   &  & & &*(yellow)1 &&&&*(gray)\bullet&*(gray)\bullet&*(gray) \bullet&*(gray) \bullet& *(gray) \bullet \\
    &  & &&    & *(magenta)1&&&&*(gray) \bullet&*(gray) \bullet&*(gray) \bullet&*(gray) \bullet  \\
     &  & &&   & &0&&&&&*(gray) \bullet& *(gray) \bullet \\
      &  & &&  &&&0&&&&& *(gray) \bullet \\
       &  & &&  &&&&*(yellow)0&&&& \\ 
       &  & &&  &&&&&*(magenta)1&&&\\
       &  & &&  &&&&&&0&& \\
       &  & &&  &&&&&&&0& \\
       &  & &&  &&&&&&&&  0\\
\end{ytableau}
=
\begin{ytableau}
3 & & *(gray) \bullet& *(gray) \bullet & *(gray) \bullet & *(gray) \bullet& *(gray) \bullet& *(gray) \bullet & *(gray) \bullet & *(gray) \bullet& *(gray) \bullet& *(gray) \bullet & *(gray) \bullet\\
 &*(yellow) 2 & & & *(gray) \bullet  & *(gray) \bullet& *(gray) \bullet& *(gray) \bullet& *(gray) \bullet& *(gray) \bullet& *(gray) \bullet& *(gray) \bullet&*(gray) \bullet \\
 &  & 1&&  &&*(gray) \bullet&*(gray)\bullet&*(gray)\bullet&*(gray)\bullet&*(gray)\bullet&*(gray) \bullet& *(gray) \bullet \\
 &  &   &1&  &&&*(gray)\bullet&*(gray)\bullet&*(gray)\bullet&*(gray)\bullet&*(gray) \bullet&*(gray) \bullet  \\
   &  & & &*(yellow)1 &&&&*(gray)\bullet&*(gray)\bullet&*(gray) \bullet&*(gray) \bullet& *(gray) \bullet \\
    &  & &&    & *(magenta)2&&&*(gray)&*(gray) \bullet&*(gray) \bullet&*(gray) \bullet&*(gray) \bullet  \\
     &  & &&   & &0&&&&&*(gray) \bullet& *(gray) \bullet \\
      &  & &&  &&&0&&&&& *(gray) \bullet \\
       &  & &&  &&&&*(yellow)0&&&& \\ 
       &  & &&  &&&&&*(magenta)0&&&\\
       &  & &&  &&&&&&0&& \\
       &  & &&  &&&&&&&0& \\
       &  & &&  &&&&&&&&  0\\
\end{ytableau}\\
&=
\begin{ytableau}
3 & & *(gray) \bullet& *(gray) \bullet & *(gray) \bullet & *(gray) \bullet& *(gray) \bullet& *(gray) \bullet & *(gray) \bullet & *(gray) \bullet& *(gray) \bullet& *(gray) \bullet & *(gray) \bullet\\
 &*(yellow) 2 & & & *(gray) \bullet  & *(gray) \bullet& *(gray) \bullet& *(gray) \bullet& *(gray) \bullet& *(gray) \bullet& *(gray) \bullet& *(gray) \bullet&*(gray) \bullet \\
 &  & 1&&  &&*(gray) \bullet&*(gray)\bullet&*(gray)\bullet&*(gray)\bullet&*(gray)\bullet&*(gray) \bullet& *(gray) \bullet \\
 &  &   &1&  &&&*(gray)\bullet&*(gray)\bullet&*(gray)\bullet&*(gray)\bullet&*(gray) \bullet&*(gray) \bullet  \\
   &  & & &*(yellow)1 &&&&*(gray)\bullet&*(gray)\bullet&*(gray) \bullet&*(gray) \bullet& *(gray) \bullet \\
    &  & &&    & *(magenta)1&&&*(gray)&*(gray) \bullet&*(gray) \bullet&*(gray) \bullet&*(gray) \bullet  \\
     &  & &&   & &0&&&&&*(gray) \bullet& *(gray) \bullet \\
      &  & &&  &&&0&&&&& *(gray) \bullet \\
       &  & &&  &&&&*(yellow)0&&&& \\ 
       &  & &&  &&&&&*(magenta)0&&&\\
       &  & &&  &&&&&&0&& \\
       &  & &&  &&&&&&&0& \\
       &  & &&  &&&&&&&&  0\\
\end{ytableau}
=
\begin{ytableau}
3 & & *(gray) \bullet& *(gray) \bullet & *(gray) \bullet & *(gray) \bullet& *(gray) \bullet& *(gray) \bullet & *(gray) \bullet & *(gray) \bullet& *(gray) \bullet& *(gray) \bullet & *(gray) \bullet\\
 &*(yellow) 2 & & & *(gray) \bullet  & *(gray) \bullet& *(gray) \bullet& *(gray) \bullet& *(gray) \bullet& *(gray) \bullet& *(gray) \bullet& *(gray) \bullet&*(gray) \bullet \\
 &  & 1&&  &&*(gray) \bullet&*(gray)\bullet&*(gray)\bullet&*(gray)\bullet&*(gray)\bullet&*(gray) \bullet& *(gray) \bullet \\
 &  &   &1&  &&&*(gray)\bullet&*(gray)\bullet&*(gray)\bullet&*(gray)\bullet&*(gray) \bullet&*(gray) \bullet  \\
   &  & & &*(yellow)1 &&&&*(gray)\bullet&*(gray)\bullet&*(gray) \bullet&*(gray) \bullet& *(gray) \bullet \\
    &  & &&    & *(magenta)1&&&&*(gray) \bullet&*(gray) \bullet&*(gray) \bullet&*(gray) \bullet  \\
     &  & &&   & &0&&&&&*(gray) \bullet& *(gray) \bullet \\
      &  & &&  &&&0&&&&& *(gray) \bullet \\
       &  & &&  &&&&*(yellow)0&&&& \\ 
       &  & &&  &&&&&*(magenta)0&&&\\
       &  & &&  &&&&&&0&& \\
       &  & &&  &&&&&&&0& \\
       &  & &&  &&&&&&&&  0\\
\end{ytableau}.
\end{aligned}
$$
\end{example}

\begin{rem}\label{rem:slight_improvement}
From the proof of Lemma \ref{lem:mirror_path} given above, we can slightly improve the statement of the lemma as follows:
For $\lambda\in \Par^k_m$, an integer $\ell-k< p\leq \ell$ with $p\geq m+2$, and $\kappa\in \mathbb{Z}^\ell$ with $p<\mathrm{supp}(\kappa)\leq \ell$, we have
\begin{eqnarray*}
\K{\Delta^k(\lambda)}{\Delta^k(\lambda)}{\lambda+\ep_p+\kappa}
&=&
\K{\Delta^k(\mu)}{\Delta^k(\mu)}{\mu+\kappa},
\end{eqnarray*}
where $\tfg{k}_\mu=D_{\mathfrak{r}(p)}\cdot \tfg{k}_\lambda.$
Indeed, throughout the straightening process introduced in  \S\ref{sec:mirror_Pf}, $\kappa$ is left unchanged because the process affects only topmost $p$ rows of the diagram.
\end{rem}

From Remark \ref{rem:slight_improvement}, we obtain the following key formula:
For $\lambda\in \Par^k_m$, and a set of integers
$\ell-k< p_1<\dots<p_r\leq \ell$ with $p_1\geq m+2$, we have
\begin{eqnarray}\label{eq:key_formula}
\K{\Delta^k(\lambda)}{\Delta^k(\lambda)}{\lambda+\ep_{p_1}+\dots+\ep_{p_{r}}}
&=&
D_{\mathfrak{r}(p_r)}\cdots D_{\mathfrak{r}(p_1)}\cdot \tfg{k}_\lambda.
\end{eqnarray}

\subsubsection{Proof of Lemma \ref{lem:main}}
Let $\Psi_1,\Psi_2$ be root ideals of $\Delta_{\ell_1}^+$ 
and  $\Delta_{\ell_2}^+$, respectively.
Let $\Psi_1\uplus \Psi_2$ be the subset of $\Delta_{\ell_1+\ell_2}^+$ 
defined by: 
\begin{equation*}
\{(i,j)\in \Delta_{\ell_1+\ell_2}^+
\;|\;(i,j)\in \Psi_1\;
\mbox{or}\;(i-\ell_1,j-\ell_1)\in \Psi_2
\;\mbox{or}\;
(i\leq \ell_1 \;\mbox{and}\; j>\ell_1)
\}.
\end{equation*}

Let $\lambda\in \Par^k_\ell.$ 
Set $\Psi=\Delta^k(\lambda).$ By the product rule \cite[Lemma 3.8]{BMS}, we have
\begin{eqnarray}
g_{(1^r)}\cdot \tfg{k}_\lambda
&=&\K{\Psi\uplus \varnothing_r}{\Psi\uplus \varnothing_r}{\lambda+\ep_{\{\ell+1,\ldots,\ell+r\}}}.\label{eq:Pieri1}
\end{eqnarray}
By applying Diagonal Removable Lemma \cite[Lemma 4.13]{BMS},
one shows that the right-hand side of \eqref{eq:Pieri1} is identical to
\begin{equation}
\K{\Psi'}{\Psi'}{\lambda+\ep_{\{\ell+1,\ldots,\ell+r\}}},\label{eq:Pieri2}
\end{equation}
where
$\Psi'=(\Delta^k(\lambda)\uplus\varnothing_r)\setminus \{(i,j)\in \Delta_{\ell+r}^+\;|\;i\leq \ell,\;j\geq \ell,j-i\leq r-1
\}$.
We can apply \cite[Lemma 5.3]{BMS}
with 
{$x=\ell-k+1$, $h=k-r+1$}
(in the notation there) to show that
\eqref{eq:Pieri2} is identical to
\begin{equation}
\sum_{a=0}^r
\sum_{\substack{S\subset \{\ell-k+1+r-a,\ldots,\ell\}\\
 |S|=a}}
 \K{\Psi''}{L(\Psi'')\sqcup S}{\lambda+\ep_S+\ep_{\{\ell+1,\ldots,\ell+r-a\}}},\label{eq:K_Pieri}
\end{equation}
 where
$
\Psi''=\Psi'\setminus
\{(i,j)\;|\;
i\leq\ell,j\geq \ell,j-i\leq k\}
 = \Delta^k(\lambda)\subset \Delta^+_{\ell+r}$
(see also the last part of the proof of \cite[Proposition 5.4]{BMS}).
By 
Lemma \ref{lem:dot} (i)
and \eqref{eq:key_formula}, 
{
for $S=\{p_1<p_2<\dots<p_a\}$,
we have
\[
\begin{aligned}
&\K{\Psi''}{L(\Psi'')\sqcup S}{\lambda+\ep_S+\ep_{\{\ell+1,\ldots,\ell+r-a\}}}\\
&=
\sum_{S'\subset S}
(-1)^{|S\setminus S'|}
\K{\Psi''}{L(\Psi'')}{\lambda+\ep_{S'}+\ep_{\{\ell+1,\ldots,\ell+r-a\}}}\\
&=
\sum_{b=0}^a
\sum_{\ell-k+1+r-a\leq 
p_{i_1}<p_{i_2}<\dots<p_{i_b}\leq \ell
}
(-1)^{a-b}
D_{\mathfrak{r}(\ell+r-a)}\cdots 
D_{\mathfrak{r}(\ell+1)}
D_{\mathfrak{r}(p_{i_b})}\cdots 
D_{\mathfrak{r}(p_{i_1})}\cdot \tfg{k}_\lambda\\
&=
D_{\mathfrak{r}(\ell+r-a)}\cdots 
D_{\mathfrak{r}(\ell+1)}
T_{\mathfrak{r}(p_a)}\cdots 
T_{\mathfrak{r}(p_1)}\cdot \tfg{k}_\lambda.
\end{aligned}
\]
Therefore, \eqref{eq:K_Pieri} is equal to
}
\begin{equation}
\sum_{a=0}^r
\sum_{\ell-k+1+r-a\leq 
p_1<\cdots<p_a\leq \ell}
D_{\mathfrak{r}(\ell+r-a)}\cdots 
D_{\mathfrak{r}(\ell+1)}
T_{\mathfrak{r}(p_a)}\cdots 
T_{\mathfrak{r}(p_1)}\cdot \tfg{k}_\lambda.\label{eq:DT}
\end{equation}

\begin{prop}
For $1\leq r\leq k$, and $\ell\in \Z$, we have
\[
\sum_{a=0}^r\sum_{\ell-k+1+r-a\leq p_1<\dots<p_a\leq \ell}
D_{\mathfrak{r}(\ell+r-a)}\dots D_{\mathfrak{r}(\ell+1)}
T_{\mathfrak{r}(p_a)}\dots T_{\mathfrak{r}(p_1)}
=
\sum_{A\subset I,\ |A|\leq r}T_{u_A}.
\]
\end{prop}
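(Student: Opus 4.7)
The plan is to expand each factor $D_j = T_j + 1$ on the left-hand side, identify every resulting monomial as some $T_{u_A}$, and then verify that each such $T_{u_A}$ with $|A| \leq r$ arises with multiplicity exactly one.

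As a preliminary reduction, I would first show that whenever $p_1 < p_2 < \cdots < p_m$ is an increasing sequence of integers contained in a window of length at most $k$, we have
\[
T_{\mathfrak{r}(p_m)} T_{\mathfrak{r}(p_{m-1})} \cdots T_{\mathfrak{r}(p_1)} = T_{u_B},
\]
with $B = \{\mathfrak{r}(p_1), \ldots, \mathfrak{r}(p_m)\} \subsetneq I$. Indeed, if $i, i+1 \in B$ with $i = \mathfrak{r}(p_s)$ and $i+1 = \mathfrak{r}(p_t)$, then $p_s \equiv p_t + 1 \pmod{k+1}$; the window-length hypothesis forces $p_s = p_t + 1 > p_t$, so $i$ appears strictly to the left of $i+1$ in our product. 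The word is therefore cyclically increasing of length $|B|$, hence equal to $T_{u_B}$.

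Next, expanding $D_j = T_j + 1$ and applying this reduction, each summand on the LHS becomes $T_{u_C}$, where $C$ is the residue set of the combined subset $P := S \sqcup \{p_1, \ldots, p_a\}$ with $S$ ranging over subsets of $\{\ell+1, \ldots, \ell+r-a\}$. Here $P$ lies in the length-$k$ window $W_a := \{\ell-k+1+r-a, \ldots, \ell+r-a\}$, whose residue set is $R_a = I \setminus \{x_a\}$ for $x_a \equiv -\ell - r + a \pmod{k+1}$, and $\mathfrak{r}$ is injective on $W_a$. Consequently, for each $A \subseteq I$ with $|A| = c \leq r$, the number of LHS summands contributing $T_{u_A}$ equals the number of $a \in \{0, 1, \ldots, r\}$ satisfying both (i) $x_a \notin A$ and (ii) $|A \cap \mathrm{low}_a| = a$, where $\mathrm{low}_a$ is the residue set of $\{\ell - k + 1 + r - a, \ldots, \ell\}$. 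It remains to show this count equals $1$.

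This is achieved by a monotonicity argument, which is the technical heart of the proof. Set $f(a) := |A \cap \mathrm{low}_a| - a$. From the elementary identity $\mathrm{low}_{a+1} = \mathrm{low}_a \sqcup \{x_a\}$, one computes
\[
f(a+1) - f(a) = \mathbf{1}[x_a \in A] - 1 \in \{0, -1\},
\]
so $f$ weakly decreases and drops by $1$ precisely when $x_a \notin A$. Since $f(0) \geq 0$ and $f(r) = c - r - \mathbf{1}[x_r \in A] \leq 0$, there is a unique largest $a^*$ with $f(a^*) = 0$. If $a^* = r$ then $c = r$ and $x_r \notin A$; otherwise $a^* < r$ and the drop at $a^*$ gives $x_{a^*} \notin A$. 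Either way (i) and (ii) hold at $a^*$. Any smaller $a < a^*$ with $f(a) = 0$ would force $f$ to be constant on $[a, a^*]$, hence $x_a \in A$, contradicting (i); this pins down $a^*$ uniquely. The main obstacle is precisely this counting step, but once the monotone $f$ is identified and its step behavior read off, both existence and uniqueness of $a^*$ follow simultaneously.
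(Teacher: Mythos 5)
Your argument is correct, and it follows the same overall strategy as the paper's proof: expand each $D_j=T_j+1$, observe that every resulting monomial is of the form $T_{u_A}$ for some subset $A$ of size at most $r$ because the relevant indices sit in a length-$k$ window (so the word is cyclically increasing), and then show that each such $T_{u_A}$ occurs with multiplicity exactly one. Where you diverge from the paper is in the multiplicity computation. The paper encodes each subset $A\subset I$ of size $n$ as a $01$-sequence $\eta\in\{0,1\}^{k+1}$, shows via the explicit formula $a = r+1-\min\{p:\eta_1+\dots+\eta_p=r-n+1\}$ that the index $a$ is recoverable from $\eta$ (hence the sets $S_a^n$ are pairwise disjoint, giving injectivity of the map $\mathrm{rm}$), and then establishes surjectivity by the Vandermonde-type cardinality identity $\sum_{a=0}^n\binom{k-r+a}{a}\binom{r-a}{n-a}=\binom{k+1}{n}$. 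You instead fix $A$ and analyze the step function $f(a)=|A\cap\mathrm{low}_a|-a$, showing it is weakly decreasing with unit steps, nonnegative at $a=0$, nonpositive at $a=r$, and that the conditions $x_a\notin A$ and $f(a)=0$ single out precisely the largest zero $a^*$ of $f$. This yields existence and uniqueness simultaneously and sidesteps both the $01$-encoding and the binomial identity, which is a genuine simplification of the counting step; the price is the slightly more involved preparatory bookkeeping with $W_a$, $x_a$, and $\mathrm{low}_a$, all of which you set up correctly.
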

\begin{proof}We first note that 
for any subsequence $A_1$ of $(\r(\ell+r-a),\ldots,\r(\ell+1))$ such that 
$0\leq |A_1|\leq r-a$, and $A_2$ of 
{$(\r(\ell),\r(\ell-1),\dots,\r(\ell-k+1+r-a) )$ }
such that $|A_2|=a$,  
the concatenation $A_1\cdot A_2$ is a cyclically
increasing sequence of length less than or equal to $r.$  

For $0\leq a\leq n\leq r$, let
\[
X^n_a=\left\{(p_1,\dots,p_n)\in \Z^n\ \left|\
   \begin{aligned}
   &\ell-k+r-a< p_1<\dots<p_a\leq \ell\\
   &<p_{a+1}<\dots<p_n\leq \ell+r-a
   \end{aligned}\
\right\}\right.
\]
Substituting $D_p=T_p+1$ into the left-hand side of the equation in the proposition, we see that
\begin{align}
&\sum_{a=0}^r\sum_{\ell-k+1+r-a\leq p_1<\dots<p_a\leq \ell}
(T_{\mathfrak{r}(\ell+r-a)}+1)\dots (T_{\mathfrak{r}(\ell+1)}+1)
T_{\mathfrak{r}(p_a)}\dots T_{\mathfrak{r}(p_1)}\nonumber\\
&=\sum_{a=0}^r\sum_{n=a}^r
\sum_{
(p_1,\dots,p_n)\in X^n_a
}
T_{u_{\{\mathfrak{r}(p_1),\dots,\mathfrak{r}(p_n)\}}}\nonumber\\
&=\sum_{n=0}^r\sum_{a=0}^n
\sum_{
(p_1,\dots,p_n)\in X^n_a
}
T_{u_{\{\mathfrak{r}(p_1),\dots,\mathfrak{r}(p_n)\}}}.\nonumber
\end{align}
Then it suffices to show that the map
\[
\mathrm{rm}:\bigsqcup_{a=0}^nX^n_a\to \{A\subset I\,;\, |A|=n\},\quad
(p_1,\dots,p_n)\mapsto \{\overline{p_1},\dots,\overline{p_n}\},
\]
is bijective.
For this, we identify a subset $A\subset I$ with a 01-sequence $\eta_1\eta_2\cdots \eta_{k+1}\in \{0,1\}^{k+1}$ by letting $\eta_p=0$ if $\overline{p+\ell}\in A$ and $\eta_p=1$ otherwise.
Then the image of $X^n_a$ by $\mathrm{rm}$ is contained in the set
\[
S^n_a:=
\left\{
\eta_1\eta_2\dots \eta_{k+1}\in \{0,1\}^{k+1}\,\left|\,
   \begin{aligned}
   \textstyle\sum_{p=1}^{k+1}\eta_p&=k+1-n,\\
 \textstyle\sum_{p=1}^{r-a}\eta_{p}&=r-n\\
   \eta_{r-a+1}&=1
   \end{aligned}
\right\}\right..
\]
For any $\eta_1\eta_2\dots \eta_{k+1}\in \bigcup_{a=0}^nS_a^n$, we have
\[
a=r+1-\min\{p\,|\,\eta_1+\dots+\eta_p=r-n+1\},
\]
which implies $S^n_a\cap S^{n}_{a'}= \emptyset\iff a\neq a'$. 
This implies that $\mathrm{rm}$ is injective as its restriction to $X^n_a$ is injective.
Because $\sum_{a=0}^n|X^n_a|=\sum_{a=0}^n\binom{k-r+a}{a}\binom{r-a}{n-a}=\binom{k+1}{n}=\sharp\{A\subset I\,;\,|A|=n\}$, the map $\mathrm{rm}$ is also surjective.
\end{proof}

This completes the proof of Lemma \ref{lem:main}.

\section{Relation to quantum $K$-theory}\label{sec:QK}

 \subsection{Quantum $K$-theory ring of the flag variety}\label{sec:QK}

\begin{thm}[\cite{KM},\cite{LM},\cite{LNS},\cite{MNS1},\cite{MNS2}]\label{thm:QK}
 ${QK}(\Fl)$
 can be identified with the quotient ring
\begin{equation}
A_{k+1}:=\C[[Q]][z_1,\ldots,z_{k+1}]/I_{k+1},\label{eq:A}
\end{equation}
where  $I_{k+1}$ is the ideal of $\C[[Q]][z_1,\ldots,z_{k+1}]$ generated by 
\begin{equation}
\sum_{\substack{I\subset \{1,\ldots,k+1\}\\|I|=i}}\prod_{j\in I}z_j\!
\prod_{\substack{j\in I\\ j+1\notin I}}(1-Q_j)
-\textstyle{\binom{k+1}{i}}\quad(1\leq i\leq k+1),\label{eq:QKrel}
\end{equation}
such that
for $w\in S_{k+1}$ the {\it quantum Grothendieck polynomial\/}
$\G_w^Q$ of Lenart and Maeno, with the change of variables $x_i=1-z_i$, represents $\O_{\Fl}^w$ 
in ${QK}(\Fl_{k+1})$.
\end{thm}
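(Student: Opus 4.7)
\emph{Proof plan.} The plan is to split Theorem~\ref{thm:QK} into two halves --- the ring presentation and the identification of Schubert representatives --- each synthesizing results from the cited works \cite{KM,LM,LNS,MNS1,MNS2}. For the presentation, I would first define a $\C[[Q]]$-algebra map $\phi \colon \C[[Q]][z_1,\ldots,z_{k+1}] \to QK(\Fl)$ sending each $z_i$ to the class $[\mathcal{L}_i]$ of the $i$-th tautological quotient line bundle on $\Fl$. At $Q=0$ the generators of $I_{k+1}$ in \eqref{eq:QKrel} collapse to the classical Borel relations $e_i(z)=\binom{k+1}{i}$, which hold inside $K(\Fl)$, so only the quantum corrections $\prod_{j\in I,\,j+1\notin I}(1-Q_j)$ need to be matched against actual products on the line-bundle side. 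Both $A_{k+1}$ and $QK(\Fl)$ are free $\C[[Q]]$-modules of rank $(k+1)!$ --- the former by a Gr\"obner argument on the leading elementary-symmetric terms, the latter by the Anderson--Chen--Tseng--Iritani finiteness theorem~\cite{ACTI} --- so once $\phi$ is shown to kill $I_{k+1}$, it automatically descends to an isomorphism by Nakayama in the $Q$-adic topology, using surjectivity at $Q=0$ from the classical Borel presentation of $K(\Fl)$.

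\emph{Schubert representatives.} The basis $\{\O^w_{\Fl}\}_{w \in S_{k+1}}$ of $QK(\Fl)$ is uniquely determined by the initial condition $\O^{\mathrm{id}}_{\Fl}=1$ together with the quantum Chevalley formula of Lenart--Naito--Sagaki~\cite{LNS}, which expresses each product $\O^{s_i}_{\Fl}\cdot \O^w_{\Fl}$ as an explicit $\C[[Q]]$-linear combination of Schubert classes. On the polynomial side, the Lenart--Maeno quantum Grothendieck polynomial $\G^Q_w$ is designed so that $\G^Q_w|_{Q=0}=\G_w$ represents $\O^w$ classically (Fomin--Kirillov). It therefore suffices to verify that the images of the $\G^Q_w$ in $A_{k+1}$ satisfy the same Chevalley recursion; induction on $\ell(w)$ then pins down all the Schubert representatives simultaneously. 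This polynomial-level verification of the Chevalley rule inside the quotient ring is the heart of \cite{LM,MNS1,MNS2}.

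\emph{Main obstacle.} The hard step is verifying the exact shape of the quantum correction $\prod_{j\in I,\,j+1\notin I}(1-Q_j)$ in \eqref{eq:QKrel}. In the cohomological analogue due to Kim, the conserved quantities of the non-relativistic Toda lattice supply the quantum corrections, but the $K$-theoretic setting demands the unipotent solution of the \emph{relativistic} Toda lattice of Ruijsenaars --- precisely the combinatorial object later exploited in this paper through $\Phi_{k+1}$. A purely geometric verification would require delicate computations of $K$-theoretic Gromov--Witten invariants on quasi-map moduli spaces; the hybrid strategy outlined above, combining the Chevalley rule of \cite{LNS} with the polynomial recursion of \cite{LM,MNS1,MNS2}, has emerged as the most tractable route, and crucially one that is compatible with the integrable-systems viewpoint used in the subsequent analysis of $\Phi_{k+1}$.
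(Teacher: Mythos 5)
The paper does not itself prove Theorem~\ref{thm:QK}: it is stated as a citation to \cite{KM,LM,LNS,MNS1,MNS2}, and the only nearby argument actually written out in the paper is the four-line Nakayama sketch for the localized polynomial analogue (Proposition~\ref{prop:KMmodified}). Your plan does reproduce the overall architecture those references employ --- an evaluation homomorphism from the polynomial ring to $QK(\Fl)$, a freeness/flatness step, a Nakayama descent from the classical $Q=0$ Borel presentation, and a Chevalley-based identification of the Schubert representatives --- so in spirit it is the same route, not a genuinely different one, and inevitably it defers to the cited works for the actual verification that $\phi$ kills the relations \eqref{eq:QKrel}.

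Two points in your write-up are inaccurate and worth fixing. First, freeness of $QK(\Fl)$ as a $\C[[Q]]$-module with Schubert basis is not a consequence of Anderson--Chen--Tseng--Iritani~\cite{ACTI}: it holds essentially by construction (Givental--Lee), since the underlying $\C[[Q]]$-module is $K(\Fl)\otimes\C[[Q]]$. What ACTI supplies is the genuinely different, and genuinely needed, statement that the quantum products of Schubert classes are \emph{polynomial} rather than merely power series in $Q$; this is what makes the polynomial version $A^{\pol}_{k+1}$ and the $S$-localization technique of Proposition~\ref{prop:KMmodified} possible at all. Second, the assertion that the Chevalley rule plus $\O^{\mathrm{id}}_{\Fl}=1$ uniquely determines the Schubert basis by ``induction on $\ell(w)$'' is too quick. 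Already in classical $K$-theory the Monk/Chevalley expansion of $\O^{s_i}_{\Fl}\cdot\O^{w}_{\Fl}$ contains Schubert classes of all lengths $\geq\ell(w)+1$, and the quantum terms bring in still further unknowns, so a naive length induction does not close. The well-founded induction is the $Q$-adic one: the quantum correction terms carry strictly positive $Q$-degree, so the Chevalley recursion is triangular for the $(Q)$-adic filtration, and the $Q=0$ specialization plus that recursion pin down the answer order by order in $Q$. Making this precise, and then checking that the images of $\G_w^Q$ in $A_{k+1}$ obey the same recursion, is exactly the nontrivial content of \cite{LNS,MNS1,MNS2}, as your ``main obstacle'' paragraph correctly anticipates.
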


Let $QK^\pol(\Fl)$ denote the
$\C[Q]$-module spanned by $\O^w_{\Fl}\;(w\in S_{k+1})$.
Let $S$ be the multiplicative subset $1+(Q)$ of $\C[Q].$ For a $\C[Q]$-module $M$,
 we denote by $M_S$ the localization by $S.$
 By a result of 
 Anderson, Chen, Tseng, and Iritani \cite{ACTI}, $QK^\pol(\Fl)_S$
  forms a subring of $QK(G/B)$.

Let $A_{k+1}^{\mathrm{pol}}$ be the quotient ring
$
\C[Q][z_1,\ldots,z_{k+1}]/I_{k+1}^{\mathrm{pol}}$, where 
$I_{k+1}^{\mathrm{pol}}$ is the ideal of $\C[Q][z_1,\ldots,z_{k+1}]$ generated by
the polynomials \eqref{eq:QKrel}. 
The following result is a modified version of Kirillov-Maeno's (conjectural) presentation
given in \cite[Remark in \S6.1]{LNS}. For the reader's convenience, we give a sketch of proof.
\begin{prop}\label{prop:KMmodified}
There exists an isomorphism of $\C[Q]_S$
algebras \begin{equation}
(A^\pol_{k+1})_S
\longrightarrow
{QK}^{\mathrm{pol}}(\Fl)_S.
\label{eq:QK=CQ}
\end{equation}
Moreover the image of $\G_w^Q$ in $(A^\pol_{k+1})_S$ is sent to $\O^{w}_{\Fl}$
for $w\in S_{k+1}.$
\end{prop}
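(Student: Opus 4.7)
The plan is to descend the $\C[[Q]]$-algebra isomorphism of Theorem \ref{thm:QK} along the inclusion $\C[Q]_S \hookrightarrow \C[[Q]]$. Every element of $S = 1+(Q)$ has an explicit geometric-series inverse in $\C[[Q]]$, so this inclusion is a well-defined injection. Since $I_{k+1}^\pol$ and $I_{k+1}$ have the same generating polynomials in $\C[Q][z_1,\ldots,z_{k+1}]$, base change yields a canonical $\C[Q]_S$-algebra homomorphism
\[
\Psi\colon (A_{k+1}^\pol)_S \longrightarrow A_{k+1} \cong QK(\Fl),
\]
and by Theorem \ref{thm:QK} it sends the class of $\mathfrak{G}_w^Q$ to $\O^w_{\Fl}$.

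Next, by the Anderson--Chen--Tseng--Iritani result cited in the text, $QK^\pol(\Fl)_S$ sits inside $QK(\Fl)$ as a $\C[Q]_S$-subalgebra with $\C[Q]_S$-basis $\{\O^w_{\Fl}\}_{w\in S_{k+1}}$. Since $\Psi$ is a $\C[Q]_S$-algebra map whose image contains every $\O^w_{\Fl}$, its image lies in and generates $QK^\pol(\Fl)_S$. Thus $\Psi$ factors as a surjection
\[
\Psi_S\colon (A_{k+1}^\pol)_S \twoheadrightarrow QK^\pol(\Fl)_S,
\]
matching $[\mathfrak{G}_w^Q]$ with $\O^w_{\Fl}$, which is the assertion of the proposition modulo injectivity.

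To upgrade $\Psi_S$ to an isomorphism, the plan is to show that $(A_{k+1}^\pol)_S$ is $\C[Q]_S$-free of rank $(k+1)!$ with basis $\{[\mathfrak{G}_w^Q]\}_{w\in S_{k+1}}$. Generation follows from specialization at $Q=0$: the quotient $A_{k+1}^\pol/(Q)$ is Grothendieck's classical presentation of $K(\Fl)$ with basis the ordinary Grothendieck polynomials $\mathfrak{G}_w=\mathfrak{G}_w^Q|_{Q=0}$, and a Nakayama-type argument at the prime ideal $(Q)\cap \C[Q]_S$ lifts this generation to $(A_{k+1}^\pol)_S$. Linear independence comes for free from the target: the images $[\mathfrak{G}_w^Q]$ are already $\C[[Q]]$-linearly independent in $A_{k+1}$ by Theorem \ref{thm:QK}, and the injection $\C[Q]_S\hookrightarrow \C[[Q]]$ transfers this to $\C[Q]_S$-linear independence in $(A_{k+1}^\pol)_S$. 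Matching the basis $\{[\mathfrak{G}_w^Q]\}$ against $\{\O^w_{\Fl}\}$ then forces $\Psi_S$ to be an isomorphism.

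The main obstacle is the freeness statement: the generation half, via $Q=0$ specialization and Nakayama, is standard but requires care because Nakayama directly applies to the local ring $\C[Q]_{(Q)}$ rather than $\C[Q]_S$, and the descent to $\C[Q]_S$-freeness must be argued (for instance, via flatness of $\C[Q]_S\to \C[Q]_{(Q)}$ together with the linear independence coming from $A_{k+1}$). Everything else — the existence and compatibility of $\Psi$, the identification of the image with $QK^\pol(\Fl)_S$, and the action on $\mathfrak{G}_w^Q$ — is essentially a formal consequence of Theorem \ref{thm:QK} and the ACTI finiteness theorem.
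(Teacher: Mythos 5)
Your overall architecture — base-change from $\C[Q]_S$ into $\C[[Q]]$, identify $\O^w_{\Fl}$ with $[\G^Q_w]$ using Theorem \ref{thm:QK}, then descend to $\C[Q]_S$ by a Nakayama-type argument at $Q=0$ — is essentially the same shape as the paper's proof, and your linear-independence step is sound: if $\sum_w c_w[\G^Q_w]=0$ in $(A_{k+1}^\pol)_S$ with $c_w\in\C[Q]_S$, pushing forward to $A_{k+1}\cong QK(\Fl)$ and using that the $\O^w_{\Fl}$ form a $\C[[Q]]$-basis forces $c_w=0$.

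The genuine gap, however, is in the \emph{generation} half, and you have misdiagnosed where it lies. Your stated concern about passing between $\C[Q]_S$ and $\C[Q]_{(Q)}$ is vacuous: any $f\in\C[Q]\setminus(Q)$ has nonzero constant term $c$ and factors as $f=c\cdot\bigl(1+(f-c)/c\bigr)\in\C^\times\cdot S$, so $\C[Q]_S=\C[Q]_{(Q)}$ and $(Q)_S$ is the unique maximal ideal — there is nothing to descend. The real obstruction is that Nakayama requires $(A^\pol_{k+1})_S$ to be a \emph{finitely generated} $\C[Q]_S$-module (or, in the variant the paper uses, $(Q)$-adically separated), and this is not automatic from the presentation \eqref{eq:A}: it is a nontrivial statement that the $\binom{k+1}{i}$ quantum relations still cut out a finite module after only inverting $S$ rather than completing in $Q$. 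Your proposal acknowledges "the freeness statement" is the main obstacle but offers no argument for this finiteness, which is precisely the content outsourced in the paper's proof to \cite[Corollary~B.3(2)]{MNS1} and the Nakayama-type lemma of \cite[Proposition~A.3, Remark~A.6]{GMSZ}, applied as in \cite[Theorem~6.1]{MNS1}. (One viable fix along your lines: observe that $(A^\pol_{k+1})_S$ is a Noetherian domain, invoke Krull's intersection theorem to get $\bigcap_n(Q)_S^n(A^\pol_{k+1})_S=0$, and note that the decomposition $(A^\pol_{k+1})_S = \Span_{\C[Q]_S}\{[\G^Q_w]\}\oplus\Ker\Psi$ together with the mod-$Q$ isomorphism forces $\Ker\Psi\subseteq\bigcap_n(Q)_S^n(A^\pol_{k+1})_S$. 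But that argument must be made, and it is not present in your proposal.)

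Finally, a smaller logical issue: you assert "the image of $\Psi$ lies in $QK^\pol(\Fl)_S$" because the image contains every $\O^w_{\Fl}$. Containing all $\O^w_{\Fl}$ gives the reverse inclusion $QK^\pol(\Fl)_S\subseteq\mathrm{Im}(\Psi)$; to conclude $\mathrm{Im}(\Psi)\subseteq QK^\pol(\Fl)_S$ you should instead note that the $\C[Q]_S$-algebra generators of the source (the $z_i$, or equivalently a subset of the $[\G^Q_w]$) land in the subring $QK^\pol(\Fl)_S\subseteq QK(\Fl)$ furnished by \cite{ACTI}. This is easily repaired, but as written the sentence proves the wrong inclusion.
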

\begin{proof} 
    By \cite[Chapter 3, Exercise 2]{AM}, we see that the localized ideal 
    $(Q)_S$ is contained in the Jacobson radical of $\C[Q]_S$. 
    By \cite[Corollary B.3(2)]{MNS1}, we see that the right-hand side of \eqref{eq:QK=CQ} is finitely generated as $\C[Q]_S$-module.
    Then we can apply Nakayama-type arguments (\cite[Proposition A.3 and Remark A.6]{GMSZ}) to obtain the isomorphism by the same arguments as in \cite[Theorem 6.1]{MNS1}. 
    The second statement follows from 
 \cite[Theorem 50]{LNS} or
 \cite[Theorem 4.4]{MNS2}.
\end{proof}

 \subsection{Map $\Phi_{k+1}$}

Recall that $\tau_i:=g_{R_i},\;\tau_i^+:=\tilde{g}_{R_i}
$ with $R_i= (\overbrace{i,\ldots,i}^{k+1-i})$ for $1\leq i\leq k$.
Note that 
the notation $R_i$ is different from the one used in \cite{IIM}, and
the indices of $\tau_i$ are also switched from the ones in \cite{IIM} by $i\mapsto k+1-i$ (see Remark \ref{rem:omega} below). 

\begin{thm}[\cite{IIM}]\label{thm:IIM}
There is a ring isomorphism
\begin{equation}
\Phi_{k+1}: A_{k+1}^{\mathrm{pol}}[Q_i^{-1}]
\longrightarrow 
\Lambda_{(k)}[\tau_i^{-1},(\tau^+_i)^{-1}],
\end{equation}
where $1\le i\le k,$ such that 
\begin{equation}\label{eq:def_of_Psi}
z_i\mapsto \frac{\tau_i\tau^+_{i-1}}{\tau^+_i\tau_{i-1}}\;(1\leq i\leq k+1),\quad
Q_i\mapsto \frac{\tau_{i-1}\tau_{i+1}}{\tau_i^2}\;(1\leq i\leq k).
\end{equation}
\end{thm}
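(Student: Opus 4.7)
The strategy is twofold: \textbf{(a)} verify that the prescription $z_i\mapsto \tau_i\tau_{i-1}^+/(\tau_i^+\tau_{i-1})$, $Q_i\mapsto \tau_{i-1}\tau_{i+1}/\tau_i^2$ respects the defining relations \eqref{eq:QKrel} of $A_{k+1}^{\pol}$; and \textbf{(b)} establish bijectivity of the induced map. Well-definedness reduces to the family of algebraic identities
\[
\sum_{\substack{I\subset\{1,\ldots,k+1\}\\|I|=r}}\prod_{j\in I}\frac{\tau_j\tau_{j-1}^+}{\tau_j^+\tau_{j-1}}\prod_{\substack{j\in I\\j+1\notin I}}\!\!\left(1-\frac{\tau_{j-1}\tau_{j+1}}{\tau_j^2}\right)=\binom{k+1}{r},\qquad 1\le r\le k+1,
\]
with the boundary conventions $\tau_0=\tau_{k+1}=\tau_0^+=\tau_{k+1}^+=1$ and $Q_{k+1}:=0$. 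The top case $r=k+1$ is immediate by telescoping: the left-hand side is $(\tau_{k+1}/\tau_0)(\tau_0^+/\tau_{k+1}^+)=1=\binom{k+1}{k+1}$.

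I would prove the remaining identities by interpreting the left-hand side as the coefficient of $u^{k+1-r}$ in $\det(u\cdot\Id+L(z,Q))$, where $L(z,Q)$ is the lower-bidiagonal Lax matrix of the $(k+1)$-particle Ruijsenaars relativistic Toda lattice. The substitution \eqref{eq:def_of_Psi} is precisely Ruijsenaars' $\tau$-function parametrization of the unipotent Lax factorization, under which $L$ is conjugate to a unipotent matrix and hence $\det(u\cdot\Id+L)=(1+u)^{k+1}$; comparing coefficients of $u$ then yields the identity at each $r$. The required bilinear relations among the $\tau_i$ and $\tau_i^+$ can be extracted from the paper's own machinery: since the rectangle $R_i=(i^{k+1-i})$ satisfies $\lambda_1+\ell(\lambda)=k+1$, Propositions~\ref{prop:ksmallgtilde} and~\ref{prop:Taki-g} together give $\tau_i^+=\F(\tau_i)$, and the determinantal formula \eqref{eq:g} for $g_{R_i}$ supplies the requisite Pl\"ucker-type bilinear identities.

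For step (b), surjectivity proceeds iteratively: the ratios $a_j:=\tau_j/\tau_{j-1}$ are recovered from $Q_1,\ldots,Q_k$ via $Q_i=a_{i+1}/a_i$ together with $a_1\cdots a_{k+1}=1$, after which $b_j:=\tau_j^+/\tau_{j-1}^+=a_j/z_j$; hence every $\tau_i^{\pm 1}$ and $(\tau_i^+)^{\pm 1}$ lies in the image, and so does each $h_r\in\Lambda_{(k)}=\C[h_1,\ldots,h_k]$ (as $\tau_1,\ldots,\tau_k$ together with $h_1,\ldots,h_k$ generate the same fraction field). Injectivity follows from a rank count: the source has the expected rank $(k+1)!$ as a $\C[Q]$-module by Theorem~\ref{thm:QK} and Proposition~\ref{prop:KMmodified}, matching that of the target after localization. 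The principal obstacle is the tau-function identity of the first paragraph: although Ruijsenaars' viewpoint makes it transparent in principle, a direct derivation from the determinantal formulas for $g_{R_i}$ and $\tilde g_{R_i}$ would amount to establishing a nontrivial family of bilinear Pl\"ucker-type relations for the infinite Grassmannian, which is precisely the integrable-systems input exploited in \cite{IIM}.
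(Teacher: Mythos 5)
The paper itself offers no proof of this theorem; it is quoted verbatim from \cite{IIM}, with a one-sentence description of the construction (unipotent solution of the relativistic Toda lattice). So the comparison can only be with that cited source. Your outline correctly identifies the Ruijsenaars/Lax-factorization mechanism behind the well-definedness identities in step~(a), and you are honest that the real content there --- the tau-function bilinear identities --- is precisely the integrable-systems input of \cite{IIM}. That part of the sketch is consistent with the paper's account.

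However, step~(b) contains a genuine gap. Your surjectivity argument claims that the ratios $a_j=\tau_j/\tau_{j-1}$ are recovered from $Q_1,\dots,Q_k$ via $Q_i=a_{i+1}/a_i$ together with $a_1\cdots a_{k+1}=1$, and hence that every $\tau_i^{\pm1}$ lies in the image. This does not follow: the recursion $a_{j}=a_1\prod_{i<j}Q_i$ expresses all $a_j$ in terms of $a_1$, and the single constraint $a_1\cdots a_{k+1}=1$ then pins down only $a_1^{\,k+1}$, not $a_1=\tau_1$ itself. Concretely, one readily checks from \eqref{eq:def_of_Psi} that
\[
\Phi_{k+1}\Bigl(\prod_{j=1}^{k}Q_j^{-j}\Bigr)=\frac{\tau_k^{\,k+1}}{\tau_{k+1}^{\,k}}=\tau_k^{\,k+1},
\]
which gives $\tau_k^{\,k+1}$ in the image, but not $\tau_k$. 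Getting $\tau_i$ itself requires exploiting the bilinear structure among $\tau_i$, $\tau_i^+$, and the complementary tau-function $\tau_i^-$ (cf.\ the discrete Toda/Hirota relation $\tau_i^2-\tau_{i-1}\tau_{i+1}=\tau_i^+\tau_i^-$ in Lemma~\ref{lem:disToda}, or the special relation $\tau_i^+-\tau_i=\tilde g_{R_i^*}$), as one sees already for $k=1$, where $\Phi_2\bigl(Q_1^{-1}(z_2-1)\bigr)=h_1=\tau_1$ works only because $\tau_1^+-\tau_1=1$. Likewise, the claimed ``rank count'' for injectivity is not an argument on its own: equality of ranks of two free modules over a common base does not force a homomorphism between them to be injective; one needs surjectivity first (or an argument that both are domains of the same Krull dimension and that $\Phi_{k+1}$ is dominant). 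Both of these points would have to be supplied before the outline becomes a proof.
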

The map $\Phi_{k+1}$ was constructed by 
solving the relativistic Toda lattice equation 
with the initial condition that the Lax matrix is unipotent. 
Although the construction of $\Phi_{k+1}$ has no apparent geometric meaning, 
it is expected that 
the map sends a Schubert structure sheaf $\O_{\Fl}^w$ to 
an element in $K_*(\Gr)_\loc$ related to a Schubert class.  

\begin{rem}\label{rem:omega} Our convention for $\Phi_{k+1}$ is slightly 
different from the one in \cite{IIM}. 
Let $\overline{\Omega}$ be the automorphism of $\Lambda_{(k)}[\tau_i^{-1},(\tau^+_i)^{-1}]$ given as the natural extension of $\Omega$ (see \S \ref{sec:kconj}).
One can check that $\Phi_{k+1}'=\overline{\Omega}\circ \Phi_{k+1}$ coincides with the map introduced in \cite{IIM} by replacing $\sigma_i$ in their notation with $\tau^+_i$.

\begin{cor}\label{cor:phi}
There is an injective $\C[Q]$-algebra homomorpshism
$$\phi: A_{k+1}^\pol[Q_i^{-1}]\hookrightarrow QK(\Fl)[Q_i^{-1}]$$
such that the image of $\G_w^Q$ in $A_{k+1}^\pol[Q_i^{-1}]$ is sent to 
$\O^w_{\Fl}.$
\end{cor}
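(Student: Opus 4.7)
The plan is to construct $\phi$ as the composition of a natural scalar-extension map with the isomorphism of Theorem \ref{thm:QK}, and then to verify injectivity via freeness of $A^{\pol}_{k+1}$ over $\C[Q]$.

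First, I would note that the inclusion $\C[Q]\hookrightarrow\C[[Q]]$ remains injective after inverting the $Q_i$, giving $\C[Q][Q_i^{-1}]\hookrightarrow\C[[Q]][Q_i^{-1}]$. Since the ideals $I^{\pol}_{k+1}\subset\C[Q][z_1,\ldots,z_{k+1}]$ and $I_{k+1}\subset\C[[Q]][z_1,\ldots,z_{k+1}]$ are generated by the same polynomials \eqref{eq:QKrel}, extension of scalars induces a natural $\C[Q]$-algebra homomorphism $A^{\pol}_{k+1}[Q_i^{-1}]\to A_{k+1}[Q_i^{-1}]$. Composing with the identification $A_{k+1}[Q_i^{-1}]\cong QK(\Fl)[Q_i^{-1}]$ from Theorem \ref{thm:QK} defines the desired $\phi$; since the image of $\G_w^Q$ in $A_{k+1}[Q_i^{-1}]$ represents $\O^w_{\Fl}$, we get $\phi(\G_w^Q)=\O^w_{\Fl}$ by construction.

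For injectivity, I would show that $A^{\pol}_{k+1}$ is a free $\C[Q]$-module of rank $(k+1)!$; equivalently, that the $k+1$ generators of $I^{\pol}_{k+1}$ listed in \eqref{eq:QKrel} form a regular sequence in the Cohen--Macaulay ring $\C[Q][z_1,\ldots,z_{k+1}]$. At the special fiber $Q=0$, the relations reduce to $e_i(z)-\binom{k+1}{i}$ for $1\le i\le k+1$; the common zero locus is the single point $(1,\ldots,1)$ in $\mathbb{A}^{k+1}$, and after the change of variables $x_i=1-z_i$ the quotient is recognized as the classical coinvariant ring of $S_{k+1}$, a zero-dimensional complete intersection of $\C$-dimension $(k+1)!$. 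Since the fiber at $Q=0$ has the expected codimension $k+1$, the entire family has codimension $k+1$ as well, so the sequence is regular in $\C[Q][z_1,\ldots,z_{k+1}]$, and the Koszul resolution shows that $A^{\pol}_{k+1}$ is $\C[Q]$-free of rank $(k+1)!$.

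Once freeness is established, the map $A^{\pol}_{k+1}[Q_i^{-1}]\to A_{k+1}[Q_i^{-1}]$ is obtained by tensoring a free $\C[Q][Q_i^{-1}]$-module with the scalar injection $\C[Q][Q_i^{-1}]\hookrightarrow\C[[Q]][Q_i^{-1}]$, and this operation clearly preserves injectivity, which forces $\phi$ to be injective. The main obstacle is the regular-sequence/freeness step; everything else is formal given Theorem \ref{thm:QK} and the natural scalar-extension construction of $\phi$. An alternative route, which avoids the explicit regular-sequence check, would invoke Proposition \ref{prop:KMmodified} to identify $(A^{\pol}_{k+1})_S$ with the $\C[Q]_S$-free module $QK^{\pol}(\Fl)_S$, deduce freeness of $A^{\pol}_{k+1}$ from finite generation and Nakayama, and then land in $QK(\Fl)[Q_i^{-1}]$ via the inclusion $QK^{\pol}(\Fl)_S\subset QK(\Fl)$ from \cite{ACTI}.
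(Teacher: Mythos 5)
Your main route contains a genuine error. You claim that $A_{k+1}^{\pol}$ is a free $\C[Q]$-module of rank $(k+1)!$ by verifying that the fiber at $Q=0$ has the expected codimension; but a single fiber does not control the others, and in fact the claim is false. Take $k=1$: eliminating $z_2$ from the two relations gives $A^{\pol}_2\cong\C[Q_1][z_1]/\bigl((1-Q_1)z_1^2-2z_1+1\bigr)$, whose fiber at $Q_1=0$ has $\C$-dimension $2$ but whose fiber at $Q_1=1$ has $\C$-dimension $1$ (the leading coefficient $1-Q_1$ vanishes). So $A^{\pol}_2$ is not free over $\C[Q_1]$, and inverting $Q_1$ does not repair this because the bad locus is $Q_1=1$, not $Q_1=0$. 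The degeneration along $\{1-Q_j=0\}$ is precisely why the paper localizes at $S=1+(Q)$ rather than appealing to global freeness. Your alternative route has the same flaw: Proposition~\ref{prop:KMmodified} and Nakayama give that $(A^{\pol}_{k+1})_S$ is $\C[Q]_S$-free, but you cannot descend this to $\C[Q]$-freeness of $A^{\pol}_{k+1}$ (the $k=1$ example again rules it out), so the inclusion $A^{\pol}_{k+1}[Q_i^{-1}]\hookrightarrow (A^{\pol}_{k+1}[Q_i^{-1}])_S$ is not automatic.

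What you are missing is the one ingredient the paper actually uses for injectivity: by Theorem~\ref{thm:IIM}, $A^{\pol}_{k+1}[Q_i^{-1}]$ is isomorphic to $\Lambda_{(k)}[\tau_i^{-1},(\tau_i^+)^{-1}]$, hence an integral domain, hence torsion-free, so localizing at $S$ is injective. After that, Proposition~\ref{prop:KMmodified} identifies $(A^{\pol}_{k+1})_S[Q_i^{-1}]$ with $QK^{\pol}(\Fl)_S[Q_i^{-1}]$, which sits inside $QK(\Fl)[Q_i^{-1}]$ by \cite{ACTI}, and the statement on $\G^Q_w$ follows from Proposition~\ref{prop:KMmodified}. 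Without the domain (or at least $S$-torsion-freeness) input, neither the scalar-extension map $A^{\pol}_{k+1}[Q_i^{-1}]\to A_{k+1}[Q_i^{-1}]$ nor the localization map at $S$ is known to be injective, and your argument does not close that gap.
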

\begin{proof}
We know from 
   Theorem \ref{thm:IIM} that $A_{k+1}^{\mathrm{pol}}[Q_i^{-1}]$ is an integral domain, and hence 
   $A_{k+1}^{\mathrm{pol}}[Q_i^{-1}]$ is a $\C[Q]$-subalgebra  of 
   $(A_{k+1}^{\mathrm{pol}}[Q_i^{-1}])_S$, where $S=1+(Q).$
   Consider the composition of ring homomorphisms 
\begin{align*}
A_{k+1}^\pol[Q_i^{-1}] 
&\hookrightarrow (A_{k+1}^\pol[Q_i^{-1}])_S \\
&= ((A_{k+1}^\pol)_S)[Q_i^{-1}]\\
&\cong 
(QK^\pol(\Fl)_S)[Q_i^{-1}]\quad \text{(Proposition \ref{prop:KMmodified})}\\
&\hookrightarrow  
QK(\Fl)[Q_i^{-1}].
\end{align*}
The statement on $\G_w^Q$ follows from that of Proposition \ref{prop:KMmodified}.
\end{proof}

\end{rem}

\subsection{$K$-homology of the affine Grassmannian}\label{sec:Khom}
For $x\in \Wafo$, let $\xi_x^0\in K_*(\Gr)$ be the element
defined \cite[\S 6.3]{LSS}.
The non-equivariant $K$-theoretic $k$-Schur function $g_x^{(k)}$ in 
\cite[Theorem 7.17 (2)]{LSS} corresponds to $\xi_x^0$. 
Let us denote this isomorphism by $\alpha_{\circ}: 
K_*(\Gr)\rightarrow \Lambda_{(k)}.$
It holds that
$\alpha_{\circ}(\mathcal{O}_x^\Gr)=\gt{k}_x$ (\cite{Tak}, \cite[Lemma 2 (ii)]{LLMS}).
We define a twisted isomorphism $\alpha:
K_*(\Gr)\rightarrow \Lambda_{(k)}$ by
$\alpha:=\F^{-1}\circ \alpha_{\circ}.$
Thus we have
\begin{equation}
\alpha(\O_x^\Gr)=\F^{-1}(\gt{k}_x).\label{eq:alphaO}
\end{equation}

\begin{prop}\label{prop:di}
If $\lambda\subset R_i$ for some $1\leq i\leq k$, equivalently
$\lambda_1+\ell(\lambda)\leq k+1$, then
we have \begin{equation}
\alpha(\O_\lambda^\Gr)
=g_\lambda.
\end{equation}
In particular, we have 
\begin{equation}
\alpha(\O_{R_i}^\Gr)=\tau_i.
\end{equation}
\end{prop}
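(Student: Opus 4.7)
The plan is to show that Proposition \ref{prop:di} is essentially a direct consequence of the definitions together with Theorem \ref{thm:main} and its immediate corollary (stated just before Section \ref{sec:proof}). First I would unwind the definition $\alpha := \F^{-1} \circ \alpha_\circ$ combined with the identity $\alpha_\circ(\O_\lambda^\Gr) = \tilde{g}^{(k)}_\lambda$ recalled right before the proposition, to obtain
\[
\alpha(\O_\lambda^\Gr) = \F^{-1}(\tilde{g}^{(k)}_\lambda).
\]

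Next I would apply Theorem \ref{thm:main}, which gives $\tilde{g}^{(k)}_\lambda = \F(\tfg{k}_\lambda)$, equivalently $\F^{-1}(\tilde{g}^{(k)}_\lambda) = \tfg{k}_\lambda$. Hence $\alpha(\O_\lambda^\Gr) = \tfg{k}_\lambda$ for every $\lambda \in \Par^k$ (so this intermediate identity is in fact the content of \eqref{eq:alphaO} rewritten via Theorem \ref{thm:main}). Under the hypothesis $\lambda_1 + \ell(\lambda) \leq k+1$, the corollary to Theorem \ref{thm:main} (combining Proposition \ref{prop:ksmallgtilde} and Proposition \ref{prop:Taki-g}) yields $\tfg{k}_\lambda = g_\lambda$, and therefore $\alpha(\O_\lambda^\Gr) = g_\lambda$, which is the first assertion.

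For the second assertion, I would simply verify that $R_i = (i,\ldots,i)$ (with $k+1-i$ parts) satisfies $\lambda_1 + \ell(\lambda) = i + (k+1-i) = k+1$, so the first part applies and gives $\alpha(\O_{R_i}^\Gr) = g_{R_i} = \tau_i$ by the definition of $\tau_i$ recalled at the beginning of the previous subsection. The equivalence between $\lambda \subset R_i$ for some $1 \leq i \leq k$ and $\lambda_1 + \ell(\lambda) \leq k+1$ is immediate from inspecting the staircase shapes $R_1, R_2, \ldots, R_k$.

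There is no substantive obstacle here: once Theorem \ref{thm:main} and the corollary identifying $\tfg{k}_\lambda$ with $g_\lambda$ in the ``small'' regime are in hand, the proof is a two-line chain of equalities. The only caveat is to confirm that both Proposition \ref{prop:ksmallgtilde} and the corollary are being cited in the direction needed, i.e., that the hypothesis $\lambda_1 + \ell(\lambda) \leq k+1$ is exactly what allows us to replace $\tfg{k}_\lambda$ by the dual stable Grothendieck polynomial $g_\lambda$.
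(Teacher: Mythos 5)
Your argument is correct, but it takes a slightly more roundabout route than the paper's. The paper's proof never invokes Theorem~\ref{thm:main} (nor the Katalan functions $\tfg{k}_\lambda$): it goes directly from $\alpha(\O_\lambda^\Gr)=\F^{-1}(\gt{k}_\lambda)$ via Proposition~\ref{prop:ksmallgtilde} (which gives $\gt{k}_\lambda=\tilde g_\lambda$ in the small regime) and then Proposition~\ref{prop:Taki-g} (which gives $\F^{-1}(\tilde g_\lambda)=g_\lambda$). You instead apply Theorem~\ref{thm:main} to convert $\F^{-1}(\gt{k}_\lambda)$ into $\tfg{k}_\lambda$, and then quote the corollary $\tfg{k}_\lambda=g_\lambda$. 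But the corollary's proof itself reuses Theorem~\ref{thm:main} to convert $\tfg{k}_\lambda$ back into $\F^{-1}(\gt{k}_\lambda)$ before applying Propositions~\ref{prop:ksmallgtilde} and~\ref{prop:Taki-g}, so your two invocations of Theorem~\ref{thm:main} cancel and the net argument reduces to the paper's. The detour is harmless but unnecessary: the clean version is simply
\[
\alpha(\O_\lambda^\Gr)=\F^{-1}(\gt{k}_\lambda)\overset{\text{Prop.~\ref{prop:ksmallgtilde}}}{=}\F^{-1}(\tilde g_\lambda)\overset{\text{Prop.~\ref{prop:Taki-g}}}{=}g_\lambda,
\]
with no appeal to the main theorem. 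Your handling of the second assertion and of the equivalence $\lambda\subset R_i\Leftrightarrow\lambda_1+\ell(\lambda)\leq k+1$ is fine.
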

\begin{proof}
We know from Proposition 
\ref{prop:ksmallgtilde} that $\tilde{g}^{(k)}_\lambda=\tilde{g}_\lambda.
$
Therefore, using Proposition \ref{prop:Taki-g}, we see that
\begin{equation*}
\alpha(\O_\lambda^\Gr)
=\F^{-1}(\gt{k}_\lambda)
=\F^{-1}(\tilde{g}_{\lambda})
=g_\lambda.
\end{equation*}
\end{proof}



\subsection{Correspondence of Schubert bases}

We consider the extended affine symmetric group $\Wafh$,
which is generated by $\{s_i\;|\;i\in I\}\cup\{\pi\}$ satisfying 
the same relations among $s_i$'s and
\begin{equation*}
\pi^{k+1}=id,\quad \pi s_i=s_{i+1}\pi.
\end{equation*}

We have $\hat{S}_{k+1}\cong S_{k+1}\ltimes P^\vee$, where  
$P^\vee$ is the coweight lattice of $\SL_{k+1}(\C)$.   
The translation element associated to $-\varpi_i^\vee\in P^\vee$ is explicitly given by  
\begin{equation*}
t_{-\varpi_i^\vee}=\pi^{-i}
x_{R_i}.
\end{equation*}
\begin{example}
For $k=3$,
\begin{eqnarray}
t_{-\varpi_1^\vee}=\pi^{-1}s_2s_3s_0,\quad
t_{-\varpi_2^\vee}=\pi^{-2}s_0s_3s_1s_0,\quad
t_{-\varpi_3^\vee}=\pi^{-3}s_2s_1s_0.
\end{eqnarray}
\end{example}

Let $w\in S_{k+1}$ be an $i$-{\it Grassmannian permutation\/}, i.e., $\Des(w)=\{i\}.$ The set of all $i$-Grassmannian permutations in $S_{k+1}$ is in bijection with $\Par_i^{k+1-i}.$
Explicitly, for an $i$-Grassmannian permutation $w$ in $S_{k+1}$,  
the corresponding partition $\lambda\in \Par_i^{k+1-i}$ is given by  
\begin{equation}
\lambda_{i+1-j}=w(j)-j\quad(1\leq j\leq i).\label{eq:shape}
\end{equation} 
For each partition $\lambda$ in $\Par_i^{k+1-i}$, we denote the corresponding
$i$-Grassmannian permutation by $w_{\lambda,i}. $ 
For $\lambda\in \Par^{k+1-i}_{i}$, 
the {\it dual\/} partition of $\lambda$ is the element
$\lambda^\vee$ 
in $\Par^{k+1-i}_{i}$ defined by $\lambda^\vee_{j}=k+1-i-\lambda_{i+1-j}\;(1\leq j\leq i).$
\begin{prop}[\cite{IIM}]\label{lem:Grass} 
Let $w_{\lambda,i}\in S_{k+1}$ be an $i$-Grassmannian permutation.
Then 
\begin{equation}
\Phi_{k+1}({\G_{\lambda,i}})
=\frac{g_{(\lambda^\vee)'}}{\tau_i},
 \label{eq:PhiGr}
\end{equation}
where $(\lambda^\vee)'$ is the conjugate of $\lambda^\vee$.
\end{prop}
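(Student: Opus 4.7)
The plan is to verify the identity by direct computation: express $\mathfrak{G}_{\lambda,i}$ as a Jacobi--Trudi-type determinant, apply $\Phi_{k+1}$ entry by entry, and recognize the outcome as $g_{(\lambda^\vee)'}/\tau_i$ using the determinantal formula \eqref{eq:g}.

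First, I would record a quantum Jacobi--Trudi determinantal expression for $\mathfrak{G}_{\lambda,i}$. Classically, the Grothendieck polynomial of the $i$-Grassmannian permutation $w_{\lambda,i}$ depends only on $x_1,\dots,x_i$ and equals the finite Grothendieck polynomial $G_\lambda(x_1,\dots,x_i)$, admitting a Jacobi--Trudi-type formula. Its quantum deformation $\mathfrak{G}_{\lambda,i}$ is a polynomial in $z_1,\dots,z_i$ and $Q_1,\dots,Q_{i-1}$ whose structure is governed by the relations \eqref{eq:QKrel} restricted to the $i$-Grassmannian parabolic quotient, and the relevant determinantal formula is a quantum $K$-theoretic Jacobi--Trudi identity whose entries are quantum analogues of complete/elementary symmetric functions in the $z_j$'s.

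Next, I would apply $\Phi_{k+1}$ to this determinant using \eqref{eq:def_of_Psi}. The telescoping product $z_1 z_2 \cdots z_i$ maps to $\tau_i/\tau_i^+$ (since $\tau_0=\tau_0^+=1$), and each $Q_j$ maps to $\tau_{j-1}\tau_{j+1}/\tau_j^2$, so every monomial appearing in the determinant expansion becomes a Laurent monomial in $\tau_j$ and $\tau_j^+$ that telescopes almost completely. After pulling out the common factor $\tau_i^{-1}$ and performing row/column operations to cancel the $\tau_j^+$ contributions, the remaining determinant has entries that are exactly the building blocks $h_r^{(m)}$ appearing in \eqref{eq:g}. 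The final step is to identify this determinant with $g_{(\lambda^\vee)'}$: the duality $\lambda\mapsto(\lambda^\vee)'$ decomposes into complementation in the $i\times(k{+}1{-}i)$ rectangle, which converts the elementary-symmetric presentation of the Grothendieck polynomial into the complete-symmetric presentation of $g_\mu$, and partition conjugation, which corresponds to transposing the Jacobi--Trudi matrix.

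The main obstacle is the precise combinatorial matching in the last step: one must check that the $m$-shifts $(i-j)$ in the entries $h_{\mu_i+j-i}^{(i-j)}$ of \eqref{eq:g} line up exactly with the quantum corrections produced by $\Phi_{k+1}(Q_j)=\tau_{j-1}\tau_{j+1}/\tau_j^2$. This is expected to work out because $\Phi_{k+1}$ is constructed from the unipotent solution of the relativistic Toda lattice of \cite{IIM}, so the tau-functions $\tau_j$ and $\tau_j^+$ are themselves Jacobi--Trudi determinants in the $h_r^{(m)}$ with exactly the correct shifts, and the particle--hole symmetry of the Toda Lax matrix explains why the duality $\lambda\mapsto(\lambda^\vee)'$ (rather than some other involution) emerges. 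An alternative route, useful as a sanity check, would be to verify the formula for the generating family of hook shapes $\lambda=(r,1^{s-1})$ (where $\mathfrak{G}_{\lambda,i}$ has an especially clean form from the Chevalley--Monk expansion) and then bootstrap via the quantum Pieri rule and induction on $|\lambda|$.
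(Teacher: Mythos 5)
The paper's proof of Proposition~\ref{lem:Grass} is essentially a two-line citation argument: it quotes Theorem~7.1 of \cite{IIM}, which already establishes the formula $\Phi_{k+1}'(\G^Q_{w_{\lambda,i}}\;\mathrm{mod}\;I_{k+1}) = g_{\lambda^\vee}/g_{(k+1-i)^i}$ for the map $\Phi_{k+1}'$ constructed in that reference, and then transports this through the relationship $\Phi_{k+1}' = \overline{\Omega}\circ\Phi_{k+1}$ of Remark~\ref{rem:omega}, using that the involution $\Omega$ sends $g_{\lambda^\vee}\mapsto g_{(\lambda^\vee)'}$ and $g_{(k+1-i)^i}\mapsto g_{R_i}=\tau_i$. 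Your proposal — a from-scratch verification via a quantum Jacobi--Trudi determinant for $\G_{\lambda,i}$, applying $\Phi_{k+1}$ entry by entry, and matching against \eqref{eq:g} — is a genuinely different route: it would in effect re-derive the content of \cite[Theorem~7.1]{IIM} (and its translation) rather than invoke it. That would be a self-contained alternative if completed, whereas the paper's approach is deliberately short and leans on the earlier reference.

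However, as written your proposal has a real gap rather than merely being terse. The pivotal ingredient — a quantum $K$-theoretic Jacobi--Trudi determinant for $\G_{\lambda,i}$ whose entries $\Phi_{k+1}$ sends, after pulling out $\tau_i^{-1}$ and some row/column operations, exactly to the shifted complete functions $h_r^{(m)}$ of \eqref{eq:g} — is asserted but not produced. No such determinantal formula for Lenart--Maeno's quantum Grothendieck polynomials is on record in the paper, and you acknowledge the step that would make everything work (matching the $m$-shifts $(i-j)$ against the quantum corrections from $\Phi_{k+1}(Q_j)=\tau_{j-1}\tau_{j+1}/\tau_j^2$) only by saying it \emph{is expected to work out} because of the Toda-lattice origin of $\Phi_{k+1}$. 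That expectation is precisely the technical heart; without carrying out the tau-function telescoping and verifying that the residual determinant is literally the Jacobi--Trudi matrix for $g_{(\lambda^\vee)'}$ (including the duality $\lambda\mapsto(\lambda^\vee)'$ emerging from rectangle complementation plus transposition), the proof is a plan rather than an argument. The suggested fallback of checking hooks and bootstrapping via a quantum Pieri rule would also need a Pieri rule for quantum Grothendieck polynomials, which at the time of this paper was still conjectural (\cite[Conjecture~6.7]{LM}).
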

\begin{proof} Let $\Phi_{k+1}'$ be the map in \cite{IIM}.
{Recall that for $w\in S_{k+1}$, $\O_{\Fl}^w$ is identified with $\mathfrak{G}_w^Q\;\mathrm{mod}\; I_{k+1}$. 
\cite[Theorem 7.1]{IIM} reads
$
\Phi_{k+1}'(\G_{w_{\lambda,i}}^Q \;\mathrm{mod}\; I_{k+1})={g_{\lambda^\vee}}/{g_{(k+1-i)^i}}.
$}
From Remark \ref{rem:omega}, we deduce \eqref{eq:PhiGr}. 
\end{proof}

In order to describe
the image $\Phi_{k+1}(\O^w_{\Fl})$,  
we need a map $S_{k+1}\rightarrow \Par^k,\;
w\mapsto \sh(w)$, due to Lam and Shimozono \cite[Lemma 11]{LS:MRL}.
\begin{prop}[\cite{LS:MRL}]\label{prop:LS11} Let $w\in S_{k+1}$. There is a $k$-bounded partition $\sh(w)$ such that 
\begin{equation}
wt_{-\sum_{i\in \Des(w)}\varpi_i^\vee}=
\pi^{-\sum_{i\in \Des(w)}i}
\cdot
x_{\sh(w)}.\label{eq:LS}
\end{equation}
\end{prop}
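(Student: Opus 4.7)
The plan is to exploit the semidirect product structure $\hat{S}_{k+1} = \langle \pi \rangle \ltimes \tilde{S}_{k+1}$, together with $\pi^{k+1}=1$ and $\pi s_i \pi^{-1}=s_{i+1}$, to peel off an appropriate power of $\pi$ from $wt_{-\eta}$ (where $\eta:=\sum_{i\in\Des(w)}\varpi_i^\vee$) and to verify that the remaining factor is a minimum-length right-coset representative in $\tilde{S}_{k+1}/S_{k+1}$, hence of the form $x_\mu$ for a unique $\mu\in\Par^k$ by the Lapointe--Morse bijection.

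First I would determine the power of $\pi$. Consider the natural surjection $\hat{S}_{k+1}\twoheadrightarrow \hat{S}_{k+1}/\tilde{S}_{k+1} \cong P^\vee/Q^\vee \cong \Z/(k+1)\Z$. Since $\varpi_i^\vee \equiv i\,\varpi_1^\vee \pmod{Q^\vee}$, the class of $t_{-\eta}$ is $-N \pmod{k+1}$ where $N=\sum_{i\in\Des(w)}i$. The given identity $t_{-\varpi_i^\vee}=\pi^{-i}x_{R_i}$ forces $\pi\mapsto 1 \in \Z/(k+1)\Z$ (taking the image of both sides, using $x_{R_i}\in\tilde{S}_{k+1}$). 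Since $w\in\tilde{S}_{k+1}$ maps to $0$, the element $y:=\pi^N w t_{-\eta}$ lies in $\tilde{S}_{k+1}$.

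Next I would check $y\in\tilde{S}_{k+1}^0$, i.e.\ $\ell(ys_j)>\ell(y)$ for every $j=1,\ldots,k$. Because $\pi$ has length zero in $\hat{S}_{k+1}$, this reduces to comparing $\ell(wt_{-\eta}s_j)$ with $\ell(wt_{-\eta})$. Using $t_\mu s_j = s_j t_{s_j\mu}$, I would rewrite
\[
wt_{-\eta}s_j \;=\; ws_j\,t_{-s_j\eta},
\]
and split into two cases. If $j\notin \Des(w)$, then $s_j$ fixes every summand of $\eta$, so $s_j\eta=\eta$ and $ws_j>w$ in $S_{k+1}$; the Iwahori--Matsumoto length formula
\[
\ell(u t_\mu)=\sum_{\alpha\in\Delta^+}\bigl|\langle \mu,\alpha\rangle + \chi(u\alpha\in -\Delta^+)\bigr|
\]
(applied with $\mu=-\eta$ dominant-inverted) then gives $\ell(ws_j t_{-\eta})=\ell(wt_{-\eta})+1$. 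If $j\in\Des(w)$, then $s_j\eta=\eta-\alpha_j^\vee$ and $ws_j<w$, but the factor $t_{\alpha_j^\vee}$ appearing in $wt_{-\eta}s_j = ws_j\,t_{\alpha_j^\vee}\,t_{-\eta}$ compensates; again via the length formula one obtains $\ell(ys_j)=\ell(y)+1$. Once $y\in\tilde{S}_{k+1}^0$ is established, the Lapointe--Morse bijection $\Par^k\to\tilde{S}_{k+1}^0$, $\lambda\mapsto x_\lambda$, yields a unique $\sh(w)\in\Par^k$ with $y=x_{\sh(w)}$, giving $wt_{-\eta}=\pi^{-N}x_{\sh(w)}$ as required.

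The main obstacle is the descent case $j\in\Des(w)$ in the length computation. There, two effects of opposite sign compete: the drop $\ell(ws_j)=\ell(w)-1$ in the finite Weyl group, and the gain coming from the shift of $\eta$ by the coroot $\alpha_j^\vee$. The specific choice $\eta=\sum_{i\in\Des(w)}\varpi_i^\vee$ is engineered precisely so that these effects combine to a net $+1$ in $\hat{S}_{k+1}$: one must track how $N(w)=\{\alpha>0: w\alpha<0\}$ and $N(ws_j)$ differ (by a single root) and pair this difference against the change $-\eta\mapsto -\eta+\alpha_j^\vee$ in the coroot component. Carrying out this bookkeeping cleanly is the technical crux; it is exactly the combinatorial content of \cite[Lemma 11]{LS:MRL}.
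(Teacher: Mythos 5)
Your proposal follows essentially the same route as the paper's proof: establish that $wt_{-\sum_{i\in\Des(w)}\varpi_i^\vee}$ is an affine Grassmannian element of $\hat{S}_{k+1}$ and then invoke the unique decomposition of such elements as $\pi^{a}x_{\lambda}$, with the power $a$ forced by the class in $\hat{S}_{k+1}/\tilde{S}_{k+1}\cong\mathbb{Z}/(k+1)\mathbb{Z}$. Like the paper, you defer the one nontrivial length computation (the descent case) to Lam--Shimozono, so the two arguments are in substance identical, yours merely spelling out the $\pi$-power bookkeeping and the length reduction that the paper leaves implicit.
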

\begin{proof}
One can show that the left-hand side of \eqref{eq:LS} is an affine Grassmannian element in $\hat{S}_{k+1}=\langle \pi\rangle 
\ltimes \Waf$ (see the first part of the proof of Lemma 6.1 in \cite{LS:MRL}). Then 
such an element can be uniquely written in the form on the right-hand side of \eqref{eq:LS} for a $k$-bounded partition, which we denote by $\sh(w)$.  
\end{proof}

For an $i$-Grassmannian permutation $w=w_{\lambda,i}$, 
we have
$\sh(w)=(\lambda^\vee)'$ (\cite[Lemma 7.1]{IIM}).
See \S\ref{sec:Grass}, for another direct proof of this fact.

The next result is a refined version of \cite[Conjecture 1.8]{IIM}.

\begin{thm} For $w\in S_{k+1}$, we have
\begin{equation}
\Phi_{k+1}(\G_w)=\frac{\F^{-1}(\gt{k}_{\sh(w)})}{\prod_{i\in \Des(w)}\tau_i}.\label{eq:main}
\end{equation}
\end{thm}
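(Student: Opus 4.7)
The plan is to compare $\Phi_{k+1}$ with the $K$-theoretic Peterson isomorphism $\Psi$ established by Kato \cite{Kato} and by Chow-Leung \cite{CL}. In our notation, $\Psi$ sends $\O^w_\Fl$ to $\O^\Gr_{\sh(w)} \cdot \prod_{i \in \Des(w)} (\O^\Gr_{R_i})^{-1}$ in a suitable localization of $K_*(\Gr)$. Composing on the source side with the inclusion $\phi$ of Corollary \ref{cor:phi}, and on the target side with the ring isomorphism $\alpha$ of \eqref{eq:alphaO}, and then using Proposition \ref{prop:di} to evaluate $\alpha(\O^\Gr_{R_i}) = \tau_i$, the right-hand side of the theorem becomes precisely $(\alpha \circ \Psi \circ \phi)(\G_w)$. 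Hence the theorem reduces to showing the equality of ring homomorphisms $\Phi_{k+1} = \alpha \circ \Psi \circ \phi$ on $A_{k+1}^\pol[Q_i^{-1}]$.

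I first verify this equality on Grassmannian Schubert classes. For $w = w_{\lambda,i}$ with $\Des(w) = \{i\}$, the shape $\sh(w) = (\lambda^\vee)'$ lies in $\Par^i_{k+1-i}$, and thus satisfies $((\lambda^\vee)')_1 + \ell((\lambda^\vee)') \leq k+1$. The corollary to Proposition \ref{prop:ksmallgtilde} (a direct consequence of our main Theorem \ref{thm:main}) then yields $\tfg{k}_{(\lambda^\vee)'} = g_{(\lambda^\vee)'}$, whence $\F^{-1}(\gt{k}_{(\lambda^\vee)'}) = g_{(\lambda^\vee)'}$. The theorem in this case reduces to $\Phi_{k+1}(\G_{w_{\lambda,i}}) = g_{(\lambda^\vee)'}/\tau_i$, which is exactly Proposition \ref{lem:Grass}. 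To extend the equality to all $w$, recall that both $\Phi_{k+1}$ and $\alpha \circ \Psi \circ \phi$ are $\C[Q]$-algebra homomorphisms, hence determined by their values on the generators $z_i$ and $Q_i$. Agreement on $z_i$ follows from the Grassmannian case above combined with the quantum $K$-theory relations \eqref{eq:QKrel}, which allow $z_i$ to be recovered from the Grassmannian Schubert classes $\G_{w_{(1),j}}^Q$. Agreement on $Q_i$ follows from Kato's identification of $\Psi(Q_i)$ with the affine Schubert class of the translation $t_{-\alpha_i^\vee}$, combined with iterated use of the $k$-rectangle factorization formula \eqref{eq:kfact}, which yields $\alpha(\Psi(Q_i)) = \tau_{i-1}\tau_{i+1}/\tau_i^2 = \Phi_{k+1}(Q_i)$ by \eqref{eq:def_of_Psi}.

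The principal obstacle lies in carefully matching the conventions between the two constructions of the $K$-Peterson isomorphism: the map $\Phi_{k+1}$ is built via the unipotent solution of the relativistic Toda lattice \cite{IIM}, whereas Kato and Chow-Leung work with semi-infinite flag manifolds and equivariant geometric methods, respectively. Reconciling these — including the $\pi^{-\sum_{i \in \Des(w)} i}$ twist appearing in the Lam-Shimozono formula \eqref{eq:LS} and the translation between fundamental coweights and simple coroots via the inverse Cartan matrix of type $A_k$ — requires detailed bookkeeping of sign and scaling choices. Once the images of the generators $z_i$ and $Q_i$ are matched, the ring-homomorphism property forces $\Phi_{k+1} = \alpha \circ \Psi \circ \phi$, from which the theorem immediately follows.
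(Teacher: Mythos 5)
Your overall strategy matches the paper's: both arguments route through Kato's map $\kappa$, both rely on Proposition \ref{lem:Grass} to handle the Grassmannian case via \eqref{eq:PhiGr}, and both invoke the Lam--Shimozono description $\sh(w_{\lambda,i})=(\lambda^\vee)'$ (Lemma \ref{lem:lambdaGr}). The reduction of the theorem to the equality of two ring homomorphisms is correct in spirit, and your verification of the Grassmannian case is sound.

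Where the two proofs diverge is in how that equality of ring homomorphisms is established, and here your argument leaves real gaps. You propose to check agreement on the generators $z_i$ and $Q_i$ of the source ring $A_{k+1}^{\pol}[Q_i^{-1}]$. For the $Q_i$ you invoke Kato's formula together with iterated $k$-rectangle factorization plus the inverse Cartan matrix, and concede that this ``requires detailed bookkeeping of sign and scaling choices'' — which is not a proof but an acknowledgment that the step has not been carried out. For the $z_i$ the issue is worse: the relations \eqref{eq:QKrel} do not directly express $z_i$ in terms of Grassmannian Schubert classes. What one actually needs is the Chevalley-type identity $1-\mathcal{O}_{\Fl}^{s_j}=(1-Q_j)z_1\cdots z_j$ (proved later in the paper, in \S\ref{sec:QK}), and using it to recover $z_i$ already presupposes agreement on the $Q_j$. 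So the logical dependency runs $Q_i$ before $z_i$, and neither verification is supplied. There is also a structural subtlety you gloss over: your $\Psi$ is the inverse of Kato's injective $\kappa$, which is only defined on the image of $\kappa$; composing $\alpha\circ\Psi\circ\phi$ is delicate precisely because $\kappa$ is asserted to be injective, not surjective.

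The paper sidesteps all of this by never attempting to invert $\kappa$. It constructs a second injection $\kappa'\colon K_*(\Gr)_{\loc}\to QK(\Fl)[Q_i^{-1}]$ in the same direction as $\kappa$, and proves $\kappa'=\kappa$ by checking on the algebra generators $\mathcal{O}_{(i)}^{\Gr}$ of $K_*(\Gr)\cong\C[h_1,\dots,h_k]$ (so just $k$ elements, and the localization handles itself since inverses are unique in a ring). The matching uses the specific $k$-Grassmannian permutations $u_i$ with $\sh(u_i)=(i)$, fed into Proposition \ref{lem:Grass} and \eqref{eq:kappa}. This choice of generating set is what makes the comparison with Kato's formula direct, since $\mathcal{O}_{(i)}^\Gr=\mathcal{O}_{u_it_{-\varpi_k^\vee}}^\Gr$ reads off immediately from \eqref{eq:kappa}. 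To make your version rigorous you would need to actually carry out the $Q_i$ and $z_i$ calculations you only sketch, and handle the ``inverse of an injective map'' issue; you would be better served adopting the paper's choice of generating set.
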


\begin{proof}
Let $Q^\vee $ denote the coroot lattice of $\SL_{k+1}(\C).$ 
Then we have $\Waf\cong S_{k+1}\ltimes Q^\vee$. 
We denote by $t_\beta\in \Waf$ be translation element corresponding to $\beta\in Q^\vee.$ 
Let $K_*(\Gr)_\loc$ be the localization of $K_*(\Gr)$ by 
the multiplicative set generated by $\O_{t_\beta}^\Gr\;(\beta\in {Q}^\vee).$

{
Kato \cite[Corollary 4.15]{Kato} constructed an injective ring homomorphism
$$
\Kato:
K_*(\Gr)_\loc\hookrightarrow {QK}(\Fl)_\loc
$$
such that 
\begin{equation}
\kappa\left(\mathcal{O}_{{wt_\beta}}^\Gr\cdot(\mathcal{O}_{t_\gamma}^\Gr)^{-1}
\right)= Q^{\beta-\gamma}\mathcal{O}_{\Fl}^w\quad (w\in S_{k+1}),\label{eq:kappa}
\end{equation}
where $\beta,\gamma\in -Q^\vee$ such that $ wt_\beta\in \Wafo$ and $\gamma$ is strictly antidominant, and $Q^{\beta-\gamma}$ is defined by identifying 
$Q^{\alpha_i^\vee}$ with $Q_i.$}

Since $\O_{R_i}^\Gr$ corresponds to $\tau_i=g_{R_i}$ by $\alpha$ (Proposition \ref{prop:di}),
the isomorphism $\alpha$ yields
$
K_*(\Gr)_\loc
\cong
\Lambda_{(k)}[\tau_i^{-1}\;(1\leq i\leq k)].
$
Let $\Kato'$ be the composition
\begin{align*}
\Kato': 
K_*(\Gr)_{\loc}
&\cong
\Lambda_{(k)}[\tau_i^{-1}]\quad \text{(induced by $\alpha$)}\\
&\hookrightarrow 
\Lambda_{(k)}[\tau_i^{-1},(\tau_i^+)^{-1}]
\\
&\underset{\Phi_{k+1}^{-1}}{\cong} A_{k+1}^\pol[Q_i^{-1}] \quad\text{(Theorem \ref{thm:IIM})}\\
 &\underset{\phi}{\hookrightarrow }
QK(\Fl)[Q_i^{-1}]\quad\text{ (Corollary \ref{cor:phi})}.
\end{align*}

We claim that $\Kato'=\Kato.$ 
Note that $K_*(\Gr)$ is generated by $\O_{s_{i-1}\cdots s_1s_0}^\Gr=\O_{(i)}^\Gr\;(1\leq i\leq k),$
and that $\alpha(\O_{(i)}^\Gr)=g_{(i)}.$
For $1\leq i\leq k$, let $u_i\in S_{k+1}$ be the $k$-Grassmannian permutation of shape $(1^{k-i})$.
So by Proposition \ref{lem:Grass}, 
we have
$$
\Phi_{k+1}(\G_{u_i}^Q)
=\frac{g_{(i)}}{\tau_k}.
$$
Hence it follows that 
$$
\Kato'(\O_{(i)}^\Gr\cdot (\O_{R_k}^\Gr)^{-1})
=\phi(\G^Q_{u_i})
=\O_{G/B}^{u_i}.
$$
By Lemma \ref{lem:lambdaGr}, we have 
$\sh(u_i)=(i)$.
Therefore, we have $\O_{u_i t_{-\varpi_k^\vee}}^\Gr=\O_{(i)}^\Gr$, and hence  by the definition of $\Kato$ 
$$
\Kato(\O_{(i)}^\Gr\cdot (\O_{R_k}^\Gr)^{-1})=\O^{u_i}_{\Fl}.
$$
Since $(\O_{R_k}^\Gr)^{-1}$ is an invertible element,
we have $\Kato'(\O_{(i)}^\Gr)=\Kato(\O_{(i)}^\Gr),$
and hence $\Kato'=\Kato.$  

For $w\in S_{k+1}$,
we deduce that $\O_{wt_{-\sum_{i\in \Des(w)}\varpi_i^\vee}}^\Gr=\O_{\sh(w)}^\Gr$ by Proposition \ref{prop:LS11},
and hence see that \begin{eqnarray*}
\phi(\G_w^Q)=
\O^w_{\Fl}
&=&\Kato\left(\O_{\sh(w)}^\Gr\cdot
\prod_{i\in \Des(w)}(\O_{R_i}^\Gr)^{-1}\right)\\
&=&\Kato'\left(\O_{\sh(w)}^\Gr\cdot
\prod_{i\in \Des(w)}(\O_{R_i}^\Gr)^{-1}\right)\\
&=&(\phi\circ\Phi_{k+1}^{-1}
)\left(\F^{-1}(\tilde{g}_{\sh(w)})\cdot \prod_{i\in \Des(w)}\tau_i^{-1}\right) \quad(\text{by \eqref{eq:alphaO}}).
\end{eqnarray*}
Since $\phi$ is injective we obtain \eqref{eq:main}.
\end{proof}

\subsection{Localizations of $K_*(\Gr)$}
The isomorphism conjectured by Lam, Li, Mihalcea, and Shimozono in \cite{LLMS} is 
different from $\Phi_{k+1}$ in the way of 
localization of $K_*(\Gr).$ The localization of 
$K_*(\Gr)$ in \cite{LLMS} can be identified with 
$\Lambda_{(k)}[\tau_i^{-1}\;(1\leq i\leq k)],$
while our version is 
$\Lambda_{(k)}[\tau_i^{-1},\;(\tau_i^+)^{-1}\;(1\leq i\leq k)].$ 
The aim of this section is to clarify the geometric meaning of $\tau_i^+=\tilde{g}_{R_i} \in \Lambda_{(k)}$.
\begin{lem}\label{lem:disToda}
$\tau_i^2-\tau_{i-1}\tau_{i+1}
=\tau_i^{+}\cdot\tau_i^{-}.$
\end{lem}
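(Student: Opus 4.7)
My plan is to recognize the identity $\tau_i^2-\tau_{i-1}\tau_{i+1}=\tau_i^+\cdot\tau_i^-$ as an instance of the Desnanot--Jacobi (Dodgson condensation) determinantal identity applied to the Jacobi--Trudi formula \eqref{eq:g} for the dual stable Grothendieck polynomial $g_{R_i}$, where $R_i=(i^{k+1-i})$ is a rectangle. Since $\tau_i^+=\tilde g_{R_i}=\F(\tau_i)$ amounts to a uniform shift $(m)\mapsto (m+1)$ of the diagonal parameters in the Jacobi--Trudi matrix for $\tau_i$, the natural guess (and what the preceding paragraph should supply) is that $\tau_i^-$ is the analogous shift by $(m)\mapsto(m-1)$; under this convention all the quantities in the lemma become minors of a single ``master'' Jacobi--Trudi matrix, and the lemma is then a purely formal determinantal consequence.

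First, I would write $\tau_i=g_{R_i}$ as the $(k+1-i)\times(k+1-i)$ determinant
\[
\tau_i=\det\bigl(h^{(a-b)}_{i+b-a}\bigr)_{1\le a,b\le k+1-i},
\]
and do the same for $\tau_i^+,\tau_i^-,\tau_{i-1},\tau_{i+1}$. Using $h_0^{(m)}=1$ and $h_r^{(m)}=0$ for $r<0$, each of the ``rectangle minus a corner row/column'' determinants $\tau_{i\pm 1}$ can be padded to size $(k+2-i)\times(k+2-i)$ without changing its value, and the five functions $\tau_i,\tau_i^+,\tau_i^-,\tau_{i-1},\tau_{i+1}$ can be realized simultaneously as the five minors of a single $(k+2-i)\times(k+2-i)$ matrix $M$ obtained by deleting a first or last row and a first or last column.

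Second, with the identification of $\tau_{i-1}, \tau_{i+1}, \tau_i^+, \tau_i^-$ as the four corner minors of $M$ and $\tau_i^2$ (or rather $\tau_i\cdot\det M$) as the product of the central and full minors, I would invoke the Desnanot--Jacobi identity
\[
\det(M)\,\det(M^{1,n}_{1,n})=\det(M^{1}_{1})\,\det(M^{n}_{n})-\det(M^{1}_{n})\,\det(M^{n}_{1})\qquad(n=k+2-i),
\]
where the superscripts and subscripts denote removed rows and columns. After matching the minors with the five tau--functions and sorting out signs, this reads exactly as the claimed bilinear relation $\tau_i^2-\tau_{i-1}\tau_{i+1}=\tau_i^+\tau_i^-$.

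The main obstacle will be the bookkeeping: the three rectangles $R_{i-1}$, $R_i$, $R_{i+1}$ have different dimensions $(k+2-i)\times(i-1)$, $(k+1-i)\times i$, and $(k-i)\times(i+1)$, so their Jacobi--Trudi determinants live naturally in matrices of different sizes, and one has to embed them uniformly as minors of one master matrix while keeping track of the $\F^{\pm 1}$ shifts of the diagonal parameter $(m)$. Verifying that the two ``off-diagonal'' corner minors of $M$ reproduce exactly $\tau_i^+=\F(\tau_i)$ and $\tau_i^-$, rather than some other shift, is the technical heart of the argument; once this identification is in place the Desnanot--Jacobi identity closes the proof immediately.
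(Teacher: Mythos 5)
Your strategy is sound in spirit but, as written, has a genuine gap; the obstruction is that deleting a row and a column from a Jacobi--Trudi matrix changes only its \emph{size} and not the ``width'' $\gamma_a$ of the rectangle appearing in the subscripts, whereas the $k$-rectangles $R_{i-1}=((i-1)^{k+2-i})$, $R_i=(i^{k+1-i})$, $R_{i+1}=((i+1)^{k-i})$ change width and height simultaneously. If you take the master matrix to be the $(k+2-i)\times(k+2-i)$ Jacobi--Trudi matrix $N=\bigl(h^{(a-b)}_{(i-1)+b-a}\bigr)$ for $\tau_{i-1}=g_{R_{i-1}}$, a short re-indexing gives $\det N^1_1=\det N^n_n=g_{((i-1)^{k+1-i})}\ne\tau_i$, $\det N^{1,n}_{1,n}=g_{((i-1)^{k-i})}\ne\tau_{i+1}$, $\det N^1_n=\F\bigl(g_{((i-2)^{k+1-i})}\bigr)\ne\tau_i^+$, and only $\det N^n_1=\F^{-1}\bigl(g_{R_i}\bigr)=\tau_i^-$ is correct. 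Desnanot--Jacobi then produces a bilinear relation among rectangular $g$'s that is not the lemma, and padding the smaller Jacobi--Trudi determinants with trailing zeros does not fix this, because those padded matrices are not the required minors of $N$.

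The Dodgson-condensation idea can be salvaged, but with a different master matrix that is not of Jacobi--Trudi type. Take $M_{a,b}=h_k^{(a-b)}$ for $1\le a,b\le n$ with $n=k+2-i$. Because the subscript $k$ is constant and the shift is Toeplitz in $a-b$, the principal corners coincide, $M^1_1=M^n_n$, and the off-diagonal corners are the $\F^{\pm1}$-images of $M^1_1$ \emph{entrywise}, so $\det M^1_n=\F(\det M^1_1)$ and $\det M^n_1=\F^{-1}(\det M^1_1)$, and Desnanot--Jacobi reads
\[
\det M\cdot\det M^{1,n}_{1,n}=\bigl(\det M^1_1\bigr)^2-\F\bigl(\det M^1_1\bigr)\cdot\F^{-1}\bigl(\det M^1_1\bigr).
\]
This becomes the lemma once one also knows $\det\bigl(h_k^{(a-b)}\bigr)_{1\le a,b\le m}=g_{R_{k+1-m}}$ for $0\le m\le k+1$; this auxiliary identity is not a specialization of \eqref{eq:g} (no $\gamma$ can make $\gamma_a+b-a$ constant in $b$) and must be proved separately, for instance by iterating the Pascal-type relation $h_r^{(m)}-h_r^{(m-1)}=h_{r-1}^{(m)}$ to row- and column-reduce $M$ to the Jacobi--Trudi matrix for $g_{R_{k+1-m}}$. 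Identifying the correct master matrix and establishing that determinant formula are the real missing steps, not mere bookkeeping. Note also that the paper's own proof is entirely different in nature: it is a brief appeal to the Lax formalism for the discrete Toda lattice in \cite{IN}, compared with the construction of $\Phi_{k+1}$ in \cite{IIM}.
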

\begin{proof}
This is the discrete Toda equation given by Hirota~\cite{Hirota}.
We can show this by comparing the construction of $\Phi_{k+1}$  \cite{IIM} and the Lax formalism for the discrete Toda equation given in~\cite[Section 1]{IN}.
\end{proof}

\begin{lem} For $1\leq i\leq k$,
$
\Phi_{k+1}(1-Q_i)
=\dfrac{\tau_i^{+}\cdot\tau_i^{-}}{\tau_i^2}.
$
\end{lem}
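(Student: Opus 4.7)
The plan is to derive this formula by a direct two-step computation: apply the definition of $\Phi_{k+1}$ on the generator $Q_i$, then invoke the preceding Lemma~\ref{lem:disToda} to rewrite the resulting numerator.

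First, I would recall from Theorem~\ref{thm:IIM} (specifically \eqref{eq:def_of_Psi}) that $\Phi_{k+1}(Q_i) = \tau_{i-1}\tau_{i+1}/\tau_i^2$. Since $\Phi_{k+1}$ is a ring homomorphism and $\Phi_{k+1}(1) = 1$, this gives
\[
\Phi_{k+1}(1 - Q_i) = 1 - \frac{\tau_{i-1}\tau_{i+1}}{\tau_i^2} = \frac{\tau_i^2 - \tau_{i-1}\tau_{i+1}}{\tau_i^2}.
\]
Next, I would apply Lemma~\ref{lem:disToda}, which states $\tau_i^2 - \tau_{i-1}\tau_{i+1} = \tau_i^+ \cdot \tau_i^-$, to immediately obtain the desired identity
\[
\Phi_{k+1}(1 - Q_i) = \frac{\tau_i^+ \cdot \tau_i^-}{\tau_i^2}.
\]

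There is no real obstacle here once Lemma~\ref{lem:disToda} is in hand: the content of the lemma we want to prove is entirely encoded by the combination of the explicit formula for $\Phi_{k+1}(Q_i)$ and the discrete Toda bilinear identity. The substantive work has already been done in Lemma~\ref{lem:disToda}, which matches the Hirota form of the discrete Toda equation with the Lax-matrix construction of $\Phi_{k+1}$ in \cite{IIM,IN}. So the proof will be essentially one line of algebraic manipulation.
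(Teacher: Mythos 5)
Your proof is correct and matches the paper's approach exactly: the paper's own proof simply says the statement ``follows from Lemma~\ref{lem:disToda},'' implicitly using the defining formula $\Phi_{k+1}(Q_i)=\tau_{i-1}\tau_{i+1}/\tau_i^2$ from Theorem~\ref{thm:IIM}, which is precisely the one-line manipulation you spell out.
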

\begin{proof}
This follows from Lemma \ref{lem:disToda}.
\end{proof}

\begin{lem}\label{eq:discToda} For $1\leq i\leq k+1$,
\begin{equation*}
\tau_i=g_{R_i^*}+\tau_i^{-}.
\end{equation*}
\end{lem}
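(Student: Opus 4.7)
The plan is to combine the discrete Toda identity of Lemma \ref{lem:disToda} with a combinatorial decomposition of $\tau_i^+$. The first ingredient is
$$\tau_i^+ \;=\; \tau_i + \tilde{g}_{R_i^*}, \qquad (\dagger)$$
which follows from $\tilde{g}_{R_i} = \sum_{\mu\subseteq R_i}g_\mu$ together with the observation that the rectangle $R_i$ has a unique removable corner at $(k+1-i,i)$: any partition $\mu\subsetneq R_i$ must omit that corner and hence satisfies $\mu\subseteq R_i^*$, so $\sum_{\mu\subsetneq R_i}g_\mu = \tilde{g}_{R_i^*}$.

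Writing $\tau_i^- = (\tau_i^2-\tau_{i-1}\tau_{i+1})/\tau_i^+$ via Lemma \ref{lem:disToda}, the claim $\tau_i = g_{R_i^*}+\tau_i^-$ is equivalent to the bilinear identity $(\tau_i-g_{R_i^*})\tau_i^+ = \tau_i^2 - \tau_{i-1}\tau_{i+1}$, which, after substituting $(\dagger)$ and cancelling the common $\tau_i^2$ term, reduces to
$$g_{R_i^*}\,\tau_i^+ \;-\; \tau_i\,\tilde{g}_{R_i^*} \;=\; \tau_{i-1}\tau_{i+1}. \qquad (\ddagger)$$

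My strategy for $(\ddagger)$ is to translate it through the isomorphism $\Phi_{k+1}$ of Theorem \ref{thm:IIM} into the quantum $K$-theory presentation. Three inputs are needed: $\Phi_{k+1}(z_1z_2\cdots z_i) = \tau_i/\tau_i^+$ (obtained by telescoping the definition \eqref{eq:def_of_Psi}); $\Phi_{k+1}(1-Q_i) = \tau_i^+\tau_i^-/\tau_i^2$ (the immediately preceding lemma); and $\Phi_{k+1}(\mathfrak{G}_{s_i}^Q) = g_{R_i^*}/\tau_i$, which follows from Proposition \ref{lem:Grass} applied to the $i$-Grassmannian permutation $w_{(1),i}$ together with the short direct checks (via \eqref{eq:shape} and the conjugation recipe) that $w_{(1),i} = s_i$ and $((1)^\vee)' = R_i^*$. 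Dividing $(\ddagger)$ by $\tau_i\tau_i^+$ and invoking these three identifications, $(\ddagger)$ becomes the polynomial relation
$$\mathfrak{G}_{s_i}^Q \;=\; 1 - z_1z_2\cdots z_i\,(1-Q_i) \qquad \text{in } A_{k+1}^{\mathrm{pol}}.$$

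The main obstacle is verifying this last equality for the Lenart--Maeno quantum Grothendieck polynomial. At the classical specialization $Q = 0$, the right-hand side becomes $1 - \prod_{j=1}^i(1-x_j)$ under the substitution $x_j = 1-z_j$ of Theorem \ref{thm:QK}, which is the standard classical Grothendieck representative of $[\mathcal{O}^{s_i}]$, so both sides agree modulo $(Q)$. The quantum correction $z_1\cdots z_i\,Q_i$ on the right must then exactly absorb the quantum deformation of $\mathfrak{G}_{s_i}^Q$; this step amounts to a direct manipulation of the relation generators of \eqref{eq:QKrel} indexed by $i$ and $i+1$ to show that the difference lies in $I_{k+1}^{\mathrm{pol}}$. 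A self-contained alternative that avoids any $QK(\Fl)$ input would establish $(\ddagger)$ directly as an identity among $k$-bounded functions, by combining Proposition \ref{prop:ksmallgtilde} with Takigiku's $k$-rectangle factorization \eqref{eq:kfact} for $\tilde{g}^{(k)}$ and its $g^{(k)}$-analog $g^{(k)}_{R_j}\cdot g^{(k)}_\lambda = g^{(k)}_{R_j \cup \lambda}$; this route is more delicate because the factor $R_i^*$ is not itself a rectangle, so the products $g_{R_i^*}\tau_i^+$ and $\tau_i\tilde{g}_{R_i^*}$ do not both simplify via a single application of these formulas.
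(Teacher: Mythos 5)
Your reduction of the lemma to the bilinear identity $(\ddagger)$ and then to the polynomial identity $\mathfrak{G}_{s_i}^Q = 1 - z_1\cdots z_i(1-Q_i)$ in $A_{k+1}^{\mathrm{pol}}$ is algebraically correct, but the argument is incomplete precisely at the step you flag as the ``main obstacle'': you verify the candidate identity only modulo $(Q)$ and then assert that the quantum correction ``must exactly absorb the quantum deformation'' via ``a direct manipulation of the relation generators'' that you do not carry out. That is not a proof. Worse, there is a circularity hazard in this route: the identity $(1-Q_i)z_1\cdots z_i = 1 - \mathcal{O}^{s_i}_{\Fl}$ is exactly the Proposition that the paper proves \emph{from} Lemma \ref{eq:discToda} together with $\Phi_{k+1}(\mathcal{O}^{s_i}_{\Fl}) = g_{R_i^*}/\tau_i$, so unless you establish the quantum Grothendieck identity from an independent source (e.g.\ by citing Lenart--Maeno's Corollary 3.33 directly, or by verifying it against their explicit formula for $\mathfrak{G}_{s_i}^Q$), your chain of implications runs in the wrong direction.

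The paper's own proof is completely different and much shorter, and it only uses an observation you already make. Implicitly (from \cite{IIM}) one has $\tau_i^- = \F^{-1}(\tau_i) = \F^{-1}(g_{R_i})$; with that, the claim $\tau_i = g_{R_i^*} + \tau_i^-$ is equivalent to $\F(\tau_i - g_{R_i^*}) = \tau_i$, i.e.\ to $\tilde g_{R_i} - \tilde g_{R_i^*} = g_{R_i}$ by Proposition \ref{prop:Taki-g}. This is exactly the ``unique removable corner of $R_i$'' fact that you used to derive $(\dagger)$ (there applied on the $\tau_i^+$-side); applied on the $\tau_i^-$-side it finishes the proof in one line, with no discrete Toda equation, no $\Phi_{k+1}$, and no quantum Grothendieck polynomials. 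In short, you had the key combinatorial input in hand but ran it through a far heavier (and, as written, unfinished and potentially circular) machine.
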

\begin{proof} It suffices to show that $\F(\tau_i-g_{R_i^*})=\tau_i.$
This follows from \eqref{eq:Taksum}, because $R_i^*$ is the unique maximal proper 
element among the partitions 
$\mu\subset R_i.$ 
\end{proof}

\begin{prop}
For $1\leq i\leq k$, $$
({1-Q_i})z_1\cdots z_i=
1-\mathcal{O}_{\Fl}^{s_i}
$$
in $QK(G/B).$
\end{prop}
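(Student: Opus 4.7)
The plan is to verify the identity by transporting it through the isomorphism $\Phi_{k+1}$ of Theorem \ref{thm:IIM} into $\Lambda_{(k)}[\tau_i^{-1},(\tau_i^+)^{-1}]$. Since $Q_i$ is a non-zero-divisor in $QK(\Fl)$ and the embedding $A_{k+1}^{\pol}[Q_i^{-1}]\hookrightarrow QK(\Fl)[Q_i^{-1}]$ of Corollary \ref{cor:phi} identifies $\mathfrak{G}_w^Q$ with $\O^w_\Fl$, it suffices to check the equality after applying $\Phi_{k+1}$.

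For the left-hand side, I would first observe that $\Phi_{k+1}(z_1 z_2\cdots z_i)$ is a telescoping product: using the formulas in \eqref{eq:def_of_Psi} and the convention $\tau_0=\tau_0^+=1$, it collapses to $\tau_i/\tau_i^+$. Next, Lemma \ref{lem:disToda} rewrites $\Phi_{k+1}(1-Q_i)=(\tau_i^2-\tau_{i-1}\tau_{i+1})/\tau_i^2$ as $\tau_i^+\tau_i^-/\tau_i^2$. Multiplying and cancelling $\tau_i^+$ yields $\Phi_{k+1}((1-Q_i)z_1\cdots z_i)=\tau_i^-/\tau_i$.

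For the right-hand side, I would apply Theorem \ref{thm:KPet} to $w=s_i$, for which $\Des(s_i)=\{i\}$. To identify $\sh(s_i)$, note that $s_i$ is the $i$-Grassmannian permutation $w_{\lambda,i}$ with $\lambda=(1)\in\Par_i^{k+1-i}$, so $(\lambda^\vee)'$ is the conjugate of $((k+1-i)^{i-1},k-i)$, namely $(i^{k-i},i-1)=R_i^*$. Because $R_i^*$ satisfies $R_{i,1}^*+\ell(R_i^*)=k+1$, Proposition \ref{prop:ksmallgtilde} gives $\tilde{g}^{(k)}_{R_i^*}=\tilde{g}_{R_i^*}$, and Proposition \ref{prop:Taki-g} then gives $\F^{-1}(\tilde{g}_{R_i^*})=g_{R_i^*}$. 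Hence $\Phi_{k+1}(\O^{s_i}_\Fl)=g_{R_i^*}/\tau_i$. Finally, Lemma \ref{eq:discToda} ($\tau_i-g_{R_i^*}=\tau_i^-$) gives $\Phi_{k+1}(1-\O^{s_i}_\Fl)=\tau_i^-/\tau_i$, matching the computation of the previous paragraph.

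Thus both sides have the same image under the isomorphism $\Phi_{k+1}$, and via the injections mentioned above the equality descends to $QK(\Fl)$. There is no real obstacle beyond book-keeping: once Theorem \ref{thm:KPet} and the Toda-type Lemmas \ref{lem:disToda} and \ref{eq:discToda} are in hand, the proof reduces to a telescoping computation together with the identification $\sh(s_i)=R_i^*$.
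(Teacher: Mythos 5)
Your proof is correct and follows essentially the same route as the paper's: apply $\Phi_{k+1}$, use the telescoping product for $z_1\cdots z_i$, Lemma \ref{lem:disToda} for $1-Q_i$, and Lemma \ref{eq:discToda} for $\tau_i-g_{R_i^*}$, obtaining $\tau_i^-/\tau_i$ on both sides. The only minor difference is in sourcing $\Phi_{k+1}(\O^{s_i}_\Fl)=g_{R_i^*}/\tau_i$: the paper cites this directly from \cite[Theorem 1.7]{IIM}, whereas you rederive it from Theorem \ref{thm:KPet} via $\sh(s_i)=R_i^*$; both are valid, yours just retraces an extra step.
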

\begin{proof} By $\Phi_{k+1}(\mathcal{O}^{s_i})=g_{R_i^*}/\tau_i$ (\cite[Theorem 1.7]{IIM}), 
and Lemma \ref{eq:discToda},
the image of the right hand side is
$$
1-\frac{g_{R_i^*}}{\tau_i}
=\frac{\tau_i-g_{R_i^*}}{\tau_i}
=\frac{\tau_i^{-}}{\tau_i}.
$$
On the other hand,
$$
\Phi_{k+1}((1-Q_i)z_1\cdots z_i)=\frac{\tau_i^{-}\tau_i^{+}}{\tau_i^2}
\prod_{j=1}^i\frac{\tau_j\tau_{j-1}^{+}}{\tau_j^{+}\tau_{j-1}}
=\frac{\tau_i^{-}}{\tau_i}.
$$
\end{proof}
\begin{rem}
The result above corresponds to 
Corollary 3.33 in \cite{LM};
note also that
\begin{equation*}
1-\mathcal{O}_{\Fl}^{s_i}=\mathcal{O}_{\Fl}(-\varpi_i).
\end{equation*}
\end{rem}
\begin{cor} The following element 
$$Q_{t_{\varpi_i^\vee}}^{-1}(1-Q_i)^{-1}(1-\O^{s_i}_{\Fl})$$
in $QK(G/B)_\loc$ is 
sent to $1/\tau^{+}_i$
by $\Phi_{k+1},$
where $Q_{t_{\varpi_i^\vee}}$ is the element such that $\Phi_{k+1}(Q_{t_{\varpi_i^\vee}})=\tau_i.$
\end{cor}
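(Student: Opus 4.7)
The plan is to combine the Proposition immediately preceding the corollary with the definition \eqref{eq:def_of_Psi} of $\Phi_{k+1}$ and reduce the claim to a telescoping product identity.

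First I would rewrite the element in question. By the preceding Proposition, $(1-Q_i)z_1\cdots z_i = 1-\O^{s_i}_{\Fl}$ in $QK(G/B)$, so after localization we have
\[
Q_{t_{\varpi_i^\vee}}^{-1}(1-Q_i)^{-1}(1-\O^{s_i}_{\Fl})
=Q_{t_{\varpi_i^\vee}}^{-1}\,z_1 z_2\cdots z_i.
\]
Thus it suffices to compute $\Phi_{k+1}(Q_{t_{\varpi_i^\vee}}^{-1}z_1\cdots z_i)$.

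Next I would apply $\Phi_{k+1}$ factor-by-factor using \eqref{eq:def_of_Psi}. Since $\Phi_{k+1}(z_j)=\tau_j\tau_{j-1}^+/(\tau_j^+\tau_{j-1})$, the product telescopes:
\[
\Phi_{k+1}(z_1\cdots z_i)=\prod_{j=1}^i\frac{\tau_j\tau_{j-1}^+}{\tau_j^+\tau_{j-1}}
=\frac{\tau_i}{\tau_i^+}\cdot\frac{\tau_0^+}{\tau_0}=\frac{\tau_i}{\tau_i^+},
\]
using the boundary conventions $\tau_0=\tau_0^+=1$ recorded just after the definition of $\Phi_{k+1}$. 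Combined with $\Phi_{k+1}(Q_{t_{\varpi_i^\vee}})=\tau_i$, this yields
\[
\Phi_{k+1}\!\left(Q_{t_{\varpi_i^\vee}}^{-1}z_1\cdots z_i\right)
=\tau_i^{-1}\cdot\frac{\tau_i}{\tau_i^+}=\frac{1}{\tau_i^+},
\]
which is exactly the claim.

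There is essentially no obstacle here: everything reduces to a telescoping calculation, and the only subtlety to check is that $Q_{t_{\varpi_i^\vee}}$ is indeed invertible in the localization under consideration (i.e., that $\tau_i$ lies in the set we invert), which is guaranteed because $\tau_i$ is one of the generators inverted in the codomain of $\Phi_{k+1}$ in Theorem~\ref{thm:IIM}, and hence $Q_{t_{\varpi_i^\vee}}$ belongs to the multiplicative set used to form $QK(G/B)_\loc$.
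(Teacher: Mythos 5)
Your proof is correct and is exactly the argument the paper intends (the corollary is stated without proof, as an immediate consequence of the preceding proposition): substitute $(1-Q_i)^{-1}(1-\O^{s_i}_{\Fl}) = z_1\cdots z_i$, apply $\Phi_{k+1}$, and telescope $\prod_{j=1}^i \tau_j\tau_{j-1}^+/(\tau_j^+\tau_{j-1}) = \tau_i/\tau_i^+$; this same telescoping already appears in the proof of the preceding proposition.
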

Note that the factor 
$Q_{t_{\varpi_i^\vee}}$ is invertible
in $QK(\Fl)_\loc$. Hence up to an invertible factor,
$1/\tau_i^+$ 
corresponds to the
element $(1-Q_i)^{-1}(1-\O^{s_i}_{\Fl})$
of $QK(\Fl)\subset QK(\Fl)_\loc.$

\appendix
\section{Parabolic quotient of Coxeter groups}\label{app:A}

We discuss some properties of a coset space of a Coxeter group. 
Our basic reference is Bjorner-Brenti \cite{BB}.
Let $(W, S)$ be a {\it Coxeter system} (see \cite{BB} for the definition), where $W$ is the group generated by $S=\{s_i\;|\;i\in I\}$
with index set $I$. 
The {\it Bruhat order} on $W$ (see  \cite[Chapter 2]{BB}) is 
denoted by $\leq.$
Let $J$ be any subset of $I$.
Let  $W_J$ be the subgroup of $W$ generated by $s_i\;(i\in J).$
The {\it minimal coset representatives\/} $W^J$ of the quotient $W/W_J$
is defined to be $W^J:=\{w\in W\;|\;ws_i>w\;\mbox{for all}\;i\in J\}$.
Any element $w$ of $W$ is expressed uniquely as $w=w^Jw_J$, with $w_J\in W_J$ and $w^J\in W^J$ (\cite[Proposition 2.4.4]{BB}). 

The following result
 is well-known and used throughout this section.
\begin{lem}\label{lem:WJ}
Let $x\in W^J$ and $i\in I$. Then, 
$s_ix <x\Longrightarrow s_i x\in W^J$.
\end{lem}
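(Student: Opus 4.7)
The plan is to argue by contradiction using only the basic length inequality $|\ell(sw)-\ell(w)|=1$ for any simple reflection $s\in S$. Suppose $x\in W^{J}$ satisfies $s_i x<x$ but, contrary to the claim, $s_i x\notin W^{J}$. By definition of $W^{J}$, the failure $s_i x\notin W^{J}$ means there exists some $j\in J$ with $(s_i x)s_j<s_i x$, and therefore
\[
\ell(s_i x s_j)=\ell(s_i x)-1=\ell(x)-2.
\]

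Next I would exploit the factorization $xs_j=s_i\cdot(s_i x s_j)$. Applying the length inequality to left multiplication by $s_i$ yields
\[
\ell(xs_j)\le \ell(s_i x s_j)+1=\ell(x)-1.
\]
On the other hand, the hypothesis $x\in W^{J}$ forces $\ell(xs_j)=\ell(x)+1$ for every $j\in J$. These two estimates are incompatible, which gives the desired contradiction and completes the argument.

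I do not anticipate any real obstacle: the lemma is a two-line manipulation of lengths once the contradiction hypothesis is set up, and the only ingredients are the definition of $W^{J}$ and the elementary fact that multiplication by a simple reflection changes the length by exactly one. One should just be careful about the side of multiplication (our $W^{J}$ is defined by a condition on right multiplication by generators in $J$, while the operation in the statement is left multiplication by $s_i$), which is precisely what makes the factorization $xs_j=s_i(s_i x s_j)$ decisive.
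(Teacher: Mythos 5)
Your argument is correct. Note that the paper does not actually supply a proof of this lemma — it simply asserts it as a well-known fact about minimal coset representatives (it is, e.g., a special case of the Lifting Property / projection properties in \cite[Chapter 2]{BB}). Your contradiction argument via the factorization $xs_j = s_i(s_i x s_j)$ and the triangle inequality for length is exactly the standard two-line proof: if $s_i x \notin W^J$ then some $j \in J$ gives $\ell(s_i x s_j) = \ell(x) - 2$, forcing $\ell(x s_j) \le \ell(x) - 1$, which contradicts $\ell(x s_j) = \ell(x) + 1$ coming from $x \in W^J$. The one point you correctly flagged — keeping straight that $W^J$ is a right-descent condition while $s_i$ acts on the left — is indeed the only place where care is needed, and you handled it properly.
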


\subsection{Proof of Proposition \ref{prop:Tgcirc}}\label{sec:0Hmod}

The $0$-{\it Hecke algebra} $H_W$ is the associative $\C$-algebra generated by 
$\{T_i\;|\;i\in I\}$ subject to the same relations as those for $W$ except $T_i^2=-T_i$ in place of $s_i^2=id$. For $w\in W$, define $T_w=T_{i_1}\cdots T_{i_m}$ for any reduced expression $w=s_{i_1}\cdots s_{i_m}.$
The elements $T_w\;(w\in W)$ form a basis of $H_W.$

\begin{prop}\label{prop:VJ} Let $V^J$ be a left $H_W$ module given by 
\begin{equation*}
V^J=H_We_J,\quad
e_J:=\sum_{w\in W_J}T_w.
\end{equation*}
Then, $V^J=\bigoplus_{x\in W^J}\C a_x$
with  
$a_x:=T_xe_J$, and for $i\in I$,
\begin{equation}
T_i\cdot a_x=\begin{cases}
a_{s_i x}& (s_ix>x\;\mbox{and}\;
s_i x\in W^J)\\
-a_x& (s_ix<x)\\
0 & (s_ix>x\;\mbox{and}\;
s_i x\notin W^J)
\end{cases}.\label{eq:Ta}
\end{equation}
\end{prop}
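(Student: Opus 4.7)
The plan is to prove the proposition in three steps, with the technical core being a single identity satisfied by $e_J$.

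First, I will establish that $T_j e_J = 0$ for every $j \in J$. This is a direct cancellation: partition $W_J$ into $A_+ = \{w : s_jw > w\}$ and $A_- = \{w : s_jw < w\}$; the map $w \mapsto s_jw$ is a bijection $A_+ \to A_-$; and applying the $0$-Hecke rule $T_jT_w = T_{s_jw}$ (resp. $-T_w$) depending on whether $\ell(s_jw) > \ell(w)$ (resp. $<\ell(w)$) shows that the two partial sums cancel. Iterating along a reduced expression $u = s_{j_1}\cdots s_{j_r}$ with $j_k \in J$, I obtain $T_u e_J = T_{j_1}\cdots T_{j_{r-1}}(T_{j_r} e_J) = 0$ for every $u \in W_J \setminus \{e\}$.

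Second, I will use the parabolic factorization $w = w^J w_J$ (with $\ell(w) = \ell(w^J) + \ell(w_J)$, hence $T_w = T_{w^J} T_{w_J}$) to obtain both spanning and linear independence. For spanning: $T_w e_J$ equals $a_{w^J}$ if $w_J = e$ and $0$ otherwise, so $V^J = H_W e_J$ is spanned by $\{a_x\}_{x \in W^J}$. For linear independence: since $\ell(xw) = \ell(x) + \ell(w)$ for $x \in W^J$, $w \in W_J$, the element $a_x$ admits the explicit expansion $a_x = \sum_{w \in W_J} T_{xw}$ in the $\{T_v\}_{v \in W}$-basis of $H_W$; because the cosets $xW_J$ are pairwise disjoint, the $a_x$'s have pairwise disjoint supports.

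Third, I will verify formula \eqref{eq:Ta} case by case. If $s_ix > x$ and $s_i x \in W^J$, then $T_iT_x = T_{s_ix}$ gives $T_i a_x = a_{s_ix}$. If $s_i x < x$, writing $x = s_i(s_ix)$ as a length-additive factorization gives $T_iT_x = T_i^2 T_{s_ix} = -T_x$, hence $T_i a_x = -a_x$. The remaining case---$s_ix > x$ but $s_ix \notin W^J$---follows from Step~1: in this situation $(s_ix)_J \neq e$, so $T_i a_x = T_{s_ix} e_J = T_{(s_ix)^J} T_{(s_ix)_J} e_J = 0$. The main obstacle is really just the identity $T_je_J = 0$ for $j \in J$; once this bijective cancellation is in place, the parabolic decomposition handles everything else uniformly, and no appeal to Lemma~\ref{lem:WJ} or to more refined exchange-type arguments is required.
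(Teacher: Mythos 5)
Your proof is correct and follows essentially the same route as the paper's: the core identity $T_j e_J = 0$ for $j \in J$ proved by the bijective cancellation on $W_J$, then spanning and independence via $T_x T_v = T_{xv}$ for the length-additive parabolic factorization, then the three-case verification of \eqref{eq:Ta}. The only cosmetic difference is that you invoke $w = w^J w_J$ explicitly where the paper phrases spanning in terms of a reduced expression of $v \notin W^J$ ending in some $s_{i_m}$ with $i_m \in J$ — these are the same observation.
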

\begin{proof}
For $i\in J$, we will prove $T_i e_J=0.$
Let 
$X^+=\{v\in W\;|\;s_iv>v\}$ and $X^-=\{v\in W\;|\;s_i v<v\}$.
We have $s_i(X^\pm)=X^{\mp}.$
If $v\in X^+$, then $T_iT_v=T_{s_iv}$.
If $v\in X^-$, then $T_{v}=T_i T_{s_iv}$, and hence
$T_i T_v=T_i^2 T_{s_iv}=-T_iT_{s_i v}=-T_{v}.$
Therefore, 
\begin{equation*}
T_ie_J
= \sum_{v\in X^+}
T_i T_v
+\sum_{v\in X^-}
T_i T_v
= \sum_{v\in X^+}
T_{s_iv}
-\sum_{v\in X^-}
T_{v}
=0.
\end{equation*}

If $v\notin W^J$, then it is easy to see that there is 
a reduced expression $v=s_{i_1}\cdots s_{i_m}$ with $i_m\in J$.
Hence $T_ve_J=T_{s_{i_1}\cdots s_{i_{m-1}}}T_{i_m}e_J=0$.
It follows that $\{a_x\}_{x\in W^J}$ 
spans $V^J=H_{W}e_J$. Note that for $x\in W^J$ and $v\in W_J$, we have
$T_xT_v=T_{xv}$ since $\ell(xv)=\ell(x)+\ell(v)$ (\cite[Proposition 2.4.4 (2)]{BB}); the linear independence of $\{a_x\}_{x\in W^J}$ follows from this fact.

Let $x\in W^J$.
If  $s_i x>x$, then 
\begin{equation*}
T_i\cdot a_x=T_i\cdot T_xe_J=T_{s_i x}e_J
=\begin{cases}
a_{s_ix}& (s_ix\in W^J)\\
0 & (s_i x \notin W^J)
\end{cases}.
\end{equation*}
If $s_i x<x$, then $T_i\cdot a_x=T_iT_xe_J
=-T_xe_J=-a_x.$
\end{proof}

\begin{prop}\label{prop:Db}
For $x\in W^J$, let $b_x:=\sum_{y\leq x}a_y.$
Set $D_i=T_i+1\;(i\in I)$.
Then\begin{equation}
D_i\cdot b_x=\begin{cases}
b_{s_i x}& (s_i x\in W^J,\;s_ix>x)\\
b_x&(\mbox{otherwise})
\end{cases}.\label{eq:Db}
\end{equation}
\end{prop}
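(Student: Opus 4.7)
The plan is to expand $D_i \cdot b_x = \sum_{y \in W^J,\, y \leq x} D_i \cdot a_y$ using the action formula of Proposition~\ref{prop:VJ}, and then identify, for each $z \in W^J$, the coefficient of $a_z$ in the result. Partition $W^J$ according to the behaviour of $s_i$: set $W^J_- = \{y : s_i y < y\}$, $W^J_{+,J} = \{y : s_i y > y,\ s_i y \in W^J\}$, and $W^J_{+,\bar J} = \{y : s_i y > y,\ s_i y \notin W^J\}$. From \eqref{eq:Ta} one computes $D_i a_y = 0$ on $W^J_-$, $D_i a_y = a_y + a_{s_i y}$ on $W^J_{+,J}$, and $D_i a_y = a_y$ on $W^J_{+,\bar J}$. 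Since $y \mapsto s_i y$ is an involution exchanging $W^J_{+,J}$ and $W^J_-$, collecting terms yields
\[
\text{coeff.\ of } a_z \text{ in } D_i \cdot b_x \;=\; \begin{cases} [z \leq x] & \text{if } s_i z > z, \\ [s_i z \leq x] & \text{if } s_i z < z, \end{cases}
\]
in Iverson-bracket notation. The proposition then reduces to matching these coefficients against those of $b_{s_i x}$ in the first case and those of $b_x$ in the other two cases.

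The main tool for the matching is the Lifting Property of the Bruhat order \cite[Prop.~2.2.7]{BB}: whenever $u < s_i u$ and $v < s_i v$, one has $u \leq v \iff u \leq s_i v \iff s_i u \leq s_i v$. When $s_i x > x$ and $s_i x \in W^J$ the matching is immediate: for $z \in W^J_+ := W^J_{+,J}\cup W^J_{+,\bar J}$ the lifting property yields $z \leq x \iff z \leq s_i x$, while for $z \in W^J_-$ applying it to $u = s_i z$, $v = x$ gives $s_i z \leq x \iff z \leq s_i x$, so both coefficients agree with those of $b_{s_i x}$. When $s_i x < x$, applying lifting to $u = s_i z$, $v = s_i x$ gives $s_i z \leq x \iff z \leq x$ for $z \in W^J_-$, while the $z \in W^J_+$ coefficients already agree with those of $b_x$.

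The main obstacle is the remaining case, where $s_i x > x$ but $s_i x \notin W^J$. I will first prove the combinatorial claim that in this situation $s_i x = x s_j$ for some $j \in J$. Picking $j \in J$ with $s_i x s_j < s_i x$, a length count gives $\ell(s_i x s_j) = \ell(x)$ and $\ell(xs_j) = \ell(x) + 1$. Hence $s_i\cdot (x s_j)$ has length one less than $xs_j$, so the deletion condition applied to a reduced expression $xs_j = s_{i_1}\cdots s_{i_l}\, s_j$ (where $s_{i_1}\cdots s_{i_l}$ is reduced for $x$) removes a single letter; if the removed letter were any of $s_{i_1},\ldots,s_{i_l}$, then stripping the trailing $s_j$ would produce a reduced expression for $s_i x$ of length $\ell(x)-1$, contradicting $\ell(s_i x) = \ell(x)+1$. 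So the removed letter is the final $s_j$, giving $s_i x s_j = x$, i.e.\ $s_i x = xs_j$. With this in hand, for $z \in W^J_-$ the lifting property gives $s_i z \leq x \iff z \leq s_i x = xs_j$, and the parabolic decomposition $(xs_j)^J = x$ together with monotonicity of the projection $w \mapsto w^J$ under Bruhat order (\cite[Prop.~2.5.1]{BB}) yields $z \leq xs_j \iff z \leq x$ for $z \in W^J$. Combining these equivalences gives $s_i z \leq x \iff z \leq x$, finishing the final case and completing the proof.
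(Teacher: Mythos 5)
Your proof is correct and follows essentially the same route as the paper's: you compute $D_i\cdot b_x$ termwise via the three-way split on the behaviour of $s_i$, invoke the lifting/$Z$-lemma (Lemma~\ref{lem:Z}) in the first two cases, and in the remaining case ($s_ix>x$, $s_ix\notin W^J$) use $s_ix=xs_j$ for some $j\in J$ together with Bruhat monotonicity of $w\mapsto w^J$ from \cite[Prop.~2.5.1]{BB}. The only cosmetic differences are that the paper packages the bookkeeping as set equalities (Lemma~\ref{lem:Z1}) rather than as coefficient extraction, and that it cites \cite[Cor.~2.5.2]{BB} for the identity $s_ix=xs_j$ where you derive it directly from the exchange condition.
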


\begin{lem}[$Z$-lemma]\label{lem:Z}
Let $w,v\in W$ and $i\in I$.
Suppse $s_iw>w$ and $s_iv>v$.
Then the following conditions are equivalent:
$$
(1)\;w\leq v,
\quad 
(2)\;s_i w\leq s_iv,
\quad
(3)\;w\leq s_iv.
$$
\end{lem}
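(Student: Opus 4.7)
The plan is to derive all three implications from the Lifting Property of Bruhat order on a Coxeter group (Bj\"orner--Brenti, Proposition 2.2.7), which describes how Bruhat comparisons are transported under left multiplication by a simple reflection, according to the four cases distinguished by whether $s_i$ is a left descent of each side. The hypotheses $s_iw>w$ and $s_iv>v$ precisely say that $s_i\notin D_L(w)\cup D_L(v)$ and, equivalently, $s_i\in D_L(s_iw)\cap D_L(s_iv)$, so we always know exactly which case to invoke.

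For $(1)\Rightarrow(3)$, I would just remark that $s_iv>v$ gives $v\leq s_iv$ directly, so $w\leq v\leq s_iv$. No Lifting Property needed.

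For $(3)\Rightarrow(2)$, I would use the ``asymmetric'' case of the Lifting Property: $s_i\in D_L(s_iv)$ (since $s_i\cdot s_iv=v<s_iv$) while $s_i\notin D_L(w)$ (by hypothesis). This case of Proposition 2.2.7 in \cite{BB} states that if $u\leq v'$, $s\in D_L(v')$, and $s\notin D_L(u)$, then $su\leq v'$. Applying it to $u=w$ and $v'=s_iv$ gives $s_iw\leq s_iv$, which is exactly (2).

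For $(2)\Rightarrow(1)$, I would invoke the ``symmetric descent'' case of the Lifting Property: starting from $s_iw\leq s_iv$, note that $s_i\in D_L(s_iw)\cap D_L(s_iv)$ since $s_i\cdot s_iw=w<s_iw$ and $s_i\cdot s_iv=v<s_iv$. The relevant case of Proposition 2.2.7 asserts that if $u\leq v'$ and $s\in D_L(u)\cap D_L(v')$, then $su\leq sv'$; applied to $u=s_iw$ and $v'=s_iv$ this yields $w\leq v$, completing the cycle $(1)\Rightarrow(3)\Rightarrow(2)\Rightarrow(1)$. The only delicate point is matching the precise formulation of each case of the Lifting Property to the hypotheses on descents, but once the cases are lined up correctly each implication reduces to a single application of that property.
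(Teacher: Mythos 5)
Your proof is correct, and it takes a genuinely different route from the paper, which disposes of the lemma by a bare citation of \cite[Proposition 5.4.3]{MP}. You instead derive the three implications from the Lifting Property of Bruhat order, which makes the argument self-contained from standard Coxeter-group facts; that is arguably preferable for the reader.

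One bookkeeping remark. The Lifting Property as stated in \cite[Proposition 2.2.7]{BB} covers only the asymmetric case (if $u<w'$ and $s$ is a left descent of $w'$ but not of $u$, then $su\leq w'$ and $u\leq sw'$). The ``symmetric descent'' statement you invoke for $(2)\Rightarrow(1)$ --- that $u\leq v'$ and $s$ a left descent of both $u$ and $v'$ imply $su\leq sv'$ --- is not literally a case of that proposition, though it is an easy consequence. Indeed you can avoid it entirely: from $s_iw\leq s_iv$ one has $w<s_iw\leq s_iv$, and applying the asymmetric lifting property to $w<s_iv$ (with $s_i$ a descent of $s_iv$ but not of $w$) already gives $w\leq s_i(s_iv)=v$, which is $(1)$. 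You should also handle separately the degenerate equalities $w=s_iv$ in $(3)\Rightarrow(2)$ and $s_iw=s_iv$ in $(2)\Rightarrow(1)$, since the cited proposition assumes a strict inequality; both are immediate. None of this affects the correctness of the argument.
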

\begin{proof}
\cite[Proposition 5.4.3]{MP}.
\end{proof}

\begin{lem}\label{lem:Z1}
Let $x\in W^J$ and $i\in I.$  We set
\begin{eqnarray*}
X_{\leq x}^{+}&:=&\{
y\in W^J\;|\;y\leq x,\;s_iy\in W^J,\;s_iy>y
\},\\
X_{\leq x}^{-}&:=&\{
y\in W^J\;|\;y\leq x,\;s_iy\in W^J,\;s_iy<y
\},\\
X^{0}_{\leq x}&:=&\{
y\in W^J\;|\;y\leq x,\;s_iy\notin W^J
\}.
\end{eqnarray*}

\begin{itemize}
\item[(1)] If $s_ix\in W^J,\;s_ix>x$, then
$s_i(X^+_{\leq x})=
X^{-}_{\leq s_i x},\;
X^+_{\leq x}=X_{\leq s_i x}^+,\;
X^0_{\leq x}=X_{\leq s_i x}^0.$
\item[(2)] If $s_ix\in W^J,\;s_i x<x$, then $s_i(X_{\leq x}^+)=X_{\leq x}^-.$
\item[(3)] If $s_ix\notin W^J$, then $s_i(X_{\leq x}^+)=X_{\leq x}^{-}.$
\end{itemize}
\end{lem}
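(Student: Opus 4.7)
The plan is a case-by-case verification. Throughout I will use Lemma \ref{lem:WJ} (for $y\in W^J$, $s_iy\notin W^J$ forces $s_iy>y$) together with three main tools: the Z-lemma (Lemma \ref{lem:Z}), the lifting property of Bruhat order (\cite[Prop.~2.2.7]{BB}), and the standard parabolic fact that for $z\in W^J$ and any $w\in W$, one has $z\leq w\Longleftrightarrow z\leq w^J$, where $w^J$ denotes the minimal representative of $wW_J$ (\cite[Prop.~2.5.1]{BB}).

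For (1), both $x$ and any $y\in X^+_{\leq x}$ satisfy $s_i(\cdot)>(\cdot)$, so the Z-lemma yields the three-way equivalence $y\leq x\Longleftrightarrow s_iy\leq s_ix\Longleftrightarrow y\leq s_ix$ whenever $s_iy>y$. The identities $X^+_{\leq x}=X^+_{\leq s_ix}$ and $X^0_{\leq x}=X^0_{\leq s_ix}$ follow at once (for $X^0$, Lemma \ref{lem:WJ} ensures that the hypothesis $s_iy>y$ is automatic). The identity $s_i(X^+_{\leq x})=X^-_{\leq s_ix}$ is then a routine check, the map $y\mapsto s_iy$ being the bijection, with inverse $z\mapsto s_iz$. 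For (2), where $s_ix<x$, the lifting property applies to any $y\in X^+_{\leq x}$ (since $s_ix<x$ while $s_iy>y$), giving $s_iy\leq x$; the reverse inclusion sets $y=s_iz$ for $z\in X^-_{\leq x}$ and uses Lemma \ref{lem:WJ} to confirm $y\in W^J$.

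Case (3) is the delicate one. Since $s_ix\notin W^J$ and $x\in W^J$, Lemma \ref{lem:WJ} forces $s_ix>x$, but the lifting property is unavailable. The reverse inclusion $s_i(X^+_{\leq x})\supset X^-_{\leq x}$ proceeds as in (2). For the forward inclusion, the Z-lemma yields only $s_iy\leq s_ix$, whereas I need $s_iy\leq x$. I would close this gap via the sub-lemma $(s_ix)^J=x$: combined with $s_iy\in W^J$ and the parabolic fact above, it gives $s_iy\leq s_ix\Longleftrightarrow s_iy\leq(s_ix)^J=x$.

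The main obstacle is this sub-lemma, but it follows from strong exchange. Since $s_ix>x$, if $x=s_{i_1}\cdots s_{i_m}$ is reduced then $s_i\cdot s_{i_1}\cdots s_{i_m}$ is a reduced expression for $s_ix$. Picking $j\in J$ with $s_ixs_j<s_ix$, strong exchange produces $s_ixs_j$ by deleting some single generator from this expression. Deleting any $s_{i_k}$ with $k\geq 1$ would give $xs_j=s_{i_1}\cdots\widehat{s_{i_k}}\cdots s_{i_m}$, of length at most $m-1$, contradicting the requirement $\ell(xs_j)=m+1$ coming from $x\in W^J$. Hence the initial $s_i$ must be deleted, so $s_ixs_j=x$, i.e., $s_ix=xs_j\in xW_J$. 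Since $x\in W^J$ is the minimal representative of its coset, $(s_ix)^J=x$.
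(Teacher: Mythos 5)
Your proof is correct, and the overall strategy is essentially the paper's: in each part the $Z$-lemma (or an equivalent) handles the bookkeeping, and the delicate forward inclusion in part (3) is resolved by first deducing $s_iy\leq s_ix$ from the $Z$-lemma and then descending from $s_ix$ to $x$ via the parabolic projection $w\mapsto w^J$, using that $(s_ix)^J=x$. The only substantive difference is that the paper obtains $(s_ix)^J=x$ by citing \cite[Corollary 2.5.2]{BB} (which states that for $x\in W^J$ with $s_ix\notin W^J$, one has $s_ix=xs_j$ for some $j\in J$), whereas you re-derive this fact from scratch via the exchange condition, and your re-derivation is correct. Your use of the lifting property in place of the $Z$-lemma in part (2) is a cosmetic variation, as both tools apply there with equal ease.
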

\begin{proof} (1) and (2) follow immediately from Lemma \ref{lem:Z}. 
(3) The inclusion $s_i(X_{\leq x}^+)\supset X_{\leq x}^{-}$ follows from Lemma \ref{lem:Z}.
We will show that $s_i(X_{\leq x}^+)\subset X_{\leq x}^{-}.$
Take arbitrary $z\in s_i(X_{\leq x}^+)$. Write $z=s_iy$ with $y\in X_{\leq x}^+.$ Since $s_iz=s_i^2y=y<s_iy=z$,
it suffices to prove $s_iy \leq x.$ 
Since $x\in W^J$ and $s_i x\notin W^J$, we have $s_i x>x$
by Lemma \ref{lem:WJ}. Hence it follows from
Lemma \ref{lem:Z} that $s_i x\ge s_i y$. Also, it follows from
\cite[Corollary 2.5.2]{BB} that
there is $v\in W_J$ such that $s_ix=xv$ 
; in fact, we can take $v=s_j $ for some $j\in J.$
Hence we have $xv\geq s_iy$.
Here note that $s_iy\in W^J$ as $y\in X_{\leq x}^+$.
It follows that 
$$
s_iy=(s_iy)^J
\leq (xv)^J
=x^J=
x;
$$
here we used the fact that $w\leq v$ for $w,v\in W$ implies
$w^J\leq v^J$ 
(\cite[Proposition 2.5.1]{BB}).
\end{proof}

\begin{proof}[Proof of Proposition \ref{prop:Db}]
We have $b_x=\sum_{y\in X_{\leq x}^+}a_y
+\sum_{y\in X_{\leq x}^-}a_y
+\sum_{y\in X_{\leq x}^0}a_y$.
Using \eqref{eq:Ta}, it is straightforward to verify 
\begin{eqnarray*}
D_i\cdot b_x
&=&
\sum_{y\in X_{\leq x}^+}a_{s_iy}
+\sum_{y\in X_{\leq x}^+}a_y
+\sum_{y\in X_{\leq x}^0}a_y.
\end{eqnarray*}
Consider first the case that $s_ix\in W^J,\;s_ix>x.$
From Lemma \ref{lem:Z1} (1), we see that
\begin{eqnarray*}
D_i \cdot b_x
&=&
\sum_{y\in X_{\leq s_ix}^-}a_{y}
+\sum_{y\in X_{\leq s_ix}^+}a_y
+\sum_{y\in X_{\leq s_ix}^0}a_y
=b_{s_i x}.
\end{eqnarray*}
Next consider the case that $s_i x\in W^J, s_ix<x$. We see that 
$s_i(X^+_{\leq x})=X_{\leq x}^-$ by Lemma \ref{lem:Z1} (2), and hence we see that  
\begin{equation*}
D_i\cdot b_x=\sum_{y\in X_{\leq x}^-}a_y
+\sum_{y\in X_{\leq x}^+}a_y
+\sum_{y\in X_{\leq x}^0}a_y=b_x.
\end{equation*}
Finally we consider the case that $s_i x\notin W^J$. We have 
$s_i(X^+_{\leq x})=X_{\leq x}^-$ by Lemma \ref{lem:Z1} (3), 
and hence $D_i\cdot b_x=b_x$ by exactly the same reasoning as in the previous case.
\end{proof}

\begin{proof}[Proof of Proposition \ref{prop:Tgcirc}]
We apply Proposition \ref{prop:VJ} to 
 $W=\Waf=\langle s_i\;|\;i=0,1,\ldots,k\rangle$, 
$W_J=S_{k+1},\; W^J=\Wafo$ with $J=\{1,2,\ldots,k\}$, 
and define an isomorphism 
$V^J\rightarrow \Lambda_{(k)}$ of vector spaces by 
$a_x\mapsto \gcirc{k}_\lambda, b_x\mapsto\tfg{k}_\lambda$
under the bijection $W^J=\Wafo \ni x\mapsto \lambda\in \Par^k$. 
Then we obtain Proposition \ref{prop:Tgcirc}.
\end{proof}

\subsection{Proof of Lemma \ref{lem:Naito2++}}\label{sec:Naito2}
For $w\in W$, and $i\in I$, we define $s_i*w$ 
in the same way as in \eqref{eq:*}.
Let $\A=(i_1,\ldots,i_r)$ be a 
 sequence 
 of elements of $I$, and $w\in S$. We define
\begin{equation}
\A*w=s_{i_1}*(s_{i_2}*\cdots *(s_{i_r}*w)\cdots).\label{eq:A*}
\end{equation}
\begin{prop}\label{prop:generalNaito2}
Let $\A=(i_1,\ldots,i_r)$ be a 
 sequence of elements of $I.$ 
Set $|\A|=r$, and $T_\A:=T_{i_1}\cdots T_{i_r}
\in H_W.$ Then for $x\in W^J$, we have
\begin{equation*}
T_{\A} \cdot a_x
=\begin{cases}
(-1)^{|\A|-\ell(\A*x)+\ell(x)}a_{\A*x} &
(\A*x\in W^J)
\\
0 & (\A*x\notin W^J)
\end{cases}.
\end{equation*}
\end{prop}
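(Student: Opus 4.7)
The plan is to prove the statement by induction on $r = |\A|$. The base case $r=0$ is trivial ($T_\varnothing = 1$ and $\varnothing * x = x \in W^J$, with sign $(-1)^0 = 1$). For the inductive step, set $\A' = (i_2,\ldots,i_r)$, so that $T_\A = T_{i_1} T_{\A'}$ and $\A * x = s_{i_1} * (\A' * x)$. Applying the induction hypothesis to $T_{\A'} \cdot a_x$ splits the argument into two cases according to whether $\A' * x$ lies in $W^J$.

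If $\A' * x \in W^J$, write $y = \A' * x$, so $T_{\A'} \cdot a_x = (-1)^{(r-1) - \ell(y) + \ell(x)} a_y$. Then $T_\A \cdot a_x = (-1)^{(r-1) - \ell(y) + \ell(x)} T_{i_1} \cdot a_y$, and Proposition \ref{prop:VJ} evaluates $T_{i_1} \cdot a_y$ in three sub-cases: (i) $s_{i_1} y > y$ with $s_{i_1} y \in W^J$, giving $a_{s_{i_1} y}$ where $\A * x = s_{i_1} y$ and $\ell(\A*x) = \ell(y)+1$; (ii) $s_{i_1} y < y$, giving $-a_y$ where $\A * x = y$ and $\ell(\A*x) = \ell(y)$; (iii) $s_{i_1} y > y$ with $s_{i_1} y \notin W^J$, giving $0$, in which case $\A * x = s_{i_1} y \notin W^J$. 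In each of (i) and (ii), a direct arithmetic check confirms that the total sign equals $(-1)^{r - \ell(\A*x) + \ell(x)}$; in (iii), both sides vanish.

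The other case, $\A' * x \notin W^J$, gives $T_{\A'} \cdot a_x = 0$ by induction, hence $T_\A \cdot a_x = 0$. To match the formula we must verify that $\A * x = s_{i_1} * (\A' * x) \notin W^J$ as well. This rests on the following stability lemma, which is the one non-trivial auxiliary input: if $y \in W \setminus W^J$ and $i \in I$, then $s_i * y \in W \setminus W^J$. For this, pick $j \in J$ with $y s_j < y$; if $s_i y \leq y$ then $s_i * y = y \notin W^J$ trivially, so assume $s_i y > y$. A length count around the quadruple $\{y, s_i y, y s_j, s_i y s_j\}$ forces $\ell(s_i y s_j) \in \{\ell(y)-2, \ell(y), \ell(y)+2\}$ while simultaneously $\ell(s_iys_j)\in\{\ell(y),\ell(y)+2\}$ (from $s_iy$) and $\ell(s_iys_j)\in\{\ell(y)-2,\ell(y)\}$ (from $ys_j$), so $\ell(s_i y s_j) = \ell(y) = \ell(s_i y) - 1$; thus $j \in J$ is a right descent of $s_i y$, so $s_i y \notin W^J$.

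I expect the stability lemma to be the only real obstacle: once it is established, the induction is essentially a bookkeeping of signs, and the case analysis matches the three cases of Proposition \ref{prop:VJ} one-to-one.
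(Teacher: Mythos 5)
Your proof is correct, and it follows essentially the same inductive strategy as the paper: peel off the leftmost index and induct on the length of the remaining sequence, with signs tracked by $\ell(\A*x)-\ell(x)$. The one place you diverge is in the case $\A'*x\notin W^J$. The paper sidesteps any "stability" discussion by choosing $q$ maximal with $\A_q*x\notin W^J$, observing that then $\A_{q+1}*x\in W^J$ and $\A_q*x=s_{i_q}(\A_{q+1}*x)>\A_{q+1}*x$ by Lemma \ref{lem:WJ}, so $T_{i_q}$ already annihilates $a_{\A_{q+1}*x}$ and everything collapses to zero. You instead prove the separate stability claim that $y\notin W^J$ and $s_iy>y$ force $s_iy\notin W^J$, which guarantees $\A*x\notin W^J$ whenever $\A'*x\notin W^J$. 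Your length-count proof of that claim is sound, but it is worth noticing that your stability claim is exactly the contrapositive of Lemma \ref{lem:WJ} (substitute $x:=s_iy$): so you have re-derived a fact the paper already records and invokes. Neither route is clearly superior; the paper's "jump to the first exit" argument avoids stating the extra lemma, while yours makes the invariance of $W\setminus W^J$ under $*$ explicit, which is arguably more transparent. The sign bookkeeping in your cases (i) and (ii) checks out and matches the paper's cases (a) and (b).
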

\begin{proof}
For $1\leq p\leq r$, set $\A_{p}=(i_p,\ldots,i_r)$, and $\A_{r+1}=\varnothing.$
We use decreasing induction on $p$. 
If $p=r+1$, then the assertion is obvious. Suppose $1\leq p<r+1$.
We first consider the case that $\A_p*x\notin W^J.$
Let $q\; (\geq p)$ be the maximal integer such that $\A_{q}*x\notin W^J.$
Then, we have $\A_{q+1}*x\in W^J$, $\A_{q}*x=s_{i_q}*(\A_{q+1}*x)=s_{i_q}(\A_{q+1}*x)\notin W^J$, and so 
$s_{i_q}(\A_{q+1}*x)>\A_{q+1}*x$ by Lemma \ref{lem:WJ}.  
Therefore, $T_{i_q}\cdot a_{\A_{q+1}*x}=0.$
By the inductive hypothesis,
we deduce that $T_{i_{q+1}}\cdots T_{i_r}\cdot a_{x}=\pm a_{\A_{q+1}*x}$, and hence
$$
T_{\A_{p}}\cdot a_x
=T_{i_p}\cdots T_{i_q}(T_{i_{q+1}}\cdots T_{i_r}\cdot a_{x})
=\pm T_{i_p}\cdots T_{i_q}\cdot a_{\A_{q+1}*x}=0.
$$ 

Next we consider the case when $\A_p*x\in W^J.$
Note that, in view of Lemma \ref{lem:WJ}, 
we have $\A_{q}*x\in W^J$ for $p\leq q\leq r+1$. 
By the inductive hypothesis, we have
$T_{i_{p+1}}\cdots T_{i_r}a_x=(-1)^{r-p-\ell(\A_{p+1}*x)+\ell(x)}\cdot a_{\A_{p+1}*x}$, so
\begin{equation}
T_{\A_p}\cdot a_x=T_{i_p}\cdot(T_{i_{p+1}}\cdots T_{i_r} a_x)
=(-1)^{r-p-\ell(\A_{p+1}*x)+\ell(x)}
T_{i_p}\cdot a_{\A_{p+1}*x}.\label{eq:hence}
\end{equation}
Now we consider two cases : (a) $s_{i_p}(\A_{p+1}*x)>\A_{p+1}*x$, (b) $s_{i_p}(\A_{p+1}*x)<\A_{p+1}*x.$
If (a) holds, then $s_{i_p}(\A_{p+1}*x)=s_{i_p}*(\A_{p+1}*x)=\A_{p}*x\in W^J.$ Therefore,
$$
T_{i_p}\cdot a_{\A_{p+1}*x}=a_{s_{i_p}(\A_{p+1}*x)}=a_{\A_{p}*x},
$$
and hence by \eqref{eq:hence}, 
$$
T_{\A_p}\cdot a_x
=(-1)^{r-p-\ell(\A_{p+1}*x)+\ell(x)}
T_{i_p}a_{\A_{p+1}*x}
=(-1)^{r-p-\ell(\A_{p+1}*x)+\ell(x)}a_{\A_{p}*x};
$$
the sign is correct 
since we have $\ell(\A_{p+1}*x)=\ell(\A_{p}*x)-1.$ 
Suppose (b) holds. 
Note that $\A_{p}*x=\A_{p+1}*x\in W^J$ by \eqref{eq:A*}. By the definition of $T_{i_p}$,
$$T_{i_p}\cdot a_{\A_{p+1}*x}=-a_{\A_{p+1}*x}=-a_{\A_{p}*x}.
$$ 
Hence we deduce that 
$$
T_{\A_p}\cdot a_x
=(-1)^{r-p-\ell(\A_{p+1}*x)+\ell(x)}
(-a_{\A_{p}*x})
=(-1)^{r-p+1-\ell(\A_{p}*x)+\ell(x)}a_{\A_p*x},
$$
where we again used $\A_{p}*x=\A_{p+1}*x.$
This completes the proof.
\end{proof}

\begin{proof}[Proof of Lemma \ref{lem:Naito2++}]
Let $A\subsetneq I$, with $|A|=r$, and take a reduced expression  
$s_{i_1}\cdots s_{i_r}$ for $u_A$.
We apply Proposition \ref{prop:generalNaito2} to the sequence $\A=(i_1,\ldots,i_r)$.
Then $T_\A=T_{u_A}$ and we obtain Lemma \ref{lem:Naito2++}.
\end{proof}

\section{Grassmannian permutations}\label{sec:Grass}

An explicit description of $\sh(w)$ is available (see \cite[\S 6]{LS:MRL}). 
For the reader's convenience, we include a simple direct proof 
when $w$ is a Grassmannian element $w_{\lambda,i}$ (Lemma \ref{lem:lambdaGr}).

\begin{prop}\label{prop:afGr}
Let $\lambda\in \Par^k$ be a $k$-bounded partition of size $r$ such that $\lambda_1+\ell(\lambda)\leq k+1$.
Take any standard tableau $T$ of shape $\lambda$. 
We denote the box of $\lambda$ with entry $i$ in $T$ by $b_T(i).$
Then $s_{\res(b_T(r))}\cdots s_{\res(b_T(2))}s_{\res(b_T(1))}$ is a reduced 
expression for $x_{\lambda}$. 
\end{prop}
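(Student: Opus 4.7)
The plan is to argue by induction on $r=|\lambda|$, the base case $r=0$ being trivial. For the inductive step, set $c:=b_T(r)$; since $r$ is the maximum entry of $T$, the box $c$ is a removable corner of $\lambda$, and $T$ with $c$ deleted is a standard tableau of shape $\mu:=\lambda\setminus\{c\}\in\Par^k$, which still satisfies $\mu_1+\ell(\mu)\leq k+1$. By the induction hypothesis, $s_{\res(b_T(r-1))}\cdots s_{\res(b_T(1))}$ is a reduced expression for $x_\mu$. Hence the proposition reduces to the following key lemma: \emph{for every removable corner $c=(a,b)$ of $\lambda$, $x_\lambda=s_{\res(c)}\,x_{\lambda\setminus\{c\}}$ and $\ell(x_\lambda)=\ell(x_{\lambda\setminus\{c\}})+1$.}

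To prove the lemma, I would start from the canonical reduced word $x_\lambda=R_\ell R_{\ell-1}\cdots R_1$ given by \eqref{eq:redexp}, where $R_i:=s_{\lambda_i-i}s_{\lambda_i-i-1}\cdots s_{-i+1}$. If $a=\ell$, then $c=(\ell,\lambda_\ell)$ and the leftmost generator of $R_\ell$ is exactly $s_{\lambda_\ell-\ell}=s_{\res(c)}$; removing it leaves the canonical reduced word for $x_\mu$, settling this case. If $a<\ell$, factor $R_a=s_{b-a}\cdot R_a'$ with $R_a':=s_{b-a-1}\cdots s_{-a+1}$. Since $\mu_a=b-1$, the factor $R_a'$ coincides with the row-$a$ block in the canonical word for $x_\mu$, so the task is to commute $s_{b-a}$ leftward past the blocks $R_{a+1},R_{a+2},\dots,R_\ell$ to obtain $s_{b-a}\cdot R_\ell\cdots R_{a+1}R_a'R_{a-1}\cdots R_1=s_{b-a}\,x_\mu$.

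The crucial combinatorial input is a residue estimate. Any generator appearing in $R_j$ with $j>a$ has residue $c'-j$ for some $1\leq c'\leq\lambda_j$. Because $(a,b)$ is a corner with $a<\ell$, we have $(a+1,b)\notin\lambda$, so $\lambda_j\leq\lambda_{a+1}\leq b-1$ for every $j\geq a+1$. Consequently
\begin{equation*}
(b-a)-(c'-j)=(b-c')+(j-a)\in[\,2,\,b+\ell-a-1\,].
\end{equation*}
Combining $\lambda_1+\ell(\lambda)\leq k+1$ with $b=\lambda_a\leq\lambda_1$ gives $b+\ell-a-1\leq k-a\leq k-1$, so each such difference lies in $[2,k-1]$ and is never congruent to $0$ or $\pm1$ modulo $k+1$. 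Hence $s_{b-a}$ commutes with every generator of $R_{a+1},\dots,R_\ell$. Carrying out these commutations yields $x_\lambda=s_{b-a}\,x_\mu$; since no simple reflection is gained or lost, the resulting word has length $|\lambda|=|\mu|+1$, so $\ell(x_\lambda)=\ell(x_\mu)+1$. This completes the lemma, and the proposition follows by induction.

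The main obstacle is that the commutation strategy requires that no forbidden relation $s_i^2=1$ or $s_is_{i\pm1}s_i=s_{i\pm1}s_is_{i\pm1}$ is triggered as $s_{b-a}$ migrates past the rows below row $a$. The hypothesis $\lambda_1+\ell(\lambda)\leq k+1$ is exactly what is needed to keep the diagonal residues of $\lambda$ within a window of width at most $k$, ensuring that the differences computed above cannot wrap around modulo $k+1$ into the forbidden values $0,\pm1$.
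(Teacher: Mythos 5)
Your argument is correct, and it takes a genuinely different route from the paper's. The paper shows that the hypothesis $\lambda_1+\ell(\lambda)\leq k+1$ forces the generator $s_{k+1-\ell(\lambda)}$ to be absent from the canonical reduced word \eqref{eq:redexp}, so that $x_\lambda$ lies in the finite parabolic subgroup $\langle s_{-\ell(\lambda)+1},\ldots,s_{-\ell(\lambda)+k}\rangle \cong S_{k+1}$, and then simply cites the well-known analogue for $i$-Grassmannian permutations of $S_{k+1}$. You instead prove the statement directly in $\Waf$ by induction on $|\lambda|$: the heart of your argument is the residue estimate showing that, for a removable corner $c=(a,b)$ with $a<\ell(\lambda)$, the integer differences $(b-a)-(c'-j)=(b-c')+(j-a)$ lie in $[2,k-1]$ and hence never reduce to $0,\pm1$ modulo $k+1$, so $s_{b-a}$ commutes with every generator in the rows below row $a$. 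Both proofs are valid; yours is more self-contained (it effectively re-proves the Grassmannian fact rather than citing it, since the commutation argument works verbatim in the finite parabolic) and makes completely explicit how the hypothesis $\lambda_1+\ell(\lambda)\leq k+1$ prevents residue wrap-around modulo $k+1$, at the cost of a slightly longer argument. One minor point of phrasing: the length identity $\ell(x_\lambda)=\ell(x_\mu)+1$ follows because \eqref{eq:redexp} is reduced for both $\lambda$ and $\mu$ (Lapointe--Morse), so that $\ell(x_\lambda)=|\lambda|=|\mu|+1=\ell(x_\mu)+1$; ``no simple reflection is gained or lost'' conveys this but slightly obscures the appeal to the Lapointe--Morse reducedness.
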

\begin{proof}
We note first that the corresponding fact
is well-known for the $i$-Grassmannian permutation $w_{\lambda,i}$.
The reader can consult \cite[\S3.1]{BCMP(Chev)}
for an exposition of this fact in a more general setting. 

Since $\lambda_1+\ell(\lambda)\leq k+1$, there is 
$1\leq i\leq k$ such that 
$\lambda\subset R_{k+1-i}$. Then in the reduced 
expression of $x_\lambda$ given by \eqref{eq:redexp}, $s_{k+1-i}$ does not appear.  
Hence $x_\lambda\in \langle s_{-i+1},\ldots,s_{-i+k}\rangle
\cong S_{k+1};$
the isomorphism of groups $\phi:S_{k+1}\rightarrow  \langle s_{-i+1},\ldots,s_{-i+k}\rangle$ is given by $\phi(s_j)=s_{-i+j}$.
 Then, the result follows from the case for the $i$-Grassmannian permutation.
\end{proof}
\begin{rem}\label{rem:fulcom}
An element in a Coxeter group $W$ is {\it fully commutative\/} if 
 any two of its reduced expressions are related by a series of transpositions of adjacent commuting generators.
It is well-known that any $i$-Grassmannian element in $S_{k+1}$ is 
fully commutative. 
From the above proof, we see that if $\lambda_1+\ell(\lambda)\leq k+1$, then 
$x_\lambda$ is fully commutative.
\end{rem}

\begin{lem}\label{lem:lambdaGr}
Let $\lambda$ be a partition contained in $R_{k+1-i}=(k+1-i)^i$, and $w_{\lambda,i}\in S_{k+1}$ the corresponding 
$i$-Grassmannian permutation. Then $\sh(w_{\lambda,i})=(\lambda^\vee)'.$ \end{lem}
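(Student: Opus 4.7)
\textbf{Proof plan for Lemma~\ref{lem:lambdaGr}.} By Proposition~\ref{prop:LS11}, since $\Des(w_{\lambda,i})=\{i\}$, it suffices to prove the identity
\[
w_{\lambda,i}\, t_{-\varpi_i^\vee} \;=\; \pi^{-i}\, x_{(\lambda^\vee)'} \quad \text{in } \hat{S}_{k+1}.
\]
Substituting $t_{-\varpi_i^\vee}=\pi^{-i}x_{R_i}$ and using $\pi^i s_a \pi^{-i}=s_{(a+i)\bmod(k+1)}$ to pull $\pi^{-i}$ to the left, this is equivalent to
\[
(\pi^i w_{\lambda,i}\pi^{-i})\cdot x_{R_i} \;=\; x_{(\lambda^\vee)'}
\quad\text{in } \tilde{S}_{k+1}.
\]

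Since both $R_i$ and $(\lambda^\vee)'$ fit in the $(k{+}1{-}i)\times i$ rectangle, the proof of Proposition~\ref{prop:afGr} (applied with its index $i'=k{+}1{-}i$) shows that $x_{R_i}$ and $x_{(\lambda^\vee)'}$ both lie in the parabolic subgroup $G_i := \langle s_j : j\in I\setminus\{i\}\rangle$, and that under the group isomorphism $\phi_i\colon S_{k+1}\xrightarrow{\sim} G_i$, $\phi_i(s_a)=s_{(a+i)\bmod(k+1)}$, one has $\phi_i(w_{R_i,\,k+1-i})=x_{R_i}$ and $\phi_i(w_{(\lambda^\vee)',\,k+1-i}) = x_{(\lambda^\vee)'}$. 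Moreover, since $w_{\lambda,i}$ is a product of $s_1,\ldots,s_k$, the element $\pi^i w_{\lambda,i}\pi^{-i}$ is a product of shifted generators $s_{a+i}\in G_i$ ($a\in[1,k]$) and coincides with $\phi_i(w_{\lambda,i})$. By injectivity of $\phi_i$, the displayed identity becomes, in $S_{k+1}$,
\[
w_{\lambda,i}\cdot c_i \;=\; w_{(\lambda^\vee)',\,k+1-i},
\]
where $c_i := w_{R_i,\,k+1-i}$ is the cyclic shift $[i{+}1, i{+}2, \ldots, k{+}1, 1, 2, \ldots, i]$ in window notation.

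It remains to verify this finite-group identity by direct computation of windows. For $j\in[1,k{+}1{-}i]$ one has $c_i(j)=j+i>i$, so $(w_{\lambda,i}c_i)(j)=w_{\lambda,i}(j+i)$ equals the $j$-th smallest element of $[1,k{+}1]\setminus S$, where $S := \{\lambda_p+(i{+}1{-}p) : 1\le p\le i\}$. On the other hand, using \eqref{eq:shape} and unfolding the definition of $(\lambda^\vee)'$, one finds $w_{(\lambda^\vee)',\,k+1-i}(j) = j + (\lambda^\vee)'_{k+2-i-j} = j + \#\{p\in[1,i] : \lambda_p\le j-1\}$. So the identity on $[1,k{+}1{-}i]$ boils down to the elementary counting statement
\[
\bigl(\text{$j$-th smallest element of }[1,k{+}1]\setminus S\bigr) \;=\; j + \#\{p\in[1,i] : \lambda_p\le j-1\},
\]
which is proved as follows: setting $Q=\#\{p:\lambda_p\le j-1\}$ and $M=j+Q$, the strict monotonicity of $p\mapsto \lambda_p+(i{+}1{-}p)$ in $p$ forces $|S\cap[1,M]|=Q$ (exactly the $p$'s with $\lambda_p\le j-1$ contribute) and $M\notin S$. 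Both sides of the finite-group identity are permutations of $[1,k{+}1]$ that agree on $[1,k{+}1{-}i]$ and that are strictly increasing on $[k{+}2{-}i,k{+}1]$; hence they must coincide on that range as well.

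The main (minor) obstacle is keeping track of index conventions during the passage $\hat{S}_{k+1}\leadsto\tilde{S}_{k+1}\leadsto G_i\cong S_{k+1}$ via $\pi^i$-conjugation and the isomorphism $\phi_i$; once the reductions are set up correctly, the combinatorial core is just the elementary counting lemma above.
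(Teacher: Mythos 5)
Your proof is correct, and it shares the paper's opening reduction but then diverges in a genuine way. Both you and the paper begin by writing $w_{\lambda,i}\,t_{-\varpi_i^\vee}=\pi^{-i}\,y_\lambda\,x_{R_i}$ with $y_\lambda=\pi^i w_{\lambda,i}\pi^{-i}$, so that the claim reduces to $y_\lambda\, x_{R_i}=x_{(\lambda^\vee)'}$ in $\Waf$. At that point the paper stays inside $\Waf$ and argues with reduced expressions: it builds a particular standard tableau $T$ of shape $R_i$ (filling $(\lambda^\vee)'$ row by row first, then the complementary skew shape column by column), applies Proposition~\ref{prop:afGr} to get a reduced word for $x_{R_i}$, and reads off the factorization $x_{R_i}=y_\lambda^{-1}\,x_{(\lambda^\vee)'}$ by observing that the skew-shape portion, after a reflection, matches $y_\lambda$. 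You instead transport the whole identity back into the finite symmetric group via the isomorphism $\phi_i\colon S_{k+1}\to\langle s_j : j\in I\setminus\{i\}\rangle$, using the fact (implicit in the proof of Proposition~\ref{prop:afGr}) that $\phi_i(w_{\mu,k+1-i})=x_\mu$ for $\mu\subset R_i$, and then verify the resulting equality $w_{\lambda,i}\cdot c_i=w_{(\lambda^\vee)',\,k+1-i}$ by a direct window-notation computation: an explicit counting argument on the first $k{+}1{-}i$ values, and monotonicity on the remaining $i$ positions. Your route buys a fully explicit, hand-checkable verification that avoids the paper's ``it is straightforward to see'' reflection-of-tableaux step; the paper's route buys a shorter write-up that reuses the residue-reading machinery already set up in Proposition~\ref{prop:afGr}. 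Both are correct; the counting identity you isolate (that the $j$-th gap of $S=\{\lambda_p+i+1-p\}$ equals $j+\#\{p:\lambda_p\le j-1\}$) is exactly the elementary content underlying the tableau picture.
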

\begin{proof}In the extended affine symmetric group
$\hat{S}_{k+1}$, we compute
$w_{\lambda,i}t_{-\varpi_i^\vee}=w_{\lambda,i}\pi^{-i}x_{R_i}=\pi^{-i}(\pi^i w_{\lambda,i}\pi^{-i})x_{R_i}.$
A reduced expression for $y_\lambda:=\pi^i w_{\lambda,i}\pi^{-i}$ is 
obtained by replacing $s_j$ with $s_{j+i}$ in $w_{\lambda,i}$. 
It is straightforward to see
when we reflect the tableau of $y_\lambda$ along the line with a slope of $1$, 
it fits inside the tableau of shape $R_i$ filled with $(k+1)$-residues adjusted to the south-east corner (see Example \ref{ex:Grass} below).  
{Let $T$ be the standard tableau with shape $R_{i}$ 
that is obtained by filling positive integers into each row of $(\lambda^\vee)'$ from 
left to right, with rows taken from top to bottom, and then into each column
of the remaining boxes from top to bottom, with columns taken from left to right.
Then apply Proposition \ref{prop:afGr} 
to $T$.}
The obtained reduced expression for  
$x_{R_i}$ shows $x_{R_i}=y_\lambda^{-1}\cdot x_{(\lambda^\vee)'},$
and hence $ w_{\lambda,i}t_{-\varpi_i^\vee}=\pi^{-i}x_{(\lambda^\vee)'}$, showing
$\sh(w_{\lambda,i})=(\lambda^\vee)'.$
\end{proof}

\begin{example}\label{ex:Grass}
For $k=6, i=3$, and $\lambda=(3,2)$, the corresponding $3$-Grassmannian
element
is $w_{\lambda,3}=s_3s_2\cdot s_5s_4s_3.$
We have $$
y_{\lambda}:=\pi^3(w_{\lambda,3})\pi^{-3}=s_6s_5\cdot s_1s_0s_6
:\quad
\begin{ytableau}6 & 0 & 1\\ 5 &6
\end{ytableau}.
$$
We read the entries of 
the tableau of shape $R_3$ filled with $7$-residues
\begin{ytableau}
0 & 1 & 2\\ 6 & 0 & *(lightgray)1  \\5 & *(lightgray)6 &*(lightgray) 0 \\4& *(lightgray)5 &*(lightgray)6
\end{ytableau}
according to the order given by  $T=\begin{ytableau}
1 & 2 & 3\\ 4 & 5 &  *(lightgray)10  \\6 & *(lightgray)8  & *(lightgray)11 \\ 7&*(lightgray) 9&*(lightgray)12
\end{ytableau}$
to obtain
$$
x_{R_3}
=(s_6s_0s_1\cdot s_5s_6 )\cdot s_4s_5s_0s_6 s_2s_1s_0.
$$
This is $y_\lambda^{-1}x_{(\lambda^\vee)'}$.
%
The shaded boxes correspond to $y_\lambda$.
\end{example}

\section{Vertical Pieri rule for the closed $K$-$k$-Schur functions}\label{app:C}
A Pieri rule for $\gt{k}_\lambda$ was proved by Takigiku \cite{Tak1}.
We record here the vertical version of Takigiku's formula, 
which should be known to experts but is missing in the literature. 
We do not use this result in the main part of this paper. 

For $\lambda\in \Par^k$, 
$1\leq r\leq r,$ define
\begin{eqnarray*}
\mathcal{H}_{\lambda,r}^{(k)}
&:=&\{A\subsetneq I\;\left|\; |A|=r,\;d_Ax_\lambda\in \Wafo,\;d_Ax_\lambda\geq_L x_\lambda\}\right.,\\
\mathcal{V}_{\lambda,r}^{(k)}
&:=&\{A\subsetneq I\;\left|\; |A|=r,\;u_Ax_\lambda\in \Wafo,\;u_Ax_\lambda \geq_L x_\lambda\}\right..
\end{eqnarray*}
For an element $A$ in 
$\mathcal{H}_{\lambda,r}^{(k)}$, we write $d_Ax_\lambda=x_{\kappa}$ for 
an element $\kappa\in \Par^k,$ and denote this $\kappa $ by $d_A\lambda$.
Similarly, for an element $A$ in 
$\mathcal{V}_{\lambda,r}^{(k)}$, we write $u_Ax_\lambda=x_{\kappa}$ for 
an element $\kappa\in \Par^k,$ and denote this $\kappa $ by $u_A\lambda$.

\begin{example}\label{ex:C1} For $k=3$, $\lambda=(2,1)\in \Par^3,$
and $r=2$, we have (see Figure \ref{fig:21})
$$\mathcal{H}_{\lambda,r}^{(k)}=\{\{2,3\},\{0,2\}\},\quad
d_{\{2,3\}}=s_3s_2,\quad
d_{\{0,2\}}=s_0s_2=s_2s_0,
$$
$$
\mathcal{V}_{\lambda,r}^{(k)}=\{\{1,2\},\{0,2\}\},\quad
u_{\{1,2\}}=s_1s_2,\quad
u_{\{0,2\}}=s_0s_2=s_2s_0.
$$
The corresponding weak (horizontal and vertical) strips are given by 
$$
d_{\{2,3\}}\lambda
=(3,1,1),\quad
d_{\{0,2\}}\lambda=u_{\{0,2\}}
\lambda
=(2,1,1),\quad
u_{\{1,2\}}\lambda
=(2,1,1,1).
$$
\end{example}

For $A_1,\ldots,A_m\in \mathcal{H}_{\lambda,r}^{(k)}$, it is known by Takigiku \cite[Corollary 4.8]{Tak1} that 
$A_1\cap \cdots\cap A_m\in \mathcal{H}_{\lambda,r'}^{(k)}$, with 
$r'=|A_1\cap\cdots \cap A_m|$ and hence
$d_{A_1\cap\cdots\cap A_m}\lambda\in \Par^k$
is defined.

\begin{prop}\label{prop:barH}
Let $A\mapsto \overline{A}$ be 
the map given by sending $i\in A$ to $-i\in \overline{A}.$
Then for $\lambda\in \Par^k$, and $1\leq r\leq k,$
\begin{equation*}
\overline{\mathcal{H}_{\lambda,r}^{(k)}}=
\mathcal{V}_{\lambda^{\omega_k},r}^{(k)}.
\end{equation*}
\end{prop}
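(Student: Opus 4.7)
The plan is to deduce this identity directly from the behavior of the involution $\omega_k$ on $\Waf$, using the two key properties already established: the formulas $\omega_k(u_A)=d_{\overline{A}}$ and $\omega_k(d_A)=u_{\overline{A}}$ (from the discussion in \S\ref{sec:kconj}), and the preservation of the left weak order under $\omega_k$ recorded in \eqref{eq:omega_weak}. One also uses that $\omega_k$ restricts to an involution on $\Wafo$, with $\omega_k(x_\lambda)=x_{\lambda^{\omega_k}}$ by definition.

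First, I would take $A\in \mathcal{H}_{\lambda,r}^{(k)}$ and apply $\omega_k$ to the element $d_A x_\lambda$. This gives
\[
\omega_k(d_A x_\lambda)=\omega_k(d_A)\,\omega_k(x_\lambda)=u_{\overline{A}}\,x_{\lambda^{\omega_k}}.
\]
Since $\omega_k$ preserves $\Wafo$, the condition $d_A x_\lambda \in \Wafo$ transfers to $u_{\overline{A}} x_{\lambda^{\omega_k}} \in \Wafo$. Since $|\overline{A}|=|A|=r$ and $\overline{A}\subsetneq I$, the cardinality condition is also satisfied. Finally, applying \eqref{eq:omega_weak} yields
\[
d_A x_\lambda \geq_L x_\lambda \iff \omega_k(d_A x_\lambda) \geq_L \omega_k(x_\lambda) \iff u_{\overline{A}} x_{\lambda^{\omega_k}} \geq_L x_{\lambda^{\omega_k}}.
\]
Thus $\overline{A}\in \mathcal{V}_{\lambda^{\omega_k},r}^{(k)}$, proving the inclusion $\overline{\mathcal{H}_{\lambda,r}^{(k)}}\subseteq \mathcal{V}_{\lambda^{\omega_k},r}^{(k)}$.

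For the reverse inclusion, I would run exactly the same argument starting from $B\in \mathcal{V}_{\lambda^{\omega_k},r}^{(k)}$ and applying $\omega_k$, using $\omega_k(u_B)=d_{\overline{B}}$, $\omega_k(x_{\lambda^{\omega_k}})=x_\lambda$ (since $\omega_k$ is an involution on $\Par^k$), and again \eqref{eq:omega_weak}. This gives $\overline{B}\in \mathcal{H}_{\lambda,r}^{(k)}$, so $B=\overline{\overline{B}}\in \overline{\mathcal{H}_{\lambda,r}^{(k)}}$.

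There is no real obstacle here: the proof is essentially a bookkeeping exercise combining $\omega_k(u_A)=d_{\overline{A}}$ with the fact that $\omega_k$ is an automorphism of the Coxeter structure, hence compatible with the left weak order and with the parabolic quotient defining $\Wafo$. The only thing to double-check is that $A\subsetneq I$ is equivalent to $\overline{A}\subsetneq I$, which is immediate since the map $i\mapsto -i$ is a bijection of $I=\Z/(k+1)\Z$.
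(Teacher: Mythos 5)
Your proposal is correct and follows essentially the same approach as the paper's proof: apply $\omega_k$, use $\omega_k(d_A)=u_{\overline{A}}$ together with \eqref{eq:omega_weak} to transfer the left-weak-order condition, and handle the reverse inclusion symmetrically. If anything, you are slightly more explicit than the paper in noting that $\omega_k$ preserves $\Wafo$ (so the membership $d_A x_\lambda \in \Wafo$ carries over) and that $|\overline{A}|=|A|$, both of which the paper leaves implicit.
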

\begin{proof}
Let $A\in \mathcal{H}_{\lambda,r}^{(k)}$.
Recall that $\omega_k$ 
is an automorphism of the group $\Waf$ preserving the left weak order. Therefore, it follows that
$$
u_{\overline{A}}x_{\lambda^{\omega_k}}=
\omega_k(d_A)\omega_k(x_{\lambda})=
\omega_k({d_Ax_{\lambda}})
\geq_L
\omega_k(x_\lambda)
=x_{\lambda^{\omega_k}},
$$
and hence $\overline{A}\in \mathcal{V}_{\lambda,r}^{(k)}.$
Thus $\overline{\mathcal{H}_{\lambda,r}^{(k)}}\subset
\mathcal{V}_{\lambda^{\omega_k},r}^{(k)}$.
Similarly we have $\overline{\mathcal{H}_{\lambda,r}^{(k)}}\supset
\mathcal{V}_{\lambda^{\omega_k},r}^{(k)}$.
\end{proof}

\begin{thm}[\cite{Tak}]
Let $\lambda\in \Par^k$ and $1\leq r\leq k$.
Let 
\begin{equation*}
\mathcal{H}_{\lambda,r}^{(k)}=\{A_1,\ldots,A_m\},\quad
\mathcal{V}_{\lambda,r}^{(k)}=\{B_1,\ldots,B_n\}.
\end{equation*}
 Then
\begin{eqnarray*}
\tilde{g}_{(r)}\cdot\gt{k}_\lambda
&=&\sum_{i= 1}^m
(-1)^{i-1}
\sum_{1\leq a_1<\cdots<a_i\leq m}
\gt{k}_{d_{A_{a_1}\cap\cdots\cap A_{a_i}}\lambda},\label{eq:gtildePieri}\\
\tilde{g}_{(1^r)}\cdot\gt{k}_\lambda
&=&\sum_{i= 1}^n
(-1)^{i-1}
\sum_{1\leq a_1<\cdots<a_i\leq n}
\gt{k}_{u_{B_{a_1}\cap\cdots\cap B_{a_i}}\lambda}.\label{eq:gtildePieri2}
\end{eqnarray*}
\end{thm}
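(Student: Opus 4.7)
The plan is to prove the vertical Pieri rule by applying the $k$-conjugation involution $\Omega$ to Takigiku's horizontal Pieri rule, so the horizontal formula (the first identity) may be cited directly from \cite{Tak1}. The strategy mirrors the approach already used in the proof of Lemma \ref{lem:11+} for deriving the vertical product formula for $\tilde{g}^{(k)}_\lambda$ from the horizontal one.

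First I would establish the preliminary identity $\Omega(\tilde{g}_{(r)})=\tilde{g}_{(1^r)}$. This is immediate since $\tilde{g}_{(r)}=\sum_{i=0}^{r}h_i$ and $\Omega(h_i)=g_{(1^i)}$, so $\Omega(\tilde{g}_{(r)})=\sum_{i=0}^r g_{(1^i)}=\sum_{\mu\subset(1^r)}g_\mu=\tilde{g}_{(1^r)}$. Next, I would apply the horizontal Pieri rule to $\omega_k(\lambda)\in\Par^k$ in place of $\lambda$, then apply $\Omega$ to both sides. On the left, since $\Omega$ is a ring homomorphism and by Corollary \ref{cor:omegatilde}, the product transforms as $\Omega(\tilde{g}_{(r)}\cdot\gt{k}_{\omega_k(\lambda)})=\tilde{g}_{(1^r)}\cdot\gt{k}_\lambda$, using also that $\omega_k$ is an involution.

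Next I would analyze the right-hand side. Writing $\mathcal{H}_{\omega_k(\lambda),r}^{(k)}=\{A_1,\ldots,A_m\}$ and noting that $\omega_k$ is a group automorphism satisfying $\omega_k(d_A)=u_{\overline{A}}$, we have for any subset $A\subset I$ with $d_Ax_{\omega_k(\lambda)}\in\Wafo$,
\[
\omega_k(d_Ax_{\omega_k(\lambda)})=u_{\overline{A}}x_\lambda,
\]
so the partition indexing the corresponding $\gt{k}$ becomes $u_{\overline{A}}\lambda$. The intersection $A_{a_1}\cap\cdots\cap A_{a_i}$ transforms under $A\mapsto\overline{A}$ as $\overline{A_{a_1}}\cap\cdots\cap\overline{A_{a_i}}$, and by Proposition \ref{prop:barH} the map $A\mapsto \overline{A}$ provides a bijection $\mathcal{H}_{\omega_k(\lambda),r}^{(k)}\xrightarrow{\sim}\mathcal{V}_{\lambda,r}^{(k)}$, so setting $B_j=\overline{A_j}$ yields exactly the claimed indexing set. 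Applying Corollary \ref{cor:omegatilde} to each summand, the right-hand side becomes
\[
\sum_{i\geq 1}(-1)^{i-1}\!\sum_{1\leq a_1<\cdots<a_i\leq n}\gt{k}_{u_{B_{a_1}\cap\cdots\cap B_{a_i}}\lambda}.
\]

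There is no substantive obstacle here: every ingredient has been set up in Section \ref{sec:kconj} (for the behaviour of $\Omega$ on $\gt{k}_\lambda$), in Proposition \ref{prop:barH} (for the bijection $\overline{\mathcal{H}}=\mathcal{V}$), and in \cite{Tak1} (for the horizontal formula). The only thing requiring a moment of care is the verification that intersections of subsets behave correctly under the bar map and that the well-definedness assertion $d_{A_{a_1}\cap\cdots\cap A_{a_i}}\lambda\in\Par^k$ (Takigiku \cite[Corollary 4.8]{Tak1}) transports via $\omega_k$ to the corresponding vertical statement $u_{B_{a_1}\cap\cdots\cap B_{a_i}}\lambda\in\Par^k$, which again is immediate from the length-preserving and weak-order-preserving properties of $\omega_k$ recorded in \eqref{eq:omega_weak}.
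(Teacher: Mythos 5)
Your proof is correct and follows essentially the same route as the paper: the horizontal Pieri rule is cited, $\Omega(\tilde{g}_{(r)})=\tilde{g}_{(1^r)}$ and Corollary \ref{cor:omegatilde} are used to transport the left-hand side, and Proposition \ref{prop:barH} together with the compatibility of the bar map with intersections is used to transport the right-hand side. The only (trivial) difference is that you substitute $\omega_k(\lambda)$ for $\lambda$ before applying $\Omega$, whereas the paper applies $\Omega$ first and obtains the vertical formula for $\lambda^{\omega_k}$.
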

\begin{proof}
\eqref{eq:gtildePieri} is due to Takigiku \cite{Tak}.
From Proposition \ref{prop:barH}, it follows that 
$\{\overline{A_1},\ldots,\overline{A_m}\}
=\mathcal{V}_{\lambda^{\omega_k},r}^{(k)}.$
By applying $\Omega$ to both sides of the equation, we obtain  
$$
\tilde{g}_{(1^r)}\cdot\gt{k}_{\lambda^{\omega_k}}
=\sum_{i= 1}^m
(-1)^{i-1}
\sum_{1\leq a_1<\cdots<a_i\leq m}
\gt{k}_{u_{\overline{A_{a_1}}\cap\cdots\cap\overline{A_{a_i}}}\lambda^{\omega_k}},
$$
where we used 
$\Omega(\tilde{g}_{(r)})=\tilde{g}_{(1^r)}$ and 
 $$\Omega(\gt{k}_{d_{A_{a_1}\cap\cdots\cap A_{a_i}}\lambda})
=\gt{k}_{u_{\overline{A_{a_1}\cap\cdots\cap A_{a_i}}}\lambda^{\omega_k}}
=\gt{k}_{u_{\overline{A_{a_1}}\cap\cdots\cap \overline{A_{a_i}}}\lambda^{\omega_k}}.$$
Thus we  have \eqref{eq:gtildePieri2}.
\end{proof}

\begin{example}
According to Example \ref{ex:C1}, we have
\begin{eqnarray*}
\gt{3}_{(2)}
\gt{3}_{(2,1)}
&=&\gt{3}_{(3,1,1)}
+\gt{3}_{(2,2,1)}
-\gt{3}_{(2,1,1)},
\\
\gt{3}_{(1,1)}
\gt{3}_{(2,1)}
&=&\gt{3}_{(2,1,1,1)}
+\gt{3}_{(2,2,1)}
-\gt{3}_{(2,1,1)}.
\end{eqnarray*}

\ytableausetup{smalltableaux}

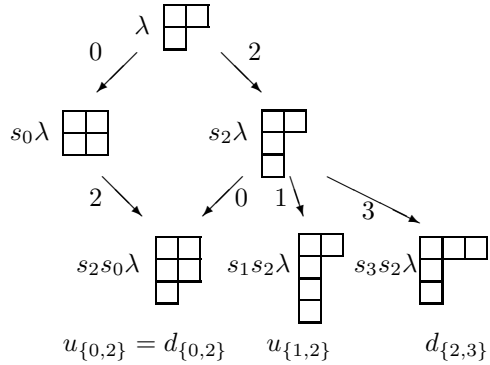
\begin{figure}[htbp]
\centering
\begin{picture}(150,120)
\put(58,120){\begin{ytableau}
{}&\\
{}\end{ytableau}} 
\put(90,110){$2$}
\put(30,110){$0$}
\put(30,55){$2$}
\put(85,55){$0$}
\put(100,55){$1$}
\put(133,50){$3$}
\put(80,113){\vector(1,-1){15}}
\put(48,113){\vector(-1,-1){15}}
\put(20,80){\begin{ytableau}
{}&\\
{}&{}\end{ytableau}}
\put(35,66){\vector(1,-1){15}}
\put(95,76){\begin{ytableau}{}&\\
{}\\
{}\end{ytableau}}
\put(88,66){\vector(-1,-1){15}}
\put(106,66){\vector(1,-3){5}}
\put(120,66){\vector(2,-1){35}}
\put(55,28){\begin{ytableau}{}&\\
{}&{}\\
{}\end{ytableau}}
\put(109,25){\begin{ytableau}{}&\\
{}\\
{}\\
{}\end{ytableau}}
\put(155,28){\begin{ytableau}{}&&\\
{}\\
{}\end{ytableau}}
\put(20,0){$u_{\{0,2\}}=d_{\{0,2\}}$}
\put(97,0){$u_{\{1,2\}}$}
\put(157,0){$d_{\{2,3\}}$}
\put(47,120){$\lambda$}
\put(0,80){$s_0\lambda$}
\put(75,80){$s_2\lambda$}
\put(25,30){$s_2s_0\lambda$}
\put(82,30){$s_1s_2\lambda$}
\put(130,30){$s_3s_2\lambda$}
\end{picture}
\caption{$k=3$, $\lambda=(2,1)$, $r=2$.}
\label{fig:21}
\end{figure}
\end{example}
\bigskip

\bibliographystyle{amsplain}

\end{document}